\documentclass[14pt]{amsart}
\usepackage{cases}
\usepackage{amsmath}
\usepackage{amsfonts}
\usepackage{bm}
\usepackage{color}
\usepackage{amssymb}
\usepackage{xypic}
\usepackage[all]{xy}
\usepackage{mathrsfs}
\usepackage{amsthm}
\usepackage[all]{xy}
\usepackage{graphics}
\usepackage{array}
\usepackage{graphicx} 
\usepackage{epstopdf}
\usepackage{float}
\usepackage{tikz}
\usepackage{tikz-cd}
\usetikzlibrary{arrows}
\usetikzlibrary{graphs}
\usepackage{tikz}
\usepackage{mathtools}
\usepackage{hyperref}
\usepackage{delarray}

\usepackage{subcaption}
\usepackage{booktabs}
\usepackage{extarrows}
\usepackage{rotating}

\newtheorem{thm}{Theorem}[section]
\newtheorem{lem}[thm]{Lemma}
\newtheorem{defi}[thm]{Definition}
\newtheorem{cor}[thm]{Corollary}
\newtheorem{prop}[thm]{Proposition}
\newtheorem{ex}[thm]{Example}
\newtheorem{rmk}[thm]{Remark}
\newtheorem{conj}[thm]{Conjecture}
\newtheorem{prob}[thm]{Problem}
\newtheorem{que}[thm]{Question}

\title
{Stability of Type A Mirkovi\'c-Vilonen Polytopes under Minkowski Sum via Weak Separation}

\author{Gleb A. Koshevoy \and Fang Li \and Lujun Zhang}

\address{Gleb A. Koshevoy
	\newline
	Institute for Information Transmission Problems,
	Russian Academy of Science of Moscow,
	Moscow 127051, Russia.}
\email{koshevoyga@gmail.com}

\address{Fang Li
\newline \footnote{The corresponding institution} School of Mathematical Sciences,
Zhejiang University,
Yuhangtang Road 866,
Hangzhou, Zhejiang 310058,
China P.R.}
\email{fangli@zju.edu.cn}

\address{Lujun Zhang\footnote{The corresponding author}
	\newline
	School of Mathematical Sciences,
	Zhejiang University,
	Yuhangtang Road 866,
	Hangzhou, Zhejiang 310058,
	China P.R. }
\email{12135007@zju.edu.cn}

\date{\today}

\newcommand{\lra}{\longrightarrow}

\newcommand{\ra}{\rightarrow}
\newcommand{\sdp}{\times\kern-.2em\vrule height1.1ex depth-.05ex}
\newcommand{\epi}{\lra \kern-.8em\ra}

\makeatletter
\@addtoreset{equation}{section}
\makeatother

 \normalbaselines
\begin{document}
\renewcommand{\thefootnote}{\alph{footnote}}

\renewcommand{\thefootnote}{\alph{footnote}}
\maketitle
\bigskip
	
\begin{abstract}
Mirkovi\'c--Vilonen (MV) polytopes play a key role in the representation theory of reductive algebraic groups, while the geometric behavior of prime MV polytopes under Minkowski addition remains a subtle open problem. This paper focuses on type A and regards Schubert matroid polytopes as fundamental prime MV building blocks. 

Using the crystal structure on MV polytopes, we strengthen Sanchez's compatibility condition and establish a necessary and sufficient condition: the positive Minkowski sum of such polytopes is again an MV polytope precisely when the indexing family is weakly separated. Working within discrete convex analysis, we relate discrete concave tropical Pl\"ucker functions to concave extensions on the hypercube and the resulting generalized matroid subdivisions, showing that weak separation is equivalent to the stability of these subdivisions under common refinement.

 We further clarify the intrinsic connection between our subdivision constructions and the hypersimplex matroid subdivisions developed by Early, providing a natural flag-type generalization of his classical results. We briefly discuss generalized positroids and generalized polypositroids, and identify the MV fan $\mathcal{MV}$ as the secondary fan of hypercube generalized positroid subdivisions. Accordingly, maximal weakly separated sets correspond to maximal cones in $\mathcal{MV}$ and produce the finest such subdivisions.
 
  This work unifies MV polytope theory with tropical matroid geometry, advances the understanding of compatibility phenomena in MV combinatorics, and offers new perspectives at the interface of representation theory and combinatorics.

\noindent\textbf{2020 Mathemathics Subject Classification: 05E45.}
\end{abstract}

\vspace{1cm}

\tableofcontents

\vspace{1cm}

\section{Introduction}
Mirkovi\'c-Vilonen \cite{MK} constructed subvarieties of the affine Grassmannian, known as \textbf{Mirkovi\'c-Vilonen cycles} (MV cycles for short). MV cycles are used to study the irreducible highest-weight modules of the Langlands dual group $G^{\vee}$ of a connected reductive complex algebraic group $G$.

To investigate the combinatorial properties of MV cycles, Anderson \cite{A} studied them via their moment polytope, which are called \textbf{Mirkovi\'c-Vilonen polytopes} (MV polytopes for short). In \cite{K2}, Kamnitzer extended the characterization of MV polytopes from type A to other Lie types using \textbf{generalized minors}. In particular, for type A, MV polytopes are parameterized by a class of functions $f:2^{[n]} \rightarrow \mathbb{Z}$ satisfying the following two conditions:
\begin{enumerate}
	\item (\textbf{Submodularity}) $f(X)+f(Y) \geq f(X \cap Y)+f(X \cup Y)$;
	\item (\textbf{Tropical Pl\"ucker relation}) $f(Xik)+f(Xj)=\max\{f(Xij)+f(Xk),f(Xjk)+f(Xi)\}$.
\end{enumerate}

where $X,Y \in 2^{[n]}$ and $i<j<k$ lie in $X^{c}$. Such functions are called \textbf{discrete concave tropical Pl\"ucker functions} (DCTP functions for short). This terminology arises from the fact that functions satisfying the type $\mathrm{A}$ conditions are essentially $M^{\natural}$-concave functions in discrete convex analysis. For general Lie types, the relations above are also referred to as \textbf{Berenstein--Zelevinsky data} (BZ data for short).

Kamnitzer explicitly discussed in Chapter 6 of \cite{K2} the structure of the parameter space of MV polytopes given by these BZ data, which is essentially a fan. By taking the quotient of this fan modulo its intrinsic maximal linearity subspace, we denote the resulting quotient fan by $\mathcal{MV}$. Kamnitzer referred to the polytopes corresponding to the generators (one-dimensional faces) in $\mathcal{MV}$ as \textbf{prime MV polytopes}. Anderson \cite{A} provided a complete characterization for low-rank cases such as $SL_3$, $SL_4$ and $SP_4$. In general, however, characterizing all prime MV polytopes and investigating their compatibility under the Minkowski sum remain open problems addressed in Chapter 6 of \cite{K2}.

For type $\mathrm{A}$ of arbitrary rank, all Schubert matroid polytopes turn out to be prime, as stated below.
\begin{prop}[Proposition \ref{10}]
	For any $I \in 2^{[n]} \setminus \{[1,j] \mid j \in [n]\}$, the Schubert matroid polytope $Q(\theta_I)$ is a prime MV polytope. Furthermore, if $I \neq J$, then $Q(\theta_I)$ and $Q(\theta_J)$ correspond to distinct stable MV polytopes.
\end{prop}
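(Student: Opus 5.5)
We first need to fix what $\theta_I$ is. I take it to be the DCTP function attached to the Schubert matroid of $I$: for $X\subseteq[n]$, $\theta_I(X)$ is the rank-type function $\theta_I(X)=\max\{|X\cap J| : J\in\mathcal{M}_I\}$ (suitably normalized). Here $\mathcal{M}_I$ is the Schubert matroid consisting of all $J$ with $|J|=|I|$ and $J\le I$ in the Gale order. Then $Q(\theta_I)$ is the matroid base polytope $\mathrm{conv}\{e_J : J\in\mathcal M_I\}$, viewed as an MV polytope via Kamnitzer's correspondence.

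The proof then has three steps.

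\textbf{Step 1: $\theta_I$ is a DCTP function.}
\begin{itemize}
\item Submodularity holds because $\theta_I$ is a matroid rank function.
\item For the tropical Pl\"ucker relation, I would compute $\theta_I$ explicitly. For a Schubert (lattice path) matroid it should have the form
\[\theta_I(X)=\min_{t}\bigl(|X\cap[t+1,n]|+|I\cap[1,t]|\bigr).\]
\item Next I check $f(Xik)+f(Xj)=\max\{f(Xij)+f(Xk),f(Xjk)+f(Xi)\}$ for $i<j<k$. The right-hand side is governed by the position of the minimizing $t$ relative to $i,j,k$. Schubert matroids are positroids, so their rank functions satisfy the three-term tropical Pl\"ucker relation with respect to the cyclic order. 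This is the structural input.
\end{itemize}

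\textbf{Step 2: $\theta_I$ spans a ray of $\mathcal{MV}$.}
\begin{itemize}
\item The cone of BZ data is cut out, modulo linearity, by the inequalities (submodularity) together with a choice of which term attains the max in each tropical Pl\"ucker relation.
\item Primality means that the ray $\mathbb{R}_{\ge0}\theta_I$ is a one-dimensional face modulo the lineality space. The lineality space consists of modular functions $X\mapsto\sum_{x\in X}c_x$, plus constants.
\item I plan to show this by exhibiting enough tight linear conditions at $\theta_I$: submodular inequalities that hold with equality, and Pl\"ucker relations in which both terms of the max agree or a specific term is forced. These should cut the face down to dimension one.
\item Concretely, I suppose $\theta_I=g+h$ with $g,h$ BZ data. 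Tightness is inherited by summands, so $g$ and $h$ satisfy every equality satisfied by $\theta_I$. I then show these equalities force $g\in\mathbb{R}_{\ge0}\theta_I+\text{linear}$.
\item To do this, I would use the chain of initial intervals and the sets $I\cap[1,t]$ to reduce the values of $g$ to a single parameter, arguing by induction on $|X|$ along a suitable order.
\item The exclusion of $I=[1,j]$ enters exactly here. For an initial interval the Schubert matroid is the single basis $\{[1,j]\}$, so $\theta_I$ is modular and hence lies in the lineality space, giving the zero ray.
\end{itemize}

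\textbf{Step 3: distinct $I$ give distinct rays.}
\begin{itemize}
\item It suffices to show that $\theta_I-\theta_J$ is not modular, up to scaling, when $I\ne J$.
\item Two polytopes $Q(\theta_I)$ and $Q(\theta_J)$ that are translates or dilates of each other would have the same normal fan. I instead distinguish them by their vertex sets, since $\mathcal M_I$ determines $I$ as its Gale-maximal basis.
\item Both polytopes are $0/1$ with edge length one, so a dilation is impossible. A translation by a nonzero vector would move $0/1$ vertices off the cube unless the translation is zero. Hence distinct $I$ give distinct stable MV polytopes.
\item The case $|I|\ne|J|$ is immediate, since the polytopes lie in different hyperplanes, and these also give distinct rays.
\end{itemize}

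I expect Step 2, the extremality, to be the main obstacle. The difficulty is choosing a sufficient family of tight relations and running the induction that pins down $g$. If the direct approach stalls, I would fall back on the fact that the Schubert matroid polytope is indecomposable under Minkowski sum among generalized permutohedra. The argument there is that the edge graph of the polytope has all edges in root directions, and a connectivity argument on its 2-faces (triangles and squares) forces every summand to be a homothet.
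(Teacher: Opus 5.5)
\textbf{There is a genuine gap in Step 2.} Step 2 carries the entire content of the statement, and it is not proved. You never say which submodular equalities, or which terms of the tropical Pl\"ucker relations, are tight at $\theta_I$. Nor do you carry out the induction that is supposed to force a summand $g$ into $\mathbb{R}_{\ge0}\theta_I$ plus modular functions. ``These should cut the face down to dimension one'' restates the claim rather than proving it. The reduction itself is sound: if $\theta_I=g+h$ with $g,h$ DCTP, then every equality tight at $\theta_I$, including which terms attain each max, is also tight for $g$ and $h$. The fallback does not close the gap either. Square $2$-faces carry no scaling information, because the corresponding face of a summand can be a rectangle with a different side ratio. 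A Shephard-type indecomposability argument therefore needs a family of \emph{triangular} $2$-faces, linked through shared edges and touching every vertex. You would have to exhibit such a family for $Q(\theta_I)$ after discarding the loops and coloops of $\Omega_I$.

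The paper supplies the missing idea geometrically. It lifts $\theta_I$ by $\mathbf{pad}$ to $\theta_{\mathbf{pad}(I)}$ on $\Delta_{n,2n}$ (Lemma \ref{lem5}), where Early's translated blade gives a multi-split, and then projects back. This shows that the subdivision $\Sigma_I$ of $[0,1]^n$ induced by $\widehat{\theta}_I$ is an $l$- or $(l+1)$-split with an explicit common face (Proposition \ref{prop8}, Theorem \ref{thm7}). By Theorem \ref{thm6}, multi-splits are coarsest regular subdivisions. Since $\mathcal{MV}$ is identified with the secondary fan of these subdivisions (Theorem \ref{affine_DCTP}), $\theta_I$ spans a ray.

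\textbf{Steps 1 and 3 rest on false justifications.}

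In Step 1, the inference ``positroid $\Rightarrow$ tropical Pl\"ucker'' fails. The positroid on $[3]$ with bases $\{12,23\}$ has a rank function $r$ with $r(13)+r(2)=2<3=r(12)+r(3)=r(23)+r(1)$, which violates (\ref{trop_plucker}). The DCTP property of $\theta_I$ comes instead from Sanchez's theorem that lattice path matroids are exactly the matroids whose polytopes are MV polytopes; the paper takes this as input. Your formula is also reversed. For $\Omega_I=\{L\le_G I\}$ the rank function is $\theta_I(X)=\min_t\bigl(|X\cap[1,t]|+|I\cap[t+1,n]|\bigr)$. On the paper's example $I=\{4,5,6,8,9\}$, $J=\{1,2,3,4,7,8\}$, your expression already gives $3$ at $t=3$, whereas $\theta_I(J)=5$.

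In Step 3, a nonzero translate of a $0/1$ polytope can again be $0/1$; for example $\mathrm{conv}\{e_2,e_3\}$ and $e_1+\mathrm{conv}\{e_2,e_3\}$. For the same reason, $|I|\neq|J|$ is not decisive, since a translation can move between parallel hyperplanes. The correct argument runs as follows:
\begin{enumerate}
\item A translation vector must vanish on every coordinate that is non-constant on the polytope, so it is supported on loops and coloops.
\item For $\Omega_I$ with $I$ not an initial interval, the coloops are the maximal initial interval $[1,m]\subseteq I$ and the loops are $[\max I+1,n]$. The non-constant coordinates are therefore exactly $[m+1,\max I]$.
\item If these sets agree for $I$ and $J$, then the loop and coloop sets agree, so the translation is zero. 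The Gale-maximal basis then recovers $I=J$.
\end{enumerate}
This is the loop/coloop argument the paper invokes.
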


In the above proposition, $\theta_I$ denotes the rank function of the Schubert matroid and is also a DCTP function. The corresponding polytope $Q(\theta_I)$ is associated with a generator of the fan $\mathcal{MV}$. Sanchez \cite{S} (Theorem C) established a sufficient condition for the compatibility of these generators, that is, for a strongly separated set $\mathcal{C}$ in $2^{[n]}$, the positive Minkowski sum of the corresponding Schubert matroid polytopes
\[
Q_{\mathcal{C}}:=\sum\limits_{I \in \mathcal{C}}c_IQ(\theta_I),\quad c_I >0,
\]
is still an MV polytope.

Furthermore, utilizing the crystal structure on the set of all MV polytopes \cite{K}, we obtain a strengthened version Sanchez's theorem and present a necessary and sufficient condition.

\begin{thm}[Theorem \ref{minkow_schubert}]
	Let $\mathcal{C} \subseteq 2^{[n]}$. Then the positive Minkowski sum of Schubert matroid polytopes
	\[
	Q_{\mathcal{C}}:=\sum\limits_{I \in \mathcal{C}}c_IQ(\theta_I),\quad c_I>0,
	\]
	is an MV polytope if and only if $\mathcal{C}$ is a weakly separated set on $2^{[n]}$.
\end{thm}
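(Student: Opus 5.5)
Since Minkowski sums of polytopes correspond to sums of support (BZ) functions, $Q_{\mathcal{C}}$ is an MV polytope exactly when $f_{\mathcal{C}}:=\sum_{I\in\mathcal{C}}c_I\theta_I$ is a DCTP function. Each $\theta_I$ is a rank function and hence submodular, and positive combinations preserve submodularity. The whole question therefore reduces to the tropical Pl\"ucker relation $f(Xik)+f(Xj)=\max\{f(Xij)+f(Xk),f(Xjk)+f(Xi)\}$. For a single $\theta_I$ I will write the three quantities
\[
a_I=\theta_I(Xik)+\theta_I(Xj),\quad b_I=\theta_I(Xij)+\theta_I(Xk),\quad c'_I=\theta_I(Xjk)+\theta_I(Xi).
\]
These satisfy $a_I=\max(b_I,c'_I)$ by the earlier result that $\theta_I$ is DCTP. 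A positive sum $\sum c_I(\cdot)$ still satisfies the max relation if and only if, for every $(X,i,j,k)$, the same term achieves the maximum for all $I\in\mathcal{C}$ simultaneously. Equivalently, there is no pair $I,J\in\mathcal{C}$ with $b_I>c'_I$ and $c'_J>b_J$ at a common $(X,i,j,k)$. Conversely, any such pair lets us pick positive coefficients that break the relation: already $c_I=c_J=1$ gives a strict inequality.

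The first step is a local computation of the Schubert rank function $\theta_I(S)$ on the four-element configurations $Xi,Xj,Xk,Xij,\dots$, recorded as differences $\theta_I(Xij)-\theta_I(Xjk)+\theta_I(Xk)-\theta_I(Xi)$. Using the explicit formula for Schubert matroid ranks (a greedy, Gale-order matching of $S$ against $I$), I expect this sign to be determined by how the elements $i<j<k$ interleave with $I$ relative to $X$. Roughly, it is positive when $I$ ``sees'' $i$ but separates $j$ from $k$ in one pattern, and negative in the mirror pattern.

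The second step is to show that a sign conflict between $I$ and $J$ at some $(X,i,j,k)$ happens exactly when $I$ and $J$ are not weakly separated. Recall that $I,J$ are weakly separated if, after removing $I\cap J$ and comparing the sizes $|I|$ and $|J|$, the sets $I\setminus J$ and $J\setminus I$ are separated by a chord on the circle, with the appropriate size condition. For the direction ``not weakly separated implies conflict'', I would take witnesses $a<b<c<d$ alternating between $I\setminus J$ and $J\setminus I$. From these I choose $X$ built from $I\cap J$ and suitable prefixes, and choose $\{i,j,k\}$ among the witnesses. The construction splits into cases according to the size comparison $|I|$ vs $|J|$. For the converse, I would argue that a conflict yields such an alternating pattern.

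Instead of doing the converse by hand, one can invoke the crystal structure of \cite{K}. The crystal operators act on MV polytopes, and the sufficiency direction can be reduced to Sanchez's strongly separated case (Theorem C of \cite{S}) by applying Kashiwara-type operators or the Weyl group action that transform weakly separated families into strongly separated ones while preserving the MV property. This is plausibly the route the authors intend with ``strengthen Sanchez''. The main obstacle I anticipate is the local computation of the sign of $\theta_I$ on Pl\"ucker configurations. Schubert rank functions have piecewise formulas, and matching them to the cyclic weak-separation condition, including the cardinality asymmetry, requires careful case analysis. I would first try small $n$ ($n=4,5$) to guess the exact sign rule and then prove it by induction on $|X|$.
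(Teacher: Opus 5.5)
Your opening reduction is correct and matches the paper's: since $Q(f+g)=Q(f)+Q(g)$ and submodularity is automatic, the theorem says exactly that no two $I,J\in\mathcal{C}$ pick different maximizers in the positive tropical Pl\"ucker relation at a common $(X,i,j,k)$. However, both directions of that characterization are left as expectations. The one concrete witness you propose for necessity is also insufficient. Alternating quadruples $a<b<c<d$ only detect chord-crossing pairs. A pair can fail weak separation with no such quadruple, namely when the difference of the \emph{larger} set surrounds that of the smaller. Take $n=3$, $I=\{2\}$, $J=\{1,3\}$, $X=\emptyset$, $(i,j,k)=(1,2,3)$. The triples $(\theta(\{1,3\})+\theta(\{2\}),\ \theta(\{1,2\})+\theta(\{3\}),\ \theta(\{2,3\})+\theta(\{1\}))$ are $(2,1,2)$ for $\theta_I$ and $(3,3,2)$ for $\theta_J$. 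So $\theta_I+\theta_J$ gives $5>\max\{4,4\}$. Such cases need a three-element witness with $X$ tuned to block sizes. The paper first normalizes the pair into alternating blocks. Then, for consecutive blocks with $|A_{i-1}|+|A_{i+1}|>|A_i|$, it builds $X$ from sub-blocks of sizes $x,x+1,y,y+1$ (Proposition~\ref{prop5}).

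Your sufficiency shortcut fails. Crystal operators act on a single MV polytope. To make them act summand-wise you need the identity $\theta_{s_iI_1}+\theta_{s_iI_2}=\mathbf{e}_i^l(\theta_{I_1}+\theta_{I_2})$ at an ascent $i$ (Theorem~\ref{thm4}), which your plan does not have. More decisively, such a move swaps columns $i,i+1$ only when neither set meets $\{i,i+1\}$ in $\{i+1\}$. So an element of $I\setminus J$ is never exchanged with one of $J\setminus I$, and the sizes and the interleaving of the two differences are invariant. A surrounding pair therefore stays surrounding. Indeed the paper's normal form in this case is $I_1^*=A_2\sqcup A_4$, $I_2^*=A_1\sqcup A_3\sqcup A_4$ with $|A_1|+|A_3|\le|A_2|$, which is still not strongly separated. (Cyclic rotation, which would fix this in the Grassmannian picture, is not a symmetry of the linear relations on $2^{[n]}$.) The weakly-but-not-strongly separated case is precisely where new work is needed, and Sanchez's theorem cannot absorb it. The paper handles it directly in Proposition~\ref{prop6}. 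It uses bracket-counting criteria for when $\theta_{I^*}(Xa)+\theta_{I^*}(Xb)-\theta_{I^*}(Xab)-\theta_{I^*}(X)=1$ (Lemma~\ref{lem4}). It then checks the ten placements of $a<b<c$ in $A_0,\dots,A_4$ and shows that any conflict forces $|A_2|<|A_1|+|A_3|$. (Another possible way to supply the missing input would be to lift via Lemma~\ref{lem5} to $\binom{[2n]}{n}$ and invoke Early's blade theorem together with Lemma~\ref{lem6}.)
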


The general notion of weak separation was introduced by Leclerc--Zelevinsky \cite{LZ}, which serves as the necessary and sufficient condition characterizing the quasi-commutativity of two quantum minors in the quantum coordinate ring of the flag variety.

On the other hand, we proceed purely from the perspective of discrete convex analysis without the representation-theoretic viewpoint. Each DCTP function $f$ can be regarded as a height function defined on the vertices of the hypercube $[0,1]^n$. Consider the concave extension $\widehat{f}$ of $f$ over the entire hypercube. The projection of the upper envelope of $\widehat{f}$ onto the hypercube induces a generalized matroid (g-matroid for short) subdivision of $[0,1]^n$ (see \cite{F} and \cite{FH}). In particular, we denote by $\Sigma_I$ the subdivision induced by $\theta_I$. We then obtain the following result:

\begin{thm}[Theorem \ref{g-positroid_ws}]
	\label{intro_g-positroid_ws}
	Let $\mathcal{C} \subseteq 2^{[n]}$. A common refinement of the g-matroid subdivisions $\{\Sigma_I : I \in \mathcal{C}\}$ of the hypercube $[0,1]^n$ remains a g-matroid subdivision if and only if $\mathcal{C}$ is a weakly separated set in $2^{[n]}$.
\end{thm}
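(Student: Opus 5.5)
The plan is to reduce the statement to Theorem \ref{minkow_schubert} through the standard dictionary between regular subdivisions and height functions. For positive weights $c_I$, the common refinement of the regular subdivisions $\Sigma_I$ induced by $\widehat{\theta_I}$ is the regular subdivision induced by $\widehat{f}$, where $f=\sum_{I\in\mathcal{C}}c_I\theta_I$. Indeed, the concave extension of a sum is at most the sum of the concave extensions, $\widehat{f}\le\sum_I c_I\widehat{\theta_I}$. The cells of the subdivision induced by $\sum_I c_I\widehat{\theta_I}$ are exactly the intersections of cells of the $\Sigma_I$, since the domains of linearity of a sum of piecewise-linear concave functions are the intersections of their domains of linearity. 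So I would first prove that the common refinement is the subdivision of $\sum_I c_I\widehat{\theta_I}$. I would also record that, whenever the common refinement is a subdivision with lattice cells, $\widehat{f}=\sum_I c_I\widehat{\theta_I}$. This holds because $\sum_I c_I\widehat{\theta_I}$ is then concave, piecewise linear on a subdivision whose vertices are cube vertices, and agrees with $f$ on $\{0,1\}^n$. The common refinement is always a polyhedral subdivision, so the real content is whether its cells are g-matroid polytopes.

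Next I would invoke the discrete convex analysis fact cited from \cite{F,FH}. A function $g:2^{[n]}\to\mathbb{R}$ is $M^\natural$-concave, i.e.\ submodular and satisfying the exchange inequalities, if and only if the regular subdivision of $[0,1]^n$ it induces is a g-matroid subdivision. Equivalently, $g$ coincides with its concave extension at the vertices and every cell has edges parallel to $e_i$ or $e_i-e_j$. Combining this with the paper's identification of DCTP functions with $M^\natural$-concave functions that additionally satisfy the tropical Plücker relation, I get the following equivalence. The common refinement is a g-matroid subdivision if and only if $f=\sum c_I\theta_I$ is $M^\natural$-concave.

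It then remains to compare $M^\natural$-concavity of $f$ with weak separation. For the "if" direction, Theorem \ref{minkow_schubert} gives that when $\mathcal{C}$ is weakly separated, $Q_{\mathcal{C}}$ is an MV polytope. By Kamnitzer's parametrization, its BZ datum is the DCTP function $\sum c_I\theta_I$, since Minkowski sums add support functions. In particular $f$ is $M^\natural$-concave, so the refinement is a g-matroid subdivision. For the "only if" direction, suppose two sets $I,J\in\mathcal{C}$ are not weakly separated. The $M^\natural$ condition is local: it reduces to the exchange inequalities on faces of the cube of dimension $\le 3$. So it suffices to find a square or cubical face $X\cup\{i,j,k\}$, or $X\cup\{i,j\}$, on which $f$ violates the exchange inequality. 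For $c_I\theta_I+c_J\theta_J$, this gives a cell of the refinement of $\Sigma_I$ and $\Sigma_J$ with a non-g-matroid edge direction, such as $e_i+e_j-e_k$ or $e_i-e_j+e_k-e_l$. Adding the other terms only refines further. A refinement could still fix a bad edge only by cutting it, but a cut in the interior of an edge at a lattice point is impossible for a unit cube, so the bad edge survives.

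The main obstacle is this last step: exhibiting the explicit local violation from a failure of weak separation. I would first try using the interlacing pattern witnessing non-weak-separation, namely elements $a<b<c<d$ alternating between $I\setminus J$ and $J\setminus I$ with the appropriate cardinality condition. Taking $X$ to be an initial segment adapted to $I\cap J$, I would compute $\theta_I$ and $\theta_J$ on the resulting face directly from the Schubert matroid rank formula. Alternatively, I could reuse the crystal-operator argument behind the necessity half of Theorem \ref{minkow_schubert}, which already shows $f$ fails to be DCTP, and check that the failure occurs in the submodular/exchange part rather than only in the Plücker equality.
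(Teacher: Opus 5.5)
Your framing is the right one, and it is how the paper reads the theorem: the common refinement is the subdivision of $\sum_I c_I\widehat\theta_I$, and it is g-matroid exactly when $f=\sum_I c_I\theta_I$ is $M^\natural$-concave. But the proposal does not prove the theorem's actual content, namely that $\theta_I+\theta_J$ is compatible exactly when $I,J$ are weakly separated.

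\textbf{Sufficiency is circular.} You cite Theorem \ref{minkow_schubert}, but the paper deduces that theorem \emph{from} the present statement: its proof is only $Q(f+g)=Q(f)+Q(g)$ applied to the compatibility result established here. What is needed is the paper's own argument:
\begin{itemize}
\item Compatibility is pairwise. For tropical Pl\"ucker summands, the sum fails at $(X,a,b,c)$ iff some summand has $f(Xab)+f(Xc)>f(Xbc)+f(Xa)$ and another has the reverse strict inequality.
\item The raising operators of Theorem \ref{thm4}, together with Corollary \ref{cor2} and Lemma \ref{lem2}, reduce to normalized pairs $I_1^*,I_2^*$ cut into sections $A_0<\cdots<A_s$.
\item The strongly separated case is Sanchez's Theorem C.
\item In the surrounding case, Proposition \ref{prop6} uses Lemma \ref{lem4} and the ten placements of $a<b<c$ among $A_0,\dots,A_4$. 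It shows that opposite strict preferences force $|A_2|<|A_1|+|A_3|$.
\end{itemize}

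\textbf{Your equivalence is justified in only one direction.} To pass from $M^\natural$-concavity of $f$ to the refinement being g-matroid, you need $\widehat f=\sum_I c_I\widehat\theta_I$ without assuming lattice cells. This identity does hold, because Murota's $M^\natural$-convex intersection (Fenchel-type duality) theorem makes concave closures additive over $M^\natural$-concave summands. But it has to be supplied; the paper also leaves it implicit.

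\textbf{Necessity: the key step is named but not carried out.} Two of your hedges also point the wrong way.
\begin{itemize}
\item The ``exchange part versus Pl\"ucker equality'' worry disappears. A positive combination of rank functions is always submodular, so square faces never fail. Since each $\theta_I$ is tropical Pl\"ucker, the middle sum $f(Xac)+f(Xb)$ of any positive combination dominates the other two. Hence any failure of the Pl\"ucker equality is a unique-maximum violation of the three-term $M^\natural$ exchange, i.e.\ a cell edge $[e_{Xac},e_{Xb}]$ of direction $e_a-e_b+e_c$.
\item Four alternating elements $a<b<c<d$ is chord separation, which characterizes weak separation only for equal cardinalities. For different sizes the obstruction can involve three elements: $\{2\}$ and $\{1,3\}$ are not weakly separated, and indeed $\theta_{\{2\}}+\theta_{\{1,3\}}$ has middle sum $5$ against side sums $4,4$ at $X=\emptyset$.
\end{itemize}
The paper gets a uniform witness by normalizing. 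For $I_1^*,I_2^*$, non-weak-separation means $|A_{i-1}|+|A_{i+1}|>|A_i|$ for three consecutive sections. Proposition \ref{prop5} then picks $a\in A_{i-1}$, $b\in A_i$, $c\in A_{i+1}$ and an explicit $X$ at which $\theta_{I_1^*}$ and $\theta_{I_2^*}$ strictly prefer opposite side terms. Corollary \ref{cor2} transports the incompatibility back to $I_1,I_2$.
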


From the perspective of polytope subdivisions, we reveal a profound connection between Theorem \ref{intro_g-positroid_ws} and the work of Early \cite{Ear}. Suppose $|I|=k$, and restrict the hypercube subdivision $\Sigma_I$ to the hypersimplex
\[
\Delta_{k,n}:=[0,1]^n \cap \Big\{x \in \mathbb{R}^n \,\Big|\, \sum_{i=1}^n x_i=k\Big\}.
\]
This restriction yields a matroid subdivision of the hypersimplex, which coincides exactly with the subdivision $\beta_I$ constructed by Early via translated blades (see \cite{Ear} and Remark \ref{rmk1} for details).

\begin{prop}[Proposition \ref{prop4}]
	For any $x \in \Delta_{k,n}$, we have
	\[
	\widehat{\theta}_I(x)=\min\big\{f_{\Pi_1}(x),f_{\Pi_2}(x),\dots,f_{\Pi_l}(x)\big\},
	\]
	where for $i \geq 2$,
	\[
	f_{\Pi_i}(x)=\sum_{m=1}^{E(S_{i-1})}x_m+\big|I_1\setminus [1,E(S_1)]\big|+|I_i|+|I_{i+1}|+\cdots+|I_l|.
	\]
	In particular,
	\[
	f_{\Pi_1}(x)=\sum_{m=1}^{E(S_{l})}x_m+\big|I_1\setminus [1,E(S_1)]\big|.
	\]
\end{prop}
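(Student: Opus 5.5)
The plan is to get an explicit min-of-affine-functions formula for $\widehat{\theta}_I$ on the whole cube, and then restrict it to $\Delta_{k,n}$, where the constraint $\sum_m x_m=k$ lets all but $l$ of the affine pieces be discarded.

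\textbf{Step 1: a min-max formula for the concave extension.} By definition, $\widehat{\theta}_I(x)$ is the maximum of $\sum_X\lambda_X\theta_I(X)$ over convex combinations with $\sum_X\lambda_X\mathbf 1_X=x$. Since $\theta_I$ is a matroid rank function, this equals
\[
\max\{y([n]) : y\in P(\theta_I),\ 0\le y\le x\},
\]
where $P(\theta_I)$ is the independence polytope. By the polymatroid intersection theorem (Edmonds' min--max), this equals
\[
\widehat{\theta}_I(x)=\min_{A\subseteq[n]}\Big(\theta_I(A)+\sum_{m\notin A}x_m\Big).
\]
The inequality ``$\le$'' is elementary: each term is an affine majorant of $\theta_I$ on the vertices, because $\theta_I(X)\le\theta_I(A)+|X\setminus A|$. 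So only the reverse inequality needs the min--max theorem.

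\textbf{Step 2: reduce the minimum to flats, then to one flat per block.} The minimum may be restricted to flats $A$: replacing $A$ by its closure keeps $\theta_I(A)$ and only decreases $\sum_{m\notin A}x_m$. For the Schubert (lattice-path) matroid the flats are controlled by the block decomposition $I=I_1\sqcup\cdots\sqcup I_l$ into maximal runs, with the gaps $S_i$ and their endpoints $E(S_i)$. Using the lattice-path description of $\theta_I$, I would show that every relevant flat can be replaced, without increasing the objective, by a set of the form
\[
A=[1,E(S_{i-1})]\cup I_i\cup I_{i+1}\cup\cdots\cup I_l\cup(\text{elements after }E(S_l)).
\]
Evaluating the rank of such a set gives the constant term: $|I_1\setminus[1,E(S_1)]|$ counts the elements of $I$ forced into the initial segment, and the remaining blocks contribute $|I_i|+\cdots+|I_l|$.

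\textbf{Step 3: rewrite on the hypersimplex.} On $\Delta_{k,n}$ the identity $\sum_{m\notin A}x_m=k-\sum_{m\in A}x_m$ turns each candidate into the stated shape: a prefix sum $\sum_{m\le E(S_{i-1})}x_m$ plus the constant from Step 2. For $i=1$ the prefix runs up to $E(S_l)$, which gives the special form of $f_{\Pi_1}$. The identification has to be checked case by case on how the prefix meets the blocks. Finally, I would verify that each $f_{\Pi_i}$ is genuinely attained, for instance at a vertex of the corresponding cell $\Pi_i$, to confirm that the list is exactly $\Pi_1,\dots,\Pi_l$.

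\textbf{Main obstacle.} The hard part is the dominance argument in Step 2: showing that the exponentially many flats all collapse to these $l$ candidates once $\sum_m x_m=k$ and $0\le x\le 1$ are imposed. My first approach is an exchange argument. Given an arbitrary flat $A$, move the elements of $A$ lying in a gap $S_j$ to the boundary of the adjacent block, and show that each such move changes $\theta_I(A)$ by at most the change in $\sum_{m\notin A}x_m$, using $0\le x_m\le1$. Iterating these moves should drive $A$ to one of the canonical sets above.
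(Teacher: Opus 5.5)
\textbf{Step 2 is the gap.} It is not merely unproved; it aims at the wrong sets, so no exchange argument can finish it. Step 1 itself is sound: for a matroid rank function the concave closure on $[0,1]^n$ equals $\max\{y([n]) : y\in P(\theta_I),\ 0\le y\le x\}$, and Edmonds' theorem turns this into $\widehat{\theta}_I(x)=\min_{A}\bigl(\theta_I(A)+x([n]\setminus A)\bigr)$.

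Your candidates $A=[1,E(S_{i-1})]\cup I_i\cup\cdots\cup I_l\cup[E(S_l)+1,n]$ contain an initial segment. Initial segments are independent in $\Omega_I$ up to size $k$, because $[1,k]\le_G I$. So $\theta_I(A)$ is in general not $|I_1\setminus[1,E(S_1)]|+|I_i|+\cdots+|I_l|$. Also, for $i\ge 2$, $x([n]\setminus A)=x(C_i\cup\cdots\cup C_l)$ is not a prefix sum. Rewriting it as $k-x(A)$ on $\Delta_{k,n}$ gives a prefix sum with the wrong sign. Concretely, take $n=6$, $I=\{1,3,5\}$: your three candidates give $3$, $3+x_2+x_4$ and $3+x_4$. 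At $x=e_{\{2,4,6\}}$ their minimum is $3$, but $\theta_I(\{2,4,6\})=2$. Every term $\theta_I(A)+x([n]\setminus A)$ is an upper bound for $\widehat{\theta}_I$, so your reduction claim is false.

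\textbf{The missing idea is a rank formula that picks out the right sets.}
\begin{itemize}
\item $\Omega_I$ is the transversal matroid of the nested family $[1,i_1]\subseteq\cdots\subseteq[1,i_k]$. Rado's theorem then gives $\theta_I(A)=\min_{0\le t\le n}\bigl(|A\cap[1,t]|+|I\cap[t+1,n]|\bigr)$.
\item Substitute this into Step 1 and swap the two minima. For fixed $t$ the minimization over $A$ splits coordinatewise. Because $0\le x_m\le 1$, it is attained at the terminal segment $A=[t+1,n]$.
\item Hence $\widehat{\theta}_I(x)=\min_{0\le t\le n}g(t)$ with $g(t)=x([1,t])+|I\cap[t+1,n]|$, valid on all of $[0,1]^n$.
\item The increment $g(t+1)-g(t)$ equals $x_{t+1}-1\le 0$ if $t+1\in I$, and $x_{t+1}\ge 0$ otherwise. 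So a minimizer can be taken at $t=0$ or at the right end of a run of $I$. These run ends are $E(S_1),\dots,E(S_l)$, plus $n$ when $I_1$ wraps around.
\item $g(E(S_{i-1}))$ is exactly $f_{\Pi_i}(x)$ (indices mod $l$), since $I\cap[E(S_{i-1})+1,n]$ consists of $I_i,\dots,I_l$ together with $I_1\setminus[1,E(S_1)]$. No complementation or casework is needed.
\item The hyperplane $x([n])=k$ is used only to discard $t=0$ and $t=n$, where $g=k$. If $1\in I$, then $k\ge g(E(S_1))$ because $[1,E(S_1)]\subseteq I$. If $1\notin I$, then $k\ge g(E(S_l))=x([1,E(S_l)])$.
\end{itemize}

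With this repair your route is a self-contained alternative that works directly for the cube extension. The paper instead checks that $\min_j f_{\Pi_j}=f_{\Pi_i}$ exactly on $\Pi_i$, and that this matches $\theta_I$ at the vertices $e_J$ via Lemma \ref{lem1}. For the cell structure it relies on Early's $l$-split.
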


The notations $S_1$, $I_i$, $E(S_1)$ and $\Pi_i$ are explained in Section \ref{SMABA}. Combined with the results of Fink et al.\cite{FMD}, we derive an explicit formula for $\widehat{\theta}_I$ on $[0,1]^n$ (see Theorem \ref{thm7}). Consequently, Theorem \ref{intro_g-positroid_ws} can be regarded as a flag analogue of Theorem 33 in Early \cite{Ear}.

In analogy with the work of Postnikov, Speyer, and Williams on positroid subdivisions and the positive tropical Grassmannian, we extend this notion of "positivity" from polymatroids to generalized polymatroids. 

Recall that a {\bf generalized polymatroid} (g-polymatroid for short)
$P$ is a polyhedron whose  edges 
parallel to vectors of the totally unmodular system $\mathbb{A}_n:=\{\pm e_i, \, e_i-e_j, \, i\neq j\in [n]\}$. This property is in essence equivalent to the exchange property for $g$-polymatroids (see, for example, \cite{M}).

The class of $g$-polymatroids is stable under Minkowski summation, taking faces, and  central symmetry $P\to -P$. Each of these operations preserves the property that edges have to be parallel to $\mathbb A_n$. Namely,  
edges of the sum $P+Q$ are parallel to edges either $P$ or $Q$; edges of any face remain edges of the polyhedron, central symmetry preserve directions from $\mathbb A_n$. $g$-polymatroids are not stable under intersections.

Two dimensional faces of a $g$-polymatroid are either 'hexagons'\footnote{We mean by hexagon also as a degenerate hexagon such as triangle, parallelogram or pentagon.} parallel to $g$-polymatroid of  a plane with basic vectors 
either $\{e_i, e_j\}$ or $\{e_i-e_j, e_j-e_k\}$, an $\mathbb A_2$-flat of $\mathbb A_n$; 
or a parallelogram parallel to a plane with  $\{e_i, e_j-e_k\}$ or $\{e_i-e_j, e_k\}$, $|\{i,j,k\}|=3$ or $\{e_i-e_l, e_j-e_k\}$ or $\{e_i-e_j, e_l-e_k\}$$|\{i,j,k,l\}|=4$. 

A class of {\bf generalized polypositroid} (g-polypositroid for short) is  a subclass of $g$-polymatroids, such that some of 2-faces parallel to  parallelograms  are forbidden.  Namely Proposition \ref{prop2}, for a $g$-polypositroid it is  not allow 
for $i<j<k$,  2-faces being parallel to planes with basic vectors   $\{e_i-e_k, e_j\}$  or , for  $i<j<k<l$, the 2-faces being parallel to planes with basic vectors $\{e_i-e_k, e_j-e_l\}$.

Let $\mathcal{GPP}(n)$ denote the set of all g-polypositroids in $\mathbb{R}^n$. Because of such a restriction, the class of $g$-polypositroids is not stability with respect to Minkowski summations and with respect to intersections. 

If a $g$-polypositroid is a 01-lattice polytope, it is called a \textbf{generalized positroid} (g-positroid for short). Several equivalent definitions of $g$-positroids are collected in Theorem \ref{thm2}.

Furthermore, using Theorem \ref{thm2}, we identify the fan $\mathcal{MV}$ with the secondary fan of the g-positroid subdivisions of the hypercube.

\vspace{0.2cm}
\begin{thm}[Theorem \ref{affine_DCTP}]
	For any DCTP function $f$, the affine domains of its concave extension over the hypercube belong to $\mathcal{GPP}(n)$, and the fan $\mathcal{MV}$ is exactly the secondary fan of the g-positroid subdivisions of the hypercube $[0,1]^n$.
\end{thm}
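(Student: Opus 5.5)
The statement has two parts: the cells of the subdivision induced by a DCTP function lie in $\mathcal{GPP}(n)$, and $\mathcal{MV}$ is the secondary fan of g-positroid subdivisions of $[0,1]^n$. I will prove the first part locally, and then obtain the second by matching cones.

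\emph{Step 1 (cells are g-polymatroids).} Let $f$ be a DCTP function. By submodularity, $f$ is $M^\natural$-concave on $\{0,1\}^n$. By the results on $M^\natural$-concave functions recalled in the introduction (see \cite{F}, \cite{FH}, \cite{M}), the concave extension $\widehat f$ induces a regular subdivision of $[0,1]^n$ whose maximal cells are g-matroid polytopes. These are exactly the domains of linearity of $\widehat f$. Each cell is a $01$-polytope with all edges parallel to $\mathbb{A}_n$.

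\emph{Step 2 (no forbidden 2-faces).} Using the characterization in Proposition \ref{prop2} and Theorem \ref{thm2}, it suffices to show that no cell has a 2-face parallel to $\mathrm{span}\{e_i-e_k,e_j\}$ with $i<j<k$, nor to $\mathrm{span}\{e_i-e_k,e_j-e_l\}$ with $i<j<k<l$.

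Suppose a cell has a face of the first type. It is a parallelogram with vertices $Xi,\ Xk,\ Xij,\ Xjk$ for some $X$ avoiding $i,j,k$. Since $\widehat f$ is affine on this face,
\[
f(Xij)+f(Xk)=f(Xjk)+f(Xi).
\]
The tropical Pl\"ucker relation then forces $f(Xik)+f(Xj)$ to equal this common value. So $Xik$ and $Xj$ also lie on the same affine piece, and the cell containing the face actually contains the full hexagon/octahedral configuration spanned by $Xi,Xj,Xk,Xij,Xik,Xjk$.

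Next I need to show this forces the 2-face to be non-maximal in the relevant direction. The cell then has an additional edge direction, $e_j-e_k$ or $e_i-e_j$, lying in the affine span. The face is therefore contained in a 3-dimensional face, and its 2-faces parallel to the plane are hexagonal $\mathbb{A}_2$-type rather than the forbidden parallelogram. This gives a contradiction.

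The second type, with four indices, reduces to the first. The four-term Pl\"ucker relation for $i<j<k<l$ follows from the three-term relations; this is the standard fact that the three-term tropical Pl\"ucker relations together with $M^\natural$-concavity imply the four-term ones.

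This step is where I expect the main obstacle. It requires a precise argument that a cell cannot have a forbidden 2-face when the tie-breaking in the Pl\"ucker relation is the ``wrong'' one. I would first handle the case where the face is an honest parallelogram, with no degenerate hexagon hiding it.

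\emph{Step 3 (the secondary fan).} The space of height functions on $\{0,1\}^n$ inducing a g-positroid subdivision is the union of the secondary cones of such subdivisions. By Step 2, every DCTP function lies in this union.

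Conversely, I claim a function whose subdivision is a g-positroid subdivision is DCTP:
\begin{itemize}
\item The g-matroid property of the cells gives $M^\natural$-concavity, hence submodularity.
\item Absence of the forbidden parallelogram $\{e_i-e_k,e_j\}$ means that on each octahedron $Xi,\dots,Xjk$ the maximum in the three-term relation is attained by $f(Xik)+f(Xj)$. This is exactly the tropical Pl\"ucker relation.
\end{itemize}

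Hence the set of DCTP functions equals the union of these secondary cones. Two DCTP functions lie in the same cone of Kamnitzer's fan exactly when the same terms achieve the maxima. This is the same data as the combinatorial type of the subdivision, namely which octahedra are split and how. So the cones coincide. Quotienting by the lineality space of affine functions gives $\mathcal{MV}$ on one side and the secondary fan on the other.
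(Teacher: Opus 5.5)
Your route is the paper's. $M^\natural$-concavity gives g-matroid cells, and Theorem~\ref{thm2}(3) converts ``no forbidden 2-face'' into ``g-positroid''. The converse runs through the same criterion, and the fan statement comes from matching local tie patterns with the combinatorial type of the subdivision. Your Step 2 says more than the paper, which only asserts that the cells have no forbidden 2-faces, but its closing step is wrong as written. A plane parallel to $\mathrm{span}\{e_i-e_k,e_j\}$ contains only the $\mathbb{A}_n$-directions $\pm e_j$ and $\pm(e_i-e_k)$, so no 2-face parallel to it can be a hexagon. Lying inside a 3-face is no contradiction either.

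The contradiction is already in your hands:
\begin{itemize}
\item Let $L$ be the affine function of the cell. Concavity gives $L\ge f$ at every vertex of the cube.
\item The Pl\"ucker relation gives $f(Xik)+f(Xj)=f(Xij)+f(Xk)=L(e_{Xik})+L(e_{Xj})$.
\item Hence $f=L$ at both $e_{Xik}$ and $e_{Xj}$, so both lie in the cell.
\item Their midpoint $e_X+\tfrac12(e_i+e_j+e_k)$ is the centre of the parallelogram, while $e_{Xj}$ is off its affine hull. So the parallelogram cannot be a face.
\end{itemize}
The ``main obstacle'' you anticipate is therefore not there.

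Three smaller points.

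(i) The four-index case needs $f(Xik)+f(Xjl)=\max\{f(Xij)+f(Xkl),f(Xil)+f(Xjk)\}$ for $i<j<k<l$. This is true and needs no $M^\natural$ hypothesis. Add $f(Xj)+f(Xk)$ to both sides and expand with the three-term relations for the triples $(i,j,k)$, $(j,k,l)$ and $(i,j,l)$; both sides become the maximum of the same four terms. Write this out rather than calling it standard, since the paper is silent on it.

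(ii) In Step 3 the octahedron is not a face of $[0,1]^n$, so argue in the 3-face $e_X+[0,1]^{\{i,j,k\}}$. $M^\natural$-concavity, via the Grassmannian lift as in the paper, says the maximum of the three sums is attained at least twice. So the only failure of the Pl\"ucker relation is $f(Xij)+f(Xk)=f(Xjk)+f(Xi)>f(Xik)+f(Xj)$. You still have to check that the g-matroid condition then forces the parallelogram to be a cell. This holds because every candidate for the smallest cell containing the centre of the 3-face, other than the parallelogram itself, has an edge parallel to some $e_a+e_b$.

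(iii) The cone data must also record which submodularity inequalities are tight, i.e.\ which squares $\{X,Xi,Xj,Xij\}$ are split, not only which Pl\"ucker terms tie. For $n=2$ there are no Pl\"ucker relations at all, yet both the trivial and the split subdivision of $[0,1]^2$ occur.
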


We obtain the following corollary from this theorem and our previous result in \cite{KLZ} that a refined arrangement of translated blades induced by a maximal weakly separated set yields a finest positroid subdivision.

\vspace{0.2cm}
\begin{cor}[Corollary \ref{finest_g_positroid}]
	Let $\mathcal{C}$ be a  maximal weakly separated set and $\theta:=\sum\limits_{I \in \mathcal{C}}c_I\theta_I$ ($c_I>0$).
	The g-positroid subdivision of $[0,1]^n$ induced by $\theta$ is finest.
	Furthermore, a maximal weakly separated family $\mathcal{C}$ in $2^{[n]}$ corresponds to a maximal cone of the fan $\mathcal{MV}$.
\end{cor}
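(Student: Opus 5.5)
The plan is to deduce the corollary from Theorem \ref{affine_DCTP} together with Theorem \ref{g-positroid_ws} and our earlier result in \cite{KLZ}. By Theorem \ref{g-positroid_ws}, since $\mathcal{C}$ is weakly separated, the common refinement of $\{\Sigma_I : I\in\mathcal{C}\}$ is a g-matroid subdivision. Because the $c_I$ are positive, the cells of the subdivision induced by $\theta=\sum_{I\in\mathcal{C}} c_I\theta_I$ are exactly the intersections of cells of the $\Sigma_I$; this is the standard fact that the regular subdivision of a positive sum of heights is the common refinement. By Theorem \ref{affine_DCTP} its cells lie in $\mathcal{GPP}(n)$, so it is a g-positroid subdivision. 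It is regular, with height $\theta$ lying in the relative interior of the cone $\mathrm{cone}\{\theta_I : I\in\mathcal{C}\}$ of $\mathcal{MV}$.

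Finestness is the main step. Theorem \ref{affine_DCTP} identifies $\mathcal{MV}$ with the secondary fan of g-positroid subdivisions of $[0,1]^n$. The subdivision induced by $\theta$ is therefore finest if and only if $\theta$ lies in the interior of a maximal cone, that is, if and only if the cone spanned by $\{\theta_I\}_{I\in\mathcal{C}}$ is full-dimensional in $\mathcal{MV}$. So I would:

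\begin{enumerate}
\item Compute the dimension of $\mathcal{MV}$. After quotienting by the lineality space, which is spanned by modular functions and by the $\theta_{[1,j]}$, the DCTP functions are determined by their values on the "generic" subsets. This gives $\dim \mathcal{MV} = 2^n - n - 1 - (\text{lineality})$, which should equal the cardinality of a maximal weakly separated family minus the intervals $[1,j]$ and the trivial sets. By Oh--Postnikov--Speyer / Danilov--Karzanov--Koshevoy, every maximal weakly separated collection in $2^{[n]}$ has size $\binom{n+1}{2}+1$.
\item Show that $\{\theta_I\}_{I\in\mathcal{C}}$ is linearly independent modulo lineality. The natural route is a triangularity argument: order $\mathcal{C}$ and evaluate $\theta_I$ on the sets of $\mathcal{C}$, using the explicit description of Schubert matroid ranks. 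An alternative is to invoke the known fact that maximal weakly separated collections form clusters, so the corresponding tropical functions give a coordinate chart of the DCTP cone.
\end{enumerate}

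As a cross-check I would use \cite{KLZ}. Restricted to each slice $\Delta_{k,n}$, the subdivision is the refined blade arrangement of $\{\beta_I: |I|=k\}$ (Proposition \ref{prop4}, Remark \ref{rmk1}), and \cite{KLZ} shows this is a finest positroid subdivision. A coarsening of the hypercube subdivision would then have to coarsen some slice or the transverse structure, which is controlled by the linear pieces of $\widehat\theta$ (Theorem \ref{thm7}).

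The maximal-cone statement follows directly. Finestness means $\theta$ is interior to a maximal cone. Since this holds for all positive $c_I$, the whole open cone spanned by the $\theta_I$ lies in one maximal cone. Dimension count together with the rays being the generators $Q(\theta_I)$ (Proposition \ref{10}) shows it equals that maximal cone.

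The hardest step is the linear independence and dimension match, specifically making the lineality quotient precise. I would first try the triangular evaluation of $\theta_I(J)$ on a maximal weakly separated family ordered by a plabic-graph/domain structure.
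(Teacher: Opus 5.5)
\textbf{There is a genuine gap: the finestness step is never proved.} Your reduction is sound in outline. The subdivision induced by $\sum_I c_I\theta_I$ is the common refinement of the $\Sigma_I$ for every choice of positive $c_I$, so the open cone spanned by the $\theta_I$ lies in a single cone of $\mathcal{MV}$. If that cone has the largest possible dimension, it is maximal and $\theta$ is interior to it. However, both ingredients that would turn this into a proof are wrong or missing.

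First, the dimension count is wrong. DCTP functions are cut out by the tropical Pl\"ucker \emph{equations}, so they do not fill the space of normalized set functions modulo modular ones. (Note also that $\theta_{[1,j]}(S)=|S\cap[1,j]|$ is itself modular.) The quantity $2^n-n-1-(\text{lineality})$ grows exponentially in $n$. The correct fact, due to Danilov--Karzanov--Koshevoy, is that a TP function is determined by its restriction to any maximal weakly separated $\mathcal{C}$. Restriction is linear and injective on each cone, so every cone of $\mathcal{MV}$ has dimension at most $\binom{n+1}{2}-n=\binom{n}{2}$, which is the number of positive roots, as it must be for $B(\infty)$. With this number the count of $\mathcal{C}\setminus(\{\emptyset\}\cup\{[1,j]\})$ does match.

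Second, once the count matches, the whole corollary becomes equivalent to the linear independence of these $\binom{n}{2}$ Schubert rank functions modulo modular functions. Equivalently, the vectors $(\theta_I(J))_{J\in\mathcal{C}}$ must be independent modulo restrictions of modular functions. For this you only propose a triangularity, without specifying an order or checking any entry. The cluster-chart fact you cite as an alternative gives the dimension upper bound, not the independence of the $\theta_I$.

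The cross-check via \cite{KLZ} does not repair this, because it is misapplied. The restriction of the hypercube subdivision to $\Delta_{k,n}$ is not the blade arrangement of $\{\beta_I:|I|=k\}$ alone, since Schubert rank functions of other sizes are typically non-affine on the slice. For example, with $n=4$, $\widehat{\theta}_{\{2\}}=\min(1,x_1+x_2)$ induces the split $x_1+x_2=1$ of $\Delta_{2,4}$. Moreover, $\mathcal{C}\cap\binom{[n]}{k}$ is in general not maximal in $\binom{[n]}{k}$: for $n\ge 3$ one has $\sum_k (k(n-k)+1)>\binom{n+1}{2}+1$. So Theorem \ref{thm11} cannot be applied slice by slice.

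The paper avoids both the dimension count and the slicing by proving finestness \emph{locally} in Theorem \ref{thm10}:
\begin{enumerate}
\item Strict submodularity $\theta(Ta)+\theta(Tb)>\theta(T)+\theta(Tab)$ comes from the uniform-matroid summand $\theta_{[n-l+1,n]}$, which belongs to every maximal $\mathcal{C}$.
\item The condition $\theta(Tab)+\theta(Tc)\neq\theta(Tbc)+\theta(Ta)$ comes from embedding into $\binom{[2n]}{n}$. There $\mathbf{pad}(\mathcal{C})\cup\widetilde{\mathbf{pad}}(\mathcal{D}_0)$ is maximal weakly separated (Lemma \ref{lem7}), so Theorem \ref{thm11} applies. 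The $\widetilde{\mathbf{pad}}(\mathcal{D}_0)$ summands satisfy the four-term relation with equality when $a,b,c\le n<d$. Lemma \ref{lem5} then transports the resulting strict inequality back to $\theta$.
\end{enumerate}
These square and octahedron relations control the subdivision (Theorem \ref{affine_DCTP}), so no refinement is possible. No linear independence or dimension argument is needed.
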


\vspace{0.2cm}

This paper is organized as follows.

 In Section 2, we review type A Mirkovi\'c--Vilonen polytopes and discrete concave tropical Pl\"ucker (DCTP) functions, introduce Schubert matroids and blade arrangements, and prove that the positive Minkowski sum of Schubert matroid polytopes is an MV polytope if and only if the indexing collection is weakly separated (Theorem \ref{minkow_schubert}). We also relate weak separation to the stability of generalized matroid subdivisions of the hypercube. In Section 3, we define generalized polypositroids and generalized positroids, give their equivalent characterizations (Theorem \ref{thm2}), and show that the affine domains of concave extensions of DCTP functions are exactly g-positroids. In Section 4, we construct generalized positroids from generalized cyclic patterns and further discuss the structure of fan $\mathcal{MV}$.
 
 We use the following notations throughout the paper.  
 
 Let $m<n$ be positive integers. We write $[m,n]=\{m,m+1,\dots,n\}$, and in particular $[n]=\{1,2,\dots,n\}$. Denote by $\binom{[n]}{m}$ the collection of all $m$-element subsets of $[n]$. For a subset $I\subseteq[n]$, let $e_I=\sum_{i\in I}e_i$ be the sum of the corresponding unit vectors in $\mathbb{R}^n$, and let $x(I)=\sum_{i\in I}x_i$ denote the sum of thei corresponding variables.   If $X$ is a subset of $[n]$ and $i \in [n]\setminus X$, we abbreviate $X \cup \{i\}$ by $Xi$.

\vspace{1cm}

\section{Type A Mirkovi\'c-Vilonen polytopes and DCTP functions}
\label{GP}
\subsection{Preliminaries on generalized polymatroids and $M^{\natural}$-concave functions}\quad 

\begin{defi}
A (integer) polyhedron $Q \subseteq \mathbb{R}^N $ ($\mathbb{Z}^N$) is a (integer) generalized polymatroid if the directions of its edges belong to the totally unimodular set $ \mathbb{A}_N := \{ \pm e_i, e_i - e_j \} $.
\end{defi}

The original definition of g-polymatroid (short for generalized polymatroid) was introduced by Frank (1984), and the equivalence between the two was proven by Danilov and Koshevoy in \cite{DK}.

\begin{thm}[\cite{DK}]
\label{def_g_poly}
A polyhedron \( Q \) is a generalized polymatroid if and only if it is defined by the inequalities 
\[
g(A) \leq \sum_{i \in A} x_i \leq f(A), \quad A \subseteq N,
\]
where \( f \) is a submodular function, \( g \) is a supermodular function, and for any \( A \) and \( B \) there holds 
\[
f(B) - f(B - A) \geq g(A) - g(A - B).
\]
\end{thm}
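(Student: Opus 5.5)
The plan is to reduce the statement to the classical characterization of \emph{base polyhedra} of submodular functions. We do this by homogenizing: add one extra coordinate, indexed by $0$, and pass from $N$ to the ground set $N_0:=N\cup\{0\}$. Consider the linear injection
\[
\phi:\mathbb{R}^N\to\mathbb{R}^{N_0},\qquad \phi(x)=(x,\,-x(N)).
\]
Its image is the hyperplane $H=\{y: y(N_0)=0\}$. The map $\phi$ sends $e_i\mapsto e_i-e_0$ and fixes each $e_i-e_j$ with $i,j\in N$. Hence $Q$ has all edge directions in $\mathbb{A}_N$ if and only if $B:=\phi(Q)\subseteq H$ has all edge directions in $\{e_a-e_b: a,b\in N_0\}$. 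So the theorem should follow from the following statement: a polyhedron in $H$ is a base polyhedron
\[
B(h)=\{y: y(S)\le h(S)\ \ \forall S\subseteq N_0,\ \ y(N_0)=h(N_0)=0\}
\]
of a submodular function $h$ (allowing the value $+\infty$) if and only if all of its edges are parallel to vectors $e_a-e_b$. The inverse map is $\phi^{-1}(y)=(y_i)_{i\in N}$.

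The next step is the dictionary between $h$ and the pair $(f,g)$. For $A\subseteq N$, the constraint $y(A)\le h(A)$ becomes $x(A)\le h(A)$, so we set $f(A):=h(A)$. The constraint $y(A\cup 0)\le h(A\cup 0)$ becomes $x(A)-x(N)\le h(A\cup0)$, that is, $x(N\setminus A)\ge -h(A\cup0)$. So we set $g(C):=-h((N\setminus C)\cup 0)$.

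Submodularity of $h$ on pairs of sets both avoiding $0$ is exactly submodularity of $f$. On pairs both containing $0$ it is exactly supermodularity of $g$. On mixed pairs $S=B\subseteq N$ and $T=(N\setminus A)\cup 0$ it unwinds to the cross inequality $f(B)-f(B\setminus A)\ge g(A)-g(A\setminus B)$. Conversely, given $(f,g)$ with these properties, the same formulas define a submodular $h$ on $2^{N_0}$ with $h(N_0)=0$, and then $Q=\phi^{-1}(B(h))$. This part is a routine case check, which I would present as a short lemma.

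It remains to prove the base-polyhedron characterization, in two directions.

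\emph{($\Rightarrow$)} For a generic linear functional $c$ ordering $N_0$ as $a_1,\dots,a_m$, the greedy algorithm (Edmonds) shows that the maximizing vertex is
\[
v_\sigma=\sum_k \big(h(\{a_1,\dots,a_k\})-h(\{a_1,\dots,a_{k-1}\})\big)e_{a_k},
\]
with suitable modification when $h$ takes the value $+\infty$ (then the face is a recession ray, and one verifies directly that the recession cone is generated by vectors $e_a-e_b$). Thus the normal fan of $B(h)$ is a coarsening of the braid fan. Every edge is normal to a wall of the braid arrangement, and such walls are of the form $c_a=c_b$. Hence every edge is parallel to $e_a-e_b$.

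\emph{($\Leftarrow$)} If every edge of $B$ is parallel to some $e_a-e_b$, then every wall of the normal fan of $B$ lies in a hyperplane $c_a=c_b$, so the normal fan is coarser than the braid fan. Define $h(S):=\sup_{y\in B}y(S)$. Then $B=B(h)$, because $B$ is the intersection of its supporting halfspaces, and those supporting halfspaces whose normals are rays of the braid fan suffice. Submodularity $h(S)+h(T)\ge h(S\cap T)+h(S\cup T)$ then follows: the indicator vectors $\mathbf 1_{S\cap T}$ and $\mathbf 1_{S\cup T}$ lie in a common braid cone, so by the coarsening they are maximized at a common point $y^*$. Therefore
\[
h(S\cap T)+h(S\cup T)=y^*(S\cap T)+y^*(S\cup T)=y^*(S)+y^*(T)\le h(S)+h(T).
\]

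I expect the main obstacle to be this ($\Leftarrow$) direction, specifically the claim that ``all edges in root directions'' forces the normal fan to coarsen the braid fan when $Q$ is unbounded or not full-dimensional. First I would reduce to the polytope case: intersect with a large box, or write $Q$ as a polytope plus its recession cone, and show separately that a cone generated by root directions is the recession cone of some $B(h)$. I would then use the fact that a polyhedron's normal fan is determined by its edge directions; in particular, each of its walls is orthogonal to an edge direction.
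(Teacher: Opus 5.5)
The paper does not prove this statement; it is quoted from Danilov--Koshevoy \cite{DK}, so there is no internal proof to compare against. Your route is standard and correct in substance: lift to a base polyhedron in $\{y(N_0)=0\}$, then characterize base polyhedra of submodular functions by root edges via the braid fan. Your homogenization is the same lifting the paper itself quotes later from Fujishige (Theorem \ref{thm13}, formula \eqref{E12}). You use the submodular, upper-bound convention, whereas the paper uses the supermodular one; the two differ by a translation in the new coordinate and the passage $S\mapsto \widehat N\setminus S$. Your dictionary checks out. Pairs avoiding $0$ give submodularity of $f$, pairs containing $0$ give supermodularity of $g$, and mixed pairs $B$, $(N\setminus A)\cup 0$ give exactly $f(B)-f(B\setminus A)\ge g(A)-g(A\setminus B)$. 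In the polytope case both directions go through as you describe: greedy vertices one way, normal fan coarsening the braid fan the other. What your route buys over citing \cite{DK} is a self-contained proof in which the cross inequality is visibly just submodularity of the lifted function on mixed pairs.

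The soft spot is the one you flagged, the unbounded case, and two of your three proposed fixes do not work. Intersecting with a large box is circular: nothing tells you the truncation still has root edges, since normal fans do not behave well under intersection, and closure under boxes is a consequence of the theorem. Writing the polyhedron as $\mathrm{conv}(V)+\mathrm{rec}$ also fails. For $P=\mathrm{conv}\{\sigma(1,0,-1)\}$ and $B=P+\mathbb{R}_{\ge 0}(e_1-e_2)$, the vertices of $B$ are $(0,1,-1),(-1,1,0),(-1,0,1)$, and their convex hull has an edge in direction $(1,1,-2)$.

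The direct fix is local. For pointed $B$, the normal cone $N_v$ of a vertex is the polar of the cone spanned by the directions of the (bounded or unbounded) edges leaving $v$. So $N_v$ is cut out by inequalities $c_a\le c_b$, one for each edge leaving $v$ in direction $e_a-e_b$. Hence $N_v$ is a union of closed braid chambers, and $\bigcup_v N_v$ is the polar of the recession cone. This also repairs your submodularity step when $h$ takes the value $+\infty$, where no common maximizer exists a priori. Suppose $\mathbf{1}_S$ and $\mathbf{1}_T$ lie in the support; then $c=\mathbf{1}_S+\mathbf{1}_T\in N_v$ for some $v$. Now $\mathbf{1}_{S\cap T}$ and $\mathbf{1}_{S\cup T}$ are the thresholds $[c\ge 2]$ and $[c\ge 1]$, so they inherit every inequality $c_a\le c_b$. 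Therefore both lie in $N_v$ and are maximized at $v$.

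Finally, state pointedness explicitly. The fact that walls of the normal fan are orthogonal to edges fails for polyhedra with a lineality space, and so does the statement under the edge definition. For example, $\{x\in\mathbb{R}^3: x_1+2x_2\le 0\}$ has no one-dimensional faces and satisfies the edge condition vacuously, yet its facet normal is not proportional to any $\pm\mathbf{1}_A$. Since $\phi$ preserves pointedness, restricting to pointed $Q$ costs nothing.
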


\vspace{0.2cm}
A pair of functions \( g: 2^{N} \to \mathbb{R} \) (\( \mathbb{Z} \)) and \( f: 2^{N} \to \mathbb{R} \) (\( \mathbb{Z} \)) with \( g \leq f \) (pointwise) is called a \textbf{strong pair} if:
\begin{enumerate}
    \item \( f \) is submodular:
    \[
    f(A) + f(B) \geq f(A \cup B) + f(A \cap B)
    \]
    \item \( g \) is supermodular:
    \[
    g(A) + g(B) \leq g(A \cup B) + g(A \cap B)
    \]
    \item For any sets \( A, B \in 2^{[n]} \):
    \[
    f(B) - f(B - A) \geq g(A) - g(A - B)
    \]
\end{enumerate}
We always assume that $f(\emptyset)=g(\emptyset)=0$. Since such a g-polymatroid $Q$ is uniquely determined by strong pair $(f,g)$, we denote it by $Q=Q(f,g)$. In particular, by taking $g(A)=f^{\natural}(A):=f(N)-f(N-A)$, the g-polymatroid $Q(f^{\natural},f)$ is called a \textbf{base polyhedron}.

The class of g-polymatroids is stable under Minkowski summation, and for integer g-polymatroids, there holds 
\[
P(\mathbb{Z}) + Q(\mathbb{Z}) = (P + Q)(\mathbb{Z}),
\]
where \( P(\mathbb{Z}) \) denotes the set of integer points in \( P \).

In fact, there is a one-to-one correspondence between strong pairs and g-polymatroids.

\begin{prop}[\cite{FT}]
    \label{prop11}
    There holds $f(A)=\text{max}\{x(A)\;|\;x \in Q(f,g)\}$ and $g(B)=\text{min}\{x(B)\;|\;x \in Q(f,g)\}$. This implies that the strong pair $(f,g)$ of g-polymatroid $Q$ is unique.
\end{prop}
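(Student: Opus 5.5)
We prove the two formulas directly from the inequality description of $Q(f,g)$ in Theorem \ref{def_g_poly}, using the strong-pair axioms together with an explicit greedy construction; uniqueness is then immediate. The inequality $x(A)\le f(A)$ for all $x\in Q(f,g)$ is part of the definition, so $\max\{x(A)\}\le f(A)$. Likewise $x(B)\ge g(B)$ gives $\min\{x(B)\}\ge g(B)$. It therefore remains to exhibit, for each $A$, a point $x\in Q(f,g)$ with $x(A)=f(A)$, and similarly for $g$.

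For the attainment we use a greedy (Edmonds-type) vertex. Order $N$ as $a_1,\dots,a_k,b_1,\dots,b_m$, where $A=\{a_1,\dots,a_k\}$, and set $A_i=\{a_1,\dots,a_i\}$. We put $x_{a_i}=f(A_i)-f(A_{i-1})$, so that $x(A)=f(A)$ telescopes (using $f(\emptyset)=0$). For the remaining coordinates we take $x_{b_j}=g(A\cup B_j)-g(A\cup B_{j-1})$ adjusted appropriately; the cleaner route is to pass to the base-polyhedron picture. Adjoin an extra element $0$ and define a submodular function $h$ on $2^{N\cup\{0\}}$ by
\[
h(S)=f(S),\qquad h(S\cup 0)=f(N)\wedge\ldots,
\]
or more concretely use the standard identification of the g-polymatroid $Q(f,g)$ with the projection of the base polyhedron of
\[
h(S)=f(S),\qquad h(S\cup\{0\})=\min_{T}\bigl(f(T)-g(T\setminus S)\bigr)+\ldots
\]
Instead of this, we verify directly the following two points.

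First, the greedy vector with $x_{a_i}$ as above and $x_{b_j}=g(A\cup B_j)-g(A\cup B_{j-1})-\bigl(g(A)-f(A)\bigr)\delta_{j1}$ is not the natural choice. We take the simpler approach: it suffices to show that the set $\{x\in Q : x(A)=f(A)\}$ is nonempty. Consider the restricted system of $x_{a}$, $a\in A$, given by the greedy chain on $A$, and on $N\setminus A$ the greedy chain using $g$ applied to the contracted pair $f'(S)=f(S\cup A)-f(A)$ and $g'(S)=g(S\cup A)-f(A)$ (or, more carefully, $g'(S)=\max\{g(S\cup T)-f(T)\}$). The strong-pair inequality $f(B)-f(B-A)\ge g(A)-g(A-B)$ is exactly what guarantees that $g'\le f'$ and that $(f',g')$ is again a strong pair. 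The chain vertex then satisfies all constraints $g(C)\le x(C)\le f(C)$: for the upper bounds this follows by submodularity via the usual uncrossing of $C$ against the chain $A_1\subset\dots\subset A\subset A\cup B_1\subset\cdots$, and for the lower bounds by supermodularity of $g$ combined with the cross inequality.

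The main obstacle is checking the mixed constraints $x(C)\ge g(C)$ for sets $C$ meeting both $A$ and $N\setminus A$; this is precisely where the cross inequality must be used, with $B=C$ and the set $A$. The argument for $g(B)=\min$ is symmetric, obtained by applying the first part to $-Q=Q(-g,-f)$, which is again a g-polymatroid. Uniqueness follows because $f$ and $g$ are recovered as the support functions of $Q$ evaluated at the vectors $e_A$ and $-e_B$.
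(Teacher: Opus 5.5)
The easy half is fine: $\max\{x(A)\}\le f(A)$ and $\min\{x(B)\}\ge g(B)$. So is reducing the $g$-formula to the $f$-formula via $-Q(f,g)=Q(-g,-f)$, and so is deducing uniqueness.

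\textbf{The gap.} The attainment step is the whole content of the proposition (it also gives $Q(f,g)\neq\emptyset$), and you never carry it out. You start a greedy vector and abandon it. You write a lifted function that trails off into ``$\ldots$''. You then propose a contracted pair whose strong-pair property you only assert.

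That assertion is false for your first choice $f'(S)=f(S\cup A)-f(A)$, $g'(S)=g(S\cup A)-f(A)$:
\begin{itemize}
\item Take $Q=\{x\in[0,1]^2 : x_1+x_2=1\}$, so $f(\{1\})=f(\{2\})=f(N)=1$, $g(\{1\})=g(\{2\})=0$, $g(N)=1$, and let $A=\{1\}$.
\item Then $f'(\{2\})-f'(\emptyset)=0$ but $g'(\{2\})-g'(\emptyset)=1$, which violates the cross inequality.
\item The same example shows that continuing the chain on $N\setminus A$ with $g$-increments leaves $Q$: it produces $(1,1)$.
\end{itemize}
Finally, the decisive check is the lower bounds $x(C)\ge g(C)$ for sets $C$ meeting both $A$ and $N\setminus A$. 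You name it as the main obstacle and leave it open. As written, nothing beyond the trivial inequalities is proved.

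\textbf{The missing idea.} No interplay between $f$ and $g$ is needed in the construction; just run the $f$-greedy along the whole chain.
\begin{itemize}
\item Order $N$ as $v_1,\dots,v_n$ with $A=\{v_1,\dots,v_{|A|}\}$, let $V_i=\{v_1,\dots,v_i\}$, and set $x(v_i)=f(V_i)-f(V_{i-1})$ for \emph{all} $i$.
\item Edmonds' argument (submodularity and $f(\emptyset)=0$) gives $x(C)\le f(C)$ for every $C$.
\item Telescoping gives $x(A)=f(A)$ and $x(N)=f(N)$.
\item Then for every $C$,
\[
x(C)=x(N)-x(N\setminus C)\ \ge\ f(N)-f(N\setminus C)\ \ge\ g(C)-g(\emptyset)=g(C),
\]
where the last step is the strong-pair inequality with $B=N$ and $C$ in the role of $A$.
\end{itemize}
Hence $x\in Q(f,g)$ and the maximum is attained. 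Your symmetry argument then gives the $g$-statement; alternatively, use the analogous $g$-greedy together with the cross inequality at $A=N$.

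\textbf{Relation to the paper.} The paper gives no proof and simply quotes \cite{FT}. A standard alternative proof lifts $Q(f,g)$ to a base polyhedron on $N\cup\{\hat e\}$, using the function $X\mapsto f(X)$, $X\cup\{\hat e\}\mapsto -g(N\setminus X)$. That function is submodular precisely because of the cross inequality, and base polyhedra attain their support function. This is the route you gestured at but did not complete.
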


We call a g-polymatroid $Q$ in $\mathbb{R}^N$ \textbf{full-dimensional} if $Q$ is of dimension $|N|$,  while a base polyhedron $P$ in $\mathbb{R}^N$ is  \textbf{full-dimensional} if $P$ is of dimension $|N|-1$. Similar to the decomposition of a matroid into the direct sum of its connected components, a g-polymatroid  can also be decomposed based on its strong pairs

\begin{thm}[\cite{FKP}, Section 3]
\label{thm1}
	Let $Q=Q(f,g)$ is a g-polymatroid in $\mathbb{R}^N$, then there exists a unique partition $N=S_1 \sqcup \cdots \sqcup S_k \sqcup T$ and a decomposition 
    \[
    Q=P_1 \times\cdots \times P_k \times Q_0
    \]
    such that $P_i$ is a full-dimensional base polyhedron in $\mathbb{R}^{S_i}$ for $i=1,\cdots,k$ while $Q_0$ is a full-dimensional g-polymatroid in $\mathbb{R}^{T}$.
\end{thm}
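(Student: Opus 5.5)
We need to prove Theorem \ref{thm1}: existence and uniqueness of a partition $N=S_1\sqcup\cdots\sqcup S_k\sqcup T$ with $Q=P_1\times\cdots\times P_k\times Q_0$, where each $P_i$ is a full-dimensional base polyhedron in $\mathbb{R}^{S_i}$ and $Q_0$ is a full-dimensional g-polymatroid in $\mathbb{R}^{T}$.

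The plan is to read the partition off the affine hull of $Q$, using the edge-direction description of g-polymatroids. Let $L$ be the linear space spanned by the edge directions of $Q$ (assume $Q$ nonempty and pointed; otherwise split off the lineality space first, which is also spanned by vectors of $\mathbb{A}_N$). Every edge direction lies in $\mathbb{A}_N$, so $L$ is spanned by a subset of $\{e_i\}\cup\{e_i-e_j\}$. Put
\[
T=\{i\in N : e_i\in L\}.
\]
On $N\setminus T$, define a graph by joining $i$ and $j$ when $e_i-e_j\in L$, and let $S_1,\dots,S_k$ be its connected components. Since $e_i\in L$ and $e_i-e_j\in L$ force $e_j\in L$, no vector $e_i-e_j$ in $L$ connects $T$ to its complement. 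It then follows that
\[
L=\mathbb{R}^T\oplus\bigoplus_i\{x\in\mathbb{R}^{S_i}: x(S_i)=0\},
\]
because a generating set taken from $\mathbb{A}_N$ spans exactly this space. Consequently $\operatorname{aff}Q=a+L$, so each sum $x(S_i)$ is constant on $Q$.

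Next we prove the product decomposition. Write $\pi_B$ for coordinate projection onto $\mathbb{R}^B$. It suffices to show $Q=\pi_T(Q)\times\prod_i\pi_{S_i}(Q)$. We use the strong pair and Proposition \ref{prop11}: $Q$ is cut out by $g(A)\le x(A)\le f(A)$. The key claim is that $f$ is modular across the blocks, that is, $f(A)=f(A\cap T)+\sum_i f(A\cap S_i)$, and similarly for $g$. To show this, pick a vertex maximizing $x(A)$ via a chain-greedy argument. Alternatively, argue with the face of $Q$ maximizing the linear functional $e_A$: its edge directions lie in $L$, and since no edge direction of $Q$ mixes blocks, the normal fan of $Q$ is the product of the normal fans of its block projections. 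The cleaner route is this: a polytope whose edge directions each lie in one of the complementary subspaces $\mathbb{R}^{B}$, for blocks $B$ partitioning $N$, is the product of its projections. This is a standard fact, proved by checking that the normal fan splits, and the unbounded case is handled the same way via recession cones. Applying it, each factor $\pi_B(Q)$ has edges in $\mathbb{A}_B$, so it is a g-polymatroid. On $S_i$ the sum $x(S_i)$ is constant, so $\pi_{S_i}(Q)$ is a base polyhedron, full-dimensional of dimension $|S_i|-1$. On $T$ the factor spans $\mathbb{R}^T$, so it is full-dimensional.

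Finally, uniqueness. For any such decomposition, the linear space spanned by the edge directions of $Q$ equals $\mathbb{R}^T\oplus\bigoplus_i\{x(S_i)=0\}$, so it depends only on the partition. Conversely, the partition is recovered from $L$: $T$ is the set of $i$ with $e_i\in L$, and the $S_i$ are the minimal nonempty sets $S$ with $e_S\perp L$. Hence the partition is determined by $Q$, and so are the factors, as coordinate projections.

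The main obstacle is the product lemma: showing that block-respecting edge directions force a product structure. If it resists direct proof, first try showing that every vertex maximizing a generic functional $c$ has block projections that maximize $c|_B$ on $\pi_B(Q)$, using connectivity of the edge graph and the fact that moving along an edge changes only one block.
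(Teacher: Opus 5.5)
Your argument is correct, but it takes a different route from the one the paper relies on. The paper does not prove this theorem; it cites FKP and describes the blocks $S_i$ as the inclusion-minimal sum-sets, i.e.\ minimal $S$ with $f(S)=g(S)$. The decomposition is thus read off the strong pair, with $f$ and $g$ splitting additively over $S_1,\dots,S_k,T$. The paper uses exactly this splitting later, in the proof of Theorem \ref{thm2}, and it is the ``modularity across blocks'' idea you mention and then drop.

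You instead read the partition off the direction space $L$ of $\operatorname{aff}Q$. Because $L$ is spanned by vectors of $\mathbb{A}_N$, it is automatically of the form $\mathbb{R}^T\oplus\bigoplus_i\{x(S_i)=0\}$, and you obtain the product from a general lemma about polytopes whose edges respect a coordinate block partition. The two partitions coincide, since $f(S)=g(S)$ iff $x(S)$ is constant on $Q$ iff $e_S\perp L$.

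Each route buys something different. Yours uses only edge directions, not submodularity, and uniqueness comes for free because $L$ is intrinsic to $Q$. The strong-pair route directly identifies the factors as $Q(f|_{S_i},g|_{S_i})$ and $Q(f|_T,g|_T)$, which the paper needs later. You recover this at once from Proposition \ref{prop11}, since $\max_Q x(A)=\sum_B\max_{\pi_B(Q)}x(A\cap B)$ once $Q$ is a product.

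Two small points. First, the product lemma you leave as ``standard'' closes in one line, with no need for edge-graph connectivity. At any vertex $v$, the tangent cone $\operatorname{cone}(Q-v)$ is generated by the edge directions at $v$. Each of these lies in $\mathbb{R}^T$ or in a single $\{x(S_i)=0\}$, so the tangent cone is a product $\prod_B K_B$ with $K_B\subseteq\mathbb{R}^B$. If $c$ is maximized at $v$, then for every $x\in Q$ we have $x_B-v_B\in K_B$, hence $c_B\cdot(x_B-v_B)\le 0$, so $\pi_B(v)$ maximizes $c_B$ on $\pi_B(Q)$. The support functions of $Q$ and $\prod_B\pi_B(Q)$ therefore agree, and the two polytopes are equal.

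Second, the detour about non-pointed or unbounded $Q$ is unnecessary. Your claim that the lineality space is spanned by vectors of $\mathbb{A}_N$ was also unjustified there. With $f,g$ real-valued, the constraints $g(\{i\})\le x_i\le f(\{i\})$ already make $Q(f,g)$ a polytope.
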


Every $S_i$ is called a inclusion-minimal \textbf{sum-set} in \cite{FKP}, that is, the minimal subset of $N$ such that $f(S_i)=g(S_i)$. Besides, $S_1,\cdots,S_k$ and $T$ might be empty in this partition. 

\begin{cor}
\label{cor1}
    Let $Q$ be a g-polymatroid with the above decomposition, then $dim(Q)=n-k$.
\end{cor}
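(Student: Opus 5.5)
By Theorem \ref{thm1}, $Q$ splits as a product $Q=P_1\times\cdots\times P_k\times Q_0$. The dimension of a product of polyhedra is the sum of the dimensions of the factors. So it is enough to compute the dimension of each factor and add them up. Here $n=|N|$, and the factors live in the pieces of the partition $N=S_1\sqcup\cdots\sqcup S_k\sqcup T$.

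\textbf{Step 1: the base polyhedra.} Each $P_i$ is a full-dimensional base polyhedron in $\mathbb{R}^{S_i}$. By the convention fixed just before Theorem \ref{thm1}, this means $\dim P_i=|S_i|-1$. The only thing to check is the edge case $S_i=\emptyset$. The theorem allows the $S_i$ to be empty, but an inclusion-minimal sum-set is presumably nonempty, so I will take all $S_i$ nonempty and count $k$ as the number of nonempty sum-sets. If empty $S_i$ were allowed into the count, the formula would need that caveat. I will state this interpretation explicitly.

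\textbf{Step 2: the remaining factor.} $Q_0$ is a full-dimensional g-polymatroid in $\mathbb{R}^T$, so $\dim Q_0=|T|$. If $T=\emptyset$, then $Q_0$ is a point and the formula still holds.

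\textbf{Step 3: summing.} Adding the contributions gives
\[
\dim Q=\sum_{i=1}^k(|S_i|-1)+|T|=|N|-k=n-k.
\]

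The only real subtlety is the bookkeeping about empty parts. Everything else follows directly from the product decomposition and the definitions of full-dimensionality, so I do not expect a genuine obstacle.
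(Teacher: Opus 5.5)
Your argument is correct and is the derivation the paper intends. The paper gives no separate proof: it treats the corollary as immediate from the product decomposition in Theorem~\ref{thm1}, together with the stated convention that a full-dimensional base polyhedron in $\mathbb{R}^{S_i}$ has dimension $|S_i|-1$. Your remark that $k$ should count only nonempty sum-sets is a sensible clarification of the paper's loose aside that the $S_i$ ``might be empty''.
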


$M^{\natural}$-concave (convex) functions was originally introduced by Murota and Shioura by means of exchange axiom. Here, we adopt an equivalent definition that was mentioned by Fujishige and Hirai (see \cite{FH}, section 2.4).

\begin{defi}
Let $f: \mathbb{R}^N \rightarrow \mathbb{R} \cup \{-\infty\}$ be a polyhedral concave function such that 
\begin{enumerate}
    \item the effective domain $dom(f):=\{x \in \mathbb{R}^N\;|\;f(x) >-\infty \}$ is an integral g-polymatroid (base polyhedron),

    \item every linearity domain is an integral g-polymatroid (base polyhedron).
\end{enumerate}
Such a function $f$ is called a \textbf{$M^{\natural}$-concave function} (\textbf{M-concave function}), and $f$ can be identified with its restriction on the integral lattice $f:\mathbb{Z}^N \rightarrow \mathbb{R} \cup \{-\infty\}$ (equivalently, the range $\mathbb{R}$ can be chosen to be $\mathbb{Z}$).
\end{defi}

In particular, we consider $M^{\natural}$-concave functions whose effective domain is the hypercube $[0,1]^n$. In this case, the affine areas of this function are all g-polymatroids whose vertices are 0-1 lattice points. A 
g-polymatroid of this form is called a \textbf{generalized matroid} (abbreviated as \textbf{g-matroid}).

Then let us consider a tropical analogue of the algebraic 
Pl\"ucker relation on the minors of a matrix. 

\vspace{0.3cm}
\begin{defi}
	A \textbf{tropical Pl\"ucker function} is a function $f:2^{[n]} \rightarrow \mathbb{R}$ that satisfies the condition
	\begin{equation}
		\label{trop_plucker}
		f(Xik)+f(Xj)=\max\{f(Xij)+f(Xk),f(Xjk)+f(Xi)\}
	\end{equation}
	for any $i,j,k \in [n]$ with $i<j<k$ and $X \subseteq [n]\backslash\{i,j,k\}$.
\end{defi}

\vspace{0.3cm}
We can regard the tropical Pl\"ucker function $f$ as a function on the domain $\mathbb{Z}^n$ such that $f(e_X)=f(X)$ and $f(a)=-\infty$ for any $a \notin \{0,1\}^n$. We are interested in a class of tropical Pl\"ucker functions due to the next proposition, and we call them \textbf{discrete concave tropical Pl\"ucker} functions (\textbf{DCTP} functions for brief).

\begin{prop}
	\label{plucker_submodular}
	A tropical Pl\"ucker function f is $M^{\natural}$-concave if and only if $f$ is submodular.
\end{prop}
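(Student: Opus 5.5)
We need to show that a tropical Plücker function $f$ on $\{0,1\}^n$ is $M^{\natural}$-concave exactly when it is submodular. We use the local exchange characterization of $M^{\natural}$-concavity for functions on $\{0,1\}^n$ (Murota's local criterion; for the hypercube domain it is equivalent to the polyhedral definition adopted above via \cite{FH}). A function $f:2^{[n]}\to\mathbb{R}$, with effective domain the whole Boolean lattice (an integral g-polymatroid), is $M^{\natural}$-concave if and only if it satisfies the following local conditions for all $X$ and distinct $i,j,k\notin X$:
\begin{enumerate}
\item[(L1)] $f(Xi)+f(Xj)\ge f(X)+f(Xij)$;
\item[(L2)] $f(Xik)+f(Xj)\le \max\{f(Xij)+f(Xk),\,f(Xjk)+f(Xi)\}$.
\end{enumerate}
Both directions then reduce to comparing these conditions with submodularity and with the relation \eqref{trop_plucker}.

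\emph{Necessity.} Assume $f$ is $M^{\natural}$-concave. Then $f$ is submodular, because $M^{\natural}$-concave set functions are submodular. Concretely, applying the exchange axiom to $e_{Xi}$ and $e_{Xj}$ gives (L1). Local submodularity (L1) on the Boolean lattice implies full submodularity by the standard induction on $|A\triangle B|$. So this direction is immediate.

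\emph{Sufficiency.} Assume $f$ is submodular and tropical Plücker.
\begin{itemize}
\item (L1) is the case $X\cup\{i\}$, $X\cup\{j\}$ of submodularity.
\item (L2) is symmetric in $i,j,k$ up to relabeling. Relation \eqref{trop_plucker} only covers the ordering $i<j<k$, where it gives equality with the ``middle'' index $j$ singled out.
\item For the other two placements of the singled-out index, observe that \eqref{trop_plucker} states that among the three sums $f(Xik)+f(Xj)$, $f(Xij)+f(Xk)$, $f(Xjk)+f(Xi)$, the first equals the maximum of the other two. Hence the maximum of all three is attained at least twice.
\item That property is fully symmetric, so each of the three sums is at most the maximum of the other two. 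This gives (L2) for every relabeling.
\end{itemize}

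The main obstacle is justifying the local criterion itself, i.e.\ that (L1)–(L2) on the hypercube imply that the concave extension has g-polymatroid linearity domains in the sense of the definition above. I would cite Murota's local exchange theorem for $M^{\natural}$-concave functions, transported to the polyhedral definition via \cite{FH}. Alternatively, and more self-contained, I would pass to the M-concave function $\tilde f(X,t)=f(X)$ on the bases of a uniform-type lifting in $\mathbb{Z}^{n+1}$ and use the local Plücker characterization of valuated matroids (Dress–Wenzel, Murota).
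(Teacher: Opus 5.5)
Your proof is correct, but it is organized differently from the paper's.

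\textbf{Comparison with the paper's route.} The paper pads $f$ to $\widetilde f(X)=f(X\cap[n])$ on $\binom{[2n]}{n}$. It uses that $f$ is $M^\natural$-concave iff $\widetilde f$ is $M$-concave, i.e.\ a valuated matroid with full support on the uniform matroid of rank $n$ on $[2n]$. It then cites a local Pl\"ucker-type criterion (\cite{AS}, Prop.~2.2), which, given \eqref{trop_plucker}, reduces $M$-concavity of $\widetilde f$ to local submodularity of $f$.

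You instead stay on the Boolean lattice and use the local exchange characterization of $M^\natural$-concave set functions with full domain. You also make explicit what the paper leaves implicit: \eqref{trop_plucker}, read as ``the maximum of the three sums is attained at least twice'', gives the three-index exchange inequality for every choice of singled-out index. Your route avoids the padding and shows exactly where each hypothesis enters. The paper's route ties the statement to valuated matroids and sets up the $\binom{[2n]}{n}$ embedding reused later (Lemma~\ref{lem5}, Theorems~\ref{affine_DCTP} and~\ref{thm10}).

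\textbf{Citing the local criterion.} The criterion with only (L1)--(L2) is specifically the full-domain theorem of Reijnierse--van Gellekom--Potters. Other routes demand more:
\begin{itemize}
\item Murota's local theorem for general $M^\natural$-convex domains also requires the four-index condition $f(Xij)+f(Xkl)\le\max\{f(Xik)+f(Xjl),\,f(Xil)+f(Xjk)\}$.
\item So does any route through an $M$-concave lift, whether the paper's or your $\mathbb{Z}^{n+1}$ one. In your lift the extra coordinate is not $0/1$, so Dress--Wenzel does not apply verbatim.
\end{itemize}
Your symmetry argument does not address the four-index condition, and the paper's reading of its lifted criterion tacitly uses the same fact.

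The condition does follow from (L2) at the same $X$. Suppose $A=f(Xij)+f(Xkl)$ exceeded both $B=f(Xik)+f(Xjl)$ and $C=f(Xil)+f(Xjk)$. Take the instances of (L2) bounding $f(Xij)+f(Xk)$, $f(Xij)+f(Xl)$, $f(Xkl)+f(Xi)$ and $f(Xkl)+f(Xj)$. Whichever term realizes each maximum, either some pair of these inequalities sums to $A\le B$ or $A\le C$, or all four sum to $2A\le B+C$. So either cite the full-domain theorem precisely or include this step.

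\textbf{Necessity direction.} Applying the exchange axiom to $e_{Xi}$ and $e_{Xj}$ yields only a trivial inequality. Apply it instead to $x=e_{Xij}$, $y=e_X$, where $\mathrm{supp}^-(x-y)=\emptyset$, to get $f(Xij)+f(X)\le f(Xi)+f(Xj)$. Alternatively, simply use that your local criterion is an equivalence.
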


\begin{proof}
	Define a function $\widetilde{f}: \binom{[2n]}{n} \rightarrow \mathbb{R}$ by taking $\widetilde{f}(X):=f(X \cap [n])$ for every $X \in \binom{[2n]}{n}$. Then $f$ is $M^\natural$-concave if and only if $\widetilde{f}$ is $M$-concave (see \cite{M}, Chapter 6 ). By Proposition 2.2 of \cite{AS}, $\widetilde{f}$ is $M$-concave if and only if $f(Xi)+f(Xj) \geq f(X)+f(Xij)$ for any $X \in 2^{[n]}$ and $i,j  \notin X$. This condition is equivalent to the submodularity.
\end{proof}

\vspace{0.3cm}
One way to construct a DCTP function is through the laminar system (see \cite{DK} for example). A \textbf{laminar system} $\mathcal{L}$ is a collection of subsets such that for any $A, B \in \mathcal{L}$, either $A \subseteq B$ or $B \subseteq A$ or $A \cap B= \emptyset$.

\begin{prop}
	For a laminar system $\mathcal{L}$ that constitutes intervals, i.e. $\mathcal{L} \subseteq \mathcal{I}(n)$, assign a concave function $f_A :\mathbb{R} \rightarrow \mathbb{R}$ for any $A \in \mathcal{L}$. Then the restriction of function 
	\[
	F_{\mathcal{L}}(x):=\sum\limits_{A \in \mathcal{L}}f_A\left(\sum\limits_{i \in A}x_i\right) \;,x \in \mathbb{R}^n
	\]
	to $\{0,1\}^n$ is a DCTP function
\end{prop}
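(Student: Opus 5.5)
Write $g=F_{\mathcal{L}}|_{\{0,1\}^n}$, so that $g(X)=\sum_{A\in\mathcal{L}} f_A(|X\cap A|)$. By Proposition \ref{plucker_submodular} it suffices to show two things: $g$ is submodular, and $g$ satisfies the tropical Pl\"ucker relation \eqref{trop_plucker}. Both properties are preserved under sums of functions that are each submodular and each satisfy \eqref{trop_plucker} on the same triples. One caveat: the tropical Pl\"ucker relation is not linear, so for it I cannot simply add summands. Instead I will verify it for the sum directly, using the structure of intervals.

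\emph{Submodularity.} Fix $X$ and $i,j\notin X$. For each $A$, put $a=|X\cap A|$. The term $f_A(|Xi\cap A|)+f_A(|Xj\cap A|)-f_A(|X\cap A|)-f_A(|Xij\cap A|)$ vanishes unless both $i,j\in A$. In that case it equals $2f_A(a+1)-f_A(a)-f_A(a+2)\ge 0$ by concavity. Summing over $A$ gives the local inequality $g(Xi)+g(Xj)\ge g(X)+g(Xij)$, which is equivalent to submodularity.

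\emph{Tropical Pl\"ucker relation.} Fix $i<j<k$ and $X\subseteq[n]\setminus\{i,j,k\}$. Because every $A\in\mathcal{L}$ is an interval, $A\cap\{i,j,k\}$ is one of $\emptyset$, $\{i\}$, $\{j\}$, $\{k\}$, $\{i,j\}$, $\{j,k\}$, $\{i,j,k\}$. The pattern $\{i,k\}$ cannot occur, since an interval containing $i$ and $k$ also contains $j$. For each $A$ let $a=|X\cap A|$ and define
\[
\Delta_1(A)=\big[f_A(|Xik\cap A|)+f_A(|Xj\cap A|)\big]-\big[f_A(|Xij\cap A|)+f_A(|Xk\cap A|)\big],
\]
and define $\Delta_2(A)$ analogously with $Xjk,Xi$ in place of $Xij,Xk$. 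I then tabulate these by pattern:
\begin{itemize}
\item For the patterns $\emptyset$, $\{i\}$, $\{j\}$, $\{k\}$, $\{i,j,k\}$, a direct check shows $\Delta_1(A)=\Delta_2(A)=0$.
\item For the pattern $\{i,j\}$, we get $\Delta_2(A)=0$ and $\Delta_1(A)=f_A(a+1)+f_A(a+1)-f_A(a+2)-f_A(a)\ge 0$.
\item For the pattern $\{j,k\}$, symmetrically, $\Delta_1(A)=0$ and $\Delta_2(A)\ge 0$.
\end{itemize}
Hence $\Delta_1:=\sum_A\Delta_1(A)\ge 0$ and $\Delta_2:=\sum_A\Delta_2(A)\ge 0$. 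The relation \eqref{trop_plucker} is equivalent to $\min(\Delta_1,\Delta_2)=0$, so it remains to show that $\Delta_1$ and $\Delta_2$ cannot both be positive.

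\emph{Main step.} Suppose $\Delta_1>0$ and $\Delta_2>0$. Then some $A\in\mathcal{L}$ has $A\cap\{i,j,k\}=\{i,j\}$, and some $B\in\mathcal{L}$ has $B\cap\{i,j,k\}=\{j,k\}$. Both contain $j$, so they intersect, and laminarity forces $A\subseteq B$ or $B\subseteq A$. If $A\subseteq B$, then $i\in B$, contradicting $B\cap\{i,j,k\}=\{j,k\}$. If $B\subseteq A$, then $k\in A$, again a contradiction. So at least one of $\Delta_1,\Delta_2$ is $0$, which gives \eqref{trop_plucker}.

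This main step is where both hypotheses on $\mathcal{L}$ are used: the interval property rules out the pattern $\{i,k\}$, and laminarity rules out $\{i,j\}$ and $\{j,k\}$ occurring together. The rest is bookkeeping. Finally, $g$ is a submodular tropical Pl\"ucker function, so it is a DCTP function by Proposition \ref{plucker_submodular}.
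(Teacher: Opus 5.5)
Your proof is correct and follows the same route as the paper: termwise concavity gives submodularity, the interval property makes each summand satisfy the tropical Pl\"ucker relation, and laminarity makes the sum satisfy it. Your pattern table, together with the argument that the patterns $\{i,j\}$ and $\{j,k\}$ cannot both occur, supplies the justification for the laminarity step that the paper only asserts; your opening remark about sums preserving the relation is muddled, but the argument rightly does not rely on it.
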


\begin{proof}
	Let us first check the submodularity of $F_{\mathcal{L}}$. For any $S,T \in 2^{[n]}$ and $A \in \mathcal{L}$, the relation
	\[
	f_A(|A \cap S|)+f_A(|A \cap T|) \geq f_A(|A \cap (S \cup T)|)+f_A(|A \cap (S \cap T)|)
	\]
	follows from the concavity of $f_A$. The tropical Pl\"ucker relation 
	\[
	f_A(|A \cap Xik|)+f_A(|A \cap Xj|)=\max\{f_A(|A \cap Xij|)+f_A(|A \cap Xik|), f_A(|A \cap Xjk|)+f_A(|A \cap Xi|)\}
	\]
	holds because $A$ is an interval. And since $\mathcal{L}$ is a laminar system, the sum of $f_A$ for all the $A \in \mathcal{L}$ satisfies the tropical Pl\"ucker relation.
\end{proof}

\vspace{0.3cm}
For any $\mathbb{Z}$-valued DCTP function $f$, we take it as a support function and define a polyhedron
\[
Q(f):=\{x \in \mathbb{R}^n\;|\;\sum\limits_{i \in S}x_i \leq f(S),\;\forall \;S\subsetneqq [n]\;\;\text{and} \;\; \sum\limits_{i=1}^nx_i=f([n]) \}
\]

The polyhedron defined in this way is also what Kamnitzer referred to as the type A (for the algebraic groups $GL_n$ or $SL_n$) \textbf{Mirkovi\'c}-\textbf{Vilonen polytope}, or simply \textbf{MV polytope}. In \cite{K2}, he established a one-to-one correspondence between stable MV polytopes and the Lusztig canonical basis. In later sections, we will further discuss the fan structure of all the collections of stable MV polytopes under Minkowski sum.

\vspace{1cm}

\subsection{Schubert matroid and blade arrangements}
\quad\\
\label{SMABA}
\indent Lattice path matroids were initially proposed in \cite{BMN} by Bonin et al, and it was recently proven by Sanchez (\cite{S}) that lattice path matroid polytopes precisely constitute the intersection class of MV polytopes and matroid polytopes. Among them, Schubert matroids, as a special class of lattice path matroids, have their rank functions computable through specific combinatorial techniques.\\
\indent Let $I=\{i_1,\cdots,i_k\;|\;1\leq i_1 \leq \cdots \leq i_k \leq n\}$ and $J=\{j_1,\cdots,j_k\;|\;1\leq j_1 \leq \cdots \leq j_k \leq n\}$ be two $k$-set in $[n]$, we denote by $I \leq_G J$ the condition that $i_1 \leq j_1, \cdots, i_k \leq j_k$.

\begin{defi}
	\label{lattice_path_matroid}
	Given two $I,J \in \binom{[n]}{k}$ with $I \leq_G J$, the \textbf{lattice path matroid} $M[I,J]$ is defined to the collection of $k$-sets as follows:
	\[
	M[I,J]:=\left\{L \in \binom{[n]}{k}\;\bigm| I \leq_G L\leq_G J\right\}.
	\]
	When $I=[k]$, the lattice path matroid $M[[k],J]$ is called \textbf{Schubert matroid}, denoted by $\Omega_J$.
\end{defi}

Let $M=M[I,J]$ be a lattice path matroid, its \textbf{rank function} $\theta_{I,J}(S):=\max\{|I|\mid I\subseteq S,I\in M\}$ for $S\subseteq [n]$, and the \textbf{matroid polytope} is defined to be
\[
Q(\theta_{I,J}):=Conv\{e_I\in\mathbb R^n\mid I\in M\}.
\]

In \cite{MTY}, a possible method for calculating the rank function of Schubert matroid was proposed and later proved in [\cite{FMD}, Theorem 14]. The detailed construction is as follows.\\
\indent To facilitate left-to-right reading of the symbols in the boxes, we transpose the $n \times 1$ grids from \cite{FMD} into the $1\times n$ grids and number them sequentially from left to right as $1,2,\cdots,n$. First, for a given $k$-set $I$, we shade the boxes whose indices belong to $I$. Then for any set $J \in 2^{[n]}$, we fill each box $i$ with a symbol according to the following rules: 

\begin{itemize}
	\item fill "(" if $i \in J$ and $i \notin I$;
	\item fill ")" if $i \notin J$ and $i \in I$;
	\item fill "$*$" if $i \in I \cap J$;
	\item do not fill any symbol if $i \notin I\cup J$.
\end{itemize}
The collections of all the symbols from left to right form a word denoted by $W_I(J)$. Let
\[
\theta_I(J):=\#\left\{{\rm paired}\;()'s\;{\rm in\;word}\;W_I(J)\right\}+\#\left\{*'s \;{\rm in\;word}\;W_I(J)\right\}.
\]
with pairing performed in the standard 'inside-out' manner. Such a function $\theta_I: 2^{[n]} \rightarrow \mathbb{N}$ is exactly the rank function of Schubert matroid $\Omega_I$.\\

\begin{ex}
	As shown in Figure \ref{fig1}, suppose that $n=10$, $I=\{4,5,6,8,9\}$ and $J=\{1,2,3,4,7,8\}$,
	\begin{figure}[H]
		\centering
		\includegraphics[width=0.5\linewidth]{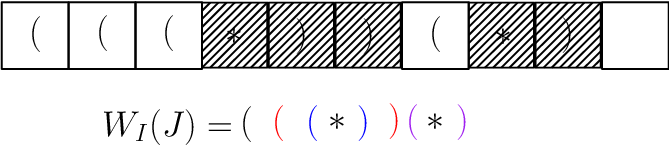}
		\caption{}
		\label{fig1}
	\end{figure}
	The half-brackets of the same color in the word $W_I(J)$ are paired into ()'s. Then $\theta_I(J)=5$.
\end{ex}
When $|J|=k$, the rank function $\theta_I(J)$ admits an explicit formula, expressed as the pointwise minimum of finitely many linear functions. Let us first define the affine area of this piecewise linear function. Suppose that $I=I_1\sqcup \cdots \sqcup I_l$ be the unique decomposition of $k$-set $I$ into cyclic intervals and let $C_i$ be the cyclic interval bridging $I_{i-1}$ and $I_i$. Then $[n]=(C_1,I_1,\cdots,C_l,I_l)$ is the decomposition of $[n]$ into a collection of adjacent cyclic intervals. For simplicity, we denote $S_i:=(C_i,I_i)$ and assume that $1 \in S_1$. We call $l$ the \textbf{partition number} of $I$ and denote it by $\sigma(I)=l$.

\begin{defi}
	\label{def1}
	Let $\Pi_i=\left[ \left(S_i\right)_{|I_i|}, \left(S_{i+1}\right)_{|I_{i+1}|},\cdots,\left(S_{i-1}\right)_{|I_{i-1}|}\right]$ $(i=1,2,\cdots,l)$ be the region in $\mathbb{R}^n$ defined by the following (in)equalities:
	\[
	\begin{split}
		&x(S_i) \geq |I_i|;\\
		&x(S_i\cup S_{i+1}) \geq |I_i|+|I_{i+1}|;\\
		&\vdots\\
		&x(S_i\cup S_{i+1}\cup \cdots \cup S_{i-2}) \geq |I_i|+|I_{i+1}|+\cdots |I_{i-2}|;\\
		&x(S_i\cup S_{i+1}\cup \cdots \cup S_{i-1})=x([n])=|I_i|+|I_{i+1}|+\cdots |I_{i-1}|=k.
	\end{split}
	\]
	Here, the subscripts of $S_i,I_i$ and $\Pi_i$ are taken modulo $l$. We maintain this convention throughout the subsequent discussion.
	
\end{defi}

In \cite{H}, the author investigated a kind of subdivisions of a given point configuration called \textbf{k-splits}. The point configuration refers to vertices of the given polytope in our context. A k-split of a polytope $\mathcal{P}$ is a coarsest subdivision $\Sigma$ with $k$ maximal face such that $\Sigma$ has a common face of codimension $k-1$ for all the maximal faces. We denote this unique common face of $\Sigma$ by $\mathcal{F}_{\Sigma}^{\mathcal{P}}$, if $\mathcal{P}$ is already clear from the context, we simply denote it by $\mathcal{F}_{\Sigma}$. When we do not want to emphasize the specific number $k$, we may use the term \textbf{multi-split} to refer to this subdivision. Besides, the author proved the following result.

\begin{thm}[\cite{H},Theorem 4.9]
	\label{thm6}
	All the k-splits are coarsest regular subdivisions. 
\end{thm}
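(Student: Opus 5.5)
The plan is to reduce everything to a statement about complete fans in $\mathbb{R}^{k-1}$. Let $\Sigma$ be a $k$-split of $\mathcal{P}\subseteq\mathbb{R}^d$ with maximal cells $\mathcal{P}_1,\dots,\mathcal{P}_k$ and common face $\mathcal{F}=\mathcal{F}_{\Sigma}$ of codimension $k-1$ in $\mathcal{P}$. Let $L$ be the affine hull of $\mathcal{F}$, and let $\pi:\mathbb{R}^d\to\mathbb{R}^d/\mathrm{lin}(L-L)\cong\mathbb{R}^{k-1}$ be the quotient map, with a point of $\mathcal{F}$ sent to the origin.

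\emph{Step 1: the cells become a fan.} I would show that each $\mathcal{P}_i$ lies in the cone $\mathcal{F}+C_i$, where $C_i$ is the tangent cone of $\mathcal{P}_i$ along $\mathcal{F}$. This uses that $\mathcal{F}$ is a face of every maximal cell. I would then check that the cones $\pi(C_i)$ are full-dimensional, pointed at $0$, and cover $\mathbb{R}^{k-1}$ near the origin. The covering holds because $\mathcal{F}$ meets the relative interior of $\mathcal{P}$, so a neighbourhood of a relative-interior point of $\mathcal{F}$ is covered by the $\mathcal{P}_i$. Hence $\{\pi(C_i)\}$ is a complete fan in $\mathbb{R}^{k-1}$ with exactly $k$ maximal cones. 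A complete pointed fan in $\mathbb{R}^{k-1}$ with $k$ maximal cones has exactly $k$ rays $r_1,\dots,r_k$, and these satisfy a positive dependence $\sum\lambda_j r_j=0$ with all $\lambda_j>0$. So the fan is combinatorially the face fan of a $(k-1)$-simplex, with $\pi(C_i)$ spanned by all rays except $r_i$.

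\emph{Step 2: regularity.} The face fan of a simplex is the normal fan of the dual simplex, so it is polytopal. Hence there is a convex function $\varphi$ on $\mathbb{R}^{k-1}$ that is linear on each $\pi(C_i)$ and not linear across any two of them. An explicit choice: define $\varphi$ linearly on each cone by $\varphi(r_j)=1$ for all $j$. Its convexity follows from $\sum\lambda_j r_j=0$ with $\lambda_j>0$. I would take the height $h(v)=\varphi(\pi(v))$ on the vertices of $\mathcal{P}$ and verify that its lower envelope has exactly the domains of linearity $\mathcal{P}\cap\pi^{-1}(\pi(C_i))=\mathcal{P}_i$, so $\Sigma$ is regular.

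\emph{Step 3: coarsest.} A proper nontrivial coarsening $\Sigma'$ of $\Sigma$ has cells that are unions of cells of $\Sigma$, and it has at least two cells. Some cell of $\Sigma'$ must then be the union of a set $A$ of the $\mathcal{P}_i$ with $2\le|A|\le k-1$. Such a union must be convex, so its image $\bigcup_{i\in A}\pi(C_i)$ would be a convex cone. I expect this to be the main obstacle, and I would argue it as follows. Pick $c\notin A$. The union contains every ray $r_j$, and the convex cone generated by all the $r_j$ is all of $\mathbb{R}^{k-1}$, because the $r_j$ are positively dependent. This would force the union to be everything, which is a contradiction since the interior of $\pi(C_c)$ is missed. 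The care needed is in confirming that each ray $r_j$ lies in some $\pi(C_i)$ with $i\in A$. This holds when $|A|\ge2$, since $r_j$ lies in every cone except $\pi(C_j)$.

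Hence every nontrivial coarsening is the trivial subdivision, so $\Sigma$ is a coarsest regular subdivision.
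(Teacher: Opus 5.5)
The paper does not prove this statement; it quotes it from Herrmann's Theorem~4.9. So there is no in-paper argument to compare with, and your proof has to stand on its own. It does.

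\emph{What your route gives.} Projecting along $\operatorname{aff}\mathcal{F}_\Sigma$ turns the $k$ cells into a complete pointed fan in $\mathbb{R}^{k-1}$ with $k$ maximal cones. The function $\varphi\circ\pi$ with $\varphi(r_j)=1$ is then an explicit convex height; for $k=2$ it is the familiar split function $|\pi(v)|$. Positive dependence of the $r_j$ rules out every proper coarsening. Compared with the bare citation, this describes the cells explicitly as $\mathcal{P}\cap\pi^{-1}(\pi(C_i))$ and gives an explicit height. It also shows that coarseness is automatic once the common face is interior, so your Step~3 proves more than the paper's paraphrase needs, since there ``coarsest'' is part of the definition.

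\emph{The fan lemma.} You assert without argument that such a fan is the face fan of a simplex. This is quick to prove. Each maximal cone has at least $k-1$ facets, and each facet is shared with a different one of the other $k-1$ maximal cones. Hence every cone is simplicial and any two cones share a facet. If $\sigma_1$ has rays $r_2,\dots,r_k$, the neighbour across the facet omitting $r_j$ adds one new ray $s_j$. The neighbours across two different facets must themselves share a facet, which forces all $s_j$ to coincide. Step~2 also uses that cells have no new vertices; this is automatic for the $0/1$ cells in the paper.

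\emph{Interiority must be stated.} You use that $\mathcal{F}$ meets the interior of $\mathcal{P}$, and this is what makes the fan complete. Herrmann's definition asks for an \emph{interior} face of codimension $k-1$. The paper's paraphrase drops the word, and it cannot be dropped. Here is a counterexample without it.

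Let $a,b,c,t_1,t_2,t_3$ lie in a plane not containing a point $v$. Take $t_1t_2t_3$ to be a shrunken copy of $abc$, rotated about the centroid by a small nonzero angle, so that $abt_2t_1$, $bct_3t_2$, $cat_1t_3$ are convex quadrilaterals. A non-affine lifting of this pattern that vanishes on $t_1t_2t_3$ is a nonzero multiple of the distance to $t_1t_2$ on $abt_2t_1$, and similarly on the other two quadrilaterals. Matching along $bt_2$, $ct_3$, $at_1$ forces
\[
d(b,t_1t_2)\,d(c,t_2t_3)\,d(a,t_3t_1)=d(a,t_1t_2)\,d(b,t_2t_3)\,d(c,t_3t_1),
\]
which fails for a nonzero rotation. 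Moreover, no proper union of the cells is convex.

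Now push the $t_i$ radially away from $v$ by a common factor to points $t_i'$. Cut $\mathcal{P}=\operatorname{conv}(v,a,b,c,t_1',t_2',t_3')$ by the four cones from $v$ over the regions of the pattern. The far facets triangulate each quadrilateral, so this is a subdivision without new vertices. It is coarsest, and its four maximal cells share the codimension-$3$ face $\{v\}$. It is not regular, since a regular lift would restrict near $v$ to a lift of the pattern.

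In the paper's own application to $\Sigma_I$, interiority does hold. The common faces in Proposition~\ref{prop8} are products of cubes and hypersimplices $\Delta_{|I_i|,S_i}$ with $0<|I_i|<|S_i|$, so they meet the open cube.
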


Then, Early studied a class of geometric objects called permutohedral blades and applied them to the muti-split of the hypersimplex. In \cite{Ear2} and \cite{Ear}, he proved the following result.

\begin{prop} ( \cite{Ear2},  \cite{Ear})
	\label{prop7}
	Polyhedral cones $\Pi_i$ $(i=1,2,\cdots,l)$ form all the maximal cones of a complete polyhedral fan in the hyperplane $\{x \in \mathbb{R}^n\;|\;\sum\limits_{i=1}^n x_i=k\}$. Meanwhile, the intersections $\Pi_i \cap \Delta_{k,n}$ $(i=1,2,\cdots,l)$ form a $l$-split of $\Delta_{k,n}$.
\end{prop}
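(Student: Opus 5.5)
We prove the two assertions of Proposition \ref{prop7} in turn: first that the cones $\Pi_i$ form a complete fan in the hyperplane $H_k:=\{x\mid x([n])=k\}$, then that they cut $\Delta_{k,n}$ into an $l$-split.

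\textbf{The fan.} Translate to the origin by choosing a point $p\in H_k$ with $p(S_i)=|I_i|$ for every $i$; for instance, put mass $|I_i|$ on one coordinate of $S_i$. In the coordinates $y=x-p$ we have $y([n])=0$. Set $z_j:=y(S_j)$, so that $\sum_j z_j=0$, and let $Z_m:=z_1+\cdots+z_m$ be the cyclic partial sums (indices mod $l$, $Z_l=0$). The inequalities defining $\Pi_i$ read
\[
z_i+z_{i+1}+\cdots+z_{i+t}\ge 0,\qquad t=0,\dots,l-2 .
\]
This is equivalent to $Z_{i+t}\ge Z_{i-1}$ for all $t$, i.e.\ to $Z_{i-1}=\min_m Z_m$.

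Hence $\Pi_i$ is the preimage, under the linear surjection $y\mapsto(z_1,\dots,z_l)$ onto $\{\sum_j z_j=0\}$, of the cone on which the $(i-1)$-st cyclic partial sum is minimal. The regions "$Z_{i-1}$ is the minimum", $i=1,\dots,l$, are exactly the maximal cones of the normal fan of a simplex (the braid-type fan of the function $\min_m Z_m$). They cover the space, each is full-dimensional, and any two meet along the face where both partial sums are minimal. Pulling back along a linear surjection preserves completeness and the intersection-in-faces property. This proves the first assertion, with each $\Pi_i$ full-dimensional in $H_k$.

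\textbf{The split.} The cones $\Pi_i$ are the domains of linearity of the concave piecewise-linear function
\[
\phi(x)=\min_m Z_m(x-p) .
\]
Restricting to $\Delta_{k,n}$ therefore gives a regular subdivision with pieces $\Pi_i\cap\Delta_{k,n}$, all of which meet in the common face
\[
F=\{x\in\Delta_{k,n}\mid x(S_j)=|I_j|\ \forall j\}.
\]
The set $F$ is the product of hypersimplices $\prod_j\Delta_{|I_j|,|S_j|}$, which is nonempty because $|I_j|\le|S_j|$. Its dimension is $(n-1)-(l-1)$, which is codimension $l-1$ in $\Delta_{k,n}$, as the definition of an $l$-split requires. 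Coarseness is automatic, since a subdivision with $l$ maximal cells around a codimension $l-1$ common face cannot be coarsened while keeping that face; alternatively one can appeal to Theorem \ref{thm6}.

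\textbf{Main obstacle.} The remaining point is that each $\Pi_i\cap\Delta_{k,n}$ is full-dimensional, i.e.\ the apex region $F$ meets the interior of every cone inside $\Delta_{k,n}$. I would handle this by perturbing a relative-interior point of $F$ along a direction that shifts mass between blocks so that the chosen $Z_{i-1}$ becomes the strict minimum. This needs $0<|I_j|<|S_j|$, which holds because each $C_j$ and each $I_j$ is nonempty. Verifying this nondegeneracy carefully, including the edge case $l=1$ where there is only one cell, is where I expect the real work to lie.
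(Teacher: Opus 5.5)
Your fan argument is correct. With $Z_0=Z_l=0$, the inequalities of $\Pi_i$ say exactly that $Z_{i-1}=\min_m Z_m$, so the $\Pi_i$ are the pull-back of the normal fan of an $(l-1)$-simplex. This is the same computation the paper makes in the proof of Proposition \ref{prop4}; for Proposition \ref{prop7} itself the paper gives no proof and only cites Early.

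The gap is in the split part, at ``restricting to $\Delta_{k,n}$ therefore gives a regular subdivision''. An $l$-split is a subdivision of $\Delta_{k,n}$ without new vertices: each cell $\Pi_i\cap\Delta_{k,n}$ must be the convex hull of vertices of $\Delta_{k,n}$. Only then do the cells coincide with the linearity domains of the concave extension of $\phi$ restricted to $\{0,1\}^n\cap H_k$, and concavity of $\phi$ does not give this. In fact your argument never uses that the numbers $|I_j|$ are integers, so it would equally prove a false statement. Take $n=4$, $k=2$, $I=\{1,3\}$: then $S_1=\{4,1\}$, $S_2=\{2,3\}$, and the wall $x_1+x_4=1$ is a genuine split of the octahedron $\Delta_{2,4}$. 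Now replace the apex data $(|I_1|,|I_2|)=(1,1)$ by $(\tfrac12,\tfrac32)$. Every step you give still goes through: complete fan, common face of codimension $1$, full-dimensional cells. Yet the hyperplane $x_1+x_4=\tfrac12$ cuts the edge $[e_{23},e_{24}]$ at its midpoint and creates a new vertex. So the real content of the second assertion is integrality of the cells, not full-dimensionality.

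The missing step is short. Using $x([n])=k$, replace every constraint $x(S_i\cup\dots\cup S_{i+t})\ge c$ whose support wraps around from $n$ to $1$ by $x([n]\setminus(S_i\cup\dots\cup S_{i+t}))\le k-c$. The new support is an ordinary interval. Then $\Pi_i\cap\Delta_{k,n}$ is described by rows that are indicator vectors of intervals, including the box constraints and $x([n])=k$. This is a consecutive-ones matrix, hence totally unimodular. The right-hand sides are integers, so all vertices are $0/1$ vectors of coordinate sum $k$, i.e.\ vertices of $\Delta_{k,n}$; the cells are positroid polytopes. Since $\phi$ is concave and affine on each cell, it then equals the concave extension of its vertex values, which gives regularity.

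Your ``main obstacle'' is immediate. Pick $q\in F$ with all coordinates in $(0,1)$, which is possible because $0<|I_j|<|S_j|$. Then $q$ lies in the relative interior of $\Delta_{k,n}$, and every $\Pi_i-q$ is a full-dimensional cone in $\{y([n])=0\}$.

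Finally, ``cannot be coarsened while keeping that face'' is not an argument. Theorem \ref{thm6}, as stated here, already builds coarseness into the definition of a $k$-split, so citing it for coarseness is circular. Argue locally instead: near $q$, a coarser cell would be a union of $m$ maximal cones of the simplex fan with $2\le m<l$. Such a union is never convex, because it contains every ray of the fan yet misses the interiors of the remaining cones.
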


Subsequently, we discovered that Early's work on hypersimplex multi-splits is fundamentally linked to the rank function of Schubert matroids, which can be formulated as follows.

\begin{lem}
	\label{lem1}
	Suppose that $I$ is not a cyclic interval i.e. $l \geq 2$. Then for any $J \in \binom{[n]}{k}$ and $e_J \in \Pi_i \;(i \geq 2)$, we have 
	\[
	\theta_I(J)=|J \cap [1,E(S_{i-1})]|+|I_1\backslash [1,E(S_1)]|+|I_{i}|+|I_{i+1}|+\cdots |I_l|
	\]
	where $E(S_i)$ represents the last element of cyclic interval $S_i$. Particularly, for $e_J \in \Pi_1$, $\theta_I(J)=|J \cap [1,E(S_{l})]|+|I_1\backslash [1,E(S_1)]|$. 
\end{lem}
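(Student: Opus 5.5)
The plan is to replace the bracket description of $\theta_I$ by a min-of-linear-functions formula, and then to show that the inequalities defining $\Pi_i$ single out which linear function attains the minimum.

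\textbf{Step 1: a prefix formula for $\theta_I$.} Since $\Omega_I=\{L\in\binom{[n]}{k}\mid L\leq_G I\}$, the condition $L\leq_G I$ is equivalent to
\[
|L\cap[t+1,n]|\leq |I\cap[t+1,n]| \quad \text{for all } t .
\]
So $\Omega_I$ is a nested matroid whose cyclic flats are the suffixes $[t+1,n]$, of rank $|I\cap[t+1,n]|$. Its rank function should therefore be
\[
\theta_I(J)=\min_{0\le t\le n} g(t),\qquad g(t):=|J\cap[1,t]|+|I\cap[t+1,n]| .
\]
I would confirm this directly from the bracket rule. In $W_I(J)$ we have
\[
\theta_I(J)=|I|-\#\{\text{unmatched ``)''}\}
\]
(each $*$ and each matched ``)'' comes from $I$). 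By the standard inside-out matching, the number of unmatched ``)'' equals the maximal prefix deficit
\[
\max_t\bigl(|I\cap[1,t]|-|J\cap[1,t]|\bigr).
\]
Here a symbol $*$ contributes $0$ to the deficit, since $t\in I\cap J$ adds $1$ to each term. Subtracting this from $k$ gives $\min_t g(t)$. Alternatively, one may simply quote \cite{FMD} for this step.

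\textbf{Step 2: reduce to block endpoints.} Stepping from $t-1$ to $t$ changes $g$ by $[t\in J]-[t\in I]$. Hence $g$ is weakly increasing across each gap $C_m$ and weakly decreasing across each block $I_m$. Consequently the minimum is attained at $t=0$ or at some right endpoint of an $I$-block in linear order. With the cyclic convention ($1\in S_1$, and $S_1$ possibly wrapping around $n$), these candidate points are exactly the $t=E(S_m)$, together with possibly $t=0$ or $t=n$ coming from the wrapped piece. The value at $t=E(S_{i-1})$ is
\[
g(E(S_{i-1}))=|J\cap[1,E(S_{i-1})]|+|I_i|+\cdots+|I_l|+|I_1\setminus[1,E(S_1)]|,
\]
which is precisely the claimed formula. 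For $i=1$ the index $i-1$ is read as $l$, giving the stated special case.

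\textbf{Step 3: compare candidates using $\Pi_i$.} For a candidate $t=E(S_m)$ with $m$ coming after $i-1$ in linear order, the difference is
\[
g(E(S_m))-g(E(S_{i-1}))=x(S_i\cup\cdots\cup S_m)-(|I_i|+\cdots+|I_m|)\qquad (x=e_J),
\]
which is $\geq 0$ by the defining inequalities of $\Pi_i$. For $m$ before $i-1$, the difference is
\[
(|I_{m+1}|+\cdots+|I_{i-1}|)-x(S_{m+1}\cup\cdots\cup S_{i-1}).
\]
Using $x([n])=k=\sum_j|I_j|$, this equals the cyclic quantity
\[
x(S_i\cup\cdots\cup S_l\cup S_1\cup\cdots\cup S_m)-(|I_i|+\cdots+|I_m|)\geq 0,
\]
again a defining inequality of $\Pi_i$ (indices read cyclically). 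The endpoint candidates $t=0$ and $t=n$ are compared in the same way.

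\textbf{Expected obstacle.} The hard part is the bookkeeping at the wrap. Since $S_1$ contains $1$ but may begin near $n$, the linear prefixes $[1,t]$ do not align exactly with unions of the cyclic intervals $S_m$. The correction term $|I_1\setminus[1,E(S_1)]|$ together with the cases $t=0$ and $t=n$ must be handled carefully: the portion of $S_1$ lying at the end of $[n]$ has to be accounted for in both $g$ and the $\Pi_i$-inequalities. I would treat separately the case where $S_1$ does not wrap and the case where it does, checking the identity for $g(E(S_{i-1}))$ in each.
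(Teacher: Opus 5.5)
Your proof is correct, but it takes a different route from the paper's.

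\textbf{The paper's route.} It works directly in the word $W_I(J)$. The lower bounds of $\Pi_i$ show that every ``)'' in $(S_i,\dots,S_l)$ is matched inside that segment. The equivalent upper bounds show that every ``('' in $(S_2,\dots,S_{i-1})$ is matched inside that segment. A two-case count inside $S_1$, according to whether $n\in I_1$, finishes the computation.

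\textbf{Your route.} You first turn the bracket rule into the prefix formula $\theta_I(J)=\min_{0\le t\le n}\bigl(|J\cap[1,t]|+|I\cap[t+1,n]|\bigr)$, and then you only compare finitely many linear values. Since $g(E(S_{m-1}))=f_{\Pi_m}(e_J)$, your Step 3 is exactly the comparison $f_{\Pi_j}\ge f_{\Pi_i}$ on $\Pi_i$ from the proof of Proposition \ref{prop4}. This buys three things. It proves the lemma and that proposition together. It replaces the picture-based claims that ``these brackets are paired within this segment'' by inequalities anyone can check. And the prefix formula holds for every $J\subseteq[n]$, not only for $|J|=k$. The paper's segment-by-segment count, in exchange, never has to deal with extra minimizer candidates.

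\textbf{One correction.} The candidates $t=0$ and $t=n$, both with $g=k$, are not handled by the $\Pi_i$ inequalities alone. You also need $0\le x_j\le 1$, which holds automatically for $x=e_J$.
\begin{itemize}
\item When $n\in I_1$, the point $t=n$ is the right end of the linear run $I_1\setminus[1,E(S_1)]$, so it is a genuine candidate. The required inequality $g(E(S_{i-1}))\le k$ reads $x\bigl([E(S_{i-1})+1,n]\bigr)\ge |I_i|+\cdots+|I_l|+|I_1\setminus[1,E(S_1)]|$. It follows from $x(S_i\cup\cdots\cup S_l\cup S_1)\ge |I_i|+\cdots+|I_l|+|I_1|$ (which is just $x([n])=k$ when $i=2$) together with $x([1,E(S_1)])\le E(S_1)$. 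This is the same combination the paper uses when it says the ``)''s of $I_1\setminus[1,E(S_1)]$ are matched from $(S_i,\dots,S_l,C_1)$.
\item When $1\notin I$ and $S_1$ wraps, the comparison with $t=0$ needs $x\ge 0$ on the part of $C_1$ lying at the end of $[n]$.
\end{itemize}
With these two remarks, your Step 3 covers all the wrap-around cases you flagged as the expected obstacle.
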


\begin{proof}
	\begin{figure}[H]
		\centering
		\includegraphics[width=12cm]{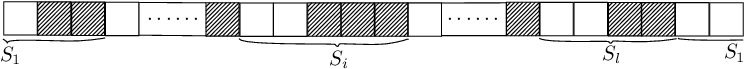}
		\caption{}
		\label{fig2}
	\end{figure}
	As shown in the Figure \ref{fig2}, we count the contributions of paired ()'s and $*$'s to $\theta_I(J)$ when $e_J$ is in the different region $\Pi_i$. We first address the case that $e_J \in \Pi_i$ for $i \geq 2$, then all the )'s in the shadowed boxes of $(S_i,S_{i+1},\cdots,S_l)$ are paired into ()'s by those ('s within the interval. Thus, the total number of paired ()'s and $*$'s in the interval $(S_i,S_{i+1},\cdots,S_l)$ is $|I_{i}|+|I_{i+1}|+\cdots |I_l|$. On the other hand, by equality $\sum\limits_{i}x_i=k$, we obtained that 
	\[
	\begin{split}
		&e_J(S_{i-1}) \leq |I_{i-1}|;\\
		&e_J(S_{i-2} \cup S_{i-1}) \leq |I_{i-2}|+|I_{i-1}|;\\
		&\vdots\\
		&e_J(S_2 \cup \cdots \cup S_{i-1}) \leq |I_{2}|+\cdots+|I_{i-1}|.
	\end{split}
	\]
	Thus the ('s in the interval $(S_2,\cdots,S_{i-1})$ are all paired by )'s within the interval. So the total number of ()'s and $*$'s in $(S_2,\cdots,S_{i-1})$ is $|J \cap (S_2\cup \cdots S_{i-1})|$ (Note that when $i=2$, this part need not be counted). It remains only to consider the contributions within $S_1$. As shown in Figure \ref{fig34}, we discuss two cases based on whether $n$ is within the cyclic interval $I_1$.
	
	\begin{figure}[htbp]
		\centering
		\begin{subfigure}[b]{0.48\textwidth}
			\includegraphics[width=6.5cm]{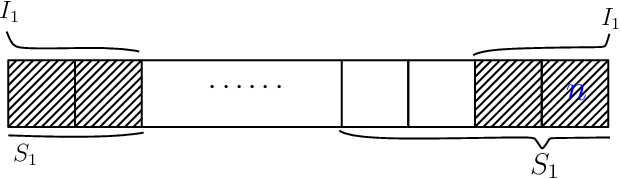}
			\caption{Case 1: $n \in I_1$}
			\label{subfig3}
		\end{subfigure}
		\hfill 
		\begin{subfigure}[b]{0.48\textwidth}
			\includegraphics[width=6cm]{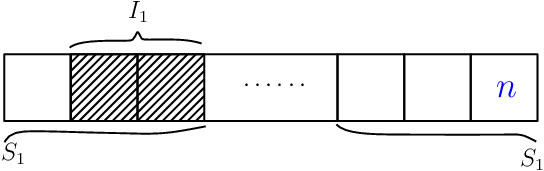}
			\caption{Case 2: $n \notin I_1$}
			\label{subfig4}
		\end{subfigure}
		\caption{}
		\label{fig34}
	\end{figure}
	In case 1, Since $I_1\backslash [1,E(S_1)] \neq \emptyset$ and $e_J(S_i \cup \cdots \cup S_l \cup S_1) \geq |I_i|+\cdots+|I_l|+|I_1|$, the )'s in $I_1\backslash [1,E(S_1)]$  are all paired by ('s from the interval $(S_i,\cdots,S_l,C_1)$. Within the interval $[1,E(S_1)]$, all contributions come from $*$'s. Thus the total contribution is $|J \cap [1, E(S_1)]|+|I_1\backslash [1,E(S_1)]|$.
	
	In case 2, since $e_J(S_1 \cup \cdots \cup S_{i-1}) \leq |I_1|+\cdots+|I_{i-1}|$, the ('s in the interval $[1,E(S_1)]$ are all paired into ()'s by )'s from interval $(I_1,S_2,\cdots,S_{i-1})$. In this case, $|I_1 \backslash [1,E(S_1)]|=0$, but we choose to keep it in the equality. The total contribution is also $|J \cap [1, E(S_1)]|+|I_1\backslash [1,E(S_1)]|$.
	
	Summing the contributions from all parts:
	\[
	\begin{split}
		\theta_I(J) &=|I_{i}|+|I_{i+1}|+\cdots |I_l|+|J \cap (S_2\cup \cdots S_{i-1})|+|J \cap [1, E(S_1)]|+|I_1\backslash [1,E(S_1)]|\\
		&=|J \cap [1,E(S_{i-1})]|+|I_1\backslash [1,E(S_1)]|+|I_{i}|+|I_{i+1}|+\cdots |I_l|.
	\end{split}
	\]
	In particular, the case that $e_J \in \Pi_1$ can be vertified separately.
\end{proof}

Since the Schubert matroid polytope $P(\Omega_I)$ is a MV-polytope, the rank function $\theta_I$ is a submodular TP function. Let $\widehat{\theta}_I:[0,1]^n \rightarrow \mathbb{R}$ denote its concave extension. Inspired by Lemma \ref{lem1}, We obtain an explicit formula for $\widehat{\theta}_I(x)$ when $x \in \Delta_{k,n}$.

\begin{prop}
	\label{prop4}
	For any $x \in \Delta_{k,n}$, $\widehat{\theta}_I(x)={\rm min}\{f_{\Pi_1}(x),f_{\Pi_2}(x), \cdots,f_{\Pi_l}(x)\}$ where
	\[
	f_{\Pi_i}(x)=\sum\limits_{m=1}^{E(S_{i-1})}x_m+|I_1\backslash [1,E(S_1)]|+|I_{i}|+|I_{i+1}|+\cdots |I_l|
	\]
	for $i \geq 2$. Particularly, $f_{\Pi_1}(x)=\sum\limits_{m=1}^{E(S_{l})}x_m+|I_1\backslash [1,E(S_1)]|$.
\end{prop}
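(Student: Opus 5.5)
Write $F(x):=\min_i f_{\Pi_i}(x)$ for $x\in\Delta_{k,n}$; we prove $\widehat{\theta}_I=F$ there. The concave extension $\widehat{\theta}_I$ of the $M^\natural$-concave function $\theta_I$ is the smallest concave function on $[0,1]^n$ that lies above $\theta_I$ at the vertices. Hence $\widehat{\theta}_I(x)=\max\{\sum_J\lambda_J\theta_I(J)\}$, where the maximum runs over convex combinations $x=\sum_J\lambda_J e_J$. Because $\sum_m x_m=k$ is forced on $\Delta_{k,n}$, only $J\in\binom{[n]}{k}$ appear in these combinations. So on $\Delta_{k,n}$ the function $\widehat{\theta}_I$ is the concave extension of $\theta_I|_{\binom{[n]}{k}}$ over the hypersimplex. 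We prove the two inequalities $\widehat{\theta}_I\le F$ and $\widehat{\theta}_I\ge F$ separately.

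\emph{Step 1: $\widehat{\theta}_I\le F$.} Each $f_{\Pi_i}$ is affine. It therefore suffices to show $\theta_I(J)\le f_{\Pi_i}(e_J)$ for every $i$ and every $J\in\binom{[n]}{k}$; concavity of the maximum over convex combinations then transfers the bound to all of $\Delta_{k,n}$. When $e_J\in\Pi_i$, Lemma \ref{lem1} gives $\theta_I(J)=f_{\Pi_i}(e_J)$. When $e_J\notin\Pi_i$, we must show $f_{\Pi_i}(e_J)\ge\theta_I(J)$, i.e. that $f_{\Pi_i}$ is an upper bound for the rank function everywhere. For this we use that $\theta_I$ is the rank function of $\Omega_I$. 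Indeed $f_{\Pi_i}(e_J)=|J\cap A_i|+c_i$, where $A_i=[1,E(S_{i-1})]$ and $c_i$ is the constant term. We claim $c_i\ge\theta_I([n]\setminus A_i)$, the rank of the complementary interval. Granting this, submodularity gives $\theta_I(J)\le\theta_I(J\cap A_i)+\theta_I(J\setminus A_i)\le|J\cap A_i|+\theta_I([n]\setminus A_i)$. To verify the claim we compute $\theta_I([n]\setminus A_i)$ with the bracket word $W_I([n]\setminus A_i)$. Its $*$'s are $I\cap(S_i\cup\cdots\cup S_l)$ together with the part of $I_1$ that wraps past $n$, i.e. $I_1\setminus[1,E(S_1)]$. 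No unpaired ')' can be matched, since all '(' lie to the right. This count is exactly $c_i$, so the rank equals $c_i$. The case $i=1$ is analogous, with $A_1=[1,E(S_l)]$.

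\emph{Step 2: $\widehat{\theta}_I\ge F$.} By Proposition \ref{prop7}, the cells $\Pi_i\cap\Delta_{k,n}$ cover $\Delta_{k,n}$. We show that each cell $\Pi_i\cap\Delta_{k,n}$ is the convex hull of its lattice points $e_J$. This holds because the cell is cut out by inequalities of the form $x(\text{interval union})\ge$ integer, so it is a (lattice path) matroid polytope, which is integral. Then any $x\in\Pi_i\cap\Delta_{k,n}$ can be written as $x=\sum\lambda_Je_J$ with every $e_J\in\Pi_i$. By Lemma \ref{lem1}, $\widehat{\theta}_I(x)\ge\sum\lambda_J\theta_I(J)=\sum\lambda_Jf_{\Pi_i}(e_J)=f_{\Pi_i}(x)\ge F(x)$. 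Combined with Step 1, this gives $\widehat{\theta}_I=F$ on $\Delta_{k,n}$.

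\emph{Main obstacle.} The main obstacle is the bookkeeping in Step 1: checking that the constant $c_i$ is exactly the rank of the complementary cyclic segment, including the wrap-around term $|I_1\setminus[1,E(S_1)]|$ in both cases $n\in I_1$ and $n\notin I_1$. If the direct bracket count turns out messy, we would instead deduce the upper bound from Step 2 and concavity. A concave function that agrees with the affine function $f_{\Pi_i}$ on the full-dimensional cell $\Pi_i\cap\Delta_{k,n}$ lies below that affine function on all of $\Delta_{k,n}$. This gives $\widehat{\theta}_I\le f_{\Pi_i}$ directly, and hence $\widehat{\theta}_I\le F$. This second route avoids the rank computation entirely, and we expect to use it.
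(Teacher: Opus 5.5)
Your Step 1 as written (submodularity plus the rank of a terminal segment) is correct. Together with Step 2 it gives a valid proof, by a different route from the paper's.

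\textbf{How the two routes differ.} The paper does not use submodularity. It compares the affine pieces pairwise: on $x([n])=k$ the difference $f_{\Pi_j}-f_{\Pi_i}$ equals $x(S_i\cup\cdots\cup S_{j-1})-(|I_i|+\cdots+|I_{j-1}|)$ (indices cyclic), so $\min_j f_{\Pi_j}=f_{\Pi_i}$ exactly on $\Pi_i$. Combined with Lemma \ref{lem1}, this gives $F(e_J)=\theta_I(J)$ for all $J\in\binom{[n]}{k}$. The paper then identifies $F$ with $\widehat{\theta}_I$, tacitly using that the cells of Proposition \ref{prop7} are integral.

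Your computation $\theta_I([E(S_{i-1})+1,n])=c_i$ is right: in that bracket word every ``('' lies to the right of every ``)'', so nothing pairs. It yields the domination $\theta_I(J)\le f_{\Pi_i}(e_J)$ at every vertex of the cube. That correctly accounts for $\widehat{\theta}_I$ being the extension over $[0,1]^n$, a point the paper passes over. The paper's route, in turn, exhibits the exact linearity domains $\Pi_i$ of $F$, which is what Proposition \ref{prop9} relies on.

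\textbf{Two things need fixing.}

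First, the opening reduction is false. The condition $\sum_m x_m=k$ only forces the average of $|J|$ to be $k$; for example $(\tfrac12,\tfrac12)=\tfrac12e_{\emptyset}+\tfrac12e_{\{1,2\}}\in\Delta_{1,2}$. Its conclusion does hold for $M^{\natural}$-concave functions, because the linearity domains are g-matroids whose slices by $x([n])=k$ are integral, but that needs an argument. You do not need it:
\begin{itemize}
\item state Step 1 for all $J\subseteq[n]$, since your submodularity argument never uses $|J|=k$;
\item Step 2 only uses one particular convex combination, so it is unaffected.
\end{itemize}

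Second, the fallback you say you expect to use is circular. Step 2 yields only $\widehat{\theta}_I\ge f_{\Pi_i}$ on $\Pi_i\cap\Delta_{k,n}$. ``Agreement'' on the cell is exactly the upper bound you are trying to prove. A concave function matching $f_{\Pi_i}$ at the vertices of the cell can sit strictly above it inside the cell if $\theta_I$ is large at vertices outside $\Pi_i$. So the domination of $\theta_I$ by $f_{\Pi_i}$ off the cell is indispensable. Your rank computation supplies precisely this (as does the paper's pairwise comparison plus Lemma \ref{lem1}), so keep the first route.

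\textbf{Minor point.} The cells are positroid (alcoved) polytopes, not lattice path matroid polytopes in general. For instance, with $n=4$ and $I=\{1,3\}$, the cell $x_1+x_4\ge 1$ consists of all $2$-sets except $\{2,3\}$. Integrality still holds, either from Proposition \ref{prop7} or from total unimodularity after passing to partial sums $y_j=x_1+\cdots+x_j$.
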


\begin{proof}
	The point is that the minimum of linear functions yields a concave piecewise linear function. Fix $i$, and for any $j > i$, $f_{\Pi_j}(x) \geq f_{\Pi_i}(x)$ if and only if
	\[
	\begin{split}
		f_{\Pi_j}(x)-f_{\Pi_i}(x) &=\left(\sum\limits_{m=1}^{E(S_{j-1})}x_m-\sum\limits_{m=1}^{E(S_{i-1})}x_m\right)+(|I_{j}|+|I_{j+1}|+\cdots |I_l|)-(|I_{i}|+|I_{i+1}|+\cdots |I_l|)\\
		&=x(S_{i} \cup \cdots S_{j-1})-(|I_i|+\cdots+|I_{j-1}|) \geq 0.
	\end{split}
	\]
	By analogous computations, for $j < i$, $f_{\Pi_j}(x) \geq f_{\Pi_i}(x)$ is equivalent to that $x(S_{j} \cup \cdots S_{i-1}) \leq |I_j|+\cdots+|I_{i-1}|$. This is equivalent to 
	\[
	x(S_i \cup \cdots \cup S_l\cup \cdots \cup S_{j-1}) \geq |I_i|+\cdots+|I_l|+\cdots +|I_{j-1}|.
	\]
	by assumption that $x \in \Delta_{k,n}$. 
	
	Therefore,  
	$\widehat{\theta}_I(x)=f_{\Pi_i}(x)$ if and only if $x \in \Pi_i=\left[ \left(S_i\right)_{|I_i|}, \left(S_{i+1}\right)_{|I_{i+1}|},\cdots,\left(S_{i-1}\right)_{|I_{i-1}|}\right]$. And $\widehat{\theta}_I(e_J)=f_{\Pi_i}(e_J)$ is directly by Lemma \ref{lem1} when $e_J \in \Pi_i$
\end{proof}

\vspace{0.2cm}
\begin{rmk}
	\label{rmk1}
	Let $((S_1)_{|I_1|},(S_2)_{|I_2|},\cdots,(S_l)_{|I_l|})$ be the union of all the codimension 1 faces of cones $\Pi_i$ $(i=1,\cdots,l$. This is the geometric object call \textbf{blade} by Early \cite{Ear}. If $S_i=\{i\}$ and $I_i=\emptyset$, we use $\beta:=(1,2,\cdots,n)$ to denote the \textbf{standard blade} and use $\beta_I:=(1,2,\cdots,n)_{e_I}$ to denote the \textbf{translated blade} which is obtained by moving the standard blade $\beta$ from the origin to vertex $e_I$. Proposition \ref{prop4} provide another way to prove that blades are tropical hypersurfaces by the concave extension of rank function of Schubert matroid.
\end{rmk}

\vspace{0.2cm}
\begin{thm}[\cite{Ear}]
	\label{thm3}
	Let $e_I$ be a vertex of $\Delta_{k,n}$, the translated blade $\beta_{I}=(1,2,\cdots,n)_{e_{I}}$ induces a $l$-split of $\Delta_{k,n}$ such that 
	\[
	(1,2,\cdots,n)_{e_{I}} \cap \Delta_{k,n}=((S_1)_{|I_1|},(S_2)_{|I_2|},\cdots,(S_l)_{|I_l|}) \cap \Delta_{k,n}
	\]
\end{thm}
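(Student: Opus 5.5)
The plan is to derive Theorem \ref{thm3} from Proposition \ref{prop4} together with Proposition \ref{prop7}, taking as given that $\beta_I$ is the codimension-one skeleton of the fan $\{\Pi_i\}$ translated to $e_I$.

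The first step is to identify the translated blade $(1,2,\dots,n)_{e_I}$ with the blade $((S_1)_{|I_1|},\dots,(S_l)_{|I_l|})$ inside the hyperplane $x([n])=k$. The standard blade $\beta$ is the union of codimension-one faces of the cones $[(i)_0,(i+1)_0,\dots,(i-1)_0]$. These cones are cut out by the conditions $x(\{i,i+1,\dots,j\})\ge 0$ on cyclic intervals starting at $i$. Translating by $e_I$ replaces them with $x([i,j])\ge |I\cap[i,j]|$.

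I then compare these translated cones with the $\Pi_m$. A cyclic interval from a start $i$ that lies strictly inside some $C_m$ or $I_m$ yields inequalities that, on $\Delta_{k,n}$, are implied by (or collapse onto) those beginning at the start of $S_m$. The reason is that $0\le x_j\le 1$ holds on the hypercube, and $|I\cap[\cdot]|$ grows by exactly $1$ per step inside $I_m$ and by $0$ inside $C_m$. Concretely, I would check the following.
\begin{itemize}
\item Inside $C_m$, prolonging an interval through elements of $C_m$ only increases $x$ while the right-hand side stays fixed.
\item Inside $I_m$, shortening by elements of $I_m$ uses $x_j\le 1$.
\end{itemize}
Consequently, on $\Delta_{k,n}$ the translated cone regions coincide with the regions $\Pi_m\cap\Delta_{k,n}$. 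Their union of boundaries is then $((S_1)_{|I_1|},\dots,(S_l)_{|I_l|})\cap\Delta_{k,n}$, which is the displayed equality.

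The second step is the $l$-split claim. By Proposition \ref{prop4}, on $\Delta_{k,n}$ the function $\widehat{\theta}_I$ is the minimum of the $l$ affine functions $f_{\Pi_i}$, and $\widehat{\theta}_I=f_{\Pi_i}$ exactly on $\Pi_i$. The maximal cells of the regular subdivision induced by $\widehat{\theta}_I$ are therefore the $\Pi_i\cap\Delta_{k,n}$. I would then argue the following.
\begin{itemize}
\item Each cell $\Pi_i\cap\Delta_{k,n}$ is full-dimensional. This can be checked by exhibiting a vertex $e_J$ in the interior, for instance by perturbing $\frac{k}{n}\mathbf 1$ toward $\Pi_i$.
\item All cells share the common face $\bigcap_i \Pi_i\cap\Delta_{k,n}=\{x\in\Delta_{k,n}: x(S_m)=|I_m|\ \forall m\}$. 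This face contains $e_I$ and has codimension $l-1$, since the $l$ equations carry one dependency given by the sum $k$.
\end{itemize}
Proposition \ref{prop7} already gives that the $\Pi_i\cap\Delta_{k,n}$ form an $l$-split, so this step is essentially a citation. The content to verify is only that the cells are exactly those induced by $\beta_I$.

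The main obstacle is the first step: making precise that the many cyclic-interval inequalities defining the translated standard blade reduce, on the hypersimplex, to the coarser inequalities indexed by the blocks $S_m$. I would handle this by an explicit case check on whether an interval's endpoint falls inside some $C_m$ or some $I_m$.
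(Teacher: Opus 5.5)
The paper gives no proof of this statement (it is imported from Early), so your argument has to stand on its own. Your second step is acceptable as a citation of Proposition~\ref{prop7}. Proposition~\ref{prop4} is proved without this theorem, so there is no circularity. Your two bullet checks are correct for what they actually establish. Consider the translated cone whose \emph{starting point} is the first element $i$ of $C_m$, i.e.\ the start of $S_m$. Every inequality $x([i,j])\ge |I\cap[i,j]|$ whose \emph{endpoint} $j$ lies inside some $C_{m'}$ or $I_{m'}$ follows on $\Delta_{k,n}$ from those ending at block boundaries, so this cone meets $\Delta_{k,n}$ exactly in $\Pi_m\cap\Delta_{k,n}$.

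The gap is your treatment of the other $n-l$ translated cones. Their inequalities are not implied by the block-start ones, and these cones coincide with no $\Pi_m\cap\Delta_{k,n}$. What is true is that they are lower-dimensional on $\Delta_{k,n}$:
\begin{itemize}
\item if $i\in I$, the inequality $x_i\ge 1$ forces $x_i=1$;
\item if $i-1\notin I$, the inequality for $[i,i-2]=[n]\setminus\{i-1\}$ together with $x([n])=k$ forces $x_{i-1}=0$.
\end{itemize}
This is the missing observation. Only the $l$ cones starting at the first elements of the $C_m$ meet $\operatorname{relint}\Delta_{k,n}$, and that is why exactly $l$ cells survive. For the same reason, the identification cannot be made in the whole hyperplane $x([n])=k$, as your first sentence suggests.

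Because of these degenerate cones, your final claim fails on $\partial\Delta_{k,n}$; this is the claim that the union of boundaries equals $((S_1)_{|I_1|},\dots,(S_l)_{|I_l|})\cap\Delta_{k,n}$. Take $n=5$ and $I=\{1,3\}$, so $S_1=\{4,5,1\}$, $S_2=\{2,3\}$, and the $S$-blade is $\{x_2+x_3=1\}$ inside $x([5])=2$. The translated cones starting at $4$ and at $5$ share the wall $x_4=0$. The point $e_{\{1,5\}}$ lies in both, hence on $\beta_I$, although $x_2+x_3=0$.

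So the displayed identity has to be stated and proved on the relative interior of $\Delta_{k,n}$, which is all the $l$-split assertion needs. For $x\in\operatorname{relint}\Delta_{k,n}$, $x\in\beta_I$ iff $x$ lies in two of the $l$ nondegenerate cones, iff $x$ lies in two of the $\Pi_m$, i.e.\ iff $x$ is on the $S$-blade.

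There is also a minor slip. No vertex $e_J$ can lie in the interior of a full-dimensional cell, and $\tfrac kn\mathbf 1$ is generally not in $\bigcap_m\Pi_m$. Instead, the point with $x_j=|I_m|/|S_m|$ for $j\in S_m$ lies in $\operatorname{relint}\Delta_{k,n}\cap\bigcap_m\Pi_m$, since $0<|I_m|<|S_m|$. Perturbing it gives interior points of every cell.
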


Combining Proposition \ref{prop4} and Theorem \ref{thm3}, we obtain the follwing result

\begin{prop}
	\label{prop9}
	For any $I \in \binom{[n]}{k}$, the affine areas of $\widehat{\theta}_I$ induce a g-matroid subdivision of $[0,1]^n$. By restricting this subdivision at $\sum\limits_{i=1}^n x_i=k$, the truncated matroid subdivision coincides precisely with that induced by translated blade $\beta_I$.
\end{prop}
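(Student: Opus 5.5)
The plan has two parts: first show that the linearity domains of $\widehat{\theta}_I$ are g-matroids, then identify the slice at $x([n])=k$ with the blade subdivision.

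\emph{Part 1: the hypercube subdivision.} The Schubert matroid polytope is an MV polytope, so $\theta_I$ is a submodular tropical Plücker function, i.e. a DCTP function. By Proposition \ref{plucker_submodular}, $\theta_I$, viewed as a function on $\{0,1\}^n$ with value $-\infty$ elsewhere, is $M^{\natural}$-concave. By the Fujishige--Hirai definition of $M^{\natural}$-concavity, its concave extension $\widehat{\theta}_I$ is then a polyhedral concave function with the following properties:
\begin{itemize}
\item its effective domain is the integral g-polymatroid $[0,1]^n$;
\item each linearity domain is an integral g-polymatroid.
\end{itemize}
Each linearity domain lies in $[0,1]^n$ and is integral, so its vertices are $0$-$1$ points, and it is therefore a g-matroid. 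The linearity domains are the projections of the upper facets of the lifted point configuration $\{(e_J,\theta_I(J))\}$. They therefore form a regular subdivision of $[0,1]^n$, and it is a g-matroid subdivision. The one point to spell out is that the concave extension is exactly the upper envelope of the lifted points, so its linearity domains are precisely the cells of the regular subdivision.

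\emph{Part 2: restriction to $\Delta_{k,n}$.} Faces of g-polymatroids are g-polymatroids, and the hyperplane $x([n])=k$ cuts $[0,1]^n$ in the face-like slice $\Delta_{k,n}$. I would argue that each cell $D$ meets this hyperplane in a face of $D$. Since $D$ is a g-polymatroid, the inequality $x([n])\le k$ or $x([n])\ge k$ should be valid on $D$, with $\max_D x([n])$ and $\min_D x([n])$ integers and $D$ a $0$-$1$ polytope. If this holds, then $D\cap\{x([n])=k\}$ is the convex hull of the $e_J$ in $D$ with $|J|=k$, hence a matroid base polytope. I expect this step to be the main obstacle. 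A g-matroid cell may have $\min_D x([n]) < k < \max_D x([n])$, in which case the hyperplane is not supporting. I would first handle it via Theorem \ref{def_g_poly}: the section $\{g(A)\le x(A)\le f(A),\ x([n])=k\}$ of a g-polymatroid is a base polyhedron with integral vertices by total unimodularity. Its vertices are then $0$-$1$ points $e_J$ with $|J|=k$, so the restricted cells are matroid polytopes. Their union is $\Delta_{k,n}$, and they are the linearity domains of $\widehat{\theta}_I|_{\Delta_{k,n}}$, which gives a regular matroid subdivision.

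\emph{Identification with the blade.} By Proposition \ref{prop4}, $\widehat{\theta}_I|_{\Delta_{k,n}}=\min_i f_{\Pi_i}$. The proof of that proposition shows that the domain where $f_{\Pi_i}$ attains the minimum is exactly $\Pi_i\cap\Delta_{k,n}$. The functions $f_{\Pi_i}$ are pairwise distinct: $f_{\Pi_j}-f_{\Pi_i}$ is the nonconstant form $x(S_i\cup\cdots\cup S_{j-1})-(|I_i|+\dots+|I_{j-1}|)$. Hence the maximal cells of the restricted subdivision are exactly the $\Pi_i\cap\Delta_{k,n}$. By Proposition \ref{prop7} and Theorem \ref{thm3}, these form the $l$-split of $\Delta_{k,n}$ cut out by $((S_1)_{|I_1|},\dots,(S_l)_{|I_l|})\cap\Delta_{k,n}=\beta_I\cap\Delta_{k,n}$. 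This is the subdivision induced by the translated blade $\beta_I$.

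When $l=1$, that is, when $I$ is a cyclic interval, all $f_{\Pi_i}$ reduce to a single linear function. The restricted subdivision is then trivial, and so is the blade split, so the statement holds in this case too.
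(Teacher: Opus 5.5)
Your proposal is correct and follows the paper's route. The g-matroid property of the cells comes from the $M^{\natural}$-concavity of the DCTP function $\theta_I$ (Proposition~\ref{plucker_submodular}), and the identification with $\beta_I$ is exactly the combination of Proposition~\ref{prop4} with Early's results (Proposition~\ref{prop7} and Theorem~\ref{thm3}), which is the whole of the paper's stated justification. One justification needs tidying: the vertices of each slice $D\cap\{x([n])=k\}$ are $0/1$ points, but not "by total unimodularity", since the constraint matrix over all subsets is not totally unimodular. Instead, any edge of the $0/1$ g-matroid $D$ that crosses the hyperplane is parallel to some $e_i$ and joins points whose coordinate sums differ by exactly one, so no new vertices appear. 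Alternatively, use the plank-intersection fact invoked in the proof of Proposition~\ref{prop13}.
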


\vspace{0.2cm}
\begin{ex}
	When $n=6$ and $I=\{1,3,5\}$, $\widehat{\theta}_I(x)={\rm min}\{f_{\Pi_1}(x),f_{\Pi_2}(x),f_{\Pi_3}(x)\}$ where $x \in \Delta_{3,6}$ and 
	\[
	\begin{split}
		&f_{\Pi_1}(x)=x_1+x_2+x_3+x_4+x_5;\\
		&f_{\Pi_2}(x)=x_1+2;\\
		&f_{\Pi_3}(x)=x_1+x_2+x_3+1.
	\end{split}
	\]
	Meanwhile, the positroid subdivision induced by $\beta_I$ is a 3-split of $\Delta_{3,6}$.
\end{ex}

In Proposition \ref{prop4}, we only provided the explicit formula for the rank function of Schubert matroid $\Omega_I$ on all $|I|$-element subsets. However, in discrete convex analysis, we can achieve an equivalent transformation between $M^{\natural}$-concave function and $M$-concave function through a homogenization-like operation. Next, we will utilize this tool to derive the explicit formula for the rank function of Schubert matroid on the whole domain $2^{[n]}$. 

Let $\pi: \mathbb{R}^{2n} \rightarrow \mathbb{R}^{n}$ be the projection map
\[
\pi : (x_1,x_2\cdots,x_n,x_{n+1},\cdots ,x_{2n}) \rightarrow  (x_1,\cdots,x_n).
\]
One can check that $\pi(\Delta_{n,2n})=[0,1]^n$. And one can lift any $M^{\natural}$-concave funtion $f$ on $2^{[n]}$ to a $M$-concave funtion $\tilde{f}$ on $\binom{[2n]}{n}$ by letting $\tilde{f}(X):=f(X \cap [n])$ for any $X \in \binom{[2n]}{n}$. 

The following definition is from Oh et al.
\begin{defi}[\cite{OPS}]
	Let $I \in \binom{[n]}{k}$, define $ \mathbf{pad}(I)$:=$I \cup \{n+k+1,\;n+k+2,\cdots,2n\}$ to be an element in $ \binom{[2n]}{n}$. 
\end{defi}

Then we have a Schubert matroid $\Omega_{\mathbf{pad}(I)}:=\{L \in \binom{[2n]}{n}\;|\;L \leq_G \mathbf{pad}(I)\}$ on the ground set $[2n]$. And we denote the rank function of $\Omega_{\mathbf{pad}(I)}$ by $\theta_{\mathbf{pad}(I)}$. There is a crucial relation between $\theta_{\mathbf{pad}(I)}$ and $\theta_I$ as shown in the following lemma.

\begin{lem}
	\label{lem5}
	For any $I \in \binom{[n]}{k}$ and $X \in \binom{[2n]}{n}$, we have
	\[
	\theta_{\mathbf{pad}(I)}(X)=\theta_I(X \cap [n])+n-k
	\]
\end{lem}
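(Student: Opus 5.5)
We compare the two rank functions through their defining bracket words, following the combinatorial rule of \cite{MTY,FMD} recalled above. Write $X=X'\sqcup X''$ with $X'=X\cap[n]$ and $X''=X\cap[n+1,2n]$. Since $|X|=n$, we have $|X''|=n-|X'|$. Recall that $\mathbf{pad}(I)=I\sqcup[n+k+1,2n]$.

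The word $W_{\mathbf{pad}(I)}(X)$ is the concatenation of $W_I(X')$, read on boxes $1,\dots,n$, with a tail word $T$ read on boxes $n+1,\dots,2n$. In $T$:
\begin{itemize}
\item boxes $n+1,\dots,n+k$ are unshaded, so they carry ``('' exactly when the box lies in $X''$;
\item boxes $n+k+1,\dots,2n$ are shaded, so they carry ``$*$'' when the box lies in $X''$ and ``)'' otherwise.
\end{itemize}

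The key step is to count paired brackets in the concatenation. In any word, a greedy inside-out pairing gives
\[
\#\{\text{pairs in } UV\}=\#\{\text{pairs in } U\}+\#\{\text{pairs in } V\}+\min\bigl(\text{unmatched ``('' of } U,\ \text{unmatched ``)'' of } V\bigr).
\]
So it suffices to show that every ``)'' and ``$*$'' of $T$ ends up paired or counted, and that the ``('' of $T$ do not contribute. Let
\[
a=|X''\cap[n+1,n+k]|,\qquad b=|X''\cap[n+k+1,2n]|,
\]
so $a+b=n-|X'|$. The tail $T$ has $a$ opening brackets, $b$ stars, and $n-k-b$ closing brackets. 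Each ``('' in $T$ sits to the left of all the ``)'' in $T$, so these pair inside $T$, giving $\min(a,n-k-b)$ pairs.

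The claim then reduces to the identity
\[
\theta_{\mathbf{pad}(I)}(X)=\theta_I(X')+b+\min\bigl(a,\,n-k-b\bigr)+\min\bigl(u,\,n-k-b-\min(a,n-k-b)\bigr),
\]
where $u$ is the number of unmatched ``('' in $W_I(X')$. We need to verify that the right-hand side equals $\theta_I(X')+n-k$, which amounts to showing that all $n-k-b$ closing brackets of $T$ get paired. That is, we need
\[
u+a\ \ge\ n-k-b,\qquad\text{i.e.}\qquad u\ \ge\ |X'|-k .
\]

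To get this inequality, observe that in $W_I(X')$ the number of ``('' minus the number of ``)'' equals $|X'\setminus I|-|I\setminus X'|=|X'|-k$. Unmatched ``('' minus unmatched ``)'' is the same quantity, since matched pairs cancel. Hence $u\ge |X'|-k$. If $|X'|<k$, the required bound $u\ge |X'|-k$ holds trivially because $u\ge 0$. This settles the reduction.

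The main obstacle is rigorously justifying the concatenation formula for the inside-out pairing, in particular that unmatched ``('' of $U$ pair with the leftover ``)'' of $V$ after $V$'s internal pairing. We would prove it by noting that the inside-out pairing coincides with the greedy stack-based pairing read left to right, where stars are ignored.

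As a sanity check, an alternative route is available. Since $\theta_{\mathbf{pad}(I)}(X)$ is the rank of $X$ in $\Omega_{\mathbf{pad}(I)}$, one can exhibit a basis inside $X$ directly: take a maximal independent subset $B'\subseteq X'$ for $\Omega_I$, and extend it using elements of $X''$ together with Gale-order comparisons. This route is less clean, so we would present the bracket argument.
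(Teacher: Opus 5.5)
Your argument is correct and is essentially the paper's proof: both read the bracket word on the extended $1\times 2n$ grid and show that every ``)'' in the shaded boxes $n+k+1,\dots,2n$ gets paired, using the ``('' from $X\cap[n+1,n+k]$ together with the unmatched ``('' of $W_I(X\cap[n])$. Your balance inequality $u\ge |X\cap[n]|-k$ merges the paper's two cases ($|X\cap[n]|\le k$ and $|X\cap[n]|>k$) into one step and makes the concatenation bookkeeping explicit.
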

\begin{proof}
	To describe the relation between $\theta_{\mathbf{pad}(I)}(X)$ and $\theta_I(X \cap [n])$, we only need to extend the $1 \times n$ grid $\mathcal{D}_I$ to the $1 \times 2n$ grid $\mathcal{D}_{\mathbf{pad}(I)}$ as Figure \ref{fig25}.
	
	\begin{figure}[H]
		\centering
		\includegraphics[width=10cm]{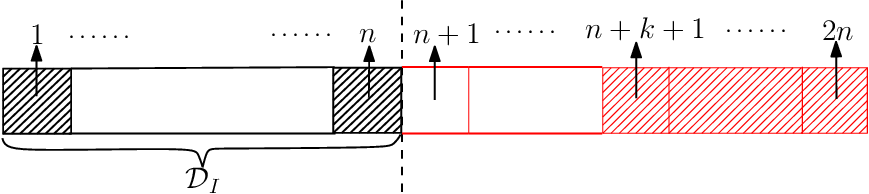}
		\caption{The picture of $\mathcal{D}_{\mathbf{pad}(I)}$ when $1 \in I$ and $n \in I$}
		\label{fig25}
	\end{figure}
	When $|X \cap [n]| \leq k$, then $|X \cap [n+1,2n]| \geq n-k$. Thus the right brackets )'s in  shaded boxes labeled by $n+k+1,\cdots ,2n$ are all paired and we get the above relation.
	
	When $|X \cap [n]| < k$, the boxes  labeled by $1,\cdots,n$ provide at least $|X \cap [n]|-k$ unpaired left brackets ('s. The total number of these unpaired ('s and the intersection of $X$ with $[n+1,2n]$ is at least $|X \cap [n]|-k+|X \cap [n+1,2n]|=|X \cap [n]|-k+n-|X \cap [n]|=n-k$. Therefore, the right brackets )'s in  shaded boxes labeled by $n+k+1,\cdots ,2n$ are all paired, then we get the same relation.
\end{proof}

\vspace{0.2cm}
From Lemma \ref{lem5}, we know that the value of $\theta_I$ on $2^{[n]}$ can be viewed as a projection of $\theta_{\mathbf{pad}(I)}$'s value on $\binom{[2n]}{n}$ with an additional translation. Fortunately, Proposition \ref{prop4} provides a specific piecewise linear expression for $\theta_{\mathbf{pad}(I)}$ on the domain $\binom{[2n]}{n}$. To get the explicit piecewise linear formula of $\theta_I$ on the whole domain $2^{[n]}$, we need to describe the image of affine areas of $\theta_{\mathbf{pad}(I)}$ under the projection $\pi$.  We use $\Sigma_I$ to denote the generalized positroid subdivision of hypercube $[0,1]^n$ induced by $\theta_I$.
\vspace{0.2cm}

\begin{prop}
	\label{prop8}
	Let $\mathbf{pad}(I)=\widetilde{I_1} \sqcup \cdots \widetilde{I}_{\sigma(\mathbf{pad}(I))}$ be the cyclic interval decomposition of $\mathbf{pad}(I)$ over ground set $[2n]$. Simlarly, for $\mathbf{pad}(I)$, we analogouly define $\widetilde{C}_i$ and $\widetilde{S}_i$ (as $C_i$, $S_i$ of $I$ over ground set $[n]$ for $i=1,\cdots,\sigma(I)$) over ground set $[2n]$ for $i=1,\cdots, \sigma(\mathbf{pad}(I))$. Then the partition number $\sigma(\mathbf{pad}(I))=\sigma(I)$ or $\sigma(I)+1$.
	\begin{enumerate}
		\item When $\sigma(\mathbf{pad}(I))=\sigma(I)=l$, this is equivalent to that $1 \in I$ and $n \notin I$ as Figure \ref{fig26}. In this case, $\widetilde{S}_1=[1,E(S_1)] \sqcup [E(S_l)+1,2n]$, $\widetilde{I}_1=[1,E(S_1)] \sqcup [n+k+1,2n]$ while $\widetilde{S}_i=S_i$, $\widetilde{I}_i=I_i$ for $i=2,\cdots,l$. Then $\Sigma_I$ is a $l$-split of $[0,1]^n$ and the common face of $\Sigma_I$ is
		\[
		\mathcal{F}_{\Sigma_I}=\Delta_{|I_2|,S_2} \times \Delta_{|I_3|,S_3} \times \cdots \Delta_{|I_l|,S_l} \times [0,1]^{S_1},
		\]
		where $\Delta_{|I_i|,S_i}:=\{x \in [0,1]^{S_i}\;|\;\sum\limits_{j \in S_i} x_i=|I_i|\}$.
		\item When $\sigma(\mathbf{pad}(I))=\sigma(I)=l+1$, $\Sigma_I$ induce a $(l+1)$-split of $[0,1]^n$, and there are three cases:\\
		
		\begin{enumerate}
			\item If $1 \in I$ and $n \in I$ as Figure \ref{fig25'}, then $\widetilde{S}_1=[1,E(S_1)] \sqcup [n+1,2n]$,  $\widetilde{I}_1=[1,E(S_1)] \sqcup [n+k+1,2n]$, $\widetilde{S}_{l+1}=[E(S_l)+1,n]$, $\widetilde{I}_{l+1}=I_1 \cap \widetilde{S}_{l+1}$ while $\widetilde{S}_i=S_i$, $\widetilde{I}_i=I_i$ for $i=2,\cdots,l$. The common face
			\[
			\mathcal{F}_{\Sigma_I}=\Delta_{|I_2|,S_2} \times \Delta_{|I_3|,S_3} \times \cdots \Delta_{|I_l|,S_l} \times \Delta_{|\widetilde{I}_{l+1}|, \widetilde{S}_{l+1}} \times [0,1]^{[1,E(S_1)]}.
			\]
			\item If $1 \notin I$ and $n \in I$ as Figure \ref{fig27}, then $\widetilde{S}_{l+1}=[n+1,2n]$, $\widetilde{I}_{l+1}=[n+k+1,2n]$ while $\widetilde{S}_i=S_i$, $\widetilde{I}_i=I_i$ for $i=1, \cdots, l$. The common face
			\[
			\mathcal{F}_{\Sigma_I}=\Delta_{|I_1|,S_1} \times \Delta_{|I_2|,S_2} \times \cdots \Delta_{|I_l|,S_l}.
			\]
			\item If $1 \notin I$ and $n \notin I$ as Figure \ref{fig28}, then $\widetilde{S}_1=[1,E(S_1)]$, $\widetilde{I}_1=I_1$, $\widetilde{S}_{l+1}=[E(S_l)+1,2n]$, $\widetilde{I}_{l+1}=[n+k+1,2n]$ while $\widetilde{S}_i=S_i$, $\widetilde{I}_i=I_i$ for $i=2, \cdots, l$. The common face
			\[
			\mathcal{F}_{\Sigma_I}=\Delta_{|\widetilde{I}_1|, \widetilde{S}_1} \times \Delta_{|I_2|,S_2} \times \Delta_{|I_3|,S_3} \times \cdots \Delta_{|I_l|,S_l} \times [0,1]^{[E(S_l)+1,n]}.
			\]
		\end{enumerate}
	\end{enumerate}
\end{prop}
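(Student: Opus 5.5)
The plan is to deduce the structure of $\Sigma_I$ from the already-understood hypersimplex picture for $\mathbf{pad}(I)$ on $\Delta_{n,2n}$, and then push it down to $[0,1]^n$ along $\pi$.

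\textbf{Step 1: cyclic decomposition of $\mathbf{pad}(I)$.} The set $\mathbf{pad}(I)$ is $I$ together with the terminal block $[n+k+1,2n]$, viewed cyclically on $[2n]$. In $\mathbb{Z}/2n$ this block is adjacent to $1$ and is separated from $n$ by the nonempty gap $[n+1,n+k]$ (if $k<n$). We then compare the cyclic intervals of $I$ in $\mathbb{Z}/n$ with those of $\mathbf{pad}(I)$ in $\mathbb{Z}/2n$.

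\begin{enumerate}
\item When $1\in I$, the padded block merges with the interval of $I$ containing $1$. Otherwise it forms a new interval.
\item When $n\in I$, the interval of $I$ wrapping from $n$ to $1$ in $\mathbb{Z}/n$ is cut at $n$, because $n+1\notin\mathbf{pad}(I)$. This contributes one extra interval, unless it was already counted via the merging in (1).
\end{enumerate}

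A four-way case check on whether $1\in I$ and whether $n\in I$ gives the claim $\sigma(\mathbf{pad}(I))\in\{\sigma(I),\sigma(I)+1\}$. Equality holds exactly when $1\in I$ and $n\notin I$. Each case yields the stated formulas for $\widetilde S_i$ and $\widetilde I_i$; these are read off by placing the gaps $\widetilde C_i$ before each $\widetilde I_i$, with the convention $1\in\widetilde S_1$.

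\textbf{Step 2: the split upstairs.} By Proposition \ref{prop4} and Theorem \ref{thm3} applied to $\mathbf{pad}(I)\in\binom{[2n]}{n}$, $\widehat\theta_{\mathbf{pad}(I)}$ on $\Delta_{n,2n}$ is the minimum of $\sigma(\mathbf{pad}(I))$ affine functions. Its domains of linearity are the cones $\widetilde\Pi_i\cap\Delta_{n,2n}$, and together they form a $\sigma(\mathbf{pad}(I))$-split. The common face is cut out by the equalities $x(\widetilde S_i)=|\widetilde I_i|$ for all $i$, namely
\[
\prod_i\Delta_{|\widetilde I_i|,\widetilde S_i}.
\]
This holds because the facet equations of the blade are exactly these, and the common face is where all the $f_{\widetilde\Pi_i}$ agree.

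\textbf{Step 3: transfer to the cube.} By Lemma \ref{lem5}, $\theta_{\mathbf{pad}(I)}=\theta_I\circ\pi+(n-k)$ on vertices. Hence the concave extension on $\Delta_{n,2n}$ equals $\widehat\theta_I\circ\pi+(n-k)$. Indeed, $\pi$ restricted to $\Delta_{n,2n}$ is onto $[0,1]^n$, and the upper envelope of the lift is the lift of the upper envelope, by the standard homogenization correspondence between $M$- and $M^\natural$-concave functions. Consequently the linearity domains of $\widehat\theta_I$ are the images $\pi(\widetilde\Pi_i\cap\Delta_{n,2n})$. These are distinct cells, since the affine functions are distinct and do not involve only the coordinates in $[n+1,2n]$ in a degenerate way. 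So $\Sigma_I$ is again a $\sigma(\mathbf{pad}(I))$-split, and its common face is $\pi$ of the common face upstairs.

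\textbf{Step 4: computing $\pi$ of each factor.} We project the common face factor by factor.
\begin{itemize}
\item A factor $\Delta_{|\widetilde I_i|,\widetilde S_i}$ with $\widetilde S_i\subseteq[n]$ projects to itself.
\item A factor with $\widetilde S_i=A\sqcup B$, where $A\subseteq[n]$ and $B\subseteq[n+1,2n]$, projects onto
\[
\{x\in[0,1]^A : |\widetilde I_i|-|B|\le x(A)\le|\widetilde I_i|\}.
\]
\begin{itemize}
\item In case (1), take $A=[1,E(S_1)]\sqcup[E(S_l)+1,n]=S_1$ and $B=[n+1,2n]$. Then $|\widetilde I_i|-|B|\le 0$ and $|\widetilde I_i|\ge|A|$, so the projection is the full cube $[0,1]^{S_1}$.
\item In case (2a), the analogous computation gives $[0,1]^{[1,E(S_1)]}$.
\item In case (2c), the factor $\widetilde S_{l+1}$ projects to $[0,1]^{[E(S_l)+1,n]}$.
\end{itemize}
\item A factor with $\widetilde S_i\subseteq[n+1,2n]$, as in case (2b), projects to a point and disappears.
\end{itemize}

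The main obstacle is Step 3. We must justify that projecting the regular split on $\Delta_{n,2n}$ gives exactly the regular subdivision of $[0,1]^n$ induced by $\widehat\theta_I$, with no cells merging. The plan is to check that the slopes of distinct $f_{\widetilde\Pi_i}$ differ on the $[n]$-coordinates modulo the relation $x([2n])=n$, so that they remain distinct affine functions on $[0,1]^n$.
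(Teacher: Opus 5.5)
Your proposal is correct and follows the paper's proof. In both, you determine the cyclic decomposition of $\mathbf{pad}(I)$ case by case, take the $\sigma(\mathbf{pad}(I))$-split of $\Delta_{n,2n}$ with common face $\prod_i\Delta_{|\widetilde I_i|,\widetilde S_i}$ from Proposition \ref{prop4} and Theorem \ref{thm3}, and project it along $\pi$ factor by factor; the paper writes this out only for case (1). Your Step 3 makes explicit the identity $\widehat\theta_{\mathbf{pad}(I)}=\widehat\theta_I\circ\pi+(n-k)$, which the paper uses tacitly (cf.\ the proof of Theorem \ref{thm7}); once that identity holds, every cell upstairs is the full preimage under $\pi$ of a cell of $\Sigma_I$, so the no-merging check you flag is automatic.
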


\begin{figure}[htbp]
	\centering
	\begin{subfigure}[t]{0.3\textwidth}
		\includegraphics[width=6.5cm]{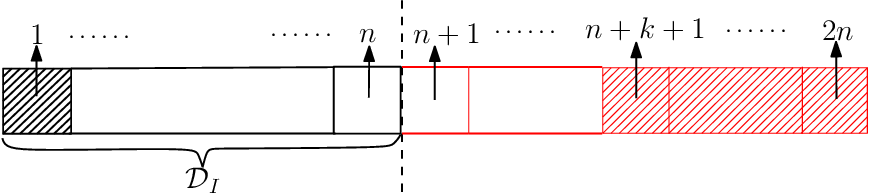}
		\caption{$1 \in I$ and $n \notin I$}
		\label{fig26}
	\end{subfigure}
	\hspace{0.15\textwidth}
	\begin{subfigure}[t]{0.3\textwidth}
		\includegraphics[width=6.5cm]{fig25.eps}
		\caption{$1 \in I$ and $n \in I$}
		\label{fig25'}
	\end{subfigure}\\
	\vspace{0.3cm}
	\begin{subfigure}[t]{0.3\textwidth}
		\includegraphics[width=6.5cm]{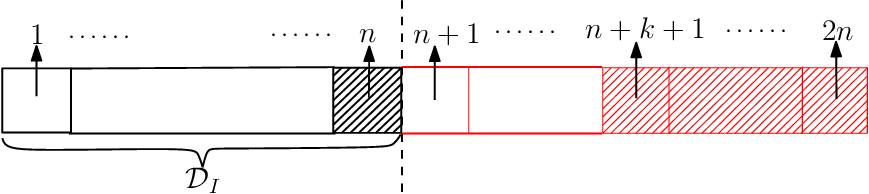}
		\caption{$1 \notin I$ and $n \in I$}
		\label{fig27}
	\end{subfigure}
	\hspace{0.15\textwidth}
	\begin{subfigure}[t]{0.3\textwidth}
		\includegraphics[width=6.5cm]{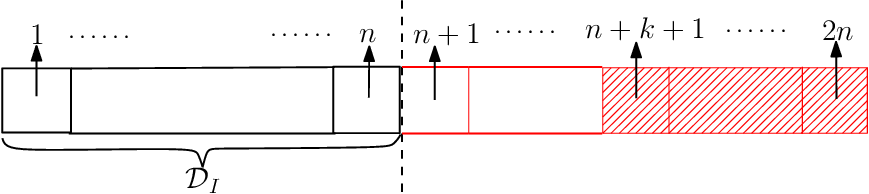}
		\caption{$1 \notin I$ and $n \notin I$}
		\label{fig28}
	\end{subfigure}
	\caption{}
	\label{fig25'28}
\end{figure}

\vspace{0.2cm}
\begin{proof}
	Since $\mathbf{pad}(I)$ is obtained from $I$ by adding an interval $[n+k+1, 2n]$, the number of cyclic intervals in the decomposition of $\mathbf{pad}(I)$ either increases by 1 or remains unchanged.
	We provide a proof for (1) , since the remaining cases follow by analogous arguments. 
	
	In case (1), when $\sigma(\mathbf{pad}(I))=\sigma(I)$, i.e.,$[n+k+1,2n]$ is absorbed by a cyclic interval of $I$, i.e., $1 \in I$ and $n \notin I$. By the assumption that $1 \in \widetilde{S}_1$, we obtain that $\widetilde{S}_1=[1,E(S_1)] \sqcup [E(S_l)+1,2n]$, $\widetilde{I}_1=[1,E(S_1)] \sqcup [n+k+1,2n]$ while $\widetilde{S}_i=S_i$, $\widetilde{I}_i=I_i$ for $i=2,\cdots,l$.
	
	Let $\beta_{\mathbf{pad}(I)}$ be the translated blade subdivision of $\Delta_{n,2n}$ induced by $\mathbf{pad}(I)$. Thus, $\beta_{\mathbf{pad}(I)}$ is a $l$-split of $\Delta_{n,2n}$. The image of the full-dimensional positroid $\pi(\widetilde{\Pi}_i \cap \Delta_{n,2n})=\pi(\widetilde{\Pi}_i) \cap [0,1]^n$ under projection $\pi$ is a full-dimensional generalized positroid. The common face of $\beta_{\mathbf{pad}(I)}$ is
	\[
	\mathcal{F}_{\beta_{\mathbf{pad}(I)}}=\bigcap\limits_{i=1}^l\left\{x \in \Delta_{n,2n}\;|\;x_{\widetilde{S}_i}=|\widetilde{I_i}|\}\right\}=\Delta_{|\widetilde{I}_{1}|, \widetilde{S}_{1} } \times \cdots \Delta_{|\widetilde{I}_{l}|, \widetilde{S}_{l}}.
	\]
	Consider the image of every connect component of positroid $\mathcal{F}_{\beta_{\mathbf{pad}(I)}}$ under $\pi$, we have $\pi(\Delta_{|\widetilde{I}_{i}|, \widetilde{S}_{i} })=\Delta_{|\widetilde{I}_{i}|, \widetilde{S}_{i} }=\Delta_{|I_i|,S_i}$ for $i=2,\cdots,l$ and $\pi(\Delta_{|\widetilde{I}_{1}|, \widetilde{S}_{1}})=[0,1]^{S_1}$. Thus,
	\[
	\mathcal{F}_{\Sigma_I}=\pi(\mathcal{F}_{\beta_{\mathbf{pad}(I)}})=\Delta_{|I_2|,S_2} \times \Delta_{|I_3|,S_3} \times \cdots \Delta_{|I_l|,S_l} \times [0,1]^{S_1}.
	\]
\end{proof}

\vspace{0.2cm}
Using the same setup as Proposition \ref{prop8} and combining with Proposition \ref{prop4}, we can give the concrete expression of the concave extension for the Schubert matroid rank function.

\begin{rmk}
	\label{rmk} The facet-defining inequalities of the polyhedron $\pi(\widetilde{\Pi}_i)$ can be written explicitly. For example, in (1) of Proposition \ref{prop8}, we suppose that $i \neq 1$, then
	\[
	\widetilde{\Pi}_i=
	\begin{dcases}
		\widetilde{x}=(\widetilde{x}_1, \cdots,\widetilde{x}_{2n}) \in \mathbb{R}^{2n}, \\
		\sum\limits_{m=1}^{2n}\widetilde{x}_m=n,\\
		\widetilde{x}_{\widetilde{S}_i} \geq |\widetilde{I}_i|,\\
		\widetilde{x}_{\widetilde{S}_i \cup \widetilde{S}_{i+1}} \geq |\widetilde{I}_i|+|\widetilde{I}_{i+1}|,\\
		\vdots\\
		\widetilde{x}_{\widetilde{S}_i \cup \cdots \cup \widetilde{S}_l} \geq |\widetilde{I}_i|+ \cdots +|\widetilde{I}_l|,\\
		\widetilde{x}_{\widetilde{S}_{i-1}} \leq |\widetilde{I}_{i-1}|,\\
		\widetilde{x}_{\widetilde{S}_{i-2} \cup \widetilde{S}_{i-1}} \leq |\widetilde{I}_{i-2}|+|\widetilde{I}_{i-1}|,\\
		\vdots\\
		\widetilde{x}_{\widetilde{S}_2 \cup \cdots \cup \widetilde{S}_{i-1}} \leq |\widetilde{I}_2|+ \cdots +|\widetilde{I}_{i-1}|.
	\end{dcases}
	\xrightarrow{\displaystyle \pi} \quad \pi(\widetilde{\Pi}_i)=
	\begin{dcases}
		x=(x_1,\cdots,x_n) \in \mathbb{R}^{n},\\
		x_{S_i}  \geq |I_i|,\\
		x_{S_i \cup S_{i+1}} \geq |I_i|+|I_{i+1}|,\\
		\vdots\\
		x_{S_i \cup \cdots S_l} \geq |I_i|+\cdots +|I_l|,\\
		x_{S_{i-1}} \leq |I_{i-1}|,\\
		x_{S_{i-2} \cup S_{i-1}} \leq |I_{i-2}|+|I_{i-1}|,\\
		\vdots\\
		x_{S_2 \cup \cdots S_{i-1}} \leq |I_2|+\cdots +|I_{i-1}|.
	\end{dcases}
	\]
	Notice that with the condition $\sum\limits_{m=1}^{2n}\widetilde{x}_m=n$, the inequality $\widetilde{x}_{\widetilde{S}_i \cup \cdots \cup \widetilde{S}_{j}} \geq |\widetilde{I}_i|+ \cdots +|\widetilde{I}_j|$ is equivalent to that $\widetilde{x}_{\widetilde{S}_{j+1} \cup \cdots \cup \widetilde{S}_{i-1}} \leq |\widetilde{I}_{j+1}|+ \cdots +|\widetilde{I}_{i-1}|$. Since $[n+1,2n] \subseteq \widetilde{S}_1$, we adopt one of these equivalent expressions to avoid having $\widetilde{S}_1$ to appear in the subscript. So we do not need to change the form of these inequalities under the projection $\pi$.
\end{rmk}

\vspace{0.2cm}
\begin{thm}
	\label{thm7}
	Corresponding to the cases of Proposition \ref{prop8}, the concave extension $\widehat{\theta_I}(x)$ can be written as
	\begin{enumerate}
		\item When $\sigma(\mathbf{pad}(I))=\sigma(I)=l$, i.e., $1 \in I$ and $n \notin I$,\\
		\[
		\widehat{\theta_I}(x)=
		\begin{cases}
			\sum\limits_{m=1}^{E(S_{i-1})}x_m+|I_i|+\cdots |I_l| &\text{for}\;x \in \pi(\widetilde{\Pi}_i) \cap [0,1]^n\;\text{and} \;i \in[2,l],\\
			\sum\limits_{m=1}^{E(S_{l})}x_m &\text{for}\;x \in \pi(\widetilde{\Pi}_1) \cap [0,1]^n\;\text{and} \;i=1.
		\end{cases}
		\]\\
		
		\item  When $\sigma(\mathbf{pad}(I))=\sigma(I)+1=l+1$,
		\begin{enumerate}
			\item If $1 \in I$ and $n \in I$, then\\
			\[
			\widehat{\theta_I}(x)=
			\begin{cases}
				\sum\limits_{m=1}^{E(S_{i-1})}x_m+|I_i|+\cdots |I_l|+|\widetilde{I}_{l+1}| &\text{for}\;x \in \pi(\widetilde{\Pi}_i) \cap [0,1]^n\;\text{and} \;i \in[2,l+1],\\
				\sum\limits_{m=1}^{n}x_m &\text{for}\;x \in \pi(\widetilde{\Pi}_1) \cap [0,1]^n\;\text{and} \;i=1.
			\end{cases}
			\]\\
			
			\item If $1 \notin I$ and $n \in I$, then\\
			\[
			\widehat{\theta_I}(x)=
			\begin{cases}
				\sum\limits_{m=1}^{E(S_{i-1})}x_m+|I_1 \backslash [1,E(S_1)]|+|I_i|+\cdots |I_l| &\text{for}\;x \in \pi(\widetilde{\Pi}_i) \cap [0,1]^n\;\text{and} \;i \in[2,l+1],\\
				k &\text{for}\;x \in \pi(\widetilde{\Pi}_1) \cap [0,1]^n\;\text{and} \;i=1.
			\end{cases}
			\]\\
			
			\item If $1 \notin I$ and $n \notin I$, then\\
			\[
			\widehat{\theta_I}(x)=
			\begin{cases}
				\sum\limits_{m=1}^{E(S_{i-1})}x_m+|I_i|+\cdots |I_l| &\text{for}\;x \in \pi(\widetilde{\Pi}_i) \cap [0,1]^n\;\text{and} \;i \in[2,l+1],\\
				k &\text{for}\;x \in \pi(\widetilde{\Pi}_1) \cap [0,1]^n\;\text{and} \;i=1.
			\end{cases}
			\]
		\end{enumerate}
	\end{enumerate}
\end{thm}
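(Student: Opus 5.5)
The approach is to lift to the hypersimplex $\Delta_{n,2n}$, where Proposition \ref{prop4} already gives an explicit formula, and then push the formula back down along $\pi$. For $X\in\binom{[2n]}{n}$, Lemma \ref{lem5} gives $\theta_{\mathbf{pad}(I)}(X)=\theta_I(X\cap[n])+n-k$, so $\widetilde{\theta}:=\theta_{\mathbf{pad}(I)}-(n-k)$ is the $M$-concave lift of $\theta_I$. Two facts drive the argument.

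\begin{enumerate}
\item The concave extension of $\widetilde\theta$ over $\Delta_{n,2n}$ equals $\widehat{\theta}_I\circ\pi$. This holds because $\pi$ restricted to the vertices of $\Delta_{n,2n}$ is surjective onto $\{0,1\}^n$. Also, the upper envelope over $\Delta_{n,2n}$ of a function that depends only on the first $n$ coordinates projects to the upper envelope over $\pi(\Delta_{n,2n})=[0,1]^n$: any convex combination in the cube lifts to one in the hypersimplex, by padding each vertex $e_J$ with $n-|J|$ coordinates from $[n+1,2n]$.
\item By Proposition \ref{prop4} applied to $\mathbf{pad}(I)$ on the ground set $[2n]$, $\widehat{\theta}_{\mathbf{pad}(I)}$ is the minimum of the affine functions $f_{\widetilde\Pi_i}$. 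Its linearity domains are exactly $\widetilde\Pi_i\cap\Delta_{n,2n}$.
\end{enumerate}

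Next I compute each $f_{\widetilde\Pi_i}$ in the four cases of Proposition \ref{prop8}, using the explicit $\widetilde S_i,\widetilde I_i$ listed there, and subtract $n-k$. For $i\ge2$ we have $E(\widetilde S_{i-1})\le n$ whenever $\widetilde S_{i-1}$ is one of the original $S_j$, so $\sum_{m\le E(\widetilde S_{i-1})}\widetilde x_m=\sum_{m\le E(S_{i-1})}x_m$ depends only on $\pi(\widetilde x)$. The constant term is
\[
|\widetilde I_1\setminus[1,E(\widetilde S_1)]|+|\widetilde I_i|+\cdots+|\widetilde I_{\sigma}| .
\]
Whenever $[n+k+1,2n]$ sits inside some $\widetilde I_j$ with $j\ge i$, or inside $\widetilde I_1\setminus[1,E(\widetilde S_1)]$, this block contributes exactly $n-k$ and cancels the shift. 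The index $i=1$, and in cases (2)(b),(c) the new block $\widetilde S_{l+1}$, must be treated separately. There the sum runs up to $E(\widetilde S_\sigma)=2n$ or up to $E(S_l)$, and I use $\sum_{m=1}^{2n}\widetilde x_m=n$ together with the fact that the padding coordinates equal their forced values on the relevant region. This produces the constants $k$ and the sums $\sum_{m=1}^n x_m$ appearing in the statement. In case (2)(a) the extra block $\widetilde I_{l+1}=I_1\cap[E(S_l)+1,n]$ accounts for the term $|\widetilde I_{l+1}|$, and $|I_1\setminus[1,E(S_1)]|$ is absorbed into it.

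Finally, each region $\pi(\widetilde\Pi_i\cap\Delta_{n,2n})$ equals $\pi(\widetilde\Pi_i)\cap[0,1]^n$, with the defining inequalities given in Remark \ref{rmk}. The remark was set up so that no inequality involves $\widetilde S_1\supseteq[n+1,2n]$, so the inequalities descend unchanged. Hence on $\pi(\widetilde\Pi_i)\cap[0,1]^n$ the function $\widehat{\theta}_I$ agrees with the pushed-down affine function.

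The main obstacle is the case bookkeeping for the first and last blocks. This means checking, in each of (1), (2)(a), (2)(b), (2)(c), that the padding contribution cancels exactly. It also means checking that on $\widetilde\Pi_1$ (respectively $\widetilde\Pi_{l+1}$) the affine function really depends only on $x$, since a priori it involves a partial sum ending inside $[n+1,2n]$. I would handle this by rewriting every partial sum that reaches past $n$ through the complement identity $\widetilde x([1,m])=n-\widetilde x([m+1,2n])$. I would then evaluate it at the vertices $e_J$ lying in that region, where Lemma \ref{lem1} for $\mathbf{pad}(I)$ pins down the values, and conclude by affineness on the region.
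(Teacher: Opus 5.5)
Your route is the paper's own: apply Proposition \ref{prop4} to $\mathbf{pad}(I)$ on $[2n]$, read the blocks $\widetilde S_i,\widetilde I_i$ off Proposition \ref{prop8}, cancel the shift $n-k$ via Lemma \ref{lem5}, and push down along $\pi$. Your four case computations do reproduce the stated formulas. The step that is not justified as written is Fact~1.

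Padding the vertices of an optimal combination for $x=\pi(\widetilde x)$ lands at \emph{some} point of the fiber $\pi^{-1}(x)\cap\Delta_{n,2n}$. It need not land at the given $\widetilde x$, whose last $n$ coordinates are prescribed. So what you have shown is only this: $\widehat\theta_{\mathbf{pad}(I)}-(n-k)\le\widehat\theta_I\circ\pi$ pointwise (the easy direction), with equality attained somewhere on each fiber. The pointwise principle you invoke is false in general. Take $n=2$ and $h(\emptyset)=h(\{1\})=h(\{2\})=0$, $h(\{1,2\})=1$; then $\widehat h(\tfrac12,\tfrac12)=\tfrac12$. But at $\widetilde x=(\tfrac12,\tfrac12,1,0)\in\Delta_{2,4}$ only $\{1,3\}$ and $\{2,3\}$ can occur in a representing combination. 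Hence the concave extension of $\widetilde h(J):=h(J\cap[2])$ is $0$ there.

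The repair is already in your bookkeeping, provided you do it before invoking Fact~1. On $\Delta_{n,2n}$ each $f_{\widetilde\Pi_i}$ depends on $\pi(\widetilde x)$ alone. The partial sum $\sum_{m\le E(\widetilde S_{i-1})}\widetilde x_m$ stops at an index $\le n$ in every case, with one exception: for $i=1$ in (2)(b),(c), $E(\widetilde S_{l+1})=2n$ and the sum is $\widetilde x([2n])=n$. Hence $\widehat\theta_{\mathbf{pad}(I)}=\min_i f_{\widetilde\Pi_i}$ is constant on fibers. Combined with the two facts above, this gives $\widehat\theta_{\mathbf{pad}(I)}-(n-k)=\widehat\theta_I\circ\pi$ everywhere.

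Alternatively, take the optimal combination among the vertices of a single g-matroid cell $C$ of $\Sigma_I$. Then $(C\times[0,1]^n)\cap\{\widetilde x([2n])=n\}$ is an integral g-polymatroid, so every fiber point is a convex combination of admissible lifts.

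This also removes the obstacle you anticipate: no complement identity, vertex evaluation, or ``forced'' padding coordinates are needed. One small correction: in (2)(b),(c) the block containing $[n+1,2n]$ is $\widetilde S_{l+1}$, not $\widetilde S_1$. So there the inequalities of Remark \ref{rmk} must be written to avoid $\widetilde S_{l+1}$.
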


\begin{proof}
	We provide the proof of (1) and the rest follows similarly. By Proposition \ref{prop4}, we write down the formula of $\widehat{\theta}_{\mathbf{pad}(I)}(\widetilde{x})$ where $\widetilde{x}=(\widetilde{x}_1, \cdots, \widetilde{x}_{2n}) \in \Delta_{n,2n}$,
	\[
	\widehat{\theta}_{\mathbf{pad}(I)}(\widetilde{x})=
	\begin{cases}
		\sum\limits_{m=1}^{E(\widetilde{S}_{i-1})}\widetilde{x}_m+|\widetilde{I}_1 \backslash [1,E(\widetilde{S}_1)]|+|\widetilde{I}_i|+\cdots |\widetilde{I}_l| &\text{for}\;\widetilde{x} \in \widetilde{\Pi}_i \cap \Delta_{n,2n}\;\text{and} \;i \in[2,l],\\
		\sum\limits_{m=1}^{E(\widetilde{S}_{l})}\widetilde{x}_m+|\widetilde{I}_1 \backslash [1,E(\widetilde{S}_1)]| &\text{for}\;\widetilde{x} \in \widetilde{\Pi}_1 \cap \Delta_{n,2n}\;\text{and} \;i=1.
	\end{cases}
	\]\\
	
	By (1) of Proposition \ref{prop8}, we know that $|\widetilde{I}_1 \backslash [1,E(\widetilde{S}_1)]|=n-k$. Then according to Lemma \ref{lem5}, $\widehat{\theta}_I$ is the projection of $\widehat{\theta}_{\mathbf{pad}(I)}$ under $\pi$ with a translation $n-k$. Thus, we complete the proof of (1) 
\end{proof}

\vspace{1cm}
\subsection{Generalized matroid subdivisions from weakly separated sets}\quad\\
\indent The notion weak separation were introduced  by Leclerc and Zelevinsky (\cite{LZ}, 1998) to characterize all the quasi-commuting families of quantum minors in the q-deformed coordinate ring $\mathbf{Q}_q[\mathcal{F}]$ of type A flag variety $\mathcal{F}$.
\begin{defi}
	Given two sets $A,B \in 2^{[n]}$, we write $A \prec B$ if $a <b$ for all $a \in A$ and $b \in B$ (In particular if one or both of $A,B$ are empty). We say $A,B$ are \textbf{weakly separated} if at least one of the following two conditions holds:
	\begin{itemize}
		\item If $|A| \geq |B|$ and $B-A$ can be partitioned into a disjoint union $B-A=B' \sqcup B''$ such that $B' \prec A-B \prec B''$.
		\item If $|B| \geq |A|$ and $A-B$ can be partitioned into a disjoint union $A-B=A' \sqcup A''$ such that $A' \prec B-A \prec A''$.
	\end{itemize}
	In particular, if $B'$ or $B''$ (resp., $A'$ or $A''$) equals $\emptyset$, we say that $A,B$ are \textbf{strongly separated}. A collection of sets $\mathcal{C} \subseteq 2^{[n]}$ is called a \textbf{weakly separated set} if any two elements in $\mathcal{C}$ are weakly separated. When both $B'$ and $B''$ (resp., $A'$ and $A''$) are non-empty, we say that $B$ \textbf{surrounds} $A$ (resp.,$A$ surrounds $B$).
\end{defi}

When $A,B$ have the same size, i.e., $A,B \in \binom{[n]}{k}$ for some $k >0$, the weak separation is equivalent to the chord separation by Galashin \cite{G}.

\begin{defi}
	We say $A,B \in \binom{[n]}{k}$ are \textbf{chord separated} if there is no $1 \leq i<j<k<l \leq n$ such that $i,k$ belong to one of $A-B$ and $B-A$ while $j,l$ belong to the other.
\end{defi}

Early established a connection between the matroid subdivision of hypersimplex $\Delta_{k,n}$ and the weakly separated set of equal-sized sets in \cite{Ear}. For $I \in \binom{[n]}{k}$ and let $\beta_I$ be the  matroid subdivision of $\Delta_{k,n}$ induced by the translated blade, his conclusion says,

\begin{thm}[\cite{Ear}]
	\label{thm8}
	Let $\{I_1,\cdots,I_s\} \subseteq \binom{[n]}{k}$ be a collection of sets. The refinement of subdivisions $\beta_{I_1},\cdots, \beta_{I_s}$ is a  matroid subdivision of $\Delta_{k,n}$ if and only if $I_1,\cdots,I_s$ are weakly separated.
\end{thm}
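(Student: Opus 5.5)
The plan is to recast the statement as a local criterion for $M$-concavity and then compare splits octahedron by octahedron. By Proposition \ref{prop9} and Proposition \ref{prop4}, each $\beta_I$ is the regular subdivision of $\Delta_{k,n}$ induced by the concave height function $\widehat{\theta}_I|_{\Delta_{k,n}}$. The common refinement of $\beta_{I_1},\dots,\beta_{I_s}$ is therefore the regular subdivision induced by $\theta:=\sum_{t}\widehat{\theta}_{I_t}$. Such a subdivision is a matroid subdivision iff all its edges are parallel to some $e_i-e_j$, i.e. iff $\theta|_{\binom{[n]}{k}}$ is $M$-concave (a valuated matroid).

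Next I would apply Murota's local exchange criterion. A function on $\binom{[n]}{k}$ is $M$-concave iff, for every $X\in\binom{[n]}{k-2}$ and distinct $a<b<c<d$ outside $X$, the maximum of
\[
\theta(Xab)+\theta(Xcd),\quad \theta(Xac)+\theta(Xbd),\quad \theta(Xad)+\theta(Xbc)
\]
is attained at least twice. This is a condition on each octahedron face of $\Delta_{k,n}$, and it is additive in the sense that each summand can be analysed separately. Each $\theta_{I}$ is a tropical Pl\"ucker function, so on such an octahedron the "crossing" term $\theta_I(Xac)+\theta_I(Xbd)$ is always one of the maxima. Hence $\theta_I$ either is affine on the octahedron, or splits it in one of exactly two ways: type $\mathrm{(ab|cd)}$, where the crossing term ties with $\theta_I(Xab)+\theta_I(Xcd)$ and strictly exceeds the third, or type $\mathrm{(ad|bc)}$, symmetrically. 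A sum of such functions violates the criterion on this octahedron iff two summands induce splits of opposite type there. Thus the theorem reduces to the pairwise statement: \emph{$I,J$ are weakly separated iff there is no octahedron on which $\theta_I$ and $\theta_J$ induce opposite splits.}

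To decide when $\theta_I$ splits a given octahedron and of which type, I would use Lemma \ref{lem1} and Proposition \ref{prop4}. These show that $\theta_I$ is affine on each cone $\Pi_i$, so a split occurs exactly when the six vertices $e_{Xab},\dots,e_{Xcd}$ fall into different cones $\Pi_i$. Equivalently, some blade inequality $x(S_i\cup\dots\cup S_{j})\ge |I_i|+\dots+|I_j|$ is violated on one side of the octahedron and strict on the other. Concretely, I expect the following description: the split is of type $\mathrm{(ab|cd)}$ when a cyclic interval $S_i\cup\dots\cup S_j$ separates $\{a,b\}$ from $\{c,d\}$ with $|X\cap(\cdot)|+1$ equal to the corresponding $|I|$-count, and of type $\mathrm{(ad|bc)}$ when the separating interval wraps around and splits $\{b,c\}$ from $\{a,d\}$. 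This can be confirmed on the bracket word $W_I(J)$ by checking whether an extra "$($" in the middle positions gets paired.

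For necessity, suppose $I,J$ are not weakly separated. Then, since $|I|=|J|$, there are $a<b<c<d$ with $a,c\in I\setminus J$ and $b,d\in J\setminus I$, or the reverse. I would take $X=(I\cap J)\cup$ (a suitable balanced choice of the remaining elements of $I\triangle J$). A direct bracket count should then show that $\theta_I$ gives type $\mathrm{(ab|cd)}$ while $\theta_J$ gives $\mathrm{(ad|bc)}$, or vice versa, so the refinement is not a matroid subdivision. Sufficiency is the converse: if some octahedron is split oppositely by $\theta_I$ and $\theta_J$, extract from the two separating cyclic intervals a crossing pattern of $I\setminus J$ and $J\setminus I$.

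The main obstacle is this last extraction in the sufficiency direction, which is where chord separation must be exploited precisely. To handle it, I would first reduce to the case $I\cap J=\emptyset$ by contracting common elements and deleting elements in neither set; both weak separation and the blade structure are compatible with these minors. I would then argue on the cyclic interval decompositions $I=I_1\sqcup\dots\sqcup I_l$ and $J=J_1\sqcup\dots\sqcup J_m$, using that weakly separated disjoint sets have interlacing intervals of a noncrossing form.
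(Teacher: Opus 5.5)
Note first that the paper gives no proof of this statement; it is quoted from Early \cite{Ear}, so I assess your argument on its own terms.

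Your outer reduction is sound. On the full domain $\binom{[n]}{k}$, the three-term relations characterize valuated matroids. Each $\theta_I|_{\binom{[n]}{k}}$ is a positive tropical Pl\"ucker vector (best justified by positroidality of $\beta_I$), so the crossing term is always a maximum. A sum therefore fails the relation on an octahedron exactly when two summands split it in opposite ways, which makes the problem pairwise. Identifying the common refinement with the subdivision induced by the summed vertex heights needs one more line (e.g.\ intersections of positroid polytopes are alcoved, hence integral), but that is minor. The real problem is that everything after this reduction, which is the content of the theorem, is only announced. Moreover, the two concrete ingredients you do offer do not work.

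\textbf{The local split criterion.} Your first formulation is the right one: the six vertices must not all lie in a single cone $\Pi_i$. Your ``equivalent'' version is strictly weaker, because a wall is only a cone inside its hyperplane. That version says some inequality $x(S_i\cup\dots\cup S_j)\ge|I_i|+\dots+|I_j|$ is violated at one vertex and strict at another, i.e.\ the hyperplane of a wall bisects the octahedron.

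Here is a counterexample. Take $I=\{1,3,5\}\subseteq[6]$, so $S_1=\{6,1\}$, $S_2=\{2,3\}$, $S_3=\{4,5\}$. Take the octahedron with $X=\{4\}$ and $\{a,b,c,d\}=\{1,2,5,6\}$.
\begin{itemize}
\item The hyperplane $x_1+x_6=1$ bisects it: $x(S_1)\ge 1$ is violated at $e_{\{2,4,5\}}$ and strict at $e_{\{1,4,6\}}$.
\item Nevertheless all six vertices lie in $\Pi_3$, and $\widehat{\theta}_I=f_{\Pi_3}$ is affine on the octahedron; all three sums equal $4$.
\end{itemize}
So the remaining inequalities of Definition \ref{def1} are essential, and controlling them is exactly what a proof must do. Your two type labels are also swapped. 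An interval separating $\{a,b\}$ from $\{c,d\}$ makes $e_{Xab}$ and $e_{Xcd}$ the apexes, so the crossing term ties with $\theta(Xad)+\theta(Xbc)$. This does not affect ``opposite'', but it shows the description was not checked.

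\textbf{The minor reduction for the hard direction is vacuous.} Contracting $I\cap J$ and deleting $[n]\setminus(I\cup J)$ restricts everything to the face where $x_e=1$ for $e\in I\cap J$ and $x_e=0$ for $e\notin I\cup J$. There, $I'=I\setminus J$ and $J'=J\setminus I$ are complementary, equal-size subsets of $G=I\triangle J$. For such sets, chord separation forces both to be cyclic intervals of $G$. Then $\theta_{I'}$ and $\theta_{J'}$ are affine on the minor hypersimplex, and nothing is left to prove there.

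All the difficulty sits in octahedra outside that face: those with some of $a,b,c,d$ in $I\cap J$ or outside $I\cup J$, or with $X\not\supseteq I\cap J$. ``Compatibility with minors'' holds only face by face. The statement you actually need is that an oppositely split octahedron can always be moved into this face, and that is just a restatement of the direction you are trying to prove.

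\textbf{What is needed instead.} You need either a genuine case analysis of octahedra meeting two cones of each blade, or a normalization that acts on the whole functions. The paper's proof of the flag analogue, Theorem \ref{g-positroid_ws}, does the latter with crystal operators (Theorem \ref{thm4}, Lemma \ref{lem2}, Corollary \ref{cor2}) before doing explicit bracket counts.

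Within this paper, the ``if'' direction can in fact be read off from Theorem \ref{g-positroid_ws}. An $M^\natural$-concave function restricts to an $M$-concave one on the layer $x([n])=k$, and that layer of $\Sigma_I$ is $\beta_I$ (Proposition \ref{prop9}). The ``only if'' direction still requires an explicit witness octahedron inside $\Delta_{k,n}$, which you have not constructed.
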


Actually, there exists a discrete function version of Theorem \ref{thm8}, which regards the compatibility between $\beta_I$ and $\beta_J$ as whether the sum of corresponding M-concave functions remains a M-concave function.

 Inspired by the relationship between Schubert rank functions and blade arrangements discussed in Proposition \ref{prop9}, we derive Theorem \ref{g-positroid_ws}, 
a flag version of the above theorem. 

\begin{thm}
	\label{g-positroid_ws}
	Let $\{I_1,\cdots,I_s\} \subseteq 2^{[n]}$ be a collection of sets. The subdivision of $[0,1]^n$ induced by concave function $\widehat{\theta}:=\sum\limits_{j=1}^{s}\widehat{\theta}_{I_j}$ is a generalized matroid subdivision if and only if $I_1,\cdots,I_s$ are weakly separated. 
\end{thm}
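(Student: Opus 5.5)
The plan is to turn the statement into one about $M^{\natural}$-concavity of a sum of Schubert rank functions, and then to homogenize so that Early's hypersimplex result, Theorem \ref{thm8}, applies. Since each $\theta_{I_j}$ is submodular and tropical Pl\"ucker, $\theta:=\sum_j\theta_{I_j}$ is still submodular. The subdivision of $[0,1]^n$ induced by $\widehat{\theta}$ is a g-matroid subdivision exactly when every linearity domain is a g-polymatroid, that is, when $\theta$ is $M^{\natural}$-concave. This uses the Fujishige--Hirai characterization recalled before Proposition \ref{plucker_submodular}, together with the fact that $\widehat\theta=\sum_j\widehat\theta_{I_j}$, because each summand is $M^\natural$-concave and hence coincides with its concave closure.

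First I pass to $\Delta_{n,2n}$. Lift $\theta$ to $\widetilde\theta(X)=\theta(X\cap[n])$ on $\binom{[2n]}{n}$. By the argument in the proof of Proposition \ref{plucker_submodular}, $\theta$ is $M^\natural$-concave iff $\widetilde\theta$ is $M$-concave. Lemma \ref{lem5} gives
\[
\widetilde\theta=\sum_j \theta_{\mathbf{pad}(I_j)}-\sum_j(n-|I_j|).
\]
So $\widetilde\theta$ is, up to an additive constant, the sum of the Schubert rank functions of $\mathbf{pad}(I_1),\dots,\mathbf{pad}(I_s)$ on $\binom{[2n]}{n}$. By Proposition \ref{prop9} (applied with $n$ replaced by $2n$ and $k$ by $n$), the regular subdivision of $\Delta_{n,2n}$ induced by $\widehat\theta_{\mathbf{pad}(I_j)}$ is the translated blade subdivision $\beta_{\mathbf{pad}(I_j)}$. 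The subdivision induced by the sum is the common refinement of these. Hence, by Theorem \ref{thm8}, $\widetilde\theta$ is $M$-concave (a matroid subdivision) iff $\{\mathbf{pad}(I_j)\}$ is weakly separated in $\binom{[2n]}{n}$.

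The combinatorial step, which I expect to be the main obstacle, is showing that $I,J\in 2^{[n]}$ are weakly separated iff $\mathbf{pad}(I),\mathbf{pad}(J)$ are chord separated. Write $|I|=k\ge |J|=m$. Then
\[
\mathbf{pad}(I)\setminus\mathbf{pad}(J)=I\setminus J,\qquad \mathbf{pad}(J)\setminus\mathbf{pad}(I)=(J\setminus I)\cup[n+m+1,n+k],
\]
and the padding block $P=[n+m+1,n+k]$ lies to the right of everything in $[n]$ and is nonempty when $k>m$.

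\emph{Weak $\Rightarrow$ chord.} Suppose $J\setminus I=J'\sqcup J''$ with $J'\prec I\setminus J\prec J''$. Then the elements of $\mathbf{pad}(J)\setminus\mathbf{pad}(I)$ read cyclically are $J''\cup P\cup J'$, one cyclic block. This block is interrupted only by $I\setminus J$, so no crossing $i<j<k<l$ can occur.

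\emph{Chord $\Rightarrow$ weak.} Argue by contraposition. If the Leclerc--Zelevinsky condition fails for the pair, I would exhibit an alternating quadruple. Elements of $I\setminus J$ and $J\setminus I$ alternate in a pattern of the form $a<b<a'$ with $a,a'\in I\setminus J$ and $b\in J\setminus I$, or else the roles go the wrong way for the size inequality. Appending a point of $P$ (when $k>m$), or using the size-counting argument of the equal-size case, then gives a crossing. When $k=m$ the padding is the same for both sets and this reduces to Galashin's equivalence of weak and chord separation. When $k>m$, the key point is that failure of weak separation forces some element of $I\setminus J$ to lie between two elements of $J\setminus I$, or forces $J$ to be surrounded incorrectly; the padding element then supplies the fourth point. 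The care needed is in handling the surrounding conventions of the definition for unequal sizes.

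Combining the three equivalences proves the theorem.
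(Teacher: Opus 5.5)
Your route differs from the paper's, and it works once two points are repaired.

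\textbf{Comparison with the paper.} The paper never leaves the cube. It reduces to pairwise compatibility of DCTP functions. It then uses Kamnitzer's crystal raising operators (Theorem \ref{thm4}, after Sanchez) to normalize a pair $(I_1,I_2)$ to $(I_1^*,I_2^*)$ without changing compatibility or weak separation. Necessity comes from an explicit violating triple (Proposition \ref{prop5}). Sufficiency combines Sanchez's Theorem C for strongly separated pairs with a bracket-counting case analysis in the surrounding case (Lemma \ref{lem4}, Proposition \ref{prop6}).

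You instead homogenize to $\Delta_{n,2n}$ via $\mathbf{pad}$ (Lemma \ref{lem5}), identify Schubert rank functions with translated blades (Proposition \ref{prop9}), invoke Early's Theorem \ref{thm8}, and pull weak separation back along $\mathbf{pad}$. This is exactly the machinery the paper only uses later, for Theorem \ref{thm10}. Your route makes the ``flag analogue of Early'' a literal corollary and bypasses both the crystal structure and Sanchez's theorem. The price is that it rests entirely on Early's hypersimplex theorem as a black box. The paper's argument is independent of Early and works directly with the tropical Pl\"ucker relation for the sum, which is the form used for Theorem \ref{minkow_schubert}.

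\textbf{First repair: the padding step.} The step you call the main obstacle is Lemma \ref{lem6} (Oh--Postnikov--Speyer), already quoted in the paper, and it is short. For $k>m$ the Leclerc--Zelevinsky condition fails exactly when some $b\in J\setminus I$ lies strictly between two elements $a<c$ of $I\setminus J$. Then $a<b<c<d$ is a crossing for any $d\in P$. For $k=m$ the paddings coincide and Galashin's equivalence applies. Your later sentence states the opposite pattern (an element of $I\setminus J$ between two elements of $J\setminus I$). That is the permitted configuration and produces no crossing, so that clause should be removed.

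\textbf{Second repair: the concave-closure identity.} Your reason for $\sum_j\widehat\theta_{I_j}$ being the concave closure of $\theta$ (``each summand is $M^\natural$-concave'') does not establish it. You need this identity, both on the cube and on $\Delta_{n,2n}$, to pass between ``the common refinement is a (g-)matroid subdivision'' and ``the sum is ($M^\natural$-)$M$-concave''. The paper leaves the same identification implicit. The identity does hold, by either of two arguments:
\begin{itemize}
\item Cells of the common refinement of two g-matroid subdivisions are intersections of two integral g-polymatroids, hence $0/1$-polytopes. Since partial sums of a compatible family stay compatible, you can then induct on the number of summands.
\item More simply, Lemma \ref{lem5} together with the argument in Theorem \ref{thm7} gives $\widehat\theta_{\mathbf{pad}(I)}=\widehat\theta_I\circ\pi+(n-|I|)$ on $\Delta_{n,2n}$. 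So the common refinement of the blades $\beta_{\mathbf{pad}(I_j)}$ has cells $\pi^{-1}(D)\cap\Delta_{n,2n}$, where $D$ ranges over cells of the cube refinement. Moreover, $D$ is a g-matroid iff $\pi^{-1}(D)\cap\Delta_{n,2n}=(D\times[0,1]^n)\cap\{x([2n])=n\}$ is a matroid polytope. This transfers Early's statement to the cube with no closure argument at all.
\end{itemize}
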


 We leave the proof of this theorem to the next section. 
We will directly prove the compatibility of the tropical Pl\"ucker relations by using the crystal structure of the corresponding functions.

Let $Q(\theta)$ be the base polytope with support function $\theta$. Thus, weak separation is a necessary and sufficient condition for the Minkowski sum of Schubert matroid polytopes to be MV polytopes, as stated in the following result.

\begin{thm}
	\label{minkow_schubert}
	The Minkowski sum $Q(\theta)=\sum\limits_{j=1}^sQ(\theta_{I_j})$ is a MV polytope if and only if $I_1,\cdots,I_s$ are weakly separated.
\end{thm}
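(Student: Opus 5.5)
Since the Minkowski sum of base polytopes has support function equal to the sum of the support functions, $Q(\theta)=\sum_j Q(\theta_{I_j})$ is the base polytope of $\theta:=\sum_j\theta_{I_j}$. A sum of submodular functions is submodular, so by Proposition \ref{plucker_submodular} and the Kamnitzer parametrization the statement reduces to this: $\theta$ satisfies every tropical Pl\"ucker relation \eqref{trop_plucker} if and only if $I_1,\dots,I_s$ are pairwise weakly separated. Each $\theta_{I_j}$ already satisfies \eqref{trop_plucker}. Therefore, for a fixed triple $X,i<j<k$, the sum $\theta$ satisfies the relation exactly when all summands attain the maximum on a common side. That is, no $\theta_{I_a}$ may strictly prefer $f(Xij)+f(Xk)$ while another $\theta_{I_b}$ strictly prefers $f(Xjk)+f(Xi)$. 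Compatibility is thus a pairwise condition, and it suffices to treat $s=2$ with $I=I_1$, $J=I_2$.

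The first step is to classify, for a single Schubert rank function $\theta_I$, the \emph{local type} at each triple $(X;i,j,k)$. The type is ``left strict'', ``right strict'' or ``tie'', according to which term attains the maximum. I would compute it using the bracket word $W_I(\cdot)$. Adding the letters $i,j,k$ to $X$ changes the word only in boxes $i,j,k$, so each value $\theta_I(Xab)$ is the number of matched pairs plus the number of $*$'s. This can be read off from the positions of $i,j,k$ relative to $I$ and from the unmatched brackets of $W_I(X)$ in the segments $[1,i)$, $(i,j)$, $(j,k)$, $(k,n]$. A strict inequality should occur only when $j$ lies in a ``gap'' of one kind with respect to $I$ while $i$ and $k$ are of the opposite kind. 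Concretely, I expect a configuration like $i,k\in I\setminus J$-type versus $j$ outside, tuned by $X$ so that the relevant brackets become critical. Equivalently, one can phrase this with the crystal operators: the crystal structure of \cite{K} on MV polytopes, through the string/signature rule, exhibits exactly these bracket matchings.

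For sufficiency, assume $I$ and $J$ are weakly separated, say $|I|\ge|J|$ with $J\setminus I=J'\sqcup J''$ and $J'\prec I\setminus J\prec J''$. I would show that whenever $\theta_I$ is strict in one direction at $(X;i,j,k)$, $\theta_J$ is either a tie or strict in the same direction. A strict-left configuration for $\theta_I$ forces an element of $I\setminus J$-type between elements of the other type. By the classification, an opposite strictness for $\theta_J$ would then force an alternating pattern $a<b<c<d$ with $a,c$ and $b,d$ taken from $I\setminus J$ and $J\setminus I$, which together with the size comparison violates the weak separation condition. This is the generalization of Sanchez's strongly separated argument \cite{S}; the sizes enter through the unmatched brackets that the padding of Lemma \ref{lem5} converts into the chord-separation picture on $\binom{[2n]}{n}$. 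An efficient route is to apply Lemma \ref{lem5} and pass to $\mathbf{pad}(I),\mathbf{pad}(J)\in\binom{[2n]}{n}$, using that $A,B$ are weakly separated iff $\mathbf{pad}(A),\mathbf{pad}(B)$ are chord separated (to be checked directly from the definitions). Then I would invoke the equal-size statement, Theorem \ref{thm8}, in its M-concave form: the sum $\theta_{\mathbf{pad}(I)}+\theta_{\mathbf{pad}(J)}$ is M-concave and tropical Pl\"ucker on $\binom{[2n]}{n}$ exactly under chord separation. Restricting back via $\tilde f(X)=f(X\cap[n])$ transfers the relations.

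For necessity, if $I$ and $J$ are not weakly separated, I will construct an explicit witness $X,i<j<k$. Take an alternating quadruple from the failure of chord separation of the padded sets; three of its entries give $i<j<k$, and $X$ is chosen so that all other brackets are matched. The bracket computation should then give opposite strict inequalities for $\theta_I$ and $\theta_J$, so $\theta$ violates \eqref{trop_plucker}. The main obstacle is precisely this translation: checking that weak separation of $I,J$ of different sizes matches chord separation of the padded sets, and that the Pl\"ucker relations on $\binom{[2n]}{n}$ (which involve $\{X\cup\{\text{two of } a,b,c,d\}\}$) restrict correctly to the flag relations on $2^{[n]}$. The case where padding elements participate must be handled by the size argument. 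If that route stalls, I would fall back on the direct four-segment bracket classification above.
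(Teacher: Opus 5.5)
Your padding route is correct, and it is genuinely different from the paper's. The paper obtains this statement in one line from $Q(f+g)=Q(f)+Q(g)$ and puts all the work into Theorem~\ref{g-positroid_ws}, which it proves intrinsically on $2^{[n]}$:
\begin{itemize}
\item Kamnitzer's raising operators (Theorem~\ref{thm4}, Corollary~\ref{cor2}) replace $I_1,I_2$ by the sorted pair $I_1^*,I_2^*$ built from sections $A_0<\dots<A_s$;
\item necessity is then an explicit violating triple (Proposition~\ref{prop5});
\item sufficiency is a ten-case bracket analysis when one difference surrounds the other (Proposition~\ref{prop6}), together with Sanchez's Theorem C for strongly separated pairs.
\end{itemize}
You instead chain five equivalences: $\theta$ is DCTP $\iff$ $\theta$ is $M^\natural$-concave $\iff$ $\widetilde\theta$ is M-concave $\iff$ $\sum_j\theta_{\mathbf{pad}(I_j)}$ is M-concave (Lemma~\ref{lem5}) $\iff$ the sets $\mathbf{pad}(I_j)$ are weakly separated (Proposition~\ref{prop9} with Theorem~\ref{thm8}) $\iff$ the $I_j$ are weakly separated (Lemma~\ref{lem6}). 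This makes the flag statement a formal corollary of Early's Grassmannian theorem. It handles both directions at once and uses neither crystals nor Sanchez. The paper's argument, by contrast, is self-contained on the flag side and produces explicit witnesses. For your first equivalence, note that each summand is DCTP, so $\theta(Xik)+\theta(Xj)$ is automatically the weak maximum of the three sums. $M^\natural$-concavity then forces it to tie with one of the other two.

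The link that needs more than you wrote is the ``M-concave form'' of Theorem~\ref{thm8}, since that theorem is stated for common refinements of blade subdivisions. One direction is formal. If the refinement is matroidal, then $\sum_j\widehat\theta_{\mathbf{pad}(I_j)}$ is concave and affine on cells whose vertices are vertices of $\Delta_{n,2n}$. Hence it is the concave envelope of $\sum_j\theta_{\mathbf{pad}(I_j)}$, which is therefore M-concave; this gives sufficiency. Necessity needs the converse: $\widehat{h_1+h_2}=\widehat h_1+\widehat h_2$ whenever $h_1,h_2,h_1+h_2$ are all M-concave. This is true. For instance, the Pl\"ucker and secondary fan structures on the Dressian coincide, so $h_1$ and $h_2$ lie in the closed secondary cone of the subdivision induced by $h_1+h_2$. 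But it is not automatic: for general heights the subdivision of a sum is not the common refinement (two crossing diagonal splits of a square add up to the trivial subdivision). The paper merely asserts this discrete function version, so you should either supply such an argument or give a local witness. Your fallback of building $X$ and $i<j<k$ from an alternating quadruple of the padded sets is not yet an argument. Choosing $X$ is exactly where the paper needs the normalization to $I_1^*,I_2^*$.
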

\begin{proof}
	This follows from a well-konwn fact, that is, $Q(f+g)=Q(f)+Q(g):=\{x+y\;|x \in Q(f),\;y \in Q(g)\}$ for any two submodular functions $f,g$.
\end{proof}

\vspace{0.2cm}
\begin{rmk}
	A similar conclusion also holds for dual Schubert matroid polytopes. For the so-called dual Schubert matroids $\Omega_I^{dual}$, as given in Definition \ref{lattice_path_matroid}, we take $J$ to be $[n-k+1,n]$, that is, $\Omega_I^{dual}:=M[I,[n-k+1,n]]$.
\end{rmk}

Similar to how Early expressed each point in the positive tropical Grassmannian as a weighted sum of translated blades (see \cite{Ear3}), based on the relationship between Schubert rank functions and translated blades, we conjecture here that every MV polytope can also be expressed as a Minkowski sum/difference of some Schubert matroid polytopes. The precise form is as follows.

\vspace{0.2cm}

\begin{conj}
	Suppose $Q$ is an MV polytope of type $A$. Then there exist  $\mathcal{C}_1, \mathcal{C}_2 \subseteq 2^{[n]}$ such that
	\[
	 Q=\sum\limits_{I \in \mathcal{C}_1}a_IQ(\theta_I)-\sum\limits_{I \in \mathcal{C}_2}b_IQ(\theta_I)
	\]
	where each $a_I$ and $b_I$ are positive.
\end{conj}

\vspace{1cm}

\subsection{Crystal structure on DCTP functions and proof of Theorem \ref{g-positroid_ws}}\quad\\
\label{CSODF}
The notion of crystals, introduced by Kashiwara \cite{Ka}, is a combinatorial object corresponding to representations of complex semisimple Lie algebra $\mathfrak{g}$.

\begin{defi}
	Let $\mathfrak{g}$ be a complex semisimple Lie algebra with weight lattice $\Lambda$ and simple roots $\alpha_1, \cdots, \alpha_n$. A \textbf{crystal} is a set $\mathcal{C}$ along with structure maps
	\[
	\mathbf{e}_i: \mathcal{C} \rightarrow \mathcal{C} \sqcup \{0\}, \;\;\;
	\mathbf{f}_i: \mathcal{C} \rightarrow \mathcal{C} \sqcup \{0\}, \;\; \text{for}\; i \in [n]\;\text{and}\;\mathbf{wt}: \mathcal{C} \rightarrow \Lambda,
	\]
	which satisfy the following axioms.
	\begin{enumerate}
		\item If $b \in \mathcal{C}$, $i \in [n]$ and $\mathbf{e}_i \cdot b \neq 0$, then $\mathbf{wt}(\mathbf{e}_i \cdot b)=\mathbf{wt}(b)+\alpha_i$.
		\item If $b \in \mathcal{C}$, $i \in [n]$ and $\mathbf{f}_i \cdot b \neq 0$, then $\mathbf{wt}(\mathbf{f}_i \cdot b)=\mathbf{wt}(b)-\alpha_i$.
		\item $b'=\mathbf{e}_i \cdot b$ if and only if $b=\mathbf{f}_i \cdot b'$.
	\end{enumerate}
\end{defi}

\indent In this section, all the DCTP functions $f$ are assumed to be $\mathbb{Z}$-valued. It has been proved by Kamnitzer in \cite{K} that DCTP functions with some normalized conditions can be endowed with a crystal structure isomorphic to $B(\infty)$ in type $A$, which can be formulated as follows:
\begin{prop}[\cite{K}]
	Let $f$ be a DCTP function, then
	\[
	(\mathbf{e_i}\cdot f)(X)=
	\begin{cases}
		\text{max}(f(X),f(X\backslash \{i+1\} \cup \{i\})-c) &\text{if } i \notin X \text{ and } i+1 \in X\\
		f(X) &\text{otherwise,}
	\end{cases}
	\]
	where $c=f([n]\backslash s_i[i])-f([n]\backslash [i])-1$ and $s_i$ is the transposition of $i$ and $i+1$.
\end{prop}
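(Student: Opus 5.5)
The plan is to derive the formula from Kamnitzer's geometric description of the crystal operators on MV polytopes in \cite{K}, rewritten in terms of the DCTP function $f$ that serves as the BZ datum of $Q(f)$. Recall that in type $A$ the chamber weights are the sets $X\subseteq[n]$, with $f(X)$ the value of the BZ datum on the chamber weight $e_X$. Kamnitzer's $\mathbf{e}_i$ acts on a stable MV polytope $P$ by keeping every vertex $\mu_w$ with $s_iw<w$ fixed. It translates the lowest vertex $\mu_e$ by $\alpha_i$ and recomputes the remaining BZ data as the unique BZ datum with these prescribed values. It is known that $\mathbf{e}_iP$ is the unique MV polytope with $\mu_e(\mathbf{e}_iP)=\mu_e(P)+\alpha_i$ whose BZ data agree with those of $P$ on every chamber weight $\gamma$ satisfying $s_i\gamma\neq\gamma$ or $\langle\gamma,\alpha_i^\vee\rangle\le0$.

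So I will take the right-hand side $g$ of the stated formula and verify three things: $g$ agrees with $f$ on the chamber sets fixed by Kamnitzer's recipe; $g$ has the correct shift at the relevant vertex; and $g$ is again DCTP. Uniqueness of BZ data then forces $g=\mathbf{e}_i\cdot f$. For the first point, note that $g$ changes $f$ only on sets $X$ with $i\notin X$ and $i+1\in X$. Among chamber weights, these are exactly the $\gamma$ with $\langle\gamma,\alpha_i^\vee\rangle=-1$ in the upper-envelope convention used here; all other sets are left unchanged, as required. The constant $c=f([n]\setminus s_i[i])-f([n]\setminus[i])-1$ is the $\varepsilon_i$-type quantity, read off from the edge of $Q(f)$ in direction $\alpha_i$ at the relevant extremal vertex. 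The ``$-1$'' accounts for the shift of that vertex by $\alpha_i$. A direct evaluation at $X=[n]\setminus s_i[i]$ gives $g(X)=\max(f(X),f([n]\setminus[i])-c)$, which equals $f([n]\setminus[i])+1$ whenever this exceeds $f(X)$. This is exactly the vertex shift, and it also recovers the weight axiom $\mathbf{wt}(\mathbf{e}_i f)=\mathbf{wt}(f)+\alpha_i$.

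The main work, and the step I expect to be the obstacle, is showing that $g$ is again submodular and satisfies the tropical Pl\"ucker relation (\ref{trop_plucker}). I would define $f'(X):=f(X\setminus\{i+1\}\cup\{i\})-c$ on the sets where $i\notin X\ni i+1$, so that $g=\max(f,f')$ there. I would then check each relation by cases on how $i$ and $i+1$ sit relative to $X$ and to the triple $\{a,b,c'\}$ of the Pl\"ucker relation. When $\{i,i+1\}\cap\{a,b,c'\}=\emptyset$, or both lie in $X$, or both lie outside, the relation is either untouched or is a relation for $f$ transported by $s_i$. The delicate cases are those where $i$ or $i+1$ is one of the three indices. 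There I would use the tropical Pl\"ucker relations of $f$ with $i,i+1$ adjacent to compare the two branches of the max, and to show that the max picks a consistent branch on all six terms, so that (\ref{trop_plucker}) persists. Submodularity then follows from Proposition \ref{plucker_submodular} together with $M^\natural$-concavity. Alternatively, it follows directly from the local inequalities $g(Xa)+g(Xb)\ge g(X)+g(Xab)$, which reduce to the same case analysis.

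Finally, I would check that $\mathbf{f}_i$ is given by the symmetric formula with $\min$, which makes $\mathbf{e}_i$ and $\mathbf{f}_i$ mutually inverse and settles crystal axiom (3). The isomorphism with $B(\infty)$ then follows from Kamnitzer's theorem once the two descriptions are identified.
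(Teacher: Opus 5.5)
The paper does not prove this statement; it quotes it from Kamnitzer. Your derivation therefore has to stand on its own, and its decisive step is missing.

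\textbf{The missing step.} Everything rests on showing that the candidate $g$ is again DCTP, and this is only announced. The mechanism you suggest for it cannot work. Take $i+1\in Z\not\ni i$ and a triple avoiding $\{i,i+1\}$. Then all six terms of (\ref{trop_plucker}) have the form $\max\{f(W),f(s_iW)-c\}$. A pointwise maximum of two tropical Pl\"ucker functions is not tropical Pl\"ucker in general, and the same branch need not win on all six sets. Here is an example:
\begin{itemize}
\item Let $f(X)=\min\{|X\cap\{2,3\}|,1\}+\min\{|X\cap[2,5]|,2\}$ on $[5]$, with $i=1$, so $c=-1$.
\item Then $g(\{2,3,5\})=f(\{1,3,5\})+1=4>f(\{2,3,5\})$.
\item But $g(\{2,4\})=f(\{2,4\})=3>f(\{1,4\})+1$.
\item Both sets occur on the left of the relation for $Z=\{2\}$ and the triple $(3,4,5)$.
\end{itemize}
The relations through $i,i+1$ only give you facts such as monotonicity of $W\mapsto f(W)-f(s_iW)$. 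Turning that into (\ref{trop_plucker}) and into submodularity is a real case analysis that you have not done. Appealing to Proposition \ref{plucker_submodular} for submodularity is circular, since that proposition needs $M^\natural$-concavity as input.

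\textbf{Errors in the preliminary checks.}
\begin{itemize}
\item The characterization you quote asks for agreement on all $\gamma$ with $s_i\gamma\neq\gamma$ or $\langle\gamma,\alpha_i^\vee\rangle\le 0$. That covers every chamber weight. What is needed is agreement on every $X$ except those with $i\notin X\ni i+1$.
\item The vertex check is done at the wrong set. $[n]\setminus s_i[i]=\{i\}\cup[i+2,n]$ is not modified, so $g=f$ there. At $[n]\setminus[i]=[i+1,n]$ one gets $g=\max\{f([i+1,n]),f(\{i\}\cup[i+2,n])-c\}=f([i+1,n])+1$ unconditionally; this is what the $-1$ in $c$ is for.
\item $g\ge f$, with strict inequality at $[i+1,n]$, and $g$ is always defined. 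So this operator enlarges $Q(f)$ and is never $0$. The geometric characterization you cite must be the one for that everywhere-defined operator, with the direction of the vertex shift matched.
\end{itemize}

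\textbf{A repair.} You can avoid the verification entirely. Existence is part of the theorem you are quoting: $h:=\mathbf{e}_i\cdot f$ is DCTP, agrees with $f$ on every $X$ not satisfying $i\notin X\ni i+1$, and has $h([i+1,n])=f([i+1,n])+1$. The relations then force $h=g$.

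Write $j=i+1$ and let $Z$ avoid $i$ and $j$. If some $k>j$ lies outside $Z$, the relation on $(i,j,k)$ gives
\[
h(Zj)=\max\{f(Zij)+f(Zk),\;h(Zjk)+f(Zi)\}-f(Zik).
\]
Suppose inductively that $h(Zjk)=\max\{f(Zjk),f(Zik)-c\}$. The same relation for $f$ then yields $h(Zj)=\max\{f(Zj),f(Zi)-c\}=g(Zj)$. This downward induction reduces everything to the case $Z\supseteq[j+1,n]$.

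Those cases follow by upward induction from $Z=[j+1,n]$, where $h([i+1,n])=f([i+1,n])+1=g([i+1,n])$. The inductive step uses the relations on $(k,i,j)$ with $k<i$:
\[
h(Zkj)=\max\{f(Zki)+h(Zj),\;f(Zij)+f(Zk)\}-f(Zi),
\]
which again collapses to $\max\{f(Zkj),f(Zki)-c\}$.

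This is exactly the uniqueness you invoke, and it delivers the formula with the DCTP property of $g$ for free. Your last paragraph about $\mathbf{f}_i$ and the isomorphism with $B(\infty)$ is not needed for this statement.
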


We say DCTP functions $f_1,f_2,\cdots,f_s$ are \textbf{compatible} if $f_1+f_2+\cdots f_s$ is a DCTP function. Clearly, by the tropical Pl\"ucker relation, compatibility condition is a pairwise condition i.e. $f_1,f_2,\cdots,f_s$ are compatible if and only if any two of them are compatible. Therefore, in the following discussion, we only need to consider the compatibility between the two DCTP functions.

Let $I_1,I_2 \in 2^{[n]}$ and $\mathcal{D}_{I_1,I_2}$ be a $2 \times n$ grid  where the shaded squares in the first row belong to $I_1$ while the shaded squares in the second row belong to $I_2$. For $i \in [n-1]$, $i$ is called an \textbf{ascent} if $I_k \cap \{i,i+1\} \neq \{i+1\}$ and let $s_i\mathcal{D}_{I_1,I_2}$ be the grid by swapping the columns $i$ and $i+1$ of $\mathcal{D}_{I_1,I_2}$. Continue performing swaps at ascents until the resulting grid has no more ascents. It is not difficult to see that this can be achieved through a finite number of swaps. Although the sequence of swaps is not unique, the resulting grid is uniquely determined. We denote the resulting grid by $\mathcal{D}_{I_1^*,I_2^*}$ where $|I_1|=|I_1^*|$ and $|I_2|=|I_2^*|$.

\vspace{0.3cm}
\begin{ex}
	\label{ex1}
	Let $n=6$ and $I_1=\{1,4,5\},\;I_2=\{1,3,4,6\}$, we let a sequence of swaps $s_5s_4s_3s_2s_1s_5s_4$ act on the grid $\mathcal{D}_{I_1,I_2}$ as Figure \ref{fig5},
	\begin{figure}[H]
		\centering
		\includegraphics[width=12cm]{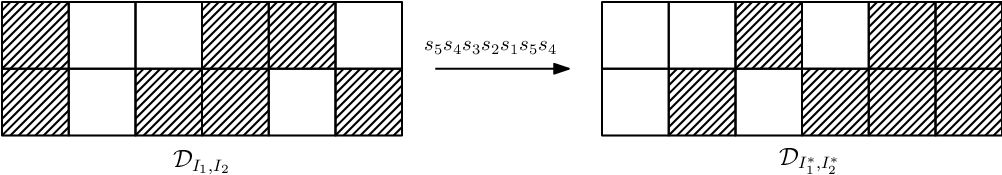}
		\caption{}
		\label{fig5}
	\end{figure}
	then $I_1^*=(s_5s_4s_3s_2s_1s_5s_4) \cdot I_1=\{3,5,6\}$ and $I_2^*=(s_5s_4s_3s_2s_1s_5s_4) \cdot I_2=\{2,4,5,6\}$.
\end{ex}

The following lemma can be directly derived from the definition of weak separation.
\begin{lem}
	\label{lem2}
	$I_1$ and $I_2$ are weakly separated if and only if $I_1^*$ and $I_2^*$ are weakly separated.
\end{lem}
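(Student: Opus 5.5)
The plan is to reduce the statement to a single swap and then induct on the length of the swap sequence. Since $\mathcal{D}_{I_1^*,I_2^*}$ is obtained from $\mathcal{D}_{I_1,I_2}$ by finitely many swaps $s_i$, each performed at an ascent, it suffices to prove the following claim. If $i$ is an ascent of $\mathcal{D}_{A,B}$ and $s_i\mathcal{D}_{A,B}=\mathcal{D}_{A',B'}$, then $A,B$ are weakly separated if and only if $A',B'$ are. Chaining this equivalence along the sequence of swaps then gives the lemma in both directions at once.

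The key observation is that weak separation of $A,B$ depends only on three pieces of data:
\begin{itemize}
\item the sizes $|A|$ and $|B|$;
\item the linearly ordered set $A\triangle B$;
\item the labelling of each element of $A\triangle B$ as belonging to $A-B$ or to $B-A$.
\end{itemize}
Indeed, both conditions in the definition (the partition $B-A=B'\sqcup B''$ with $B'\prec A-B\prec B''$, or the symmetric one) only compare elements of $A-B$ with elements of $B-A$ under $<$. So I will show that a swap at an ascent preserves the sizes and induces an order-preserving, label-preserving bijection $A\triangle B\to A'\triangle B'$. The sizes are clearly unchanged, because swapping two columns of the grid permutes the entries of each row.

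For the bijection, first note that the swap moves only entries in columns $i,i+1$. Elements of $A\triangle B$ outside $\{i,i+1\}$ are therefore fixed, and so is their order relative to everything else. Since $i$ is an ascent, each row restricted to $\{i,i+1\}$ is one of $\emptyset$, $\{i\}$, $\{i,i+1\}$, and the swap changes $\{i\}$ to $\{i+1\}$ while fixing the other two patterns. I will run through the cases for the pair of patterns of the two rows.
\begin{itemize}
\item If both rows have the same pattern, columns $i,i+1$ contribute nothing to $A\triangle B$ or to $A'\triangle B'$.
\item If the patterns are $\{i,i+1\}$ and $\emptyset$, nothing changes.
\item If the patterns are $\{i\}$ and $\emptyset$, the element $i$ of the symmetric difference becomes $i+1$ with the same label.
\item If the patterns are $\{i\}$ and $\{i,i+1\}$, the element $i+1$ of the symmetric difference becomes $i$ with the same label.
\end{itemize}
In the last two cases exactly one element of the symmetric difference lies in $\{i,i+1\}$, before and after the swap. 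Replacing it by its neighbour therefore does not change its position relative to the remaining elements of the symmetric difference, all of which lie outside $\{i,i+1\}$. Hence the induced map is an order- and label-preserving bijection, which proves the claim.

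The only place where care is needed is this case check, and it is exactly where the ascent hypothesis is used. If some row had pattern $\{i+1\}$ and the other row had $\{i\}$, the swap would exchange the relative order of two oppositely labelled elements of the symmetric difference, and weak separation could genuinely change. The ascent condition rules out the pattern $\{i+1\}$ in either row, so this situation never arises.
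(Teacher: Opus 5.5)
Your proof is correct and takes essentially the paper's approach: the paper only remarks that the lemma follows directly from the definition of weak separation, and you supply that verification. You reduce to a single swap at an ascent and show that such a swap fixes $|A|,|B|$ and induces an order- and label-preserving bijection $A\triangle B\to A'\triangle B'$, correctly using the ascent hypothesis (no row with pattern $\{i+1\}$) to rule out two oppositely labelled elements exchanging order.
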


In Sanchez's paper (\cite{S}, Theorem 6.8), the transposition operation performed at the ascent $i$ corresponds to the application of multiple raising operators to the DCTP function in the crystal structure. In our framework, we formulate it as follows.

\begin{thm}
	\label{thm4}
	Let $i$ be an ascent in $\mathcal{D}_{I_1,I_2}$, then
	\[
	\theta_{s_iI_1}+\theta_{s_iI_2}=\mathbf{e}_i^l \cdot (\theta_{I_1}+\theta_{I_2}),
	\]
	where l=1 or 2 denotes the number of $I_j\;(j=1,2)$ such that $i \in I_j$ but $i+1 \notin I_j$.
\end{thm}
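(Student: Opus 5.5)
The plan is to reduce Theorem \ref{thm4} to a single-set statement and then handle additivity. For a single $I$ and the ascent $i$, there are three possibilities.
\begin{itemize}
\item If $\{i,i+1\}\subseteq I$ or $\{i,i+1\}\cap I=\emptyset$, then $s_iI=I$.
\item If $i\in I$ and $i+1\notin I$, then $s_iI=I\setminus\{i\}\cup\{i+1\}$.
\item The case $I\cap\{i,i+1\}=\{i+1\}$ is excluded by the ascent hypothesis.
\end{itemize}
So I would first prove, for one Schubert rank function $\theta_I$, that $\mathbf{e}_i\cdot\theta_I=\theta_{s_iI}$ when $i\in I$, $i+1\notin I$. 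I would also check that, in the other cases, $\theta_I$ is unaffected in the relevant sense. Concretely, for $X$ with $i\notin X$, $i+1\in X$, write $X'=X\setminus\{i+1\}\cup\{i\}$. Since $\theta_I(Y)=\theta_{s_iI}(Y)$ whenever $Y\cap\{i,i+1\}\neq\{i+1\}$ (swapping two columns that both contain the same $X$-status does not change pairings), it suffices to compare the values at such $X$.

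The key computation uses the bracket word $W_I(X)$ from \cite{FMD}. Passing from $I$ to $s_iI$, or from $X$ to $X'$, only alters the two letters in positions $i,i+1$, so I would tabulate these letters in each case. Two facts would follow. First, $0\le \theta_I(X')-\theta_I(X)\le 1$: moving an element of $X$ leftwards cannot decrease the rank in a Schubert matroid $\Omega_I=\{L\le_G I\}$, and it changes rank by at most one. Second, $\theta_{s_iI}(X)=\theta_I(X')$ when $i\in I$, $i+1\notin I$. Next I would evaluate Kamnitzer's constant $c=\theta_I([n]\setminus s_i[i])-\theta_I([n]\setminus[i])-1$. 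It should equal $-1$ in the case $i\in I$, $i+1\notin I$, and $0$ otherwise; this is a short bracket count on two specific sets. Then $\max(\theta_I(X),\theta_I(X')-c)$ gives exactly $\theta_{s_iI}(X)$ in the first case and $\theta_I(X)$ in the others, by the first fact.

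For the sum $F=\theta_{I_1}+\theta_{I_2}$, I would iterate the raising operator. I would prove $\mathbf{e}_i^m\cdot f(X)=\max\big(f(X),f(X')-c_f+m-1\big)$ for a DCTP $f$, where $c_f$ is the constant attached to $f$. This should follow by induction, noting that each application of $\mathbf{e}_i$ lowers the constant by one. Since $c$ is additive in $f$, we get $c_F=c_{\theta_{I_1}}+c_{\theta_{I_2}}=-l$. So the right-hand side of Theorem \ref{thm4} equals $\max\big(F(X),F(X')-1\big)+\ldots$; concretely, $\mathbf{e}_i^l F(X)=\max\big(F(X),F(X')+l-(l+1)\cdot 0\ldots\big)$, which I would simplify to $\max\big(F(X),\,F(X')-\text{(number of $j$ not of swap type)}\big)$.

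The main obstacle is this additivity step. A maximum of sums is not generally the sum of maxima, so I must show that both summands choose the same branch at every $X$. Write $d_j=\theta_{I_j}(X')-\theta_{I_j}(X)\in\{0,1\}$. If $I_j$ is not of swap type, its target value is $\theta_{I_j}(X)$. If it is of swap type, its target is $\theta_{I_j}(X)+d_j$. Then the sum of targets is $F(X)+\sum_{\text{swap}}d_j$, and I must check that this equals $\max\big(F(X),F(X')-(2-l)\big)$.
\begin{itemize}
\item When $l=2$, this reads $F(X)+d_1+d_2=\max(F(X),F(X')) $, which holds since $d_j\ge 0$.
\item When $l=0$, it holds since $F(X')-2\le F(X)$.
\item When $l=1$, I need $\theta_{I_2}(X')-1\le\theta_{I_2}(X)$ to absorb the non-swap term correctly; I would verify that $d_2\le d_1$ or handle the mismatch by the bracket table.
\end{itemize}
That mixed case is exactly where I expect to need the explicit letter-by-letter analysis at positions $i,i+1$. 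If it fails in general, I would restrict to the ascent hypothesis more carefully, using that $I_2\cap\{i,i+1\}\in\{\emptyset,\{i,i+1\}\}$ forces $d_2=0$.
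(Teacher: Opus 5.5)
Your overall route is viable: compare the bracket words $W_I(X)$ and $W_I(X')$ at positions $i,i+1$, and iterate using $\mathbf{e}_i^m f(X)=\max(f(X),f(X')-c_f+m-1)$, which is correct. It would be a self-contained alternative, since the paper gives no argument here and simply defers to Sanchez's Theorem 6.8. As written, however, your bookkeeping of Kamnitzer's constant is wrong in three places, and this breaks the case $l=1$.

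\emph{First}, a bracket count gives $\theta_I(\{i\}\cup[i+2,n])=|I\cap(\{i\}\cup[i+2,n])|+[\,i\notin I,\ i+1\in I\,]$ (Iverson bracket) and $\theta_I([i+1,n])=|I\cap[i+1,n]|$. So $c_{\theta_I}=0$ when $i\in I$, $i+1\notin I$, and $c_{\theta_I}=-1$ otherwise, which is the reverse of what you claim. With your value $-1$, the swap case would even give $\theta_I(X')+1\neq\theta_{s_iI}(X)$.

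\emph{Second}, $c$ is not additive. Because of the $-1$ in its definition, $c_{f+g}=c_f+c_g+1$, so $c_F=l-1$, not $-l$.

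\emph{Third}, $\mathbf{e}_i$ never leaves a function unchanged. Directly from the definition of $c$, $(\mathbf{e}_i f)([i+1,n])=f([i+1,n])+1$ for every $f$, as it must since the weight shifts by $\alpha_i$. So a non-swap summand is not ``unaffected'', and the operator has to be applied to $F$ as a whole.

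Consequently your target $\max(F(X),F(X')-(2-l))$ is false for $l=1$, and the fallback $d_2=0$ does not rescue it. Take $n=2$, $i=1$, $I_1=\{1\}$, $I_2=\{1,2\}$, $X=\{2\}$. Then $F(X)=1$ and $F(X')=2$. The left-hand side of the theorem is $\theta_{\{2\}}(X)+\theta_{\{1,2\}}(X)=2$, but your formula gives $\max(1,1)=1$.

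The repair is short. With $c_F=l-1$ one gets, for both $l=1,2$,
\[
(\mathbf{e}_i^l F)(X)=\max\bigl(F(X),F(X')\bigr)=F(X'),
\]
using $\theta_{I_j}(X')\ge\theta_{I_j}(X)$ for each $j$. On the other side:
\begin{itemize}
\item swap-type summands contribute $\theta_{s_iI_j}(X)=\theta_{I_j}(X')$, which is your second fact;
\item non-swap summands contribute $\theta_{I_j}(X)=\theta_{I_j}(X')$. Under the ascent hypothesis $I_j\cap\{i,i+1\}\in\{\emptyset,\{i,i+1\}\}$, and then $W_{I_j}(X)$ and $W_{I_j}(X')$ differ only by moving a $*$ past a bracket, or by moving a lone ``('' into an adjacent empty box.
\end{itemize}
Hence both sides equal $F(X')$, and there is no max-of-sums issue at all. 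Together with your correct observation that $\theta_{s_iI}=\theta_I$ on sets $Y$ with $Y\cap\{i,i+1\}\neq\{i+1\}$, this proves the theorem.
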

Combining with Theorem \ref{thm4} and connectivity of crystal $B(\infty)$ in type A, we have the following corollary.

\begin{cor}
	\label{cor2}
	$\theta_{I_1}$ and $\theta_{I_2}$ are compatible if and only if $\theta_{I_1^*}$ and $\theta_{I_2^*}$ are compatible.
\end{cor}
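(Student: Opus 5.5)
We need to prove Corollary \ref{cor2}: $\theta_{I_1}$ and $\theta_{I_2}$ are compatible if and only if $\theta_{I_1^*}$ and $\theta_{I_2^*}$ are compatible. The idea is to transport compatibility along the chain of ascent swaps using the crystal operators. By Theorem \ref{thm4}, each swap at an ascent is realized by some power of a raising operator, and raising operators preserve the property of being a DCTP function. Iterating over a sequence of swaps gives one direction. For the converse we invert the raising operators by lowering operators.

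First, recall that Kamnitzer's crystal structure is defined on the set of (suitably normalized) $\mathbb{Z}$-valued DCTP functions, and it is isomorphic to $B(\infty)$. Hence $\mathbf{e}_i$ maps DCTP functions to DCTP functions, and by crystal axiom (3) so does $\mathbf{f}_i$ wherever it is defined. Normalization causes no difficulty: $\theta_I(\emptyset)=0$, and the relevant normalization, up to adding a modular (linear) function, does not affect submodularity or the tropical Pl\"ucker relation.

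Next, fix a sequence of ascent swaps $s_{i_1},\dots,s_{i_r}$ taking $\mathcal{D}_{I_1,I_2}$ to $\mathcal{D}_{I_1^*,I_2^*}$. Write $(J_1^{(t)},J_2^{(t)})$ for the intermediate pairs. By Theorem \ref{thm4},
\[
\theta_{J_1^{(t+1)}}+\theta_{J_2^{(t+1)}}=\mathbf{e}_{i_{t+1}}^{l_{t+1}}\cdot\bigl(\theta_{J_1^{(t)}}+\theta_{J_2^{(t)}}\bigr).
\]
If $\theta_{I_1}+\theta_{I_2}$ is DCTP, then induction on $t$, using that $\mathbf{e}_i$ preserves DCTP functions, shows that $\theta_{I_1^*}+\theta_{I_2^*}$ is DCTP.

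For the converse, read the same identity backwards. Axiom (3) gives
\[
\theta_{J_1^{(t)}}+\theta_{J_2^{(t)}}=\mathbf{f}_{i_{t+1}}^{l_{t+1}}\cdot\bigl(\theta_{J_1^{(t+1)}}+\theta_{J_2^{(t+1)}}\bigr),
\]
provided the left side lies in the crystal. This is the main obstacle: the left side is a priori only a function on $2^{[n]}$, not known to be DCTP, so the crystal axiom cannot be applied to it directly. I would handle this by applying Kamnitzer's explicit formula for $\mathbf{f}_i$, the analogue of the given formula for $\mathbf{e}_i$ with $\min$ and $+c$, to the DCTP function $\theta_{J_1^{(t+1)}}+\theta_{J_2^{(t+1)}}$. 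This produces a DCTP function $g$. I would then check directly that $g$ equals $\theta_{J_1^{(t)}}+\theta_{J_2^{(t)}}$. That check is a local bracket-word computation at columns $i,i+1$, of the same kind underlying Theorem \ref{thm4}. It shows that the piecewise formulas for $\mathbf{e}_i$ and $\mathbf{f}_i$ are mutually inverse on sums of Schubert rank functions. Alternatively, since Theorem \ref{thm4} identifies the two sides as functions, injectivity of $\mathbf{e}_i$ on the crystal identifies $g$ with the original sum.

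Connectivity of $B(\infty)$ guarantees that every element is reached from the highest weight element by raising and lowering operators, so no other obstruction arises. By induction the compatibility of $\theta_{I_1^*},\theta_{I_2^*}$ transfers back to $\theta_{I_1},\theta_{I_2}$, which completes the plan.
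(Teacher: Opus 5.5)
Your forward direction (compatibility of $\theta_{I_1},\theta_{I_2}$ implies compatibility of $\theta_{I_1^*},\theta_{I_2^*}$) is exactly the paper's argument: Theorem~\ref{thm4} realizes each ascent swap as $\mathbf{e}_i^l$, and $\mathbf{e}_i$ sends DCTP functions to DCTP functions. You are also right that the converse is the real issue, because $\theta_{I_1}+\theta_{I_2}$ is not known to lie in the crystal. The paper passes over this with the words ``connectivity of $B(\infty)$'', which do not address it, and your closing appeal to connectivity does not either. However, neither of your two remedies closes the gap.

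First, no pointwise rule $(\mathbf{f}_ig)(X)=\min\{g(X),g(s_iX)+c'\}$, with $s_iX=X\setminus\{i+1\}\cup\{i\}$, can invert the $\max$-formula. The reason is that the $\max$ forgets $F(X)$ whenever $F(s_iX)-c\ge F(X)$. Concretely, take $n=3$, $i=2$ and $f=\theta_{\{2\}}$, so $f(S)=1$ iff $S\cap\{1,2\}\neq\emptyset$. Then $c=f(\{2\})-f(\{3\})-1=0$, and
$(\mathbf{e}_2f)(\{3\})=\max\{0,1\}=1$ and $(\mathbf{e}_2f)(\{1,3\})=\max\{1,1\}=1$.
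Hence $g:=\mathbf{e}_2f=\theta_{\{3\}}$, which equals $1$ on every nonempty set. Recovering $f$ requires lowering $g(\{3\})$ by one while keeping $g(\{1,3\})$, even though $g(\{3\})-g(\{2\})=g(\{1,3\})-g(\{1,2\})=0$. No single constant $c'$ achieves both.

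Second, the injectivity fallback is circular. The operator $\mathbf{e}_i$ is injective on the crystal, but not as a formula on arbitrary functions. Let $h$ agree with $\theta_{\{2\}}$ except that $h(\{1,3\})=0$. Then also $\mathbf{e}_2h=\theta_{\{3\}}$, yet $h(\{1,2\})+h(\{1,3\})=1<2=h(\{1\})+h(\{1,2,3\})$, so $h$ is not even submodular. Thus knowing that $\mathbf{e}_i^l(\theta_{I_1}+\theta_{I_2})$ is DCTP tells you nothing about $\theta_{I_1}+\theta_{I_2}$ unless you already know the latter is DCTP, which is exactly what is to be proved.

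What is missing is a genuine argument that the tropical Pl\"ucker relations for $\theta_{I_1}+\theta_{I_2}$ follow from those for $\theta_{s_iI_1}+\theta_{s_iI_2}$, for example a triple-by-triple comparison through the bracket words.

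Alternatively, you can avoid the converse altogether:
\begin{enumerate}
\item Sort the columns \emph{leftward} instead. Swap $i,i+1$ whenever, for both $k$, $i\in I_k$ implies $i+1\in I_k$, and the two columns differ.
\item Each such swap is undone by an ascent swap. So $(I_1,I_2)$ is reached from the leftward-sorted pair $(I_1^\flat,I_2^\flat)$ by ascent swaps, and your forward argument carries compatibility from $(I_1^\flat,I_2^\flat)$ to $(I_1,I_2)$.
\item In the sorted form, common columns are coloops and empty columns are loops. Take a weakly separated pair in which $I_2\setminus I_1$ surrounds $I_1\setminus I_2$, with sections $A_1,A_3\subseteq I_2$ around $A_2\subseteq I_1$ and $|A_2|\ge|A_1|+|A_3|$. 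Up to modular terms, the two rank functions are $\min\{x(A_1\cup A_2),|A_2|\}$ and $\min\{x(A_2\cup A_3),|A_3|\}$.
\item A violation would need $a\in A_1$, $b\in A_2$, $c\in A_3$ and some $X$ with $|X\cap(A_1\cup A_2)|=|A_2|-1$ and $|X\cap(A_2\cup A_3)|=|A_3|-1$. This is impossible, since the first equation forces $|X\cap A_2|\ge|A_2|-|A_1|\ge|A_3|$. The strongly separated case is Sanchez's theorem.
\item Now suppose $(I_1^*,I_2^*)$ is compatible. The direct computation in Proposition~\ref{prop5} gives that it is weakly separated. Hence $(I_1,I_2)$ and $(I_1^\flat,I_2^\flat)$ are weakly separated, by Lemma~\ref{lem2} and its leftward analogue. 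By the previous steps $(I_1^\flat,I_2^\flat)$ is compatible, and therefore so is $(I_1,I_2)$.
\end{enumerate}
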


The discussion of compatibility between $\theta_{I_1}$ and $\theta_{I_2}$ now reduces to examining the compatibility between $\theta_{I_1^*}$ and $\theta_{I_2^*}$. The advantage is that the configurations of $I_1^*$ and $I_2^*$ are more regular, making it easier to analyze the additivity of tropical Pl\"ucker relations.

$I_1^*$ and $I_2^*$ partition [n] into a disjoint union of intervals, that is, $[n]=A_0 \sqcup A_1 \sqcup \cdots \sqcup A_s$ with order $A_0 < A_1 < \cdots < A_s$. Among these intervals, $A_0$ belongs to neither $I_1^*$ nor $I_2^*$ while $A_s$ is in both $I_1^*$ and $I_2^*$ ($A_0$ and $A_s$ might be empty sets). $A_1, \cdots, A_{s-1}$ alternately lie in $I_1^*$ and $I_2^*$ i.e. if $A_i \subseteq I_1^*$ and $A_i \cap I_2^* = \emptyset$, then $A_{i+1} \subseteq I_2^*$ and $A_{i+1} \cap I_1^*=\emptyset$. We call these intervals $A_i$ \textbf{sections} of the grid $\mathcal{D}_{I_1^*,I_2^*}$ in the following discussion.

\begin{prop}
	\label{prop5}
	If $\theta_{I_1}$ and $\theta_{I_2}$ are compatible, then $I_1$ and $I_2$ are weakly separated
\end{prop}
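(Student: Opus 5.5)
We argue by contraposition: assuming $I_1$ and $I_2$ are not weakly separated, we show that $\theta_{I_1}+\theta_{I_2}$ violates a tropical Pl\"ucker relation. Submodularity is automatic for a sum of submodular functions, so the Pl\"ucker relation is the only possible obstruction.

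\textbf{Step 1: reduce to the sorted pair.} By Lemma \ref{lem2}, $I_1,I_2$ are weakly separated if and only if $I_1^*,I_2^*$ are. By Corollary \ref{cor2}, $\theta_{I_1},\theta_{I_2}$ are compatible if and only if $\theta_{I_1^*},\theta_{I_2^*}$ are. So we may assume the grid $\mathcal{D}_{I_1,I_2}$ has no ascents, and use the section decomposition $[n]=A_0\sqcup A_1\sqcup\cdots\sqcup A_s$ with $A_1,\dots,A_{s-1}$ alternating between $I_1\setminus I_2$ and $I_2\setminus I_1$.

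\textbf{Step 2: read off the combinatorics.} In this normal form, $I_1\setminus I_2$ and $I_2\setminus I_1$ are unions of alternating sections. Weak separation fails precisely when there are enough alternations that neither set difference can be split as $B'\prec A-B\prec B''$ in the required size-compatible way. Concretely, suppose $|I_1|\ge|I_2|$. Then failure means that $I_2\setminus I_1$ meets at least two sections strictly interleaved with sections of $I_1\setminus I_2$, in one of two ways:
\begin{itemize}
\item four consecutive sections alternate $2,1,2,1$ (or $1,2,1,2$), giving a chord-crossing pattern; or
\item $I_2$ surrounds $I_1$ with the wrong size inequality, giving a pattern $2,1,2$ with $|I_2|>|I_1|$ forced by the counts.
\end{itemize}
We choose representatives $a<b<c$ (and $d$ if needed) from consecutive alternating sections.

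\textbf{Step 3: exhibit the failing relation.} Take $i<j<k$ from three consecutive alternating sections, say $i\in A_p\subseteq I_2\setminus I_1$, $j\in A_{p+1}\subseteq I_1\setminus I_2$, $k\in A_{p+2}\subseteq I_2\setminus I_1$. Choose $X$ to be the union of all of $A_s$, the sections before $A_p$, and suitable parts of the sections between, tuned so that the bracket counts are balanced. Using the bracket-pairing formula for $\theta_I$, we then compute the six values $\theta_{I_m}(Xik),\theta_{I_m}(Xj),\theta_{I_m}(Xij),\theta_{I_m}(Xk),\theta_{I_m}(Xjk),\theta_{I_m}(Xi)$ for $m=1,2$. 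The goal is to have the maximum in the tropical Pl\"ucker relation attained only by the term $\theta(Xij)+\theta(Xk)$ for $\theta_{I_1}$, and only by $\theta(Xjk)+\theta(Xi)$ for $\theta_{I_2}$, each strictly. Then the sum satisfies
\[
\theta(Xik)+\theta(Xj)<\max\{\theta(Xij)+\theta(Xk),\ \theta(Xjk)+\theta(Xi)\}.
\]
Intuitively, adding the middle element $j$ creates an extra pairing for one matroid and destroys one for the other.

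\textbf{Main obstacle.} The main difficulty is choosing $X$ uniformly, especially in the "surrounding" case, where the strict maximizers must differ, and in accounting for the unpaired brackets from $A_0$ and the shared interval $A_s$. I would first test on $n=3$ ($I_1=\{2\}$, $I_2=\{1,3\}$, $X=\emptyset$) and on $n=4$ ($\{1,3\}$ vs $\{2,4\}$) to fix the choice of $X$. I would then pad $X$ so that all other brackets are paired independently of $i,j,k$, which makes the local computation reduce to these small cases.
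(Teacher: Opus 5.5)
Your reduction to $I_1^*,I_2^*$ via Lemma \ref{lem2} and Corollary \ref{cor2}, followed by breaking a three-term Pl\"ucker relation at $i<j<k$ taken from three consecutive alternating sections, is the paper's strategy. But Step 3 is wrong as stated. Put $\alpha_m=\theta_{I_m}(Xij)+\theta_{I_m}(Xk)$ and $\beta_m=\theta_{I_m}(Xjk)+\theta_{I_m}(Xi)$. Each $\theta_{I_m}$ satisfies the relation, so the left side for the sum is $\max(\alpha_1,\beta_1)+\max(\alpha_2,\beta_2)\ge\max(\alpha_1+\alpha_2,\beta_1+\beta_2)$. A violation is therefore always a strict ``$>$'', never the ``$<$'' you display. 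Your assignment of strict maximizers is also reversed, and the reversed version cannot occur. Using $\theta_I(J)=\min_{0\le t\le n}\bigl(|J\cap[1,t]|+|I\cap[t+1,n]|\bigr)$, one checks that for every $X$, if $i,j,k$ lie in consecutive sections of types $I_2,I_1,I_2$, then $\alpha_1\le\beta_1$ and $\beta_2\le\alpha_2$. So the violation must come from $\beta_1>\alpha_1$ together with $\alpha_2>\beta_2$. Your own $n=3$ test already shows this: $\theta_{\{2\}}$ gives $\beta_1=2>\alpha_1=1$, $\theta_{\{1,3\}}$ gives $\alpha_2=3>\beta_2=2$, and the sum has left side $5$ against $\max\{4,4\}$.

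The more serious gap is that the choice of $X$, which is the actual content of the proof, is left as ``tuned'', and the part you do specify is wrong. Putting the sections before $A_p$ into $X$ fails as soon as anything precedes $A_p$. For $n=4$, $I_1=\{3\}$, $I_2=\{2,4\}$ (already sorted, with $A_0=\{1\}$), the only admissible triple is $2,3,4$. Then $1\in X$ makes $\theta_{\{3\}}$ identically $1$ on every $X\cup S$, so nothing breaks, whereas $X=\emptyset$ does give a violation. The paper's $X$ is as follows: it contains nothing before $A_p$, all of $A_{p+3}\sqcup\cdots\sqcup A_s$, exactly $x$ elements of $A_p$, exactly $y$ elements of $A_{p+1}$, and all but $y+1$ elements of $A_{p+2}$. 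Here $x+y+1=|A_{p+1}|$, $x\le|A_p|-1$ and $y\le|A_{p+2}|-1$, and $i,j,k$ are chosen outside $X$. Such $x,y$ exist exactly when $|A_p|+|A_{p+2}|>|A_{p+1}|$, and this is the only place non-weak separation enters. Your Step 3 never invokes this inequality for the chosen triple, and without it the claim is false: the weakly separated $2,1,2$ configuration with $|A_{p+1}|\ge|A_p|+|A_{p+2}|$ is compatible. Step 2 is also garbled: under $|I_1|\ge|I_2|$ the bad three-block pattern is $1,2,1$ with $|I_1|>|I_2|$, not $2,1,2$. Finally, the four-block case needs no fourth index. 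If $|A_1|+|A_3|\le|A_2|$ and $|A_2|+|A_4|\le|A_3|$, then $|A_1|+|A_4|\le 0$, so some consecutive triple always satisfies the strict inequality.
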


\begin{proof}
	Suppose that $I_1$ and $I_2$ are not weakly separated, by Lemma \ref{lem2} and Corollary \ref{cor2}, we only need to prove that $\theta_{I_1^*}$ and $\theta_{I_2^*}$ are not compatible. Since $I_1^*$ and $I_2^*$ are not weakly separated, there exsits sections $A_{i-1},\; A_i,\; A_{i+1}$ such that $|A_{i-1}|+|A_{i+1}| >|A_i|$. Without loss of generality, we suppose that $A_i$ belongs to $I_1^*$ and  $A_{i-1},\;A_{i+1}$ belong to $I_2^*$. With inequality $|A_{i-1}|+|A_{i+1}| >|A_i|$, we can partition sections $A_{i-1}=A_{i-1}^1 \sqcup A_{i-1}^2$, $A_{i}=A_{i}^1 \sqcup A_{i}^2$ and $A_{i+1}=A_{i+1}^1 \sqcup A_{i+1}^2$ such that $|A_{i-1}^1|=x$, $|A_{i}^2|=x+1$, $|A_{i}^1|=y$ and $|A_{i+1}^2|=y+1$ ($x,y$ might be $0$) as in Figure \ref{fig6}. 
	\begin{figure}[H]
		\centering
		\includegraphics[width=13cm]{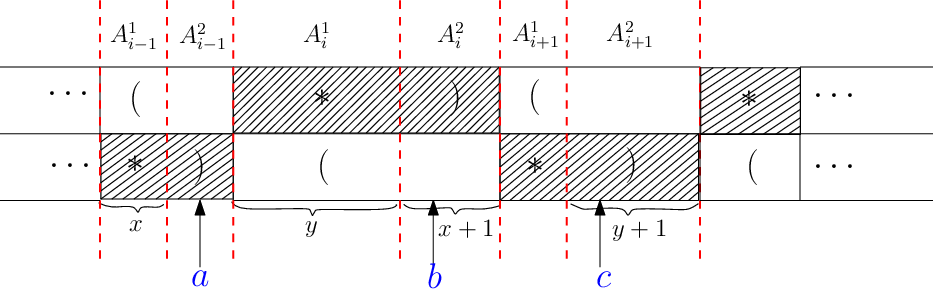}
		\caption{}
		\label{fig6}
	\end{figure}
	Let $X=(A_{i-1}^1 \sqcup A_{i}^1 \sqcup A_{i+1}^1) \sqcup A_{i+2} \sqcup \cdots A_{s}$ and take any $a \in A_{i-1}^2$, $b \in A_i^2$ and $c \in A_{i+1}^2$. Then 
	\[
	\begin{split}
		&\theta_{I_1^*}(Xab)=\theta_{I_1^*}(X)+1\quad \theta_{I_1^*}(Xa)=\theta_{I_1^*}(X)+1\quad \theta_{I_1^*}(Xb)=\theta_{I_1^*}(X)+1\\
		&\theta_{I_2^*}(Xbc)=\theta_{I_2^*}(X)+1 \quad
		\theta_{I_2^*}(Xb)=\theta_{I_2^*}(X)+1\quad \theta_{I_2^*}(Xc)=\theta_{I_2^*}(X)+1
	\end{split}
	\]
	Therefore
	\[
	\theta_{I_1^*}(Xac)+\theta_{I_1^*}(Xb)=\theta_{I_1^*}(Xbc)+\theta_{I_1^*}(Xa)=2\theta_{I_1^*}(X)+2>\theta_{I_1^*}(Xab)+\theta_{I_1^*}(Xc)=2\theta_{I_1^*}(X)+1,
	\]
	and
	\[
	\theta_{I_2^*}(Xac)+\theta_{I_2^*}(Xb)=\theta_{I_2^*}(Xab)+\theta_{I_2^*}(Xc)=2\theta_{I_2^*}(X)+3>\theta_{I_2^*}(Xbc)+\theta_{I_2^*}(Xa)=2\theta_{I_2^*}(X)+2.
	\]
	This implies that $\theta_{I_1^*}$ and $\theta_{I_2^*}$ are incompatible.
\end{proof}

Proposition \ref{prop5} provides a necessary condition for the compatibility of $\theta_{I_1}$ and $\theta_{I_2}$. Conversely, when $I_1$ and $I_2$ are weakly separated, can it be guaranteed that $\theta_{I_1}$ and $\theta_{I_2}$ are compatible under addition?

To simplify the notation, we define $\Phi_i(X,a,b):=\theta_{I_i^*}(Xa)+\theta_{I_i^*}(Xb)-\theta_{I_i^*}(Xab)-\theta_{I_i^*}(X)$ for $i=1,2$ and $a,b \in [n]\backslash X$. The following lemma can be directly derived from the properties of the matroid's rank function.

\begin{lem}
	\label{lem3}
	$\Phi_i(X,a,b) \in \{0,1\}$ for any $X \subseteq [n]$ and $a,b \in [n]\backslash X$. Besides,  $\Phi_i(X,a,b)=1$ if and only if $\theta_{I_i^*}(Xa)=\theta_{I_i^*}(Xb)=\theta_{I_i^*}(Xab)=\theta_{I_i^*}(X)+1$.
\end{lem}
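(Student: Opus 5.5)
The plan is to use only two facts about $\theta_{I_i^*}$: it is the rank function of a matroid (the Schubert matroid $\Omega_{I_i^*}$), and it is submodular. Write $\theta=\theta_{I_i^*}$ for short.

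\emph{Range of $\Phi_i$.} Submodularity applied to the sets $Xa$ and $Xb$, whose intersection is $X$ and whose union is $Xab$, gives $\Phi_i(X,a,b)\ge 0$. For the upper bound I would invoke the unit-increase property of matroid rank functions: adding one element raises the rank by at most one. Hence
\[
0\le \theta(Xa)-\theta(X)\le 1
\qquad\text{and}\qquad
0\le \theta(Xab)-\theta(Xb).
\]
It follows that
\[
\Phi_i(X,a,b)=\bigl(\theta(Xa)-\theta(X)\bigr)-\bigl(\theta(Xab)-\theta(Xb)\bigr)\le 1.
\]
Since $\theta$ is integer-valued, $\Phi_i(X,a,b)\in\{0,1\}$.

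\emph{Characterization of $\Phi_i=1$.} Equality $\Phi_i=1$ in the decomposition above forces two things. First, $\theta(Xa)-\theta(X)=1$. Second, $\theta(Xab)-\theta(Xb)=0$. By the symmetric decomposition, obtained by exchanging the roles of $a$ and $b$, we also get $\theta(Xb)=\theta(X)+1$ and $\theta(Xab)=\theta(Xa)$. Combining these gives $\theta(Xa)=\theta(Xb)=\theta(Xab)=\theta(X)+1$.

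The converse is a direct substitution: if $\theta(Xa)=\theta(Xb)=\theta(Xab)=\theta(X)+1$, then $\Phi_i=(\theta(X)+1)+(\theta(X)+1)-(\theta(X)+1)-\theta(X)=1$.

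There is no real obstacle in this argument. The only point requiring care is to cite the matroid rank axioms for $\theta_{I_i^*}$. This is justified because $\theta_{I_i^*}$ is, by construction, the rank function of the Schubert matroid $\Omega_{I_i^*}$, as computed via the bracket rule of \cite{FMD}.
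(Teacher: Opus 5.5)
Your argument is correct and is what the paper has in mind when it says the lemma follows directly from the properties of a matroid rank function. Submodularity gives $\Phi_i\ge 0$, monotonicity and the unit-increase axiom give $\Phi_i\le 1$, and analysing the equality case together with the $a\leftrightarrow b$ symmetry gives the characterization; the paper states this without detail, so your write-up just supplies the routine verification.
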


When $I_i^*=[p,q] \sqcup [h,n]$ be the union of two intervals where $h > q+1$. Then $[n]=L_1 \sqcup L_1 \sqcup L_3 \sqcup L_4$ where $L_1=[1,p-1]$, $L_2=[p,q]$, $L_3=[q+1, h-1]$ and $L_4=[h,n]$ (Notice that $L_1=\emptyset$ if $p=1$). The following lemma classifies all the necessary cases that $\Phi_i(X,a,b)=1$ when $I_i^*$ is in the case described above.

\begin{lem}
	\label{lem4} 
	The following statements are equivalent when $a$ and $b$ are in different positions.
	\begin{enumerate}
		\item When $a,b$ are shown in Figure \ref{subfig7} and \ref{subfig8},  $\Phi_i(X,a,b)=1$ is equivalent to that
		\[
		\max\{|L_4\backslash X|-|L_3 \cap X|, 0\}+|L_2 \backslash X|=|L_1 \cap X|+1.
		\]
		\item When $a,b$ are shown in Figure \ref{subfig9} and \ref{subfig10},  $\Phi_i(X,a,b)=1$ is equivalent to that
		\[
		\max\{|L_1 \cap X|-|L_2 \backslash X|, 0\}+|L_3 \cap X|=|L_4 \backslash X|-1.
		\]
		\item When $a,b$ are shown in Figure \ref{subfig11}, \ref{subfig12}, \ref{subfig13} and \ref{subfig14},  $\Phi_i(X,a,b)=1$ is equivalent to that 
		\[
		|L_4\backslash X|-|L_3 \cap X|>0\;\;and \;\;|L_4\backslash X|-|L_3 \cap X|+|L_2 \backslash X|=|L_1 \cap X|+1.
		\]
	\end{enumerate}
	\begin{figure}[htbp]
		\raggedright
		\begin{subfigure}[t]{0.3\textwidth}
			\includegraphics[width=4.5cm]{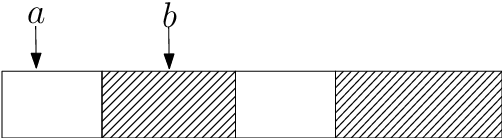}
			\caption{$a \in L_1, \;b \in L_2$}
			\label{subfig7}
		\end{subfigure}
		\hfill
		\begin{subfigure}[t]{0.3\textwidth}
			\includegraphics[width=4.5cm]{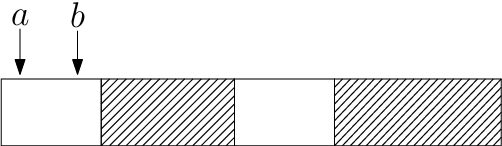}
			\caption{$a, b \in L_1$}
			\label{subfig8}
		\end{subfigure}
		\hfill
		\begin{subfigure}[t]{0.3\textwidth}
			\includegraphics[width=4.5cm]{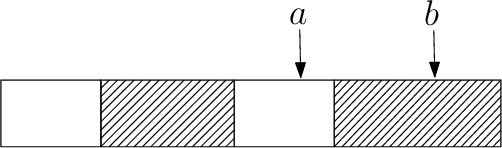}
			\caption{$a\in L_3,\;b \in L_4$}
			\label{subfig9}
		\end{subfigure}
		\vspace{0.5cm}
		\begin{subfigure}[t]{0.3\textwidth}
			\includegraphics[width=4.5cm]{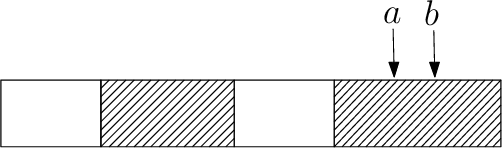}
			\caption{$a,b\in L_4$}
			\label{subfig10}
		\end{subfigure}
		\hfill
		\begin{subfigure}[t]{0.3\textwidth}
			\includegraphics[width=4.5cm]{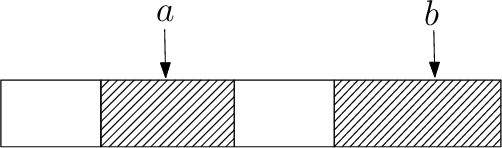}
			\caption{$a\in L_2,\;b \in L_4$}
			\label{subfig11}
		\end{subfigure}
		\hfill
		\begin{subfigure}[t]{0.3\textwidth}
			\includegraphics[width=4.5cm]{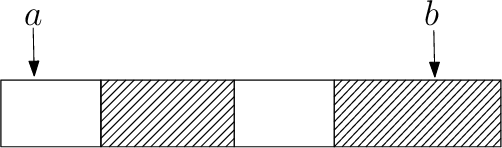}
			\caption{$a\in L_1,\;b \in L_4$}
			\label{subfig12}
		\end{subfigure}
		\vspace{0.5cm}
		\begin{subfigure}[t]{0.3\textwidth}
			\includegraphics[width=4.5cm]{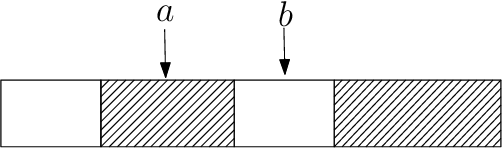}
			\caption{$a\in L_2,\;b \in L_3$}
			\label{subfig13}
		\end{subfigure}
		\hspace{0.5cm}
		\begin{subfigure}[t]{0.3\textwidth}
			\includegraphics[width=4.5cm]{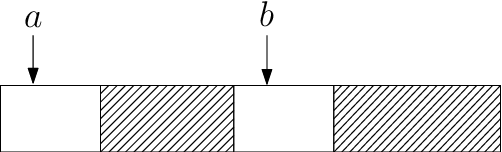}
			\caption{$a \in L_1,\;b \in L_3$}
			\label{subfig14}
		\end{subfigure}
		\hfill
		\caption{}
		\label{fig714}
	\end{figure}
\end{lem}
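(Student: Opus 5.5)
The plan is to replace the bracket-pairing description of $\theta_{I_i^*}$ by a closed formula that is valid when $I_i^*=[p,q]\sqcup[h,n]$, and then to verify each case of the lemma directly from Lemma \ref{lem3}. That lemma says $\Phi_i(X,a,b)=1$ exactly when each of the three increments $\theta(Xa)-\theta(X)$, $\theta(Xb)-\theta(X)$, $\theta(Xab)-\theta(X)$ equals $1$.

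\emph{Step 1: a closed formula.} Write $\theta=\theta_{I_i^*}$ and fix $Y\subseteq[n]$. In the word $W_{I_i^*}(Y)$ the boxes of $Y$ contribute as follows:
\begin{itemize}
\item boxes of $L_1\cap Y$ and $L_3\cap Y$ give ``(''s;
\item boxes of $L_2\cap Y$ and $L_4\cap Y$ give $*$'s;
\item boxes of $L_2\setminus Y$ and $L_4\setminus Y$ give ``)''s.
\end{itemize}
The inside-out pairing first matches the ``)''s of $L_4$ against the ``(''s of $L_3$. The leftover ``)''s of $L_4$, together with the ``)''s of $L_2$, are then matched against the ``(''s of $L_1$. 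Put $R(Y):=\max\{|L_4\setminus Y|-|L_3\cap Y|,0\}+|L_2\setminus Y|$. Then I expect
\[
\theta(Y)=|L_2\cap Y|+|L_4\cap Y|+\min\{|L_3\cap Y|,|L_4\setminus Y|\}+\min\{|L_1\cap Y|,R(Y)\}.
\]
I will justify this carefully from the pairing rule, as in the proof of Lemma \ref{lem1}.

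\emph{Step 2: single-element increments.} Next I compute how $\theta$ changes when one element $c$ is added to $X$.
\begin{itemize}
\item $c\in L_1$: the increment is $1$ iff $|L_1\cap X|<R(X)$, and $0$ otherwise.
\item $c\in L_2$: the $*$ count rises by $1$ while $R$ drops by $1$, so the increment is $1$ iff $|L_1\cap X|<R(X)$.
\end{itemize}
The cases $c\in L_3$ and $c\in L_4$ are symmetric, with the roles of $L_3,L_4$ against $L_1,L_2$ reversed; the relevant comparison is between $|L_3\cap X|$ and $|L_4\setminus X|$, and then the effect on the max term. Let $m:=|L_1\cap X|$ and $t:=|L_4\setminus X|-|L_3\cap X|$.

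\emph{Step 3: the three cases.}
\begin{itemize}
\item Case (1), with $a,b\in L_1\cup L_2$ as in Figures \ref{subfig7}, \ref{subfig8}. Both single increments equal $1$ iff $m<R(X)$. The double increment equals $1$, rather than $2$, iff $m+1=R(X)$. This is exactly the stated equation.
\item Case (2). This is the mirror argument on the $L_3,L_4$ side. The condition becomes $\max\{m-|L_2\setminus X|,0\}+|L_3\cap X|=|L_4\setminus X|-1$.
\item Case (3), with $a$ on the left and $b$ on the right. Adding $b$ changes $t$ by $1$, and this propagates into $R$ only when $t>0$. If $t\le 0$, the increments from $a$ and from $b$ are produced in independent blocks, so the double increment is $2$ whenever both single increments are $1$; hence $\Phi_i=0$. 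If $t>0$, both sides compete for the same leftover closers, and the double increment drops to $1$ precisely when $t+|L_2\setminus X|=m+1$.
\end{itemize}

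I expect case (3) to be the main obstacle. It splits into the four configurations of Figures \ref{subfig11}--\ref{subfig14}, and at the boundary values $t=0$ and $t=1$ one must check that the max term changes regime in exactly the way that produces the strict inequality $t>0$. I would handle it by tabulating $\theta(X)$, $\theta(Xa)$, $\theta(Xb)$ and $\theta(Xab)$ in the two regimes $t\le0$ and $t\ge1$ separately, using the formula from Step 1.
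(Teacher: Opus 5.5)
Your proposal is correct and is essentially the paper's argument: both reduce via Lemma \ref{lem3} to checking that the three increments equal $1$, and both read these off from the bracket pairing. Your closed formula for $\theta(Y)$, which is right, simply packages the paper's comparison of $R(X)$ with $|L_1\cap X|$ in (\ref{E1})--(\ref{E2}), and it lets you treat all eight configurations rather than one per case. For case (2), compute with the equivalent form $\min\{|L_1\cap Y|,|L_2\setminus Y|\}+\min\{|L_4\setminus Y|,\max\{|L_1\cap Y|-|L_2\setminus Y|,0\}+|L_3\cap Y|\}$ of the pair count, since the ``mirror'' is a symmetry of the bracket word and not of $\theta$; and in case (3) with $t\le 0$, note simply that $\theta(Xb)=\theta(X)$, which is how the paper rules out that regime.
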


\begin{proof}
	For the three cases above, we prove one instance of each, as the others follow analogously. By Lemma \ref{lem3}, $\Phi_i(X,a,b)=1$ is equivalent to that $\theta_{I_i^*}(Xa)=\theta_{I_i^*}(Xb)=\theta_{I_i^*}(Xab)=\theta_{I_i^*}(X)+1$. Thus, let us apply the method of computing $\theta_{I_i^*}(X)$ in the beginning of Section \ref{SMABA}.
	
	(1). In the case (A) of Figure \ref{fig714}, $|L_4\backslash X|$ denotes the number of )'s in the section $L_4$ while $|L_3 \cap X|$ denotes the number of ('s in the section $L_3$. Thus, $\max\{|L_4\backslash X|-|L_3 \cap X|,0\}$ denotes the number of unpaired )'s in the sections $L_3$ and $L_4$ while computing $\theta_{I_i^*}(X)$. Since $\theta_{I_i^*}(Xa)=\theta_{I_i^*}(X)+1$, the total number of unpaired )'s in $L_3\sqcup L_4$ and )'s in $L_2$ is bigger than ('s in $L_1$, i.e.
	\begin{equation}
		\label{E1}
		\max\{|L_4\backslash X|-|L_3 \cap X|, 0\}+|L_2 \backslash X|>|L_1 \cap X|.
	\end{equation}
	Therefore, there are $max\{|L_4\backslash X|-|L_3 \cap X|, 0\}+|L_2 \backslash X|-|L_1 \cap X|-1$ unpaired )'s in computing $\theta_{I_i^*}(Xa)$. On the other hand, $\theta_{I_i^*}(Xa)=\theta_{I_i^*}(Xab)$ implies that transforming the ) to $*$ in the box $b$ keeps $\theta_{I_i^*}(Xa)$ unchanged. This shows that the )
	in box $b$ has been paired which implies that there is no unpaired )'s in computing $\theta_{I_i^*}(Xa)$, i.e.
	\begin{equation}
		\label{E2}
		\max\{|L_4\backslash X|-|L_3 \cap X|, 0\}+|L_2 \backslash X|-|L_1 \cap X|-1 \leq 0.
	\end{equation}
	Combining equations \ref{E1} and \ref{E2}, we complete the proof. The inverse direction can be checked direcly by showing that $\theta_{I_i^*}(Xa)=\theta_{I_i^*}(Xb)=\theta_{I_i^*}(Xab)=\theta_{I_i^*}(X)+1$.
	
	(2). Similar to (1). In the case (C) of Figure \ref{fig714}, the unpaired ('s in $L_1 \sqcup L_2$ is $max\{|L_1 \cap X|-|L_2\backslash X|,0\}$ when computing $\theta_{I_i^*}(X)$. Since $\theta_{I_i^*}(Xb)=\theta_{I_i^*}(X)+1$, the total number of unpaired ('s in $L_1 \sqcup L_2$ and ('s in $L_3$ is less than )'s in $L_4$, i.e.
	\begin{equation}
		\label{E3}
		max\{|L_1 \cap X|-|L_2\backslash X|,0\}+|L_3 \cap X|<|L_4\backslash X|.
	\end{equation}
	Similarly while computing $\theta_{I_i^*}(Xa)$, the total number of unpaired ('s in $L_1 \sqcup L_2$ and ('s in $L_3$ is $\max\{|L_1 \cap X|-|L_2\backslash X|,0\}+|L_3 \cap X|+1$. Since $\theta_{I_i^*}(Xab)=\theta_{I_i^*}(Xb)$, this shows that adding one ( in the box $b$ dose not change the value of $\theta_{I_i^*}(Xa)$, i.e. the adding ( in box $b$ is unpaired. Thus
	\begin{equation}
		\label{E4}
		\max\{|L_1 \cap X|-|L_2\backslash X|,0\}+|L_3 \cap X|+1 \geq |L_4\backslash X|.
	\end{equation}
	Combining equations \ref{E3} and \ref{E4}, we complete the proof.
	
	(3). In the case (E) of Figure \ref{fig714}, if $|L_4\backslash X|-|L_3 \cap X| \leq 0$, then $\theta_{I_i^*}(Xb)=\theta_{I_i^*}(X)$ which contradicts to the assumption $\theta_{I_i^*}(Xb)=\theta_{I_i^*}(X)+1$. So $\max\{|L_4\backslash X|-|L_3 \cap X|,0\}=|L_4\backslash X|-|L_3 \cap X|$ and by the similar arguments in (1) and (2), $\Phi_i(X,a,b)=1$ requires the additional condition
	\begin{equation}
		\label{E5}
		|L_4\backslash X|-|L_3 \cap X|+|L_2 \backslash X|=|L_1 \cap X|+1.
	\end{equation}
\end{proof}

Sanchez has proved that $\theta_{I_1}$ and $\theta_{I_2}$ are compatible when $I_1$ and $I_2$ are strongly separated in \cite{S}. So here, in order to prove the inverse side of Theorem \ref{g-positroid_ws}, we only need to prove for the case that $I_1$ and $I_2$ are weakly separated but not strongly separated. The contrapositive of this proposition is stated as follows.

\begin{prop}
	\label{prop6}
	Suppose that $I_2\backslash I_1$ surrounds $I_1 \backslash I_2$. If $\theta_{I_1}$ and $\theta_{I_2}$ are incompatible, then $I_1$ and $I_2$ are not weakly separated.
\end{prop}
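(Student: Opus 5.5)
We argue by contraposition on the reduced grid. Assume $I_2\backslash I_1$ surrounds $I_1\backslash I_2$ and that $I_1,I_2$ are weakly separated (so, since $I_2\backslash I_1$ surrounds, $|I_2|\geq |I_1|$ forces the surrounding configuration). We show that $\theta_{I_1}$ and $\theta_{I_2}$ are compatible, contradicting the hypothesis. By Lemma \ref{lem2} and Corollary \ref{cor2}, it suffices to show that $\theta_{I_1^*}$ and $\theta_{I_2^*}$ are compatible.

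\textbf{Step 1: the reduced configuration.} We first record the shape of the sorted pair. Ascent-swaps preserve the weak separation type. Once no ascents remain, the sections satisfy $A_0<A_1<\cdots<A_s$, with $A_s$ common to both sets. In the surrounding, weakly separated case only three alternating sections can survive: $A_1\subseteq I_2^*$, $A_2\subseteq I_1^*$, $A_3\subseteq I_2^*$, followed by the common block $A_4=A_s$, with $|A_1|+|A_3|\leq |A_2|$ coming from weak separation. This gives $I_1^*=A_2\sqcup A_4$, which is an interval, so by Sanchez's strongly separated analysis $\theta_{I_1^*}$ is easy. It also gives $I_2^*=A_1\sqcup [\min A_3,n]$, which is a union of two intervals. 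This is exactly the situation $I_i^*=[p,q]\sqcup[h,n]$ of Lemma \ref{lem4}, with $L_1=A_0$, $L_2=A_1$, $L_3=A_2$ and $L_4=A_3\sqcup A_4$. I would verify this reduction carefully by tracking how sections merge under the swaps $s_i$.

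\textbf{Step 2: reformulate compatibility via $\Phi$.} Since each $\theta$ is a matroid rank function, the sum $\theta_{I_1^*}+\theta_{I_2^*}$ is automatically submodular. What must be checked is the tropical Pl\"ucker relation for all $X$ and $i<j<k$. A standard manipulation writes each of the three quantities $f(Xik)+f(Xj)$, $f(Xij)+f(Xk)$, $f(Xjk)+f(Xi)$, relative to $f(X)$ terms, through the values $\Phi(X,\cdot,\cdot)$ and the single-step increments. For a tropical Pl\"ucker function $\theta$, the relation is equivalent to the triple $(\Phi(X,i,j),\Phi(X,j,k),\Phi(X,i,k))$ having its maximum attained twice in the appropriate sense. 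The sum fails exactly when $\theta_{I_1^*}$ and $\theta_{I_2^*}$ attain their maxima in "crossing" positions, as in the proof of Proposition \ref{prop5}. So I would show that no triple $i<j<k$ and set $X$ can give $\Phi_1(X,i,k)=\Phi_1(X,j,k)=1>\Phi_1(X,i,j)$ together with $\Phi_2(X,i,k)=\Phi_2(X,i,j)=1>\Phi_2(X,j,k)$, nor the symmetric pattern. (The values lie in $\{0,1\}$ by Lemma \ref{lem3}.)

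\textbf{Step 3: case analysis, the main obstacle.} For the interval $I_1^*$, the conditions $\Phi_1=1$ have a simple bracket description. For $I_2^*$ they are given by Lemma \ref{lem4}, according to the blocks $L_1,\dots,L_4$ containing $a$ and $b$. I would run through the positions of $i<j<k$ among the blocks and compare the resulting linear equalities in the counts $|L_1\cap X|$, $|L_2\backslash X|$, $|L_3\cap X|$ and $|L_4\backslash X|$. A crossing pattern should force an equation such as
\[
|L_4\backslash X|-|L_3\cap X|+|L_2\backslash X|=|L_1\cap X|+1
\]
simultaneously with an interval condition for $I_1^*$ requiring $|L_3\cap X|$ to exceed $|L_2\backslash X|$ by a prescribed amount. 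Combining the two should yield $|A_1|+|A_3|>|A_2|$, contradicting weak separation. This mirrors the construction in Proposition \ref{prop5}, which showed that $|A_{i-1}|+|A_{i+1}|>|A_i|$ is exactly what produces a violation.

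The bookkeeping of the eight position cases in Lemma \ref{lem4}, intersected with the interval cases for $I_1^*$, is the hard part. I would try first to cut it down by the symmetry that reverses $[n]$ and complements the sets, which swaps cases (1) and (2) of Lemma \ref{lem4}, leaving essentially the mixed case (3) as the genuine computation.
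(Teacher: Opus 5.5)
Your outline (reduce to the sorted grid, encode (in)compatibility through $\Phi_1,\Phi_2$, run a case analysis with Lemma~\ref{lem4}, end with $|A_2|<|A_1|+|A_3|$) follows the paper's route, taken as a contrapositive. However, the key reformulation in Step 2 is wrong. Put $\Phi(X,i,j)=f(Xi)+f(Xj)-f(Xij)-f(X)$ and $C=f(Xi)+f(Xj)+f(Xk)-f(X)$. Then the three sums in the tropical Pl\"ucker relation are $C-\Phi(X,i,j)$, $C-\Phi(X,i,k)$ and $C-\Phi(X,j,k)$. So the relation says $\Phi(X,i,k)=\min\{\Phi(X,i,j),\Phi(X,j,k)\}$; it is a minimum condition on $\Phi$. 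Consequently $\theta_{I_1^*}+\theta_{I_2^*}$ fails exactly when $\Phi_1(X,i,k)=\Phi_2(X,i,k)=0$ and the $1$'s sit crosswise: either $\Phi_1(X,i,j)=\Phi_2(X,j,k)=1$ or $\Phi_1(X,j,k)=\Phi_2(X,i,j)=1$. This is the paper's $(\clubsuit.1)/(\clubsuit.2)$, and it is the configuration exhibited in Proposition~\ref{prop5}. The pattern you propose to exclude, $\Phi_1(X,i,k)=\Phi_1(X,j,k)=1>\Phi_1(X,i,j)$, already violates the relation for the single DCTP function $\theta_{I_1^*}$. So it never occurs, and excluding it proves nothing about compatibility. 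This is not cosmetic: the paper gets its strict inequality precisely from the hypothesis $\Phi_2(X,a,c)=0$, which your formulation does not contain.

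Steps 1 and 3 have further problems.
\begin{itemize}
\item $I_1^*=A_2\sqcup A_4$ is not an interval. The surrounding hypothesis makes $A_3\neq\emptyset$, and $A_3\subseteq I_2^*\setminus I_1^*$ lies between $A_2$ and $A_4$. So $I_1^*$ also has the form $[p,q]\sqcup[h,n]$, with $L_1=A_0\sqcup A_1$, $L_2=A_2$, $L_3=A_3$, $L_4=A_4$. Lemma~\ref{lem4} must therefore be applied to both functions, with these two different labelings.
\item Weak separation here forces $|I_1|\ge|I_2|$, not the reverse. Your inequality $|A_1|+|A_3|\le|A_2|$ is nonetheless the correct one.
\item Step 3 is only a hope. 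The paper's argument runs as follows. No two of $a<b<c$ can lie in one section, which leaves ten placements among $A_0,\dots,A_4$. In six of them, Lemma~\ref{lem4} gives equivalences such as $\Phi_1(X,a,c)=1\Leftrightarrow\Phi_1(X,b,c)=1$ and $\Phi_2(X,a,c)=1\Leftrightarrow\Phi_2(X,b,c)=1$, which rule out both crossing patterns. In the other four, only one pattern survives; for example, with $a\in A_0$, $b\in A_2$, $c\in A_3$ only $(\clubsuit.1)$ survives. Adding the case-(1) equation for $\Phi_1(X,a,b)=1$ to the case-(2) equation for $\Phi_2(X,b,c)=1$ gives
\[
|A_2|+\max\{|A_3\cap X|-|A_4\setminus X|,0\}+\max\{|A_1\setminus X|-|A_0\cap X|,0\}=|A_1|+|A_3|.
\]
The last maximum is positive, since otherwise Lemma~\ref{lem4}(3) forces $\Phi_2(X,a,c)=1$.
\end{itemize}
So the decisive equations come from cases (1) and (2) of Lemma~\ref{lem4}, and case (3) supplies the contradiction. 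Your duality-plus-reversal symmetry does preserve compatibility (it swaps the roles of $I_1$ and $I_2$). It would not, however, reduce the work to ``essentially case (3)''.
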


\begin{proof}
	By Lemma \ref{lem2} and Corollary \ref{cor2}, we can reduce $I_1$ and $I_2$ to consider $I_1^*$ and $I_2^*$.
	Since $I_2^*\backslash I_1^*$ surrounds $I_1^* \backslash I_2^*$, $[n]$ can be decomposed into 5 sections $A_0, \cdots,A_4$ such that $I_1^*=A_2\sqcup A_4$ and $I_2^*=A_1 \sqcup A_3 \sqcup  A_4$ ($A_0$ and $A_4$ might be empty).
	
	Meanwhile, by Lemma \ref{lem3}, $\theta_{I_1^*}$ and $\theta_{I_2^*}$ are incompatible if and only if $\exists \;X \subseteq [n]$ and $a,b,c \in [n]\backslash X$ with  $a<b<c$ such that one of the following two conditions holds:
	\begin{equation}
		\Phi_1(X,a,c)=\Phi_2(X,a,c)=0\;\; \text{and}\;\; \Phi_1(X,a,b)=\Phi_2(X,b,c)=1 \tag{$\clubsuit \; .1$}
	\end{equation}
	\begin{equation}
		\Phi_1(X,a,c)=\Phi_2(X,a,c)=0\;\; \text{and}\;\; \Phi_1(X,b,c)=\Phi_2(X,a,b)=1 \tag{$\clubsuit \; .2$}
	\end{equation}
	
	First, it is easy to see that for condition ($\clubsuit \; .1$) or ($\clubsuit \; .2$) to hold, any two elements among $a, b$ and $c$ cannot fall within the same section. Thus, there are $\binom{5}{3}=10$ possible arrangements of $a, b$ and $c$ as shown in Figure \ref{fig1524}. We will show that most of these 10 scenarios are impossible. For a thorough proof, we address each case individually.
	
	\begin{figure}[H]
		\centering
		\begin{subfigure}[t]{0.3\textwidth}
			\includegraphics[width=4.5cm]{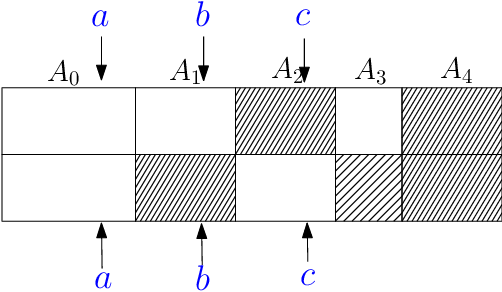}
			\caption*{Case 1: $a \in A_0,\;b \in A_1,\;c\in A_2$}
			\label{subfig15}
		\end{subfigure}
		\hfill
		\begin{subfigure}[t]{0.3\textwidth}
			\includegraphics[width=4.5cm]{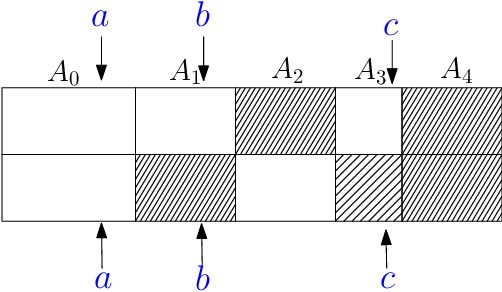}
			\caption*{Case 2: $a \in A_0,\;b \in A_1,\;c\in A_3$}
			\label{subfig16}
		\end{subfigure}
		\hfill
		\begin{subfigure}[t]{0.3\textwidth}
			\includegraphics[width=4.5cm]{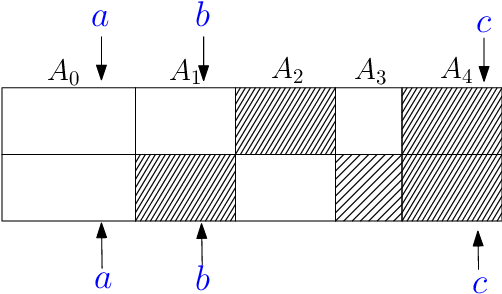}
			\caption*{Case 3: $a \in A_0,\;b \in A_1,\;c\in A_4$}
			\label{subfig17}
		\end{subfigure}
		\vspace{10pt}
		\begin{subfigure}[t]{0.3\textwidth}
			\includegraphics[width=4.5cm]{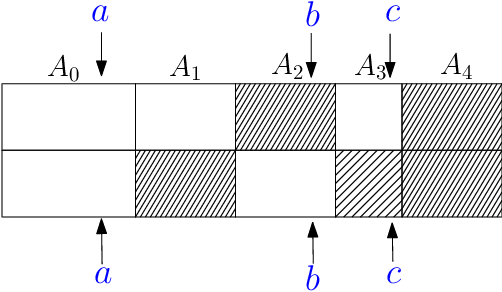}
			\caption*{Case 4: $a \in A_0,\;b \in A_2,\;c\in A_3$}
			\label{subfig18}
		\end{subfigure}
		\hfill
		\begin{subfigure}[t]{0.3\textwidth}
			\includegraphics[width=4.5cm]{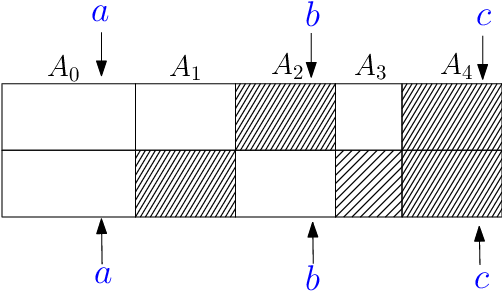}
			\caption*{Case 5: $a \in A_0,\;b \in A_2,\;c\in A_4$}
			\label{subfig19}
		\end{subfigure}
		\hfill
		\begin{subfigure}[t]{0.3\textwidth}
			\includegraphics[width=4.5cm]{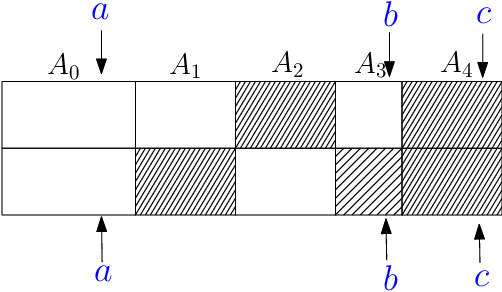}
			\caption*{Case 6: $a \in A_0,\;b \in A_3,\;c\in A_4$}
			\label{subfig20}
		\end{subfigure}
		
		\begin{subfigure}[t]{0.3\textwidth}
			\includegraphics[width=4.5cm]{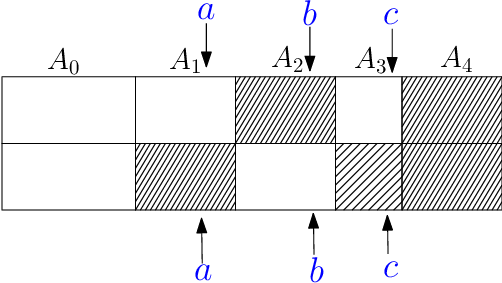}
			\caption*{Case 7: $a \in A_1,\;b \in A_2,\;c\in A_3$}
			\label{subfig21}
		\end{subfigure}
		\hfill
		\begin{subfigure}[t]{0.3\textwidth}
			\includegraphics[width=4.5cm]{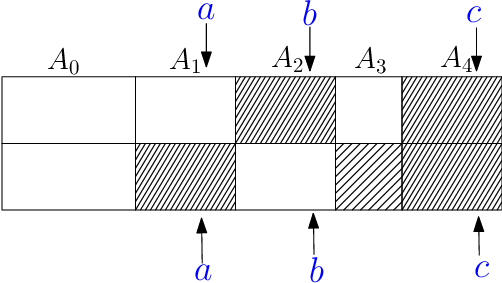}
			\caption*{Case 8: $a \in A_1,\;b \in A_2,\;c\in A_4$}
			\label{subfig22}
		\end{subfigure}
		\hfill
		\begin{subfigure}[t]{0.3\textwidth}
			\includegraphics[width=4.5cm]{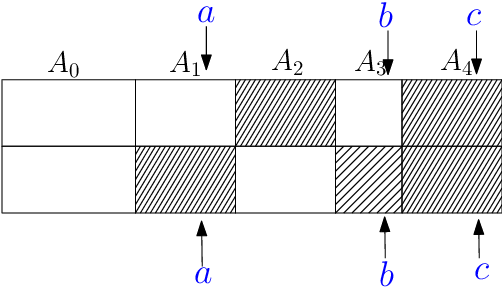}
			\caption*{Case 9: $a \in A_1,\;b \in A_3,\;c\in A_4$}
			\label{subfig23}
		\end{subfigure}
		
		\begin{subfigure}[t]{0.3\textwidth}
			\includegraphics[width=4.5cm]{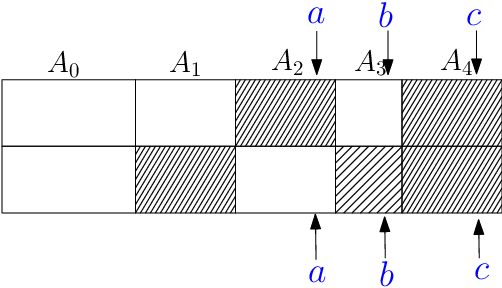}
			\caption*{Case 10: $a \in A_2,\;b \in A_3,\;c\in A_4$}
			\label{subfig24}
		\end{subfigure}
		\caption{}
		\label{fig1524}
	\end{figure}
	
	\noindent Case 1: By (1) of Lemma \ref{lem4}, we have $\Phi_1(X,a,c)=1 \Leftrightarrow \Phi_1(X,b,c)=1$. Thus  condition ($\clubsuit \; .2$) is not satisfied. On the other hand, by (2) of Lemma \ref{lem4}, $\Phi_2(X,a,c)=1 \Leftrightarrow \Phi_2(X,b,c)=1$, so condition ($\clubsuit \; .1$) is not satisfied. This case is impossible. Similar to Case 1, cases 2, 3, 6, 9, 10 are ruled out for the same reason.\\
	
	\noindent Case 4: By (3) of Lemma \ref{lem4}, $\Phi_1(X,a,c)=1 \Leftrightarrow \Phi_1(X,b,c)=1$ and $\Phi_2(X,a,c)=1 \Leftrightarrow \Phi_2(X,a,b)=1$. Therefore, ($\clubsuit \; .1$) is the only possible case for $\theta_{I_1^*}$ and $\theta_{I_2^*}$ to be incompatible. Thus by (1) in Lemma \ref{lem4}, $\Phi_1(X,a,b)=1$ if and only if
	\begin{equation}
		\label{E6}
		\max\{|A_4\backslash X|-|A_3 \cap X|,0\}+|A_2\backslash X|=|A_0 \cap X|+|A_1 \cap X|+1.
	\end{equation}
	And by (2) of Lemma \ref{lem4}, we have $\Phi_2(X,b,c)=1$ if and only if
	\begin{equation}
		\label{E7}
		 \max\{|A_0 \cap X|-|A_1 \backslash X|, 0\}+|A_2 \cap X|=|A_3\backslash X|+|A_4\backslash X|-1.
	\end{equation}
	Add equations \ref{E6} and \ref{E7}, we get the equation
	\begin{equation}
		\label{E8}
		|A_2|+\max\{|A_3\cap X|-|A_4\backslash X|,0\}+\max\{|A_1 \backslash X|-|A_0 \cap X|,0\}=|A_1|+|A_3|.
	\end{equation}
	We claim that $|A_1 \backslash X|-|A_0 \cap X|>0$, otherwise suppose that $|A_0 \cap X|-|A_1 \backslash X| \geq 0$, equation \ref{E6} can be writen as $|(A_3 \sqcup A_4)\backslash X|-|A_2 \cap X|+|A_1 \cap X|=|A_0 \cap X|+1$. Moreover, $|A_0 \cap X|-|A_1 \backslash X| \geq 0$ is equivalent to that $|(A_3 \sqcup A_4)\backslash X|-|A_2 \cap X|>0$. Thus this is equivalent to that $\Phi_2(X,a,c)=1$ by (3) of Lemma \ref{lem4} which contradicts the condition ($\clubsuit \; .1$). Therefore $|A_2|<|A_1|+|A_3|$ and this shows that $I_1^*$ and $I_2^*$ are not weakly separated. In cases 5, 7, 8, we follow the same arguments to show that $|A_2|<|A_1|+|A_3|$ as case 4. Then we complete the proof.
\end{proof}

\vspace{0.2cm}
\begin{proof}[Proof of Theorem \ref{g-positroid_ws}]
	The "only if" part follows from Proposition \ref{prop5}, that is,  if $\theta:=\sum_{I \in \mathcal{C}} c_I \theta_I$ is a DCTP function, then $\mathcal{C}$ is a weakly separated set. while the "if" part follows from Proposition \ref{prop6} and Theorem C in \cite{S}.
\end{proof}

\vspace{1cm}

\section{Generalized polypositroids}
\label{DODF}
\subsection{Generalized polypositroids and DCTP functions}\quad\\
In this section, we introduce a special class of g-polymatroids called g-polypositroids. It generalizes the positroids from \cite{ARW} and the polypositroids from \cite{LP}, and can also be used to characterize the affine regions of DCTP functions.

For a set \([n]\), denote \( \mathcal{I}(n) \) the set of interval subsets of \([n]\), of the form \([a, b] := \{a, a+1, \cdots, b\}\), then we use \( \mathcal{I}(n) \) to define the subclass.

\begin{defi}
	\label{def3}
	Let $(f, g)$ be a strong pair. The g-polymatroid $Q(f, g)$ is called a \textbf{generalized polypositroid} (abbreviated as \textbf{g-polypositroid}) if $Q(f, g) = QI(f, g)$, where
\begin{equation}
	QI(f,g):=\{x \in \mathbb{R}^n\;|\;g([a,b]) \leq x([a,b]) \leq f([a,b]), \; \forall \; [a,b] \in \mathcal{I}(n)\}.
\end{equation}
Denote by \( \mathcal{GPP}(n) \) the class of g-polypositroids in \( \mathbb{R}^n \).
\end{defi}

\vspace{0.2cm}
\begin{rmk}
In the above definition,
\begin{enumerate}
	\item when $g(X)=f([n])-f([n]\backslash X)$, then $Q(f,g)=\operatorname{QI}(f,g)$ becomes an alcoved polytope , which degenerates to the \textbf{polypositroid} defined in \cite{LP}. \cite{LP} employs various combinatorial models, such as Coxeter necklaces, to parameterize polypositroids. 
	
	\item In particular, when $Q$ is a polypositroid and for every $i \in [n]$, $f(\{i\})=0$ or $1$, then $Q$ degenerates to a \textbf{positroid}, and the corresponding Coxeter necklace degenerates to a Grassmann necklace.
\end{enumerate}
\end{rmk}

\vspace{0.3cm}

Definition \ref{def3} has equivalent characterizations. Given an inequality $x(S) \leq f(S)$ (or $x(S) \geq g(S)$), we call $S$ the \textbf{support} of this inequality. From Definition \ref{def3}, we know that all facet-defining inequalities have interval supports. 

Conversely, if $Q(f,g)$ is a g-polymatroid whose supports of facet-defining inequalities are intervals, then $Q$ is a  g-polypositroid. Here, we need a further explanation. In Theorem \ref{thm1}, we can express the affine hull of $Q$ as
\[
\text{Aff}(Q)=H_1 \times H_2 \times \cdots H_k \times \mathbb{R}^T,
\]
where $H_i=\{x \in \mathbb{R}^{S_i}\;|\;x(S_i)=g(S_i)=f(S_i)\}$ is a hyperplane in $\mathbb{R}^{S_i}$. Thus every facet-defining equality has many equivalent expressions by adding or substracting equations of the forms $x(S_i)=f(S_i)$ ($i \in [k]$). Therefore, here we refer to the assumption that among the equivalent expressions of each facet-defining inequality, there exists at least one whose support is an interval.

More explicitly, suppose that $Q$ is not a g-polypositroid, i.e., $Q(f,g) \subsetneq QI(f,g)$, so there exists a non-interval $A$ such that all the equivalent expressions of $x(A) \leq f(A)$ or $x(A) \geq g(A)$ have non-interval supports, and one of them is facet-defining. We summarize the above discussion as the following proposition.

\begin{prop}
    \label{prop12}
    A g-polymatroid is a g-polypositroid if and only if among each equivalent class of facet-defining inequalities, there exists at least one whose support is an interval.
\end{prop}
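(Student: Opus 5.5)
The plan is to prove both directions by comparing $Q=Q(f,g)$ with $QI(f,g)$, using the facet structure of polyhedra together with the decomposition of Theorem \ref{thm1}. By Proposition \ref{prop11}, every inequality $g(A)\le x(A)\le f(A)$ is valid and tight on $Q$. Hence $Q\subseteq QI(f,g)$ always holds, and the content of the statement is the reverse inclusion. We first fix the affine hull $\text{Aff}(Q)=H_1\times\cdots\times H_k\times\mathbb{R}^T$ as in the discussion preceding the proposition. Two inequalities are called equivalent when they differ by integer combinations of the equations $x(S_i)=f(S_i)$; equivalent inequalities cut out the same halfspace inside $\text{Aff}(Q)$. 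We also note that each $S_i$ may itself fail to be an interval. So we must first check that the equations defining $\text{Aff}(Q)$ are implied by interval inequalities of $QI(f,g)$, or else read Definition \ref{def3} modulo the affine hull as in the preceding remark.

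For the direction ($\Leftarrow$), assume each equivalence class of facet-defining inequalities contains a representative with interval support. Since $Q$ is a polyhedron, it equals $\text{Aff}(Q)$ intersected with the halfspaces of its facet-defining inequalities. Replace each such inequality by its interval representative; inside $\text{Aff}(Q)$ this describes the same halfspace. The tightness in Proposition \ref{prop11} shows that the right-hand side of this representative is exactly $f([a,b])$ (respectively $g([a,b])$), because the representative is a supporting inequality attaining its maximum on a facet. Thus every point of $QI(f,g)\cap\text{Aff}(Q)$ satisfies all facet inequalities of $Q$, so it lies in $Q$. To conclude $Q=QI(f,g)$, it remains to show $QI(f,g)\subseteq\text{Aff}(Q)$, which is the subtle point flagged above. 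I would handle it by showing that each sum-set $S_i$ of the decomposition is forced by interval constraints: sum-sets arising from facet data should decompose into intervals on which $f=g$. If this fails, I would instead interpret the statement with $QI$ taken inside $\text{Aff}(Q)$, which is exactly the convention fixed in the paragraph before the proposition.

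For the direction ($\Rightarrow$), argue by contrapositive, following the paper's sketch. Suppose some facet $F$ of $Q$ has a class of defining inequalities in which every representative has non-interval support. Pick a point $y$ just beyond $F$ in $\text{Aff}(Q)$, close to the relative interior of $F$, so that $y$ violates only the facet inequality of $F$ and satisfies strictly every other facet inequality. We show $y\in QI(f,g)$, which gives $Q\subsetneq QI(f,g)$. Let $x([a,b])\le f([a,b])$ be any interval inequality. It is valid on $Q$ and is a nonnegative combination of facet inequalities, modulo the affine hull. If the facet of $F$ enters this combination with positive weight, the interval inequality either is a positive multiple of it modulo the affine hull, which contradicts the hypothesis on $F$, or involves other facets with slack. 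In the latter case the slack dominates the small violation at $y$ once $y$ is chosen close enough to $F$, since there are only finitely many interval inequalities.

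The main obstacle is making this perturbation rigorous. The difficulty is a $0/1$ normal vector $e_{[a,b]}$ that is a conic combination of the normal of $F$ with normals of facets all tight along $F$: in that case the slack at $y$ could vanish. To rule this out, I would use the fact that $e_{[a,b]}$ is then tight on all of $F$, so it defines a face containing $F$, which is either $F$ itself or $Q$. The case $F$ means the interval inequality is equivalent to the facet inequality of $F$, contradicting the hypothesis. The case $Q$ means it is an implicit equation, which does not separate $y$ because $y$ was chosen inside $\text{Aff}(Q)$.
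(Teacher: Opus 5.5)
Your direction ($\Rightarrow$) is correct. The paper treats it as immediate: $Q=QI(f,g)$ is a finite system of interval inequalities, and every facet of a polyhedron is cut out by some inequality of any system describing it. Your replacement argument giving $QI(f,g)\cap\text{Aff}(Q)\subseteq Q$ is also correct.

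The gap is the remaining inclusion $QI(f,g)\subseteq\text{Aff}(Q)$. Your proposed justification, that each sum-set $S_i$ is forced by interval constraints, is false, and in fact the proposition as literally stated fails at this point. Take $n=3$ and $Q=\{x\in[0,1]^3\mid x_1+x_3=1\}$, the square with vertices $e_1,e_3,e_1+e_2,e_2+e_3$.
\begin{itemize}
\item Its edges are parallel to $e_1-e_3$ and $e_2$, so it is a g-matroid, with $S_1=\{1,3\}$ and $T=\{2\}$.
\item Its facets $x_1\ge 0$, $x_1\le 1$, $x_2\ge 0$, $x_2\le 1$ all have singleton, hence interval, supports, so the hypothesis holds.
\item By Proposition \ref{prop11}, $(g,f)$ takes the values $(0,1)$ on singletons, $(0,2)$ on $\{1,2\}$ and $\{2,3\}$, and $(1,2)$ on $[3]$.
\item The point $e_2$ satisfies all of these interval constraints but has $x_1+x_3=0$. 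So $e_2\in QI(f,g)\setminus Q$, and $Q$ is not a g-polypositroid.
\end{itemize}
This is consistent with Theorem \ref{thm2}, since $S_1$ surrounds $T$. The paper's own contrapositive sentence before the proposition skips exactly this point. The strict inclusion $Q\subsetneq QI(f,g)$ can come from an implicit equation $x(S_i)=f(S_i)$ with non-interval $S_i$, not from any facet.

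Your fallback of reading $QI(f,g)$ inside $\text{Aff}(Q)$ only proves $Q=QI(f,g)\cap\text{Aff}(Q)$, which is not Definition \ref{def3}. The honest repair is an additional hypothesis: the equations $x(S_i)=f(S_i)$ defining $\text{Aff}(Q)$ must be consequences of interval equations $x([a,b])=f([a,b])=g([a,b])$. Under this hypothesis your two steps do give $Q=QI(f,g)$.

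The hypothesis holds, for instance, when the partition of Theorem \ref{thm1} is non-crossing and no $S_i$ surrounds $T$. Then $[\min S_i,\max S_i]$ consists of $S_i$ together with whole blocks $S_j$ nested in its gaps. Each such hull gives an interval equation, and one inducts on the nesting. That is the situation in which the paper invokes the proposition in the proof of Theorem \ref{thm2}, so the application there survives.
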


Analogues to g-polymatroids, g-polypositroids are also closed under some operations. 

\begin{prop}
	\label{prop13}
    The g-polypositroids in $\mathbb{R}^n$ are closed under the following operations.
    \begin{enumerate}
        \item Intersection with a plank $P(\alpha,\beta):=\{x \in \mathbb{R}^n\;|\;\alpha \leq x([n]) \leq \beta\}$, where $\alpha,\beta \in \mathbb{R}$.
        \item Intersection with a box $B(\alpha,\beta):=\{x \in \mathbb{R}^n\;|\;\alpha \leq x_i \leq \beta \; \text{for} \; i \in [n]\}$, where $\alpha,\beta \in \mathbb{R}$.
        \item Let $\pi_m:\mathbb{R}^n \rightarrow \mathbb{R}^m$ be the projection onto the first $m$ coordinates, then $\pi_m(Q)$ is a g-polypositroid in $\mathbb{R}^m$.
    \end{enumerate}
\end{prop}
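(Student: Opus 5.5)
The plan is to check each operation directly against Definition \ref{def3}: $Q$ lies in $\mathcal{GPP}(n)$ exactly when $Q=QI(f,g)$. In each case we first show that the new polyhedron is still a g-polymatroid, and identify its strong pair $(f',g')$. We then show that it is cut out by inequalities with interval supports. The interval criterion of Proposition \ref{prop12} is the convenient tool: it suffices to show that every facet-defining inequality of the new polyhedron has an interval-supported representative.

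For (1) and (2), both the plank $P(\alpha,\beta)$ and the box $B(\alpha,\beta)$ are themselves g-polymatroids whose defining inequalities have interval supports. The plank is supported on $[n]$, and the box on the singletons $\{i\}=[i,i]$. Intersections of g-polymatroids are not g-polymatroids in general, so this is the point needing care. We use the classical fact (Frank's theorem on intersecting a g-polymatroid with a box or plank) that such an intersection is again a g-polymatroid. Its strong pair is obtained by truncation. For the plank,
\[
f'(A)=\min\{f(A),\,\beta-g([n]\setminus A)\},\qquad g'(A)=\max\{g(A),\,\alpha-f([n]\setminus A)\}.
\]
For the box, one takes the analogous formulas with $f'(A)=\min_{B\subseteq A}\{f(B)+\beta|A\setminus B|\}$, and dually for $g'$. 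Every facet of $Q\cap P$ or $Q\cap B$ is then a facet of $Q$ or of $P$ (respectively $B$). Facets of $Q$ have interval representatives by Proposition \ref{prop12}, and the added facets have supports $[n]$ or $\{i\}$. Proposition \ref{prop12} therefore gives the conclusion. One subtlety is the equivalence classes of facet inequalities modulo the sum-set equations of Theorem \ref{thm1]}. Intersecting can create new sum-sets, for instance when $x([n])$ becomes fixed. This only enlarges the equivalence classes, so an interval representative survives.

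For (3), projections of g-polymatroids are g-polymatroids. Indeed, $\pi_m(Q)=Q(f|_{2^{[m]}},g|_{2^{[m]}})$, since by Proposition \ref{prop11} the values $\max x(A)$ and $\min x(A)$ for $A\subseteq[m]$ are unchanged under projection. It remains to show $\pi_m(QI(f,g))=QI(f|,g|)$. The inclusion $\subseteq$ is clear. For $\supseteq$, we use that $\pi_m(Q)$ is the g-polymatroid with strong pair $(f|,g|)$. By Definition \ref{def3} applied to $Q$, the values $f(A)$ for $A\subseteq[m]$ are determined by the interval inequalities. We then argue via Proposition \ref{prop12} that the facets of $\pi_m(Q)$ come from restricted supports $A\subseteq[m]$ that are tight. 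An intervals-of-$[n]$ intersected with $[m]$ argument gives the interval supports: any interval of $[n]$ meets $[m]$ in an interval of $[m]$. Combined with the sum-set freedom, this yields interval representatives.

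The main obstacle I expect is (3). Projection is an existential operation (Fourier–Motzkin), and a facet of $\pi_m(Q)$ could a priori arise from combining several interval inequalities of $Q$ that involve coordinates outside $[m]$. I would handle this by projecting one coordinate at a time, $\pi_{n-1}$, and using the g-polymatroid exchange structure. Eliminating $x_n$ pairs an upper bound on an interval $[a,n]$ with a lower bound on an interval $[b,n]$. Their combination has support of the form $[a,b-1]$ or $[b,a-1]$, again an interval, and by submodularity and supermodularity it is dominated by the restricted strong-pair inequality.
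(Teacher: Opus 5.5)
Your proposal follows essentially the same route as the paper. For (1)–(2) you use the known fact that a plank or box intersection of a g-polymatroid is a g-polymatroid and read off interval supports via Proposition~\ref{prop12}; for (3) you identify the strong pair of $\pi_m(Q)$ as the restriction of $(f,g)$ and prove $\pi_m(QI(f,g))=QI(f|_{2^{[m]}},g|_{2^{[m]}})$ one coordinate at a time, and your Fourier--Motzkin pairing of an upper bound on $[a,n]$ with a lower bound on $[b,n]$ is exactly the paper's check that $l\ge u$. Small corrections: the redundancy of the combined inequality comes from the third strong-pair condition $f(B)-f(B-A)\ge g(A)-g(A-B)$ with $B=[a,n]$, $A=[b,n]$, not from sub/supermodularity alone; your box formula for $f'$ is wrong because it ignores the lower bounds $x_j\ge\alpha$ (the triangle $\{x\in[0,1]^2: x_1+x_2\le 1\}$ cut by $[\tfrac12,1]^2$ is a single point, so $f'(\{1\})=\tfrac12$, while your formula gives $1$); neither that formula nor your middle paragraph on (3) is needed.
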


\begin{proof}
	According to \cite{FT}, the intersection of Q with either a plank or a box is a g-polymatroid. From Proposition \ref{prop12}, we know that under both of these operations, the supports of the facet-defining inequalities remain intervals. Therefore, their intersection remains a g-polypositroid.
	
	As for the projection operation, it follows from \cite{FT} (Chapter 3) that the strong pair of the projected g-polymatroid $\pi_m(Q)$ is $(f_m, g_m)$, where $f_m$ and $g_m$ are the restrictions of $f$ and $g$ to $2^{[m]}$, respectively. Here we prove that $\pi_m(QI(f,g)) = QI(f_m, g_m)$, so from $\pi_m(Q(f,g)) = Q(f_m, g_m)$ and $Q(f,g) = QI(f,g)$, we obtain $\pi_m(Q(f,g)) = QI(f_m, g_m)$ is a g-polypositroid. To prove $\pi_m(QI(f,g)) = QI(f_m, g_m)$, we use induction on $m$. The case $m = n$ is obvious, we assume it holds for $m + 1$ and prove that $\pi_m(QI(f_{m+1},g_{m+1}))=QI(f_m,g_m)$. The inclusion "$\subseteq$" follows directly, we focus on the other direction.

	Given a $x_1 \in \mathbb{R}^m$ that lies in $QI(f_m,g_m)$, we find a $y \in \mathbb{R}$ such that the lifted point $x=(x_1,y) \in \mathbb{R}^{m+1}$ lies in $QI(f_{m+1},g_{m+1})$. Let
	\begin{align*}
		l&=\min\left\{f_{m+1}(I)-x_1(I\backslash \{m+1\})\;|\;m+1 \in I,\;I \in \mathcal{I}(m+1)\right\},\\
		u&=\max\left\{x_1(J\backslash \{m+1\}-g_{m+1}(J))\;|\;m+1 \in J,\;J \in \mathcal{I}(m+1)\right\}.
	\end{align*}
	For such $I, J \in \mathcal{I}(m+1)$, by strong pair condition, we have 
	\[
	f_{m+1}(I)-g_{m+1}(J) \geq f_{m+1}(I \backslash J)-g_{m+1}(J \backslash I) \geq x_1(I \backslash J)- x_1(J \backslash I)=x_1(I \backslash \{m+1\})-x_1(J \backslash \{m+1\}),
	\]
	thus we have $l \geq u$. Here we choose $y$ such that $l \geq y \geq u$. Next, we prove that $x$ lies within $QI(f_{m+1},g_{m+1})$. Take any $I \in \mathcal{I}(m+1)$, if $m+1 \notin I$, If $m+1 \notin I$, then by the assumption that $x_1 \in QI(f_m,g_m)$, we directly obtain $g_{m+1}(I) \leq x(I) \leq f_{m+1}(I)$. If $m+1 \in I$, then
	\begin{equation*}
		\begin{split}
			f_{m+1}(I)-x(I)&=f_{m+1}(I)-x_1(I\backslash \{m+1\})-y\\
			& \geq (f_{m+1}(I)-x_1(I\backslash \{m+1\}))-l \geq 0.
		\end{split}
	\end{equation*}
	Similarly, we obtain $g_{m+1}(I) \leq x(I)$. Thus, by the induction hypothesis, we have that $\pi_m(Q)=QI(f_m,g_m)$ is a g-polypositroid.
\end{proof}

\vspace{0.3cm}
\begin{rmk}
	From Proposition \ref{prop13}, it is not difficult to see that if we replace $\mathbb{R}^m$ in (3) with $\mathbb{R}^{[a,b]}$, where $[a,b]$ is an integer interval, the conclusion still holds.
\end{rmk}

\vspace{1cm}

\subsection{Minkowski decompositions for g-polymatroids/polypositroids into simplexes}\quad\\

Recall that $A+B:=\{a+b,\;a \in A,\;b \in B\}$ is the \textbf{Minkowski sum} of $A$ and $B \subseteq \mathbb{R}^n$, and 
\[
A-B:=\{c \in \mathbb{R}^n,\;c+B \subseteq A\}
\]
is called the \textbf{Minkowski difference} of $A$ and $B$. Note that Minkowski difference might be empty.

Let $f:2^{N} \rightarrow \mathbb{R}$ be a function and let $\mu_f$ be its M\"obius inversion. That means there holds
\begin{equation}
\label{E16}
f(S)=\sum\limits_{T \subseteq S}\mu_f(T), \qquad \mu_f(T)=\sum\limits_{S \subseteq T} (-1)^{|T|-|S|}f(S).
\end{equation}
For such a function $f:2^N \rightarrow \mathbb{R}$, define the polyhedron
\[
\Delta(f):=\{x \in \mathbb{R}^N\;|\;x(S) \geq f(S),\;x(N)=f(N)\}.
\]
Note that $\Delta(f)$ might be empty.

Since we are interested in decomposition of polyhedra into Minkowski sums and difference of simplexes, we can consider polyhedra up to translation by an integer vector. The latter means that w.l.o.g we may consider non-negative function $f$, since the translation of a polyhedron $\Delta(f)$ by an vector $z \in \mathbb{R}^N$ means adding a linear function,
\[
f(S) \rightarrow f(S)+\sum\limits_{s \in S}z_s.
\]
Note that the M\"obius inversion $\mu_z(S)=0$ if $|S|>1$ and $\mu_z(i)=z_i$ for $i \in N$.

In the paper \cite{DK2} by Danilov and the first author, they proved a formula for decomposing a base polyhedron into Minkowski sums and difference of simplexes.

\begin{thm}[\cite{DK2}]
    \label{thm12}
    There holds
    \begin{equation}
    \label{E10}
    \Delta(f)=\left(\sum\limits_{S,\;\mu_f(S)>0}\mu_f(S)\Delta_S\right)-\left(\sum\limits_{T,\;\mu_f(T)<0}(-\mu_f(T))\Delta_T\right),
    \end{equation}
    
    where $\Delta_S:=Conv\{e_i\;|\;i \in S\}$, $Conv\{\cdot\}$ denotes taking the convex hull. If $f:2^N \rightarrow \mathbb{R}_+$ is supermodular, then $\sum\limits_{T,\;\mu_f(T)<0}(-\mu_f(T))\Delta_T$ is a Minkowski summand of $\sum\limits_{S,\;\mu_f(S)>0}\mu_f(S)\Delta_S$.
    
    The latter implies
    \[
    \sum\limits_{S,\;\mu_f(S)>0}\mu_f(S)\Delta_S=\Delta(f)+\left(\sum\limits_{T,\;\mu_f(T)<0}(-\mu_f(T))\Delta_T\right).
    \]
\end{thm}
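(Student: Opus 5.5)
The plan is to work with lower support functions. For a polytope $P\subseteq\mathbb{R}^N$ and $w\in\mathbb{R}^N$ put $h_P(w):=\min\{\langle w,x\rangle \mid x\in P\}$; in particular $h_P(e_A)=\min\{x(A)\mid x\in P\}$. Throughout I assume $f(\emptyset)=0$, so $\mu_f(\emptyset)=0$ and the empty simplex never appears. For a single simplex, $h_{\Delta_S}(e_A)=1$ if $S\subseteq A$ and $0$ otherwise. More generally, $h_{\Delta_S}(w)=\min_{i\in S}w_i$, which is exactly the Lov\'asz extension of the set function $A\mapsto[S\subseteq A]$.

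\textbf{Step 1: the positive and negative parts are base polytopes.} Split $\mu_f=\mu^+-\mu^-$ into its positive and negative parts. Define
\[
f^{\pm}(A):=\sum_{T\subseteq A}\mu^{\pm}(T),\qquad P^{\pm}:=\sum_{S}\mu^{\pm}(S)\Delta_S,
\]
so that $f=f^+-f^-$ by the M\"obius formula \eqref{E16}.
\begin{itemize}
\item Each indicator $A\mapsto[S\subseteq A]$ is supermodular, so $f^\pm$ are nonnegative combinations of supermodular functions and hence supermodular.
\item Lower support functions add under Minkowski sum, and Lov\'asz extension is linear in the set function. Hence $h_{P^\pm}$ is the Lov\'asz extension $L_{f^\pm}$.
\item By Edmonds' greedy theorem, for supermodular $g$ the base polytope $\Delta(g)$ has lower support function $L_g$. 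Therefore $P^\pm=\Delta(f^\pm)$.
\end{itemize}
I expect this identification to be the one step needing genuine care: matching the Lov\'asz extension with the support function in \emph{all} directions, not only the $0/1$ directions $e_A$.

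\textbf{Step 2: the difference formula \eqref{E10}.} This step needs no hypothesis on $f$. By definition, $c\in P^+-P^-$ iff $c+P^-\subseteq P^+$. Since $P^+=\Delta(f^+)$ is cut out by the inequalities $x(A)\ge f^+(A)$ and the equation $x(N)=f^+(N)$, the containment holds iff two conditions hold.
\begin{itemize}
\item For every $A$, $\min_{y\in P^-}(c+y)(A)\ge f^+(A)$. This reads $c(A)+f^-(A)\ge f^+(A)$, i.e. $c(A)\ge f(A)$.
\item The equation on $N$ is preserved, i.e. $c(N)+f^-(N)=f^+(N)$, i.e. $c(N)=f(N)$. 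This uses that $P^-$ lies in the hyperplane $x(N)=f^-(N)$.
\end{itemize}
Together these say precisely that $c\in\Delta(f)$, which proves \eqref{E10}.

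\textbf{Step 3: the summand statement.} Now assume $f$ is supermodular. Then $\Delta(f)$ is a nonempty base polytope with $h_{\Delta(f)}=L_f$, again by the greedy theorem. Lower support functions add under Minkowski sum and Lov\'asz extension is linear, so
\[
h_{\Delta(f)+P^-}=L_f+L_{f^-}=L_{f^+}=h_{P^+}.
\]
Convex polytopes are determined by their support functions, so $\Delta(f)+P^-=P^+$. Thus $P^-$ is a Minkowski summand of $P^+$, which is the displayed identity. Nonnegativity of $f$ plays only a normalizing role (up to translation, as remarked before the theorem).
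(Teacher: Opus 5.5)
The paper gives no proof of this statement; it is quoted from \cite{DK2}, so there is no in-text argument to compare against. Your proof is correct and complete. The identity $h_{\Delta_S}(w)=\min_{i\in S}w_i$ is the Lov\'asz extension of $A\mapsto[S\subseteq A]$, so $h_{P^{\pm}}=L_{f^{\pm}}$ holds in every direction $w$. Edmonds' theorem for the supermodular $f^{\pm}$ (with $f^{\pm}(\emptyset)=0$) then gives $P^{\pm}=\Delta(f^{\pm})$ together with its inequality description. The containment test in Step~2 needs only two facts about the nonempty set $P^-$: $\min_{y\in P^-}y(A)=f^-(A)$, and $y(N)=f^-(N)$ for every $y\in P^-$. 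Hence \eqref{E10} holds for every $f$ with $f(\emptyset)=0$, and neither supermodularity nor $f\ge 0$ is used there. Your normalization $f(\emptyset)=0$ is not cosmetic. If $f(\emptyset)<0$, the empty simplex $\Delta_\emptyset$ occurs in the negative sum, so the Minkowski difference becomes all of $\mathbb{R}^N$ and \eqref{E10} fails.

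There is an equally short alternative, in the core/superdifferential spirit of \cite{DK2}, which avoids the greedy theorem for \eqref{E10}. For any $f$ with $f(\emptyset)=0$, sort $w_{\sigma(1)}\ge\cdots\ge w_{\sigma(n)}$, set $U_k=\{\sigma(1),\dots,\sigma(k)\}$, and write $\langle x,w\rangle=\sum_{k<n}(w_{\sigma(k)}-w_{\sigma(k+1)})\,x(U_k)+w_{\sigma(n)}\,x(N)$. This shows directly that $\Delta(f)=\{x\mid \langle x,w\rangle\ge L_f(w)\ \text{for all } w\}$. By linearity $L_f=h_{P^+}-h_{P^-}$, so the condition reads $h_{x+P^-}\ge h_{P^+}$, i.e.\ $x+P^-\subseteq P^+$. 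Supermodularity (equivalently, concavity of $L_f$) then enters only once, to identify $L_f$ with $h_{\Delta(f)}$ in the summand claim.

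Your version uses Edmonds' theorem up front to get the facet description of $P^+$, which turns Step~2 into a pure inequality check. The alternative makes it clear that \eqref{E10} is a statement about the Lov\'asz extension of an arbitrary set function, with supermodularity relevant only to the summand statement.
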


\vspace{0.3cm}
\begin{rmk}
	For a polytope 
	\[
	\nabla(g):=\{x\in \mathbb R^N \; |\; x(S)\le g(S), \, x(N)=g(N)\}.
	\]
	we can see that, for $f(S)=g(N)-g(N-S)$, we have
	\[
	\nabla (g)=\Delta(f).
	\] 
	Therefore we get a similar theorem for 
	$\nabla(g)$ applying the M\"obius inversion $\mu_g$.
\end{rmk}
\vspace{0.5cm}
According to Fujishige's notes \cite{F} , every g-polymatroid can be represented as a projection of some base polyhedron associated with a sub/supermodular system.

Given a strong pair $(g,f)$ on the domain $2^N$, define the extended base set $\widehat{N}:=N \cup \{\hat{e}\}$ by adding a new element $\hat{e} \notin N$. Then we can construct a new supermodular function $h:2^{\widehat{N}} \rightarrow \mathbb{R}$ by setting
\begin{equation}
\label{E12}
h(X):=g(X)\;\; \text{and}\;\;h(X\hat{e}):=f(N)-f(N-X)\;\;\text{for every}\;\;X\in 2^N 
\end{equation}

\vspace{0.3cm}
\begin{thm}[\cite{F}]
    \label{thm13}
    Then g-polymatroid $Q(f,g)$ is the projection of the base polyhedron $Q(h,h^{\natural})$ by setting $x(\hat{e})=0$, where
    \[
    Q(h,h^{\natural}):=\{x \in \mathbb{R}^{\widehat{N}}\;|\;x(S) \geq h(S)\;\forall\; S \subsetneqq \widehat{N}\:\:\text{and} \;\;x(\widehat{N})=h(\widehat{N})\}
    \]
\end{thm}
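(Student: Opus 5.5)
The proof has two parts. First I check that $h$ defined in (\ref{E12}) is a legitimate supermodular function on $2^{\widehat{N}}$, so that $Q(h,h^{\natural})$ is a base polyhedron. Then I prove the two inclusions between $Q(f,g)$ and the image of $Q(h,h^{\natural})$ under the coordinate projection $p:\mathbb{R}^{\widehat{N}}\to\mathbb{R}^N$ that forgets the $\hat e$-coordinate. Throughout I use the normalization $f(\emptyset)=g(\emptyset)=0$. This gives $h(\emptyset)=0$, $h(\{\hat e\})=f(N)-f(N)=0$ and $h(\widehat{N})=f(N)$.

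\emph{Supermodularity of $h$.} Take $X,Y\subseteq N$; there are three cases.
\begin{enumerate}
\item If neither set contains $\hat e$, the inequality $h(X)+h(Y)\le h(X\cup Y)+h(X\cap Y)$ is exactly the supermodularity of $g$.
\item If both contain $\hat e$, the inequality for $X\hat e,Y\hat e$ reads
\[
-f(N-X)-f(N-Y)\le -f(N-(X\cup Y))-f(N-(X\cap Y)),
\]
which is the submodularity of $f$ applied to $N-X$ and $N-Y$.
\item In the mixed case $X$ and $Y\hat e$, the union is $(X\cup Y)\hat e$ and the intersection is $X\cap Y$. The required inequality becomes
\[
g(X)-g(X\cap Y)\le f(N-Y)-f(N-(X\cup Y)).
\]
Put $A=X$ and $B=N-Y$. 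Then $B-A=N-(X\cup Y)$ and $A-B=X\cap Y$, so this is precisely the third strong pair axiom $f(B)-f(B-A)\ge g(A)-g(A-B)$.
\end{enumerate}
This mixed case is the only place where the strong pair condition enters, and I expect it to be the main point to get right.

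\emph{Inclusion $p(Q(h,h^{\natural}))\subseteq Q(f,g)$.} Let $x\in Q(h,h^{\natural})$ and put $y=p(x)$. For $X\subseteq N$ we have $y(X)=x(X)\ge h(X)=g(X)$. For the upper bounds, use $x(\widehat N)=f(N)$ together with $x(X\hat e)\ge f(N)-f(N-X)$ to get
\[
y(N-X)=f(N)-x(X\hat e)\le f(N-X).
\]
As $X$ ranges over $2^N$, $N-X$ ranges over all subsets, so $y(Z)\le f(Z)$ for every $Z\subseteq N$. By Theorem \ref{def_g_poly} this means $y\in Q(f,g)$.

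\emph{Inclusion $Q(f,g)\subseteq p(Q(h,h^{\natural}))$.} Given $y\in Q(f,g)$, lift it by setting $x_{\hat e}:=f(N)-y(N)$, which forces $x(\widehat N)=f(N)=h(\widehat N)$. For $X\subseteq N$ we have $x(X)=y(X)\ge g(X)=h(X)$, and
\[
x(X\hat e)=f(N)-y(N-X)\ge f(N)-f(N-X)=h(X\hat e).
\]
So $x\in Q(h,h^{\natural})$ and $p(x)=y$. The two inclusions give the equality $Q(f,g)=p(Q(h,h^{\natural}))$.
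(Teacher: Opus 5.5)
Your proof is correct. The paper gives no argument of its own here, since Theorem \ref{thm13} is simply quoted from Fujishige's notes \cite{F}, and your two-inclusion verification with the lift $x_{\hat e}=f(N)-y(N)$ is the standard direct argument.

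Two things are worth noticing about what your argument actually uses.

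First, the set equality $Q(f,g)=p(Q(h,h^{\natural}))$ follows from the definition of $h$ and the normalization $f(\emptyset)=g(\emptyset)=0$ alone. The lift together with the complementation $X\mapsto N-X$ does all the work. The boundary case $X=N$, where $N\hat e=\widehat N$ is not a proper subset, is covered by the equality constraint, as you implicitly use.

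Second, the strong pair axiom and the sub/supermodularity of $f,g$ enter only in your first part. There they certify that $h$ is supermodular, so that $Q(h,h^{\natural})$ is a base polyhedron in the technical sense. That is also exactly what the paper needs later when it applies Theorem \ref{thm12} to $h$ in the proof of Theorem \ref{thm14}.

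You were also right to read ``setting $x(\hat e)=0$'' as deleting the $\hat e$-coordinate. Intersecting with the slice $\{x_{\hat e}=0\}$ would instead produce only the face $\{y\in Q(f,g): y(N)=f(N)\}$.
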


\vspace{0.3cm}
Combining Theorem \ref{thm12} and \ref{thm13}, we obtain the decomposition formula for g-polymatroids.

\begin{thm}
    \label{thm14}
    There holds
    \begin{equation}
    \label{E11}
    \begin{split}
        Q(f,g)&=\left[\left(\sum\limits_{S,\;(\mu_{f^{\natural}}-\mu_g)(S)>0}(\mu_{f^{\natural}}-\mu_g)(S)\overline{\Delta}_S\right)-\left(\sum\limits_{T,\;(\mu_{f^{\natural}}-\mu_g)(T)<0}(\mu_g-\mu_{f^{\natural}})(T)\overline{\Delta}_T\right)\right]\\
        &+\left[\left(\sum\limits_{S,\;\mu_g(S)>0}\mu_g(S)\Delta_S\right)-\left(\sum\limits_{T,\;\mu_g(T)<0}(-\mu_g(T))\Delta_T\right)\right]
    \end{split}
    \end{equation}
    for any strong pair $(f,g)$ over domain $2^N$, where $S \subseteq N$ and  $\overline{\Delta}_S:=Conv\{0,\;\Delta_S\}$.
\end{thm}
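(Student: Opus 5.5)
The approach is to realize $Q(f,g)$ via Theorem \ref{thm13} as the coordinate projection of the base polyhedron $Q(h,h^{\natural})$ on $\widehat{N}=N\cup\{\hat e\}$, where $h$ is the supermodular function of (\ref{E12}). We then apply Theorem \ref{thm12} to $\Delta(h)=Q(h,h^{\natural})$. Finally we push the resulting Minkowski expression through the linear projection $p:\mathbb{R}^{\widehat N}\to\mathbb{R}^N$ that forgets the $\hat e$-coordinate. Projection commutes with Minkowski sums. For Minkowski differences $A-B$ with $B$ a summand of $A$, we have $p(A-B)=p(A)-p(B)$ whenever $A=(A-B)+B$. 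This is exactly what the second half of Theorem \ref{thm12} provides in the supermodular case, after translating so that $h\ge 0$. So the projected expression is legitimate, and it remains to compute the Möbius coefficients of $h$ and the images $p(\Delta_U)$.

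\textbf{Computing $\mu_h$.} Split the subsets $U\subseteq\widehat N$ into two types. If $U=S\subseteq N$, then $\mu_h(S)=\sum_{T\subseteq S}(-1)^{|S|-|T|}g(T)=\mu_g(S)$, since $h=g$ on $2^N$. If $U=S\hat e$, then
\[
\mu_h(S\hat e)=\sum_{T\subseteq S}(-1)^{|S|-|T|}\bigl(h(T\hat e)-h(T)\bigr)=\mu_{f^{\natural}}(S)-\mu_g(S).
\]
Here $f^{\natural}(X)=f(N)-f(N-X)$, and we use $h(X\hat e)=f^{\natural}(X)$.

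\textbf{Images of the simplices.} For $S\subseteq N$ we have $p(\Delta_S)=\Delta_S$. For $S\hat e$ we have $p(\Delta_{S\hat e})=\mathrm{Conv}\{0,e_i: i\in S\}=\overline{\Delta}_S$, because $e_{\hat e}$ maps to $0$. Substituting these into (\ref{E10}) and grouping the terms with and without $\hat e$ yields the two bracketed expressions of (\ref{E11}).

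\textbf{Main obstacle.} The delicate point is justifying that the Minkowski difference survives projection and that the terms can be regrouped. As written, (\ref{E11}) splits the negative part into two separate differences rather than one combined subtracted sum. I would first obtain
\[
Q(f,g)=p\Bigl(\textstyle\sum_{\mu_h>0}\mu_h(U)\Delta_U\Bigr)-p\Bigl(\textstyle\sum_{\mu_h<0}(-\mu_h(U))\Delta_U\Bigr),
\]
using that the subtracted polytope is a summand, so $p(A)=Q(f,g)+p(B)$ and hence $Q(f,g)\subseteq p(A)-p(B)$. The reverse inclusion follows by cancellation of convex bodies: if $x+p(B)\subseteq p(A)=Q(f,g)+p(B)$, then comparing support functions gives $x\in Q(f,g)$ when everything is compact. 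For unbounded $Q(f,g)$, I would argue via support functions on the recession cone or by truncating with a large box.

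With the single-difference form established, I would interpret (\ref{E11}) in the same formal sense: the sum of the two brackets is understood as (positive parts) minus (negative parts), which is legitimate since all polytopes involved are convex and Minkowski addition is cancellative. I would note this convention explicitly.
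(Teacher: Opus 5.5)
Your argument is the paper's own: lift $Q(f,g)$ to $\Delta(h)$ by Theorem~\ref{thm13}, compute $\mu_h(S)=\mu_g(S)$ and $\mu_h(S\hat e)=(\mu_{f^{\natural}}-\mu_g)(S)$, apply Theorem~\ref{thm12}, and project using $p(\Delta_{S\hat e})=\overline{\Delta}_S$. The paper does this in one line, and your extra care about the difference is justified.

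That step can be shortened. Set $A=\sum_{\mu_h(U)>0}\mu_h(U)\Delta_U$ and $B=\sum_{\mu_h(U)<0}(-\mu_h(U))\Delta_U$. Both lie in hyperplanes $\{x(\widehat N)=\mathrm{const}\}$, and $p$ is injective on each such hyperplane. Lift $y\in p(A)-p(B)$ to the unique $c$ with $p(c)=y$ and $c(\widehat N)=h(\widehat N)$. For each $z\in B$, the point $c+z$ lies in the hyperplane of $A$ and satisfies $p(c+z)\in p(A)$, so $c+z\in A$. Hence $p(A-B)=p(A)-p(B)$ with no summand property and no translation to $h\ge 0$. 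There is also no unbounded case to worry about, since $g(\{i\})\le x_i\le f(\{i\})$.

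The part to correct is your last paragraph. Cancellativity justifies reading (\ref{E11}) in the group of virtual polytopes. It does not make the two-bracket expression equal, as a set, to the single difference $(P_1+P_2)-(N_1+N_2)$. With the paper's definition $A-B=\{c: c+B\subseteq A\}$ one only has $(P_1-N_1)+(P_2-N_2)\subseteq(P_1+P_2)-(N_1+N_2)$, and the inclusion can be strict. So (\ref{E11}) taken literally is false.

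\emph{Counterexample.} Take $N=\{1,2\}$, $f(\{1\})=f(\{2\})=1$, $f(N)=2$, $g(\{1\})=g(\{2\})=0$, $g(N)=1$. This is a strong pair, and $Q(f,g)=\mathrm{Conv}\{e_1,e_2,e_1+e_2\}$. Here $\mu_g$ is $1$ on $N$ and $0$ elsewhere, while $\mu_{f^{\natural}}-\mu_g$ equals $1,1,-1$ on $\{1\},\{2\},N$. The first bracket is $(\overline{\Delta}_{\{1\}}+\overline{\Delta}_{\{2\}})-\overline{\Delta}_N=[0,1]^2-\mathrm{Conv}\{0,e_1,e_2\}=\{0\}$. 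The second bracket is $\Delta_N$. So the literal right-hand side is the segment $\Delta_N$. Your single difference $([0,1]^2+\Delta_N)-\overline{\Delta}_N$ gives the correct triangle.

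Structurally, the first bracket is $p(\Delta(\phi))$ for $\phi=h-\gamma$, where $\gamma(U)=g(U\cap N)$. This $\phi$ fails to be supermodular whenever $f^{\natural}-g$ does, so $\Delta(\phi)+\Delta(\gamma)\subsetneq\Delta(h)$ can occur. The single-difference form you prove, or equivalently the virtual-polytope reading, is therefore not an optional convention but the only version that is true. The paper's one-line proof passes over this point silently.
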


\begin{proof}
    Let $h: 2^{\widehat{N}} \rightarrow \mathbb{R}$ be the construction in (\ref{E12}). The M\"obius inversion $\mu_{h}(S)=\mu_g(S)$ for every $S \in 2^N$ and
    \[
    \begin{split}
        \mu_{h}(S\widehat{e}) &=\sum\limits_{X \subseteq S\cup \widehat{e}}(-1)^{|S|+1-|X|}h(X)\\
        &=\sum\limits_{X \subseteq S}(-1)^{|S|+1-|X|}g(X)+\sum\limits_{X \subseteq S}(-1)^{|S|+1-(|X|+1)}\left(f(N)-f(N-X)\right)\\
        &=(\mu_{f^{\natural}}-\mu_g)(S).
    \end{split}
    \]
    By Theorem \ref{thm12} and \ref{thm13}, we obtain the decomposition (\ref{E11}). 
    
\end{proof}

Now we apply equation \ref{E11} to g-polypositroids. In particular, we are more interested in those g-polypositroids whose coefficients in the simplex decomposition are all nonnegative. In \cite{LP}, the authors provided some special polypositroids with all-nonnegative coefficients, such as cyclohedra, especially associahedra. However, in fact, not all g-polypositroids have nonnegative decomposition coefficients. A typical example is the hypersimplex $\Delta_{k,n}:=\{x \in [0,1]^n\;|\;x([n])=k\}$.
For example, taking $k=2$ and $n=4$, we have
\[
\Delta_{2,4}=\Delta_{\{1,2,3\}}+\Delta_{\{1,2,4\}}+\Delta_{\{1,3,4\}}+\Delta_{\{2,3,4\}}-2\Delta_{[4]}
\]
which has a negative term

A function \( h: \mathcal{I}(n) \to \mathbb{R} \) (\( \mathbb{Z} \)) has the following extension \( \tilde{h}: 2^{[n]} \to \mathbb{R} \) (\( \mathbb{Z} \)), defined by 
\begin{equation}
	\label{E14}
	\tilde{h}(A) = h(I_1) + \cdots + h(I_k), 
\end{equation}
where \( A = I_1 \cup \cdots \cup I_k \) is a covering of \( A \) by disjoint intervals, such that \( I_j \cup I_{j+1} \) fails to be an interval.
It is easy to check that if \( h \) is submodular/supermodular, then \( \tilde{h} \) is also submodular/supermodular. In this case, we call $\tilde{h}$ a \textbf{function on intervals}. Clearly, we have
\[
\Delta(\tilde{h})=\Delta(h):=\{x \in \mathbb{R}^n \;|\; x(I) \geq h(I) \; \text{and}\; x([n])=h([n])\; \text{for}\;I \in \mathcal{I}\backslash [n]\}.
\]




Given $S \in 2^{[n]}$, let $m(S)$ and $M(S)$ denote the minimum and maximum elements of $S$, respectively.

Now suppose that $h$ is a supermodular function on intervals. Let $I \in \mathcal{I}(n)$, and $I_m=I\backslash\{m(I)\}$  $I_M=I\backslash \{M(I)\}$. By supermodularity of $h$,
 \begin{equation}
    \label{E15}
    h(I)+h(I_m \cap I_M)-h(I_m)-h(I_M) \geq 0.
 \end{equation}
 Take (\ref{E16}) into (\ref{E15}), this is equivalent to  
\begin{equation}
    \label{E17}
    \sum\limits_{S \subseteq I}\mu_{\widetilde{h}}(S)+\sum\limits_{T \subseteq I_m \cap I_M}\mu_{\widetilde{h}}(T)-\sum\limits_{R \subseteq I_m}\mu_{\widetilde{h}}(R)-\sum\limits_{L \subseteq I_M}\mu_{\widetilde{h}}(L)=\mu_{\widetilde{h}}(I) \geq 0.
\end{equation}
When $A=I_1 \cup \cdots I_k$ $(k \geq 2)$ is the union of disjoint intervals such that $I_j \cup I_{j+1}$ fails to be an interval,
\begin{equation}
    \label{E18}
    \begin{split}
        \mu_{\widetilde{h}}(A)&=\sum\limits_{S \subseteq A}(-1)^{|A|-|S|}\widetilde{h}(S)\\
        &=\sum\limits_{S_1 \subseteq I_1}(-1)^{|I_1|-|S_1|} \left(\cdots \sum\limits_{S_k \subseteq I_k}(-1)^{|I_k|-|S_k|}\left(\widetilde{h}(S_1)+\cdots +\widetilde{h}(S_k)\right)\right)\\
        &=\sum\limits_{S_1 \subseteq I_1}(-1)^{|I_1|-|S_1|}\left(\cdots \sum\limits_{S_{k-1} \subseteq I_{k-1}}\mu_{\widetilde{h}}(I_k)\right)=0
    \end{split}
\end{equation}

\vspace{0.3cm}
%
From Definition \ref{def3}, we can see that when $h$ is a function on intervals, $\Delta(h)$ is a polypositroid. From this we immediately obtain the following proposition.

\begin{prop}
	The polytope
	\begin{equation}
		Q=\sum\limits_{I \in \mathcal{I}(n)}y_I\Delta_I+\sum\limits_{k=1}^n\overline{y}_k\overline{\Delta}_{[k]}
	\end{equation}
	is a g-polypositroid when $y_I \geq 0,\; \overline{y}_k \geq 0$ for every $I \in \mathcal{I}(n)$ and $k \in [n]$.
\end{prop}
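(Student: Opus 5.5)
We want to show that $Q=\sum_{I\in\mathcal{I}(n)}y_I\Delta_I+\sum_{k=1}^n\overline{y}_k\overline{\Delta}_{[k]}$ is a g-polypositroid. The plan is to compute the strong pair of $Q$ explicitly and check that the resulting g-polymatroid has a description $Q=QI(f,g)$ using only interval supports.

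\textbf{Step 1: each summand is a g-polymatroid and $Q$ is one.} Each $\Delta_I$ and each $\overline{\Delta}_{[k]}$ has edges parallel to vectors $e_i-e_j$ or $e_i$. Hence each summand is a g-polymatroid. Since g-polymatroids are closed under Minkowski sum, so is $Q$.

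\textbf{Step 2: compute the strong pair $(f,g)$ of $Q$.} By Proposition \ref{prop11}, $f(A)=\max_{x\in Q}x(A)$ and $g(A)=\min_{x\in Q}x(A)$. These are additive under Minkowski sum, so it suffices to compute them for each summand.

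For $\Delta_I$:
\[
\max x(A)=[A\cap I\neq\emptyset],\qquad \min x(A)=[I\subseteq A].
\]
For $\overline{\Delta}_{[k]}$:
\[
\max x(A)=[A\cap[k]\neq\emptyset],\qquad \min x(A)=0.
\]
Therefore
\[
f(A)=\sum_I y_I[A\cap I\neq\emptyset]+\sum_k\overline{y}_k[A\cap[k]\neq\emptyset],\qquad g(A)=\sum_I y_I[I\subseteq A].
\]
With the extension $\tilde h$ of (\ref{E14}), $g$ is exactly the function on intervals $\tilde h$ with $h(J)=\sum_{I\subseteq J}y_I$ for intervals $J$. Indeed, an interval $I$ lies in $A$ if and only if it lies in one maximal interval component of $A$. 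Equivalently, $\mu_g$ is supported on intervals with $\mu_g(I)=y_I\ge 0$, consistent with (\ref{E17})--(\ref{E18}).

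For $f$, if $A=A_1\sqcup\cdots\sqcup A_r$ is the decomposition into maximal intervals, one computes
\[
f(A)=f(A_1)+\dots+f(A_r)-\text{(corrections for intervals meeting several }A_j).
\]

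\textbf{Step 3: show every constraint with non-interval support is implied by interval constraints.} We must show that $x\in QI(f,g)$ implies $g(A)\le x(A)\le f(A)$ for every $A$. This step is the main obstacle.

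The lower bound is immediate, since $g(A)=\sum_j g(A_j)\le\sum_j x(A_j)=x(A)$.

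For the upper bound, I would argue by induction on the number $r$ of components of $A$. Write $A=A'\sqcup A_r$ and let $G$ be the gap interval between $A'$ and $A_r$. Consider the interval $U=\mathrm{conv}(A_{r-1}\cup G\cup A_r)$. Then
\[
x(A)=x(A'\cup U)-x(G)\le \tilde f(A'\cup U)-g(G),
\]
where the inequality uses induction (since $A'\cup U$ has fewer components) together with $x(G)\ge g(G)$.

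It then remains to verify the purely combinatorial inequality
\[
f(A)\ge f(A\cup G)-g(G)
\]
directly from the formula for $f$. Note that $f(A\cup G)-f(A)$ counts $y_I$ over intervals $I$ that meet $G$ but miss $A$. Such an interval lies inside $G$, because it is an interval missing both neighbours $A_{r-1}$ and $A_r$ of $G$. Also $\overline{\Delta}_{[k]}$ contributes nothing to $f(A\cup G)-f(A)$, since $[k]$ meeting $G$ forces $[k]$ to meet $A_{r-1}$, which lies to the left of $G$. Hence $f(A\cup G)-f(A)=\sum_{I\subseteq G}y_I=g(G)$, giving equality.

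This upper-bound step also settles the base case of the induction. The combinatorial identity is where I expect care is needed, in particular the leftmost gap, which has no left neighbour. Edge cases of this kind are handled analogously using $g(G)\ge 0$, or by absorbing the gap into a prefix interval. This completes the plan: $Q=QI(f,g)$, so $Q$ is a g-polypositroid.
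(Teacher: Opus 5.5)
Your argument is correct, and it takes a genuinely different route from the paper. The paper first treats the case $\overline{y}_k=0$. It sets $h(S)=\sum_{I\subseteq S}y_I$, a supermodular function on intervals whose M\"obius inversion is $y\ge 0$. Implicitly via Theorem~\ref{thm12}, $\sum_I y_I\Delta_I=\Delta(h)$ is then cut out by interval inequalities, so it is a polypositroid. The summands $\overline{y}_k\overline{\Delta}_{[k]}$ are absorbed by writing $Q$ as a coordinate projection of such a polypositroid and invoking Proposition~\ref{prop13}(3). Made precise, this means adjoining a coordinate $0$ in front of $1$, replacing $\overline{\Delta}_{[k]}$ by $\Delta_{[0,k]}$, and projecting $\mathbb{R}^{[0,n]}\to\mathbb{R}^{[1,n]}$; this is the interval version stated in the remark after Proposition~\ref{prop13}.

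You instead compute the strong pair explicitly from support functions and prove $QI(f,g)\subseteq Q$ directly, by induction on the number of interval components. The key step is the identity $f(A\cup G)-f(A)=g(G)$ for the gap $G$ between the last two components.

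The paper's proof is shorter given the decomposition theorem and the projection lemma, and it exhibits $Q$ as a shadow of a polypositroid. Yours is self-contained and gives $f$ and $g$ in closed form. It also shows that each non-interval inequality is a nonnegative combination of interval ones. Finally, it makes visible why the $\overline{\Delta}$ summands must be indexed by prefixes. For a non-prefix interval $J\subseteq G$, $\overline{\Delta}_J$ would add to $f(A\cup G)-f(A)$ but not to $g(G)$. Indeed, for $\overline{\Delta}_{\{2\}}+\Delta_{[1,3]}$, the point $(1,0,1)$ satisfies every interval constraint, while $x_1+x_3\le 1$ on $Q$.

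Your closing caveat is unnecessary. In your induction $r\ge 2$, and $G$ always lies between $A_{r-1}$ and $A_r$, so both neighbours exist and no ``leftmost gap'' ever occurs. The base case $r\le 1$ is just an interval constraint (or trivial). You can delete the hedging; the proof is complete as it stands.
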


\begin{proof}
	First we assume $\overline{y}_k=0$ for all $k \in [n]$, i.e., $	Q=\sum\limits_{I \in \mathcal{I}(n)}y_I\Delta_I$. For any $S \in 2^{[n]}$, we define $h(S):=\sum\limits_{I \subseteq S} y_I$. Since $y_I$ is nonnegative for any $I \in \mathcal{I}(n)$, it can be verified that h is a supermodular function on intervals. Thus $Q$ is a polypositroid. When not all $\overline{y}_k$
	are zero, $Q$ is the projection of a polypositroid constructed as above. Then by (3) of Proposition \ref{prop13}, $Q$ is a g-polypositroid. 
\end{proof}
%
%
\vspace{0.3cm}
More generally, for a g-polypositroid that admits a simplex decomposition with nonnegative coefficients, i.e.,
\begin{equation}
	\label{E19}
	Q=\sum\limits_{S \in 2^{[n]}}y_S\Delta_S+\sum\limits_{T \in 2^{[n]}}\overline{y}_T\overline{\Delta}_{T},\; \text{where} \; y_S,\; \overline{y}_T \geq 0.
\end{equation}

Naturally we can propose a question.

\begin{que}
	In \ref{E19}, under what conditions on $S$, $T$ and the preceding coefficients, is $Q$ a g-polypositroid?
\end{que}

After characterizing this problem, we can determine whether the sum of any two g-polypositroids of this form is again a g-polypositroid.

\vspace{0.5cm}

\subsection{Generalized positroids} \quad

\begin{defi}
    When a polyhedron is both a generalized polypositroid and a generalized matroid, we call it a \textbf{generalized positroid} (abbreviated as \textbf{g-positroid}).
\end{defi}

\vspace{0.2cm}
\begin{prop}
\label{prop1}
    Let $Q=Q(f,g)$ be a full-dimensional generalized matroid in $\mathbb{R}^n$, then there exists an edge of $Q$ which is a parallel translate of $e_i$ for any $i \in [n]$.
\end{prop}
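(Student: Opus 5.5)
Fix $i\in[n]$; we want an edge of $Q$ parallel to $e_i$. The plan is to show that $Q$ meets two parallel lines in direction $e_i$ in a segment of positive length, and then to move that segment onto the boundary until it becomes an edge. Everything rests on the defining property of a g-polymatroid: every edge is parallel to a vector of $\mathbb{A}_n=\{\pm e_j,\ e_j-e_k\}$. All vertices of $Q$ lie in $\{0,1\}^n$, so $Q\subseteq[0,1]^n$ is a polytope.

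\emph{Step 1: $Q$ has edges in direction $\pm e_i$ or in some direction $e_i-e_j$ at every vertex.} Since $Q$ is full-dimensional, at any vertex $v$ the edge directions span $\mathbb{R}^n$. The $i$-th coordinate is not constant on $Q$, so some edge direction at $v$ has nonzero $i$-th coordinate. Hence every vertex has an incident edge in direction $\pm e_i$ or $\pm(e_i-e_j)$. If some vertex has an edge in direction $\pm e_i$, we are done. So suppose, for contradiction, that no edge of $Q$ is parallel to $e_i$.

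\emph{Step 2: use the projection along $e_i$.} Let $p:\mathbb{R}^n\to\mathbb{R}^{n-1}$ forget the $i$-th coordinate. Since $Q$ is full-dimensional, $p(Q)$ is full-dimensional in $\mathbb{R}^{n-1}$. Choose a generic point $y$ in the interior of $p(Q)$. The fiber $p^{-1}(y)\cap Q$ is then a segment $[a,b]$ of positive length in direction $e_i$, with $a$ and $b$ in the relative interiors of two facets $F_-$ and $F_+$ of $Q$. Now move $y$ generically toward a boundary point of $p(Q)$. The upper and lower boundary functions of $Q$ over $p(Q)$ are concave and convex piecewise linear respectively, and their difference is positive on the interior of $p(Q)$. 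On the boundary of $p(Q)$ this difference either vanishes or stays positive. If it stays positive at some boundary point, then the fiber over a boundary vertex of $p(Q)$ is a segment in direction $e_i$ which is a face of $Q$, namely an edge parallel to $e_i$.

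\emph{Step 3, the main obstacle: rule out that the fibers collapse on the whole boundary of $p(Q)$.} This is the point where integrality and the $0/1$ structure must be used. The approach is to take a vertex $w$ of $p(Q)$. Its fiber is a face of $Q$ whose image under $p$ is a point, so the face is a vertex $v\in\{0,1\}^n$ or a segment parallel to $e_i$. It suffices to show the fiber over some vertex of $p(Q)$ is a segment.

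The vertex $w$ of $p(Q)$ is cut out by $n-1$ facets of $p(Q)$. Each such facet lifts to a facet of $Q$ whose inequality $x(A)\le f(A)$ or $x(A)\ge g(A)$ has support $A\not\ni i$, because it comes from the cylinder over the projection. This suggests taking $w=p(v)$ with $v$ lexicographically extremal, for instance maximizing $x(N\setminus i)$ and then $x_i$. The aim is to show that both $v$ and $v-e_i$ (or $v+e_i$) lie in $Q$. For this we use Proposition \ref{prop11} together with the strong pair exchange condition $f(B)-f(B-A)\ge g(A)-g(A-B)$ applied with $A=\{i\}$: if $Q$ does not collapse at $w$, the constraints involving $i$ leave slack of at least one unit. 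By integrality of $f$ and $g$, the point $v\pm e_i$ then satisfies all constraints $g\le x(\cdot)\le f$, so $[v,v\pm e_i]$ is an edge. If this argument fails in some degenerate case, we fall back on the decomposition of Theorem \ref{thm1} with $k=0$ to see that $i$ lies in no sum-set, which forces $g(\{i\})<f(\{i\})$ on a suitable face.
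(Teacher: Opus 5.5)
There is a genuine gap: Step~3, which carries the entire content of the proposition, is not proved. Your key sentence ``if $Q$ does not collapse at $w$, the constraints involving $i$ leave slack of at least one unit'' presupposes exactly what has to be shown. The strong-pair inequality with $A=\{i\}$ only compares values of $f$ and $g$; it does not produce a $0/1$ point $v$ with $v,v\pm e_i\in Q$.

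Your suggested choice of $v$ also fails concretely. For $Q=\mathrm{conv}\{0,e_1,e_2,e_3\}$ and $i=1$, maximizing $x_2+x_3$ and then $x_1$ gives $v=e_2$ or $v=e_3$. Their fibers under $p$ are single points, whereas the only edge parallel to $e_1$ is $[0,e_1]$.

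More fundamentally, the local argument you sketch at $v$ uses only integrality and the strong-pair inequality. These hold equally for the non-full-dimensional g-matroid $Q'=\mathrm{conv}\{e_1,e_2,e_1+e_3,e_2+e_3\}$. There $p(Q')=[0,1]^2$ is full-dimensional, yet every fiber is a point and $Q'$ has no edge parallel to $e_1$. So full-dimensionality of $Q$ must enter Step~3 in an essential way, and your sketch does not say how.

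The fallback through Theorem~\ref{thm1} does not repair this. Knowing $g(\{i\})<f(\{i\})$ on a face only says that $x_i$ is non-constant there. That is compatible with every edge that moves $x_i$ being of type $e_i-e_j$, as in any connected matroid base polytope.

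Step~2 has a smaller hole of the same kind. The concave function $u-l$ may be positive at boundary points of $p(Q)$ and still vanish at every vertex. So positivity at one boundary point yields a vertical edge only if you already know the proposition for faces. Also, facets of $p(Q)$ pull back to faces of $Q$, not necessarily facets.

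What you do have is the right target: it suffices to find a $0/1$ point $v$ with $v,v+e_i\in Q$. Then $[v,v+e_i]$ is an edge of $[0,1]^n$ contained in $Q$, and is therefore an edge of $Q$. The missing idea is to pass to a matroid, where connectivity supplies every exchange direction. This is the paper's proof:
\begin{itemize}
\item Since $Q$ is integral and full-dimensional, there is an integer $k$ with $g([n])\le k<k+1\le f([n])$. The slice $Q\cap\{k\le x([n])\le k+1\}$ is still a full-dimensional integral g-matroid.
\item The injective map $x\mapsto(x,k+1-x([n]))$ sends this slice to a $0/1$ base polytope of dimension $n$ in $\mathbb{R}^{n+1}$, i.e.\ the polytope of a connected matroid on $[n+1]$.
\item In that matroid, $i$ and $n+1$ lie in a common circuit $C$. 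Extending $C\setminus\{i\}$ to a basis $B$ gives bases $B\ni n+1$ and $(B\setminus\{n+1\})\cup\{i\}$. They differ by a single exchange, so they span an edge parallel to $e_i-e_{n+1}$.
\item Forgetting the last coordinate gives $v$ and $v+e_i$ in $Q$.
\end{itemize}
The detour through a matroid is not cosmetic. For general integral base polytopes such as $\Delta_{\{1,2\}}+\Delta_{\{2,3\}}$, full-dimensionality does not force an edge in every direction $e_a-e_b$.
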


\begin{proof}
    Since $Q$ is full-dimensioanl, we can choose an integer $k$ such that  $g([n])\leq k <k+1 \leq f([n])$. Let $\pi: \mathbb{R}^n \rightarrow \mathbb{R}^{n+1}$ be a lift mapping that takes $x=(x_1,\cdots,x_n)$ to $\pi(x)=(x_1,\cdots,x_n,k+1-\sum\limits_{i=1}^nx_i)$. We take the truncated generalized matroid $Q[k,k+1]:=Q \cap\{k \leq \sum\limits_{i=1}^nx_i \leq k+1\}$. Then $\pi(Q[k,k+1])$ is a full-dimensional matroid polytope in $\mathbb{R}^{n+1}$. So the corresponding matroid is connected in $[n+1]$ and there exists an edge of $\pi(Q[k,k+1])$ which is a parallel translate of $e_i-e_{n+1}$ (see \cite{BGW}). Then by projecting in the direction of $e_{n+1}$, we have found an edge as a parallel translate of $e_i$ for $Q[k,k+1]$, thus also for $Q$.
\end{proof}

\vspace{0.2cm}
\begin{prop}
\label{prop2}
    If $Q=Q(f,g)$ is an g-polypositroid, then Q does not contain two-dimensional face with vertices of the form $\{e_{Xij},e_{Xjk},e_{Xk},e_{Xi}\}$ with $\{i,j,k\} \subseteq [n]\backslash X$ and $i<j<k$ or the form of $\{e_{Xij},e_{Xjk},e_{Xkl},e_{Xli}\}$ with $\{i,j,k,l\} \subseteq [n]\backslash X$ and $i<j<k<l$.
\end{prop}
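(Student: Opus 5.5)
We argue by contradiction, using that a g-polypositroid equals $QI(f,g)$, so each facet-defining inequality can be written with interval support, up to the equations $x(S_i)=f(S_i)$ of Theorem \ref{thm1} (Proposition \ref{prop12}). The central fact is standard for polyhedra: if $F$ is a face, then $F$ is the intersection of $Q$ with the hyperplanes $x(A)=f(A)$ or $x(A)=g(A)$ that are tight on $F$. Moreover, we may take only tight constraints with interval support $A$, since facets of $Q$ are cut out by interval-supported inequalities and every face is an intersection of facets. So suppose $F$ is a $2$-face with vertex set $\{e_{Xij},e_{Xjk},e_{Xk},e_{Xi}\}$ with $i<j<k$. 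The plan is to show that every interval $A$ with $x(A)$ constant on these four vertices also has $x(A)$ constant on a strictly larger set. This will show that the intersection of the tight interval constraints, taken inside the affine hull of $Q$, is a $3$-dimensional affine space (or larger), which is a contradiction.

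For the first case, the key computation is simple. The four vertices differ only in coordinates $i,j,k$, with patterns $(1,1,0),(0,1,1),(0,0,1),(1,0,0)$. So $x(A)$ is constant on them exactly when $\chi_A(i)+\chi_A(j)=\chi_A(j)+\chi_A(k)=\chi_A(k)=\chi_A(i)$. This forces $\chi_A(i)=\chi_A(k)$ and $\chi_A(j)=0$. For an interval $A$, containing both $i$ and $k$ forces $j\in A$, so we must have $A\cap\{i,j,k\}=\emptyset$. Hence every tight interval constraint on $F$ is also tight on the whole cube $e_X+\{0,1\}^{\{i,j,k\}}$, and its affine span $e_X+\mathbb{R}^{\{i,j,k\}}$ is $3$-dimensional. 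Next we handle the equations $x(S_i)=f(S_i)$ from Theorem \ref{thm1}. If $S_m$ is a sum-set, then $x(S_m)$ is constant on $Q$. The same parity argument, applied now to general sets, shows $S_m\cap\{i,j,k\}\in\{\emptyset,\{i,k\}\}$. The tight constraints then define $e_X+\{y\in\mathbb{R}^{\{i,j,k\}}: y_i+y_k \text{ fixed if } \{i,k\}\subseteq S_m\}$, which is at least $2$-dimensional and contains $F$.

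This is where care is needed, so I will redo the count by working modulo the affine hull. The cleaner formulation is to compare dimensions: $F$ equals the intersection of $\mathrm{Aff}(Q)$ with the tight interval hyperplanes. In the case $\{i,k\}\subseteq S_m$, the direction $e_j$ is also cut out by $\mathrm{Aff}(Q)$, because $x(S_m)$ is constant and $j\notin S_m$. So $e_j$ is a free direction, and the solution set contains $e_X+\mathrm{span}(e_i-e_k, e_j)+\ldots$. That gives dimension $2$ only, matching $F$; this is the obstacle. To resolve it, I would use the direction $e_i+e_j-e_k$, equivalently check the point $e_{Xj}$ or $e_{Xijk}$. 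The affine span of $F$ is $e_X+\{y: y_i+y_j\ldots\}$, so one exhibits an explicit lattice point, such as $e_{Xj}$, satisfying all tight interval equalities and the affine-hull equations but not lying in $\mathrm{aff}(F)$. Here $\mathrm{aff}(F)$ is $\{y_i-y_k+\ldots\}$, and one computes its equation as $y_i+y_j+\ldots$: vertices give $y_j = y_i+y_k-1$? Checking, $(1,1,0),(0,1,1),(0,0,1),(1,0,0)$ satisfy $y_j=y_i+y_k-1$. Now $e_{Xj}$ has $y_j=1$ and $y_i+y_k-1=-1$, so it is not in $\mathrm{aff}(F)$. It does satisfy the interval equalities, since $A\cap\{i,j,k\}=\emptyset$, and it satisfies the sum-set equalities, since $j\notin S_m$ and $y_i+y_k=0$ differs from $1$ only if $\{i,k\}\subseteq S_m$. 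In that last case I would use instead $e_{Xijk}$ shifted by $-e_i$... Choosing the point $e_X+e_i+e_j+e_k-e_i=e_{Xjk}$ fails, so instead take $e_X+2e_j+e_i-e_i$. Concretely, pick $y=(1,2,0)$, which has $y_i+y_k=1$ and $y_j=2\neq 0$. This gives the contradiction, since no tight constraint distinguishes $y$ from $F$.

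The second configuration, $\{e_{Xij},e_{Xjk},e_{Xkl},e_{Xli}\}$ with $i<j<k<l$, is handled identically. Constancy forces $\chi_A(i)=\chi_A(k)$ and $\chi_A(j)=\chi_A(l)$. For an interval $A$ this leaves $A\cap\{i,j,k,l\}\in\{\emptyset,\{i,j,k,l\}\}$, because if $A$ contains $i$ and $k$ then it contains $j$, and hence also $l$. The point $e_X+2e_i$ preserves $y_i+y_j+y_k+y_l=2$ and satisfies the sum-set conditions, where the intersections are even combinations. However, it violates the affine equation $y_i+y_k=1$ of $F$, which gives the contradiction. The main obstacle is the bookkeeping of the sum-set equations from Theorem \ref{thm1}, which I handle by choosing the test point to preserve all partial sums allowed by the parity constraints.
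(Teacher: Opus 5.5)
Your strategy (the affine hull of a face is the solution set of the constraints tight on it, so exhibit a point satisfying all of them but lying off $\mathrm{aff}(F)$) is sound. However, the argument breaks down in exactly the case you flag: a sum-set $S_m$ with $\{i,k\}\subseteq S_m$ and $j\notin S_m$.

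Your equation for $\mathrm{aff}(F)$ is wrong, since $y_j=y_i+y_k-1$ already fails at $(1,1,0)$. The four vertices span the plane $y_i+y_k=1$ with $y_j$ free, a parallelogram with edge directions $e_j$ and $e_i-e_k$. So your replacement test point $(1,2,0)$ lies in $\mathrm{aff}(F)$ and gives no contradiction. In fact your own dimension count just before shows that, in this case, the constraints you allow cut out exactly $\mathrm{aff}(F)$ inside $e_X+\mathbb{R}^{\{i,j,k\}}$, so no test point can work there.

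The four-element configuration has the same defect. If $S_m\cap\{i,j,k,l\}=\{i,k\}$, then $e_X+2e_i$ has $x_i+x_k=2$, while the vertices of $F$ have $x_i+x_k=1$. So it violates the sum-set equation, and the appeal to ``even combinations'' does not cover this.

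The missing observation is that no separate bookkeeping of sum-sets is needed. Since $Q=QI(f,g)$, $Q$ is described by a finite system consisting only of interval-supported inequalities $g(I)\le x(I)\le f(I)$. Then $\mathrm{aff}(F)$ is the solution set of the members of this system that are tight on $F$. These include the implicit equalities cutting out $\mathrm{aff}(Q)$, which are therefore interval-supported too. By your parity computation, every such $I$ satisfies $I\cap\{i,j,k\}=\emptyset$ (resp.\ $I\cap\{i,j,k,l\}\in\{\emptyset,\{i,j,k,l\}\}$). So $e_{Xj}$ (resp.\ $e_{Xik}$ or $e_X+2e_i$) satisfies all of them while having $x_i+x_k\neq 1$, which is the desired contradiction.

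In particular, the troublesome case cannot occur at all. The functional $x(S_m)$ is constant on $\mathrm{aff}(Q)$, so it is a linear combination of the functionals $x(I)$ of the interval equations cutting out $\mathrm{aff}(Q)$, and these all have zero coefficients on $i,j,k$.

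With this repair, your route is a genuine alternative to the paper's. The paper instead verifies through the interval inequalities that $e_{Xj}$ and $e_{Xik}$ lie in $Q$. The four points then form the equatorial square of an octahedron contained in $Q$, and hence cannot be a face.
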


\begin{proof}
    We first show that if $e_{Xij},e_{Xjk},e_{Xk},e_{Xi} \in Q$, then $e_{Xik},e_{Xj} \in Q$. Among them, $e_{Xj} \in Q$ is euqivalent to that $g(I) \leq |Xj\cap I| \leq f(I)$ for any interval $I \in \mathcal{I}$. Since $e_{Xij} \in Q$, we have $|Xj \cap I| \leq |Xij\cap I| \leq f(I)$ which gives the right hand side of the above inequality.  On the other side, suppose that there exists an $I_0 \in \mathcal{I}$ such taht $|Xj\cap I_0|<g(I_0)$. Since $g(I_0) \leq |Xij \cap I_0|$ and $g(I_0) \leq |Xjk \cap I_0|$, we obtain that $i,k \in I_0$.
    Thus $|Xk\cap I_0|=|Xi \cap I_0|=|Xj \cap I_0| < g(I_0)$, a contradiction to the assumption that $e_{Xi}, e_{Xk} \in Q$. Therefore $e_{Xj} \in Q$ and similarly $e_{Xik} \in Q$. The six  points $\{e_{Xij},e_{Xjk},e_{Xk},e_{Xi},e_{Xik},e_{Xj}\}$ form an octahedron, so $Q$ does not contain the two dimensional face $\{e_{Xij},e_{Xjk},e_{Xk},e_{Xi}\}$. By similar arguments, $Q$ also does not contain $\{e_{Xij},e_{Xjk},e_{Xkl},e_{Xli}\}$ as a two-dimensional face.
\end{proof}

\begin{prop}
\label{prop3}
    Let $Q$ be a full-dimensional generalized matroid in $\mathbb{R}^n$ with $n\geq 3$. If $Q$ is not a generalized positroid, then there exists a two-dimensional face with vertice of the form $\{e_{Xij},e_{Xjk},e_{Xk},e_{Xi}\}$ with $\{i,j,k\} \subseteq [n]\backslash X$ and $i<j<k$.
\end{prop}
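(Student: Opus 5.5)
Since $Q=Q(f,g)$ is a full-dimensional g-matroid that is not a g-positroid, Proposition \ref{prop12} gives a facet of $Q$ whose defining inequality has a non-interval support $A$ in every equivalent expression. Full-dimensionality means $\mathrm{Aff}(Q)=\mathbb{R}^n$, so there are no sum-sets and the expression is unique. Up to the symmetry $x\mapsto \mathbf{1}-x$, which exchanges the roles of $f$ and $g$ and preserves intervals, I may assume the facet is $x(A)\le f(A)$ with $A$ not an interval. Then there are $i<j<k$ with $i,k\in A$ and $j\notin A$.

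The idea is to localize this non-interval facet to a 2-dimensional face of the type in the statement. Let $F=\{x\in Q: x(A)=f(A)\}$ be the facet. Its vertices are 0-1 vectors $e_Y$ with $|Y\cap A|=f(A)$. Since $F$ is a facet of a full-dimensional polytope, it is $(n-1)$-dimensional. Its edge directions lie in $\mathbb{A}_n$ and are parallel to the hyperplane $x(A)=0$. So they are of the form $e_a-e_b$ with $a,b$ both in $A$ or both outside $A$, or $\pm e_c$ with $c\notin A$.

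First, I use that $F$ spans the hyperplane $x(A)=\mathrm{const}$. Then $F$ must have some edge with direction $\pm e_j$ (as $j\notin A$), and some edge with direction $e_i-e_k$ (as $i,k\in A$); otherwise $A$ would split into smaller blocks. Here I would argue as in Proposition \ref{prop1}: truncate and lift to a matroid polytope and use connectivity of the relevant block \cite{BGW}. Next I would find a vertex $e_Z$ of $F$ at which both kinds of edge can be combined. The natural candidate is a 2-face spanned by directions $e_i-e_k$ and $e_j$, i.e.\ with vertices $e_{Xi},e_{Xk},e_{Xij},e_{Xjk}$ for $X=Z\setminus\{i,k\}$. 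Such a quadrilateral is a parallelogram. It is a genuine 2-face of $Q$ exactly when neither $e_{Xj}$ nor $e_{Xik}$ lies in $Q$, since otherwise the octahedron argument in the proof of Proposition \ref{prop2} applies. I would show that $e_{Xik}$ and $e_{Xj}$ are excluded because $x(A)\le f(A)$ is facet-defining and violated or not tight there. Indeed $e_{Xik}$ has $x(A)$ one larger than $e_{Xi}$, so it lies outside $Q$ when $e_{Xi}\in F$. If $e_{Xj}\in Q$ as well, I would replace $A$ by a smaller non-interval support or move to a neighboring vertex.

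The main obstacle is the last step: producing simultaneously, at one vertex, the three points $e_{Xi},e_{Xk}$ (or $e_{Xij},e_{Xjk}$) on $F$ while ruling out $e_{Xj}$. My first attempt would be an extremal choice. I would take $A$ a non-interval facet support and choose the gap triple $i<j<k$ with $k-i$ minimal. Then I would use the g-polymatroid exchange axiom on a pair of vertices $e_Y,e_{Y'}$ of $F$ differing by $e_i-e_k$, and show that adding $j$ preserves membership. I would check the exchange property via Theorem \ref{def_g_poly} with the strong-pair inequality, which forces $f(A\cup j)$ or the appropriate $g$-value to give the required tightness. If $n=3$ fails to directly fit, I would handle it by direct enumeration of g-matroids on $[0,1]^3$.
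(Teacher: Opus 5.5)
Your setup matches the paper's, but the proposal has a genuine gap at the step you yourself flag as the main obstacle, and the criterion you aim at there is wrong.

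\textbf{The 2-face criterion.} Suppose a g-matroid $Q$ contains $e_{Xi},e_{Xk},e_{Xij},e_{Xjk}$. The face $Q\cap\{x_m=1\ (m\in X),\ x_m=0\ (m\notin X\cup\{i,j,k\})\}$ is a g-matroid inside a $3$-cube. In it, the parallelogram fails to be a face only when \emph{both} $e_{Xj}$ and $e_{Xik}$ lie in $Q$; that is the octahedron of Proposition \ref{prop2}. If only $e_{Xj}$ is present, the local picture is a square pyramid or the prism $[0,1]^3\cap\{x_i+x_k\le 1\}$, and the parallelogram is still a face.

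Since $e_{Xik}\notin Q$ is automatic once $e_{Xi}\in F$, you never need to exclude $e_{Xj}$. In general you cannot. Take $Q=[0,1]^3\cap\{x_1+x_3\le 1\}$. Its only non-interval support is $A=\{1,3\}$, and the only candidate is $\{e_1,e_3,e_{12},e_{23}\}$, which is indeed a 2-face. Yet $e_2\in Q$. So the plan ``replace $A$ by a smaller non-interval support or move to a neighboring vertex'' has nowhere to go.

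\textbf{The missing step.} You still need a single $X$ with all four points $e_{Xi},e_{Xk},e_{Xij},e_{Xjk}$ in $F$. The minimal-gap choice plus the exchange axiom does not give this, because adding $j$ to an arbitrary exchange pair on $F$ need not stay in $Q$.

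The idea you need is the product structure of the facet. $F$ is a g-matroid of dimension $n-1$ with affine hull $\{x(A)=f(A)\}$. By Corollary \ref{cor1}, $A$ is its unique minimal sum-set. Theorem \ref{thm1} then gives $F=P_1\times Q_0$, where $P_1$ is the base polytope of a connected matroid on $A$ and $Q_0$ is a full-dimensional g-matroid on $[n]\setminus A$.

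Connectivity gives an edge $\{e_{A'i},e_{A'k}\}$ of $P_1$: take a circuit $C$ through $i$ and $k$, extend $C\setminus\{i\}$ to a basis $B'$, and note that $B'-k+i$ is again a basis. Proposition \ref{prop1} applied to $Q_0$ gives an edge $\{e_B,e_{Bj}\}$. The product of these two edges is a 2-face of $F$, hence of $Q$, with $X=A'\cup B$.

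This is the paper's argument. It also supplies the justification for your two edge-existence claims, which as written are not proved: ``otherwise $A$ would split'' is not an argument, and Proposition \ref{prop1} does not apply directly to the non-full-dimensional $F$.
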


\begin{proof}
    Since $Q$ is not a generalized  positroid, there exists a facet $Q_1=\{x\;|\;\sum\limits_{i \in L}x_i=M\}\cap Q$ where $L$ is not an interval. From Chapter 3 of \cite{FT}, we know every face of a g-polymatroid is a g-polymatroid. Thus we have a strong pair $(f_1,g_1)$ such that $Q_1=Q_1(f_1,g_1)$ and $\sum\limits_{i \in L}x_i=f_1(L)=g_1(L)$, that is, $L$ is a sum-set for strong pair $(f_1,g_1)$. On the other hand, since $dim(Q_1)=n-1$, we get that $L$ is the unique inclusion-minimal sum-set by Corollary \ref{cor1}. Thus by Theorem \ref{thm1}, $Q_1=P_1\times Q_0$ where $P_1$ is a full-dimensional matroid polytope in $\mathbb{R}^L$ and $Q_0$ is a full-dimensional generalized matroid in $\mathbb{R}^{[n]\backslash L}$. Because $L$ is not an interval, we can find $a<b<c$ such that $a,c \in L$ and $b \in [n]\backslash L$.
    Then by the fact that the  corresponding matroid $M(P_1)$ is connected and Proposition \ref{prop1}, there exists an edge with vertices $\{e_{Aa}.e_{Ac}\}$ in $P_1$ and an edge with vertices $\{e_{B},e_{Bb}\}$ in $Q_0$. So the two-dimensional face with vertices $\{e_{Aa}.e_{Ac}\} \times \{e_{B},e_{Bb}\}=\{e_{ABab},e_{ABbc}.e_{ABa},e_{ABc}\}$ is a face of $Q_1$
\end{proof}
\vspace{0.2cm}

A \textbf{non-crossing partition} of $N$ is a partition $N=\bigsqcup\limits_{i}N_i$ such that for any two subsets $N_{i_1}$ and $N_{i_2}$, there do not exist indices $1 \leq a<b<c<d \leq n$ with $a,c \in N_{i_1}$ and $b,d \in N_{i_2}$. A subset $S$ \textbf{surrounds} a subset $T$ ($S \cap T= \emptyset$) if there exists a non-trivial partition $S=S_1 \sqcup S_2$ such that $\max(S_1)<\min(T)$ and $\max(T)<\min(S_2)$.

\vspace{0.2cm}
\begin{thm}
\label{thm2}
    Let $N=[n]$ equipped with an order, and $Q$ is a g-matroid, then the following statements are equivalent,
    \begin{enumerate}
        \item $Q$ is a g-positroid;
        \item there exists a unique non-crossing partition $[n]=S_1 \sqcup \cdots \sqcup S_k \sqcup T$ with no $S_i$ surrounds $T$, and a decomposition
        \[
        Q=P_1 \times\cdots \times P_k \times Q_0
        \]
        such that each $P_i$ is a full-dimensional positroid polytope in $\mathbb{R}^{S_i}$ and $Q_0$ is a full-dimensional g- positroid in $\mathbb{R}^{T}$;
        \item $Q$ contains no two-dimensional face with vertices of the form $\{e_{Xij},e_{Xjk},e_{Xk},e_{Xi}\}$ with $\{i,j,k\} \subseteq [n]\backslash X$ and $i<j<k$ or the form $\{e_{Xij},e_{Xjk},e_{Xkl},e_{Xli}\}$ with $\{i,j,k,l\} \subseteq [n]\backslash X$ and $i<j<k<l$.
    \end{enumerate}
\end{thm}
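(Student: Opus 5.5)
I would prove the cycle of implications (1)$\Rightarrow$(3)$\Rightarrow$(1) and then (1)$\Leftrightarrow$(2), with the decomposition of Theorem \ref{thm1} as the bridge to (2).

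\textbf{(1)$\Rightarrow$(3).} This is exactly Proposition \ref{prop2}. Both forbidden configurations are excluded for any g-polypositroid, and a g-positroid is in particular one.

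\textbf{(3)$\Rightarrow$(1).} I will argue by contrapositive, reducing to the full-dimensional case. Apply Theorem \ref{thm1} to write $Q=P_1\times\cdots\times P_k\times Q_0$, where $P_i$ is a full-dimensional matroid base polytope on $S_i$ and $Q_0$ is a full-dimensional g-matroid on $T$.

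Suppose $Q$ is not a g-positroid. By Proposition \ref{prop12}, some facet-defining inequality class has no interval support. That facet then comes from one of the factors, so one of two things happens.

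\begin{itemize}
\item \emph{The facet comes from $Q_0$.} Proposition \ref{prop3}, applied after relabelling $T$ in increasing order, gives a face $\{e_{Xij},e_{Xjk},e_{Xk},e_{Xi}\}$ of $Q_0$. Taking the product with vertices of the other factors gives such a 2-face of $Q$.
\item \emph{The facet comes from some $P_i$.} Here I use the known positroid analogue: a connected matroid polytope is a positroid iff it has no square face $\{e_{Xij},e_{Xjk},e_{Xkl},e_{Xli}\}$ with $i<j<k<l$ (Ardila--Rinc\'on--Williams, \cite{ARW}).
\item \emph{The facet inequality mixes factors.} Then the non-interval support arises because the partition crosses or some $S_i$ surrounds $T$. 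In that case I build the forbidden face directly. Take $a<b<c$ with $a,c\in S_i$ and $b\in T$ (or $b$ in another block). Use an edge $e_{Aa}e_{Ac}$ in $P_i$, which exists by connectivity, and an edge $e_Be_{Bb}$ in $Q_0$, which exists by Proposition \ref{prop1}. This is exactly the argument in the proof of Proposition \ref{prop3}. When $b$ lies in another base factor $S_j$ with a crossing $a<b<c<d$, use edges $e_a-e_c$ in $P_i$ and $e_b-e_d$ in $P_j$. Their product gives a square of the second forbidden type.
\end{itemize}

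\textbf{(1)$\Leftrightarrow$(2).} For (1)$\Rightarrow$(2), take the decomposition of Theorem \ref{thm1}; uniqueness is inherited from there. Faces of $Q$ are products of faces of the factors, so (3), which we now know is equivalent to (1), passes to each factor. Hence each $P_i$ is a positroid (by \cite{ARW}) and $Q_0$ is a g-positroid.
\begin{itemize}
\item \emph{Non-crossing.} If $S_i,S_j$ crossed, the square construction above would give a forbidden face.
\item \emph{No $S_i$ surrounds $T$.} If it did, the edge construction gives a forbidden face of the first type.
\end{itemize}

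For (2)$\Rightarrow$(1), I will verify (3) directly. A 2-face of $Q$ is either a 2-face of a single factor, which is allowed by hypothesis, or a product of two edges from distinct factors. The edge directions are $e_a-e_c$ within a base factor $S_i$ and $e_b$ or $e_b-e_d$ within $T$ or another factor. The resulting parallelogram is forbidden only if $b$ lies strictly between $a$ and $c$, or if $\{a,c\}$ and $\{b,d\}$ interlace. Both patterns are excluded by the non-crossing and non-surrounding hypotheses. One needs care that $a,c\in S_i$ with $b\in S_j$ nested inside, and no $d$ outside, does not produce a forbidden face. Indeed, $e_a-e_c$ paired with $e_b-e_d$ for nested $b,d\in S_j$ gives the allowed type $\{e_i-e_l,e_j-e_k\}$.

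\textbf{Main obstacle.} I expect the hardest step to be (3)$\Rightarrow$(1) in the mixed situation. There the non-interval facet inequality must be traced back to a specific crossing or surrounding pair of blocks, since facet supports are only determined modulo the sum-set equations. My first attempt would be to choose the representative support minimizing intersections with the $S_i$, and show that any remaining non-interval gap witnesses $a<b<c$ across two blocks.
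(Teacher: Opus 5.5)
\textbf{There is a genuine gap: nothing in your write-up implies (1).} Your ingredients are the paper's, but the implications do not close into a cycle. Here is what you actually establish.
\begin{itemize}
\item (1)$\Rightarrow$(3) is Proposition \ref{prop2}.
\item Your ``(1)$\Rightarrow$(2)'' argument uses only (3), so it is really (3)$\Rightarrow$(2). It passes (3) to the factors and then applies Proposition \ref{prop3}, the edge-product constructions, and the 2-face criterion for positroids. That criterion is Theorem 3.9 of \cite{LPW}, not \cite{ARW}. You should also exclude crossings between some $S_i$ and $T$; any such crossing contains $x<y<z$ with $x,z\in S_i$ and $y\in T$, so your first-type construction handles it.
\item Your (2)$\Rightarrow$(3) check is correct.
\end{itemize}
The only arrow landing in (1) is your (3)$\Rightarrow$(1). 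Its mixed case is exactly what you leave open: every factor is a (g-)positroid in its induced order, yet some facet class of $Q$ has no interval representative. Your (2)$\Rightarrow$(1) is routed through this same step.

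\textbf{The missing piece is the paper's direct proof of (2)$\Rightarrow$(1), an interval-completion argument.}
\begin{itemize}
\item A facet of $Q$ is a facet of one factor times the remaining factors. Since that factor is a (g-)positroid, its inequality has a support $L$ that is an interval in the induced order of its block, $S_i$ or $T$. For $P_i$ this may require replacing $L$ by $S_i\setminus L$ via $x(S_i)=f(S_i)$.
\item Any element $b$ lying in a gap of $L$ in $[n]$ (between consecutive maximal runs) lies outside that block.
\item If $L\subseteq S_i$ and $b\in T$, then either $T$ crosses $S_i$ or $S_i$ surrounds $T$.
\item If $b\in S_j$ and $S_j$ has an element outside that gap, then $S_j$ crosses the block of $L$.
\item Hence under (2) every gap is a disjoint union of whole blocks $S_j$. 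Adding the equations $x(S_j)=f_j(S_j)$ for these blocks turns the inequality into one supported on the interval $[\min L,\max L]$.
\end{itemize}

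\textbf{Your suggested fix goes the wrong way.} This argument \emph{adds} the nested blocks. Choosing the representative that minimizes intersections with the $S_j$ keeps gaps that nested blocks fill. As you yourself note in (2)$\Rightarrow$(3), a witness $a<b<c$ with $b$ in a nested block produces no forbidden face, so that strategy cannot succeed.

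With the completion argument in place, your proof becomes the paper's: (1)$\Rightarrow$(3)$\Rightarrow$(2)$\Rightarrow$(1). The separate (2)$\Rightarrow$(3) check and the mixed-case analysis are then unnecessary.
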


\begin{proof}
    (1) $\Rightarrow$ (2) By Theorem \ref{thm1}, such a decomposition exists. So first we need to show that  $[n]=S_1 \sqcup \cdots \sqcup S_K \sqcup T$ is a noncrossing partition and no $S_i$ surrounds $T$. If not, there exsits $a<b<c<d$ such that $a,c \in S_i$ and $b,d \in S_j$ for some $i \neq j$. Since every matroid polytope $P_i$ is full-dimensional in $\mathbb{R}^{S_i}$ ($M(P_i)$ is a connected matroid with base set $S_i$), $P_i$ has an edge with vertices $\{e_{Aa},e_{Ac}\}$ while $P_j$ has an edge with vertices $\{e_{Bb},e_{Bd}\}$. Therefore there exists a two-dimensional face with vertices$\{e_{ABab},e_{ABbc},e_{ABcd},e_{ABda}\}$ in $P_i \times P_j$ (thus in $Q$), contradicts to Proposition \ref{prop2}. Or if there exists $S_i$ such that $S_i$ and $T$ are not non-crossing or $S_i$ surrounds $T$, then there exists $a<b<c$ such that $a,c \in S_i$ and $b \in T$. Similarly, there exists an edge with vertices $\{e_{Aa},e_{Ac}\}$ in $P_i$ and an edge with vertices $\{e_{B},e_{Bb}\}$ in $Q_0$ by Proposition \ref{prop1}. Therefore we can find a two-dimensional face in $P_i\times Q_0$(thus in Q) with vertices $\{e_{ABab},e_{ABbc}.e_{ABa},e_{ABc}\}$ which contradicts to Proposition \ref{prop2}. Next, we prove that every $P_i$ is a positroid polytope in $\mathbb{R}^{S_i}$ and $Q_0$ is a g- positroid in $\mathbb{R}^T$ where the order in sets $S_i$ and $T$ are inherited from $[n]$. Let $f|_{S_i}$ be the restriction of $f$ on $S_i$ ($f|_{T},g|_{S_i},g|_{T}$ are defined respectively). Since $S_i$ is a sum-set, it is not hard to prove that for any $X \subseteq [n]$
    \begin{align*}
        f(X) &=f|_{S_i}(X \cap S_i)+\cdots+f|_{S_k}(X \cap S_k)+f|_{T}(X \cap T)\\
         g(X) &=g|_{S_i}(X \cap S_i)+\cdots+g|_{S_k}(X \cap S_k)+g|_{T}(X \cap T).
    \end{align*}
    Since $Q$ is a g-polypositroid, its facets are entirely determined by the values of $f$ and $g$ on the intervals. Therefore the values of $f|_{S_i}$ ($f|_{T},g|_{S_i},g|_{T}$ respectively) are entirely determined by the intervals in $S_i$ ($T,S_i,T$ respectively). Then each $P_i=P_i(f|_{S_i},g|_{S_i})$ is a positroid polytope and $Q_0=Q_0(f|_{T},g|_{T})$ is a g-positroid.\\\\
    (2) $\Rightarrow$ (1) Suppose that $P_i=P_i(f_i,g_i)$ and $Q_0=Q_0(f_0,g_0)$. Now given any $X \subseteq [n]$ and the partition  $X=X_1 \sqcup \cdots \sqcup X_k \sqcup X_0$ such that $X_0 \subseteq T$ and $X_i \subseteq S_i$ for $i=1,\cdots,k$. Define $f(X):=\sum\limits_{i=0}^kf_i(X_i)$ and $g(X):=\sum\limits_{i=0}^kg_i(X_i)$, next we prove that $Q=Q(f,g)$ is a g-positroid. Here, we mainly exploit the property that the facet of the polytope product is formed by the products of the facet of some component polytope with the other polytopes. Without loss of generality, we assume that the facet of $Q$ is defined by one of the equations $x(I_i)=f_i(I_i)$ ($i=0,1,\cdots,k$), where $I_0$ is an interval in $T$ and $I_i$ $(i \geq 1)$ is an interval in $S_i$ . But $I_i$ might not be an interval in $[n]$, Suppose that $I_i$ is uniquely expressed as a pairwise disjoint union of intervals as follows:
    \[
    I_i=I_i^1 \sqcup I_i^2 \sqcup \cdots \sqcup I_i^l
    \]
    where $I_i^1 < I_i^2 < \cdots < I_i^l$. Let $C_i^{k}$ be the interval bridging $I_i^k$ and $I_i^{k+1}$, then we complete $I_i$ into an interval $C(I_i)$ in $[n]$, that is,
    \[
    C(I_i):=[ min\;I_i^1,\;max\;I_i^l ]=I_i^1 \sqcup C_i^1 \sqcup I_i^2 \sqcup \cdots C_i^{l-1}\sqcup I_i^l.
    \]
    Since $[n]=S_1 \sqcup \cdots \sqcup S_k \sqcup T$ is a non-crossing partition with no $S_i$ surrounds $T$, every $C_i^k$ is a disjoint union of some $S_j$'s $(j \neq i)$. Thus the equation $x(I_i)=f_i(I_i)$ is equivalent to 
    \[
    x(C(I_i))=f_i(I_i)+\sum\limits_{S_j \subseteq C(I_i) }f_j(S_j)
    \]
    This indicates that the support of the facet-defining equation for polyhedron $Q$ constitutes an interval. So $Q$ is a g-positroid.
    \\

    (1) $\Rightarrow$ (3) is directly by Proposition \ref{prop1}. Since (1) $\Leftrightarrow$ (2), we prove that (3) $\Rightarrow$ (2). Suppose that $Q$ contains no two-dimensional faces of this two forms, by the same arguments in the proof of (1) $\Rightarrow$ (2), $[n]=S_1 \sqcup \cdots \sqcup S_k \sqcup T$ is a non-crossing partition and no $S_i$ surrounds $T$. Together with Theorem 3.9 in \cite{LPW} and Proposition \ref{prop3}, every $P_i$ must be a positroid polytope and $Q_0$ must be a g-positroid.
    
\end{proof}

\vspace{0.2cm}

\begin{thm}
	\label{affine_DCTP}
    For any DCTP function $f$, the affine regions of its concave extension on the hypercube $[0,1]^n$ belong to $\mathcal{GPP}(n)$. Moreover, the fan $\mathcal{MV}$ is exactly the secondary fan of the g-positroid subdivisions of the hypercube $[0,1]^n$.
\end{thm}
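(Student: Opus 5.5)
The plan is to split the statement into a local part and a global part. The local part says that every affine region of $\widehat f$ is a g-positroid. The global part identifies the fan $\mathcal{MV}$ with the secondary fan of g-positroid subdivisions of $[0,1]^n$.

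\textbf{Step 1 (each region is a g-matroid).} Let $f$ be a DCTP function. By Proposition \ref{plucker_submodular}, $f$ is $M^\natural$-concave. By the Fujishige--Hirai description of $M^\natural$-concave functions, every linearity domain $Q$ of the concave extension $\widehat f$ on $[0,1]^n$ is then an integral g-polymatroid with $0/1$ vertices, i.e.\ a g-matroid.

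\textbf{Step 2 (each region is a g-positroid).} By Theorem \ref{thm2}, it suffices to show that $Q$ has no forbidden two-dimensional face.
\begin{itemize}
\item Suppose first that $Q$ had a face with vertex set $\{e_{Xij},e_{Xjk},e_{Xk},e_{Xi}\}$, $i<j<k$.
\begin{itemize}
\item Since $\widehat f$ is affine on this square face and the face is a parallelogram, $f(Xij)+f(Xk)=f(Xjk)+f(Xi)$.
\item Because the square is a face of the regular subdivision, lifting its diagonals shows the other two lattice points $e_{Xik},e_{Xj}$ are not on the same affine piece with a larger value. This forces $f(Xik)+f(Xj)<f(Xij)+f(Xk)$.
\item That contradicts the tropical Pl\"ucker relation, which says the maximum of the two pairings must be attained by $f(Xik)+f(Xj)$ (i.e.\ equal to it).
\end{itemize}
\item Suppose instead that $Q$ had a face $\{e_{Xij},e_{Xjk},e_{Xkl},e_{Xli}\}$ with $i<j<k<l$. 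This is the usual four-term Pl\"ucker situation, and the same argument applies to the restriction to the slice $x([n])=|X|+2$. There the DCTP condition implies the three-term Grassmann tropical Pl\"ucker relation $f(Xik)+f(Xjl)=\max\{f(Xij)+f(Xkl),f(Xil)+f(Xjk)\}$, which is standard for $M$-concave tropical Pl\"ucker functions. It rules out a square with diagonals $ik$, $jl$ as a cell face.
\end{itemize}
Hence every region is a g-positroid, and in particular lies in $\mathcal{GPP}(n)$.

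\textbf{Step 3 (the converse).} Let $h:\{0,1\}^n\to\mathbb{R}$ be a height function whose regular subdivision of $[0,1]^n$ consists of g-positroids. Then $h$ is $M^\natural$-concave, since every cell is a g-matroid, so it is submodular by Proposition \ref{plucker_submodular}.

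To obtain the tropical Pl\"ucker relation for a triple $Xi,Xj,Xk$, look at the octahedron $\mathrm{conv}\{e_{Xij},e_{Xjk},e_{Xik},e_{Xi},e_{Xj},e_{Xk}\}$. This is a face of a cube in $\{0,1\}^n$, and the induced subdivision splits it along one of its three square cross-sections, or leaves it whole. Each square cross-section corresponds to one of the three pairings.
\begin{itemize}
\item The square $\{e_{Xij},e_{Xjk},e_{Xk},e_{Xi}\}$ is forbidden by Theorem \ref{thm2}(3).
\item So the maximum among the three pairings is never attained uniquely by $f(Xij)+f(Xk)$ versus the others in the wrong way. This is exactly the relation $f(Xik)+f(Xj)=\max\{\cdots\}$.
\end{itemize}

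\textbf{Step 4 (cones).} Steps 2 and 3 show that the set of DCTP functions coincides with the set of height functions inducing g-positroid subdivisions. Modulo affine functions, which form the lineality space quotiented out in $\mathcal{MV}$, this set is a union of secondary cones. Two DCTP functions lie in the same cone of $\mathcal{MV}$ iff they satisfy the same equalities and strict inequalities among the relations, iff they induce the same subdivision. Hence $\mathcal{MV}$ is the secondary fan restricted to g-positroid subdivisions.

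The main obstacle is the four-element case and the octahedron analysis. In both, a square that is a two-dimensional face of a cell must be correctly translated into a strict inequality among the pairings. I would handle this by restricting to the three-dimensional cube face spanned by $i,j,k$ over $X$, where regular subdivisions of the octahedron are fully classifiable by hand, and by using the padding of Lemma \ref{lem5} to transfer the $\{i,j,k,l\}$ case to the hypersimplex.
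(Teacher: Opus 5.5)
Your proposal is correct and follows the paper's route. $M^{\natural}$-concavity makes the cells g-matroids, and Theorem~\ref{thm2}(3) upgrades them to g-positroids. The converse reads submodularity and the three-term relation off the octahedra once the square $\{e_{Xij},e_{Xjk},e_{Xk},e_{Xi}\}$ is excluded, and the fan identification rests on the local relations determining the subdivision, which both versions assert without full proof.

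Two small repairs are needed:
\begin{enumerate}
\item The octahedron is not a face of the cube. It is the intersection of the face $e_X+[0,1]^{\{i,j,k\}}$ with the integral slab $|X|+1\le x([n])\le |X|+2$, and that still suffices because cells meet it in integral g-matroids.
\item The four-term relation you call standard follows by expanding the three-term relations at base $X$ for the triples $(i,j,k)$, $(j,k,l)$, $(i,j,l)$. With that, you actually handle the second forbidden square, which the paper's proof passes over.
\end{enumerate}
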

\begin{proof}
    Since a DCTP function is an $M^{\natural}$-function, all affine regions of the concave extension of $f$ are g-matroids. Furthermore, since the affinity areas of $f$ contain no two-dimensional faces of the forms in (3) of Theorem \ref{thm2}, thus they are g-positroids. Conversely, suppose that $f$ is an $M^{\natural}$-concave function on the domain $\{0,1\}^n$. As in the proof of Proposition \ref{plucker_submodular}, we define $\widetilde{f}$ on $\binom{[2n]}{n}$ by $\widetilde{f}(X):=f(X \cap [n])$. Since $f$ is $M^{\natural}$-concave if and only if $\widetilde{f}$ is M-concave, by \cite{AS}, this is equivalent to $\widetilde{f}$ being a Pl\"ucker vector i.e. $f$ is submodular and among the three sums 
    \[
    f(Xij)+f(Xk),\quad\quad  f(Xik)+f(Xj),\quad\quad  f(Xjk)+f(Xi)
    \]
    where $i<j<k$, two are equal and greater than or equal to the third. Since $f$ induces a g-positroid subdivision, by statement (3) of Theorem \ref{thm2}, $f$ should satisfy equation \ref{trop_plucker} thus is a DCTP function. Moreover, the two relations of DCTP functions determine the 3-dimensional skeleton complex of the subdivision, so the fan $\mathcal{MV}$ is exactly the secondary fan of the g-positroid subdivision of $[0,1]^n$.
\end{proof}

\vspace{1cm}

\vspace{1cm}
\section{Further applications and discussions}
\subsection{Constructing gereralized positroids from generalized cyclic patterns}\quad

A domian $\mathcal{D} \subseteq 2^{[n]}$ is called \textbf{w-pure} (\textbf{s-pure}) if all the maximal-by-inclusion weakly (strong) separated sets in $\mathcal{D}$ have the same size. The authors in \cite{DKK2} constructed a large amount of w-pure domains using a new geometric model called the \textbf{combined tiling}.

More precisely, we fix $n$ vectors $\xi_1,\xi_2,\cdots,\xi_n$ of unit Euclidean length in the upper half plane $\mathbb{R} \times \mathbb{R}_{>0}$, which

\begin{enumerate}
    \item  follow in this order clockwise around the origin $(0,0)$;
    \item are $\mathbb{Z}$-independent, i.e., all integer combinations of these vectors are different.
\end{enumerate}
The zonogon
\[
Z_n:=\{\lambda_1\xi_1+\lambda_2\xi_2+\cdots+\lambda_n\xi_n\;|\;0\leq \lambda_i \leq 1, \forall i \in [n]\}.
\]
is a $2n$-gon with vertices $(0,0)$ and $\xi_1,\;(\xi_1+\xi_2),\;\cdots,\;(\xi_1+\xi_2+\cdots+\xi_{n}),\;(\xi_2+\xi_3+\cdots+\xi_n),\;\cdots,\;\xi_n $ in the clockwise order. For any other $X \in 2^{[n]}$, the point $\xi_X:=\sum\limits_{i \in X}\xi_i$ is in the interior of $Z_n$ while it is not vertex of $Z_n$. Moreover, if $X \neq Y$, the points $\xi_X$ and $\xi_Y$ do not overlap.

Let $\mathcal{S}=(S_1,S_2,\cdots,S_r=S_0)$ be a sequence of  subsets in $2^{[n]}$ such that either $|S_{p-1}\triangle S_p|=1$ or $|S_{p-1}\triangle S_p|=2$ and $|S_{p-1}|=|S_p|$ for $p=1,2,\cdots,r$. The pair $\{S_{p-1},S_p\}$ in the former (latter) case is called a \textbf{1-distance pair} (resp. \textbf{2-distance pair}). Additionally, $\mathcal{S}$ is a weakly separated set. Such a sequence $\mathcal{S}$ is called a \textbf{generalized cyclic pattern} in \cite{DKK2}.

By connecting the adjacent vertices $\xi_{S_{p-1}}$ and $\xi_{S_p}$ in $\mathcal{S}$ with straight line segments, we get a closed piecewise-linear curve $\xi_{\mathcal{S}}$. In \cite{DKK2}, the authors provide a necessary and sufficient condition of $\xi_{\mathcal{S}}$ being non-self-intersecting.
Once $\xi_{\mathcal{S}}$ is non-self-intersecting, the zonogon $Z_n$ is divided into the region (include the curve $\xi_{\mathcal{S}}$, i.e.,  a closed set) inside $\xi_{\mathcal{S}}$ and the region outside $\xi_{\mathcal{S}}$, denoted by $\mathcal{R}_{\mathcal{S}}^{in}$ and $\mathcal{R}_{\mathcal{S}}^{out}$ respectively. Then let
\[
\mathcal{D}_{\mathcal{S}}^{in}:=\{X \in 2^{[n]}\;|\;\xi_{X} \in \mathcal{R}_{\mathcal{S}}^{in}\}\;\;\text{and}\;\;\mathcal{D}_{\mathcal{S}}^{out}:=\{X \in 2^{[n]}\;|\;\xi_{X} \in \mathcal{R}_{\mathcal{S}}^{out}\}
\]

\begin{thm}[\cite{DKK2}]
    Both $\mathcal{D}_{\mathcal{S}}^{in}$ and $\mathcal{D}_{\mathcal{S}}^{out}$ are w-pure domain, and for any maximal weakly separated sets $\mathcal{C}_1 \subseteq \mathcal{D}_{\mathcal{S}}^{in}$ and $\mathcal{C}_2 \subseteq \mathcal{D}_{\mathcal{S}}^{out}$, the union $\mathcal{C}_1 \cup \mathcal{C}_2$ is maximal weakly separated in the domain $2^{[n]}$.
\end{thm}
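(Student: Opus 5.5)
The plan is to deduce the statement from the geometric model of weakly separated sets in \cite{DKK2}: \emph{combined tilings} of the zonogon $Z_n$. The result I would take as the base input is that a collection $\mathcal{C}\subseteq 2^{[n]}$ is maximal weakly separated if and only if $\{\xi_X \mid X\in\mathcal{C}\}$ is the vertex set of a combined tiling of $Z_n$. All such vertex sets have the same cardinality $\binom{n+1}{2}+n+1$, so $2^{[n]}$ is itself w-pure. The idea is to localize this correspondence to the two regions $\mathcal{R}_{\mathcal{S}}^{in}$ and $\mathcal{R}_{\mathcal{S}}^{out}$ cut out by the non-self-intersecting curve $\xi_{\mathcal{S}}$, and then glue along $\xi_{\mathcal{S}}$.

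\textbf{Step 1 (the boundary is forced).} Show that every $S_p\in\mathcal{S}$ is weakly separated from every $X\in\mathcal{D}_{\mathcal{S}}^{in}$, and likewise from every $X\in\mathcal{D}_{\mathcal{S}}^{out}$. It then follows that any maximal weakly separated $\mathcal{C}_1\subseteq\mathcal{D}_{\mathcal{S}}^{in}$ contains $\mathcal{S}$, and similarly for $\mathcal{C}_2$.
\begin{itemize}
\item I would prove this by contradiction.
\item Suppose $X$ and $S_p$ are not weakly separated. Then there are indices $i<j<k<l$ (or $i<j<k$ when $|X|\neq|S_p|$) witnessing that they ``cross''.
\item Using the clockwise order of $\xi_1,\dots,\xi_n$, such a crossing forces the segment structure of $\xi_{\mathcal{S}}$ near $\xi_{S_p}$ to separate $\xi_X$ from the appropriate side of the curve.
\item Alternatively, the crossing would force $\mathcal{S}$ itself to contain a non-weakly-separated pair, contradicting that $\mathcal{S}$ is weakly separated.
\end{itemize}

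\textbf{Step 2 (tilings of a region).} Show that for a weakly separated $\mathcal{C}_1\subseteq\mathcal{D}_{\mathcal{S}}^{in}$ containing $\mathcal{S}$, maximality within $\mathcal{D}_{\mathcal{S}}^{in}$ is equivalent to $\xi_{\mathcal{C}_1}$ being the vertex set of a combined tiling of the closed disc $\mathcal{R}_{\mathcal{S}}^{in}$. Here the boundary of that disc is made of the edges and 2-distance diagonals of $\xi_{\mathcal{S}}$.
\begin{itemize}
\item I would imitate the proof for $Z_n$ from \cite{DKK2}, namely the construction of the tiling from the ``generalized tiling'' of a weakly separated set.
\item I would restrict it to the subcomplex of tiles lying inside the curve. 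The non-self-intersection criterion of \cite{DKK2} guarantees that $\xi_{\mathcal{S}}$ is a union of tile edges and diagonals.
\end{itemize}

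\textbf{Step 3 (purity).} Prove that all combined tilings of the fixed disc $\mathcal{R}_{\mathcal{S}}^{in}$ have the same number of vertices.
\begin{itemize}
\item My first attempt would be an Euler-characteristic and area count. Each tile type (rhombus, and the two kinds of triangles in a combined tiling) has an area determined by its edge vectors. The total area and the boundary length are fixed by $\mathcal{S}$. Together with $V-E+F=1$, this should pin down $V$.
\item If the tile areas do not separate the types, I would fall back on connectivity of tilings of the region under the local flips of \cite{DKK2}, which preserve the vertex count.
\end{itemize}
The same argument applies to $\mathcal{R}_{\mathcal{S}}^{out}$, which is an annulus between $\xi_{\mathcal{S}}$ and $\partial Z_n$; there the Euler characteristic is $0$. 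This gives w-purity of both domains.

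\textbf{Step 4 (gluing).} Given maximal $\mathcal{C}_1\subseteq\mathcal{D}_{\mathcal{S}}^{in}$ and $\mathcal{C}_2\subseteq\mathcal{D}_{\mathcal{S}}^{out}$, Steps 1 and 2 give tilings of the inner disc and of the outer annulus that agree along $\xi_{\mathcal{S}}$. Their union is a combined tiling of $Z_n$. Its vertex set $\mathcal{C}_1\cup\mathcal{C}_2$ is therefore weakly separated, which also yields the cross-region separations, and it has the maximal size. Hence $\mathcal{C}_1\cup\mathcal{C}_2$ is maximal weakly separated in $2^{[n]}$.

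I expect Step 2 to be the main obstacle, specifically the claim that a merely maximal weakly separated collection inside the region already tiles it, with no gaps near the curve. I would attack it by showing that any untiled gap adjacent to $\xi_{\mathcal{S}}$ contains a lattice point $\xi_Y$ whose set $Y$ is weakly separated from all of $\mathcal{C}_1$. This contradicts maximality, and it is exactly the local argument used in \cite{DKK2} for $Z_n$.
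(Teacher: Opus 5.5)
The paper gives no proof of this statement; it is quoted from \cite{DKK2}, where it comes out of the combined-tiling model you invoke. Measured against what a proof needs, your outline fails at Step~1. Since $\mathcal{R}_{\mathcal{S}}^{in}\cup\mathcal{R}_{\mathcal{S}}^{out}=Z_n$, every $X\subseteq[n]$ lies in $\mathcal{D}_{\mathcal{S}}^{in}$ or in $\mathcal{D}_{\mathcal{S}}^{out}$. So Step~1 claims that each member of $\mathcal{S}$ is weakly separated from \emph{every} subset of $[n]$, which is false. Take $n=3$ and $\mathcal{S}=(\emptyset,\{1\},\{1,2\},\{1,2,3\},\{2,3\},\{2\})$. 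Consecutive terms are 1-distance pairs and the six sets are pairwise weakly separated. The curve $\xi_{\mathcal{S}}$ is the boundary of $Z_3$ with the rhombus spanned by $\xi_2,\xi_3$ cut off, so it is non-self-intersecting. Yet $\{2\}$ and $\{1,3\}$ are not weakly separated. Put $\xi_1$ and $\xi_3$ at $70^\circ$ to the left and right of the vertical, and tilt $\xi_2$ slightly and generically to the left of vertical. Then $\xi_1+\xi_3$ lies in the interior of that rhombus. Consequently $\mathcal{D}_{\mathcal{S}}^{in}=\mathcal{S}$ is itself weakly separated, so it is the unique maximal $\mathcal{C}_1$, and it contains $\{2\}$. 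Any maximal $\mathcal{C}_2\subseteq\mathcal{D}_{\mathcal{S}}^{out}$ containing $\{1,3\}$ then gives a union $\mathcal{C}_1\cup\mathcal{C}_2$ that is not weakly separated. So the statement as transcribed is false. The domains must be restricted to sets weakly separated from all members of $\mathcal{S}$ (equivalently, one considers maximal collections containing $\mathcal{S}$), as in \cite{DKK2} and in the analogous purity results of Oh--Postnikov--Speyer \cite{OPS}. With that restriction Step~1 holds by definition; without it, no argument can prove it.

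Even with the corrected domains, the content of the theorem sits in Step~2, and your outline does not supply the key ingredient. Non-self-intersection is a property of the curve alone. It says nothing about whether $\xi_{\mathcal{S}}$ runs along edges of a given combined tiling. What you need is a lemma about tilings, for instance: in any combined tiling of $Z_n$ whose vertex set contains $\mathcal{S}$, each segment $[\xi_{S_{p-1}},\xi_{S_p}]$ is an edge or lies in a single tile. Given such a lemma, Step~2 is short:
\begin{itemize}
\item extend a maximal $\mathcal{C}_1\subseteq\mathcal{D}_{\mathcal{S}}^{in}$ to a maximal weakly separated $\mathcal{C}''\subseteq 2^{[n]}$;
\item take its tiling and cut along $\xi_{\mathcal{S}}$;
\item the inner part has vertex set $\mathcal{C}''\cap\mathcal{D}_{\mathcal{S}}^{in}\supseteq\mathcal{C}_1$, which equals $\mathcal{C}_1$ by maximality.
\end{itemize}
Your fallback of filling gaps adjacent to the curve does not work as stated, for three reasons. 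First, a point $\xi_Y$ in a gap must be weakly separated from all of $\mathcal{C}_1$, including sets far from the gap; this global requirement is exactly what makes purity hard. Second, gaps need not touch the curve. Third, speaking of gaps at all presupposes that the tiles spanned by a non-maximal collection do not overlap, which is itself a theorem.

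Step~3 is superfluous. Once Step~4 holds, fix one $\mathcal{C}_2$ and use $|\mathcal{C}_1\cup\mathcal{C}_2|=\binom{n+1}{2}+1$ together with $\mathcal{C}_1\cap\mathcal{C}_2=\mathcal{S}$ (for generic $\xi_i$); this gives purity directly. Two smaller corrections: the maximal size in $2^{[n]}$ is $\binom{n+1}{2}+1$, not $\binom{n+1}{2}+n+1$; and $\mathcal{R}_{\mathcal{S}}^{out}$ need not be an annulus, since in the example above it is a rhombus.
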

In particular, the Grassmann necklace mentioned in \cite{OPS} is a special class of generalized cyclic patterns. A \textbf{Grassmann necklace} is a sequence of $k$-elements $\mathcal{S}=(S_{n+1}=S_1,S_2,\cdots,S_n)$ such that either $S_{i}=S_{i+1}$ or $S_i \backslash S_{i+1}=\{i\}$ for $i \in [n]$. In other words, $\mathcal{S}$ is a generalized cyclic pattern $(S_{i_{r+1}}=S_{i_1}, S_{i_2}, \cdots, S_{i_r})$ such that $S_{i_j} \backslash S_{i_{j+1}}=\{i_j\}$ for $j \in [r]$ where $r \leq n$ and $ 1 \leq i_1 <i_2< \cdots <i_r \leq n$. Oh proved a one-to-one correspondence between Grassmann necklaces and positroids in \cite{O}.

\begin{thm}[\cite{O})]
    Let $\Pi_i=[i,i+1,\cdots,i-1]$ be the cones defined in Definition \ref{def1} for every $i \in [n]$ and $\mathcal{S}=(S_1,S_2,\cdots,S_n)$ be a Grassmann necklace. Then
    \[
    \mathcal{M}(\mathcal{S})=\bigcap\limits_{i=1}^n\left(\left(e_{S_i}-\Pi_i\right)\cap \Delta_{k,n}\right)
    \]
    is a positroid polytope. Conversely, for any positroid $\mathcal{M}$, take $S_i$ to be the minimal element in $\mathcal{M}$ under the cyclic shifted order $<_i$ (i.e., $i<_ii+1<_i\cdots<_ii-1$), then $\mathcal{S}$ is a Grassmann necklace.
\end{thm}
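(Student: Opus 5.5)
The plan is to treat the two halves of the statement separately. The first half says that $\mathcal{M}(\mathcal{S})$ is a positroid polytope. The converse says that the cyclic minima of a positroid form a Grassmann necklace.

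For the first half, I would begin by rewriting each region $(e_{S_i}-\Pi_i)\cap\Delta_{k,n}$ as an explicit system of cyclic-interval inequalities. By Definition \ref{def1}, with every $S_j=\{j\}$ and $I_j=\emptyset$ (the standard blade of Remark \ref{rmk1}), the cone $\Pi_i$ is cut out by
\[
x([i,j])\geq 0,\qquad j\neq i-1,
\]
together with $x([n])=0$. Hence $y\in e_{S_i}-\Pi_i$ means
\[
y([i,j])\leq |S_i\cap[i,j]|\quad\text{for all cyclic intervals starting at } i,
\]
and $y([n])=k$. Intersecting over all $i$ therefore gives a polytope of the form $QI(f,g)$ restricted to the hyperplane $x([n])=k$. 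Its upper bounds are $x(I)\leq |S_{m(I)}\cap I|$ on cyclic intervals $I$, and passing to complements within $x([n])=k$ converts every cyclic interval into an ordinary interval or the complement of one. So $\mathcal{M}(\mathcal{S})$ is cut out by interval inequalities inside $\Delta_{k,n}$.

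Next I need this polytope to be a matroid polytope whose vertices are exactly the $e_J$ with $S_i\leq_i J$ for all $i$, which is Oh's positroid description. Here I would use that each $(e_{S_i}-\Pi_i)\cap\Delta_{k,n}$ is a Schubert-type matroid polytope in the $i$-shifted order; this is the content of Proposition \ref{prop4} and Theorem \ref{thm3} applied to the cyclically rotated order. The key step is then to show that the intersection of these $n$ shifted Schubert matroid polytopes is integral. I would prove this by total unimodularity: the constraint matrix consists of cyclic-interval indicator rows, and such a matrix is a circular-ones matrix. It becomes an interval (consecutive-ones) matrix after complementing the rows that wrap around, using $x([n])=k$.

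I expect the real obstacle to be elsewhere, namely in showing the polytope is nonempty and full in the correct sense. Concretely, $e_{S_i}$ must satisfy the $j$-th family of constraints for every $j$, i.e. $S_j\leq_j S_i$, and this is exactly where the necklace condition $S_i\setminus S_{i+1}\subseteq\{i\}$ enters. I would prove $S_j\leq_j S_i$ by induction along the cycle from $j$ to $i$. Each step replaces at most the element $i'$, which is the $<_{j}$-smallest among the remaining positions, by a larger one, so the Gale order is preserved. Once this holds, every $e_{S_i}$ lies in $\mathcal{M}(\mathcal{S})$, and the edge directions $e_a-e_b$ coming from the matroid structure give the positroid property, which I would confirm via Theorem \ref{thm2}(3) or Proposition \ref{prop12}.

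For the converse, let $\mathcal{M}$ be a positroid and let $S_i$ be its $<_i$-minimal basis, which exists by the greedy algorithm. Suppose $i\notin S_i$. Then the greedy procedures in the orders $<_i$ and $<_{i+1}$ coincide, so $S_{i+1}=S_i$. Suppose instead $i\in S_i$. Running the greedy algorithm in the order $<_{i+1}$, where $i$ is now last, yields a basis obtained from $S_i$ by a single basis exchange removing $i$, so $S_i\setminus S_{i+1}=\{i\}$. This is a routine matroid argument. Finally, $\mathcal{M}=\mathcal{M}(\mathcal{S})$ follows from Oh's theorem, which I would cite as in \cite{O}.
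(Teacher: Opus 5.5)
\textbf{The gap is in the first half.} You show that $\mathcal{M}(\mathcal{S})$ is cut out by cyclic-interval inequalities, that it is integral (the circular-ones/total-unimodularity argument is fine), and that it contains every $e_{S_i}$. You then read off edges $e_a-e_b$ ``from the matroid structure''. But none of these steps produces that structure. An integral intersection of matroid polytopes need not be a matroid polytope, and Proposition~\ref{prop12} and Theorem~\ref{thm2}(3) both presuppose a g-matroid.

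Worse, the necklace axiom enters your argument only through $S_j\le_j S_i$, and that is too weak. Take $n=4$, $k=2$ and the non-necklace family $(\{1,3\},\{2,4\},\{1,3\},\{2,4\})$. It satisfies $S_j\le_j S_i$ for all $i,j$. Yet the constraints $x_1+x_2\le 1$, $x_2+x_3\le 1$, $x_3+x_4\le 1$, $x_4+x_1\le 1$ and $x([4])=2$ force $x=(t,1-t,t,1-t)$. So the intersection is the segment $[e_{\{2,4\}},e_{\{1,3\}}]$, with direction $e_1-e_2+e_3-e_4$. It is integral and contains every $e_{S_i}$, but it is not a matroid polytope. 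That the intersection is a matroid polytope is exactly the substance of Oh's theorem; the paper only cites it, and Oh obtains it from Postnikov's positroid attached to the necklace.

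Within your polyhedral approach, the missing idea is crossing submodularity. Set $f([a,b])=|S_a\cap[a,b]|$ on proper cyclic intervals and $f([n])=k$. Take crossing intervals $A=[a,b]$, $B=[c,d]$ with $a<c\le b<d$, read linearly after cutting the circle at a point outside $A\cup B$. Then $f(A)+f(B)\ge f(A\cup B)+f(A\cap B)$ is equivalent to $|S_c\cap[b+1,d]|\ge|S_a\cap[b+1,d]|$. This holds because passing from $S_a$ to $S_c$ deletes only elements of $[a,c-1]\subseteq A$. This is where $S_i\setminus S_{i+1}\subseteq\{i\}$ is really used; in the example above, $f([1,2])+f([2,3])=2<3=f([1,3])+f(\{2\})$.

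Fujishige's theorem on crossing-submodular functions then makes the polytope an integral base polyhedron. It is nonempty since it contains the $e_{S_i}$ (see below). It lies in $[0,1]^n$ because $x_i\le|S_i\cap\{i\}|\le 1$ and $x_i\ge k-|S_{i+1}\setminus\{i\}|\ge 0$. Hence it is a matroid polytope, and only now does your interval-support argument apply.

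\textbf{Your induction for $S_j\le_j S_i$ is also wrong as stated.} The chain $S_j,S_{j+1},\dots,S_i$ need not be monotone for $\le_j$, because the element added at step $i'$ can precede $i'$ in $<_j$. For example, take the necklace $(\{1,3\},\{3,4\},\{3,4\},\{1,4\})$, whose positroid is $\{\{1,3\},\{1,4\},\{3,4\}\}$, and $j=4$. The step $S_1\to S_2$ replaces $1$ by $4$, and $S_1\not\le_4 S_2$.

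The inequality itself is true by a counting argument:
\begin{itemize}
\item For $m\in[j,i-1]$, passing from $S_i$ forward to $S_j$ deletes only elements of $[i,j-1]$, so $|S_j\cap[j,m]|\ge|S_i\cap[j,m]|$.
\item For $m\in[i,j-1]$, passing from $S_j$ forward to $S_i$ never deletes elements of $[m+1,j-1]$, so $|S_i\cap[m+1,j-1]|\ge|S_j\cap[m+1,j-1]|$. This is the same inequality after complementing.
\end{itemize}

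Your greedy argument for the converse is correct, and in fact it works for every matroid.
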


Here we utilize similar methods to generate a generalized positroid from a generalized cyclic pattern.

\begin{prop}
    \label{prop10}
    Suppose that $\widetilde{\mathcal{S}}=\{\widetilde{S}_1,\widetilde{S}_2,\cdots,\widetilde{S}_{2n}\}$ is a connected Grassmann necklace in the domain $\binom{[2n]}{n}$, and $\mathcal{S}=\{S_1,S_2,\cdots,S_r\}$ is a (non-self-intersecting) generalized cyclic pattern obtained from $\widetilde{\mathcal{S}}$ by taking the intersection $\widetilde{S}_i \cap [n]$ for $i \in [2n]$. Then,
    \[
    \mathcal{GM}(\mathcal{S}):=\text{Conv}\{e_Y\;|\;Y \in \mathcal{D}_{\mathcal{S}}^{in}\}
    \]
    is a generalized positroid where \text{Conv}$\{\cdot\}$ denotes taking the convex hull.
\end{prop}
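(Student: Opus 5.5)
The plan is to realize $\mathcal{GM}(\mathcal{S})$ as the image, under the projection $\pi:\mathbb{R}^{2n}\to\mathbb{R}^n$ onto the first $n$ coordinates, of the positroid polytope attached to $\widetilde{\mathcal{S}}$. Then we invoke Proposition \ref{prop13}(3) together with the fact that $\pi$ of a (0/1) base polytope in $\Delta_{n,2n}$ is a g-matroid. Concretely, by Oh's theorem, $\mathcal{M}(\widetilde{\mathcal{S}})=\bigcap_{i=1}^{2n}\big((e_{\widetilde{S}_i}-\widetilde{\Pi}_i)\cap\Delta_{n,2n}\big)$ is a positroid polytope in $\mathbb{R}^{2n}$. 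It is full-dimensional in $\Delta_{n,2n}$ because $\widetilde{\mathcal{S}}$ is connected. A positroid is a polypositroid, hence a g-polypositroid, and its facet inequalities have (cyclic) interval supports. Proposition \ref{prop13}(3) therefore gives that $\pi(\mathcal{M}(\widetilde{\mathcal{S}}))$ is a g-polypositroid in $\mathbb{R}^n$. Its vertices are projections of 0/1 vectors, and projections of g-polymatroids are g-polymatroids \cite{FT}. Hence $\pi(\mathcal{M}(\widetilde{\mathcal{S}}))$ is a g-matroid, and so a g-positroid.

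The remaining task is the identification
\[
\pi(\mathcal{M}(\widetilde{\mathcal{S}}))=\mathrm{Conv}\{e_Y \mid Y\in\mathcal{D}^{in}_{\mathcal{S}}\},
\]
i.e. that the set of $Y=\widetilde{Y}\cap[n]$, for $\widetilde{Y}$ a basis of the positroid, coincides with $\mathcal{D}^{in}_{\mathcal{S}}$. Since both sides are integral polytopes with all lattice points among 0/1 vectors, it suffices to compare vertex/lattice-point sets.

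First, I would use the Grassmann necklace description: $\widetilde{Y}$ is a basis iff $\widetilde{S}_i\le_i\widetilde{Y}$ for all $i$ (Gale order in the shifted order). Second, I would translate this under $X\mapsto X\cap[n]$ and the zonogon picture. The map $\widetilde{Y}\mapsto \xi_{\widetilde{Y}\cap[n]}$ sends the cyclic Gale conditions to the statement that $\xi_Y$ lies on the correct side of each segment $[\xi_{S_{p-1}},\xi_{S_p}]$ of the closed curve $\xi_{\mathcal{S}}$. Heuristically, for $i\in[n]$ the condition $\widetilde{S}_i\le_i\widetilde{Y}$ is a family of inequalities $x(I)\ge |\widetilde{S}_i\cap I|$ on cyclic intervals starting at $i$. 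After projection, and using $x([n+1,2n])=n-x([n])$, these become interval inequalities on $[n]$, exactly as in Remark \ref{rmk}. For $i\in[n+1,2n]$ they become the $g$-side inequalities. Here one uses that $\mathcal{S}$ is non-self-intersecting and weakly separated, together with the \cite{DKK2} description of $\mathcal{D}^{in}_{\mathcal{S}}$, to match these inequalities with ``$\xi_Y$ inside $\xi_{\mathcal{S}}$''. Third, for each $Y\in\mathcal{D}^{in}_{\mathcal{S}}$, I would lift it to $\widetilde{Y}=Y\cup J$ with $J\subseteq[n+1,2n]$, $|J|=n-|Y|$, chosen minimal in the appropriate cyclic order so that all necklace conditions hold. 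This mirrors the lifting step in the proof of Proposition \ref{prop13}, where an $l\ge u$ argument produced an admissible extra coordinate.

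I expect the main obstacle to be the second step: converting the Gale inequalities into the geometric ``inside the curve $\xi_{\mathcal{S}}$'' condition. The difficulty is that membership in $\mathcal{R}^{in}_{\mathcal{S}}$ is a planar condition on $\xi_Y$, while the polytope condition is a system of interval inequalities. I would first try to prove only the combinatorial statement $\mathcal{D}^{in}_{\mathcal{S}}=\{\widetilde{Y}\cap[n]:\widetilde{Y}\in\mathcal{M}(\widetilde{\mathcal{S}})\}$. To do this I would check it on the 1-distance and 2-distance steps of $\mathcal{S}$ separately. Each step corresponds to a single necklace transition $\widetilde{S}_i\setminus\widetilde{S}_{i+1}=\{i\}$, and each such transition cuts the zonogon by one line through $\xi_{S_{p-1}},\xi_{S_p}$, exactly as a single necklace inequality cuts the hypercube.
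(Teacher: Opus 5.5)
Your first step is sound and is exactly how the paper concludes. $\mathcal{M}(\widetilde{\mathcal S})$ is a positroid, and its cyclic-interval inequalities become linear-interval ones after complementing with $x([2n])=n$. So Proposition \ref{prop13}(3) makes $\pi(\mathcal{M}(\widetilde{\mathcal S}))$ a g-polypositroid with $0/1$ vertices, i.e.\ a g-positroid. Everything else rests on the identification $\pi(\mathcal{M}(\widetilde{\mathcal S}))=\mathcal{GM}(\mathcal S)$. The paper obtains it by quoting Proposition 9.10 of \cite{OPS} in $\binom{[2n]}{n}$ and transporting $\mathcal{R}^{in}_{\widetilde{\mathcal S}}$ onto $\mathcal{R}^{in}_{\mathcal S}$ by a piecewise-linear map $\gamma$. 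You leave it as a heuristic, and the heuristic is itself off in two ways. First, $\xi_{\mathcal S}$ is in general not convex, so ``inside'' is not the same as ``on the correct side of every segment''. Second, your lifting step presupposes that $e_Y$ already satisfies the projected inequalities.

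The decisive problem is that this identification is false, so the step cannot be filled in, and the statement fails with it. Take the paper's example $\mathcal{P}(\omega)$ with $n=3$ and $\omega=213$. Then $\mathcal S=([3],\{2,3\},\{3\},\emptyset,\{2\},\{1,2\})$ and $\widetilde{\mathcal S}=(\{1,2,3\},\{2,3,6\},\{3,5,6\},\{4,5,6\},\{2,5,6\},\{1,2,6\})$. This is a Grassmann necklace whose decorated permutation is a $6$-cycle, hence connected.

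One checks $\widetilde S_i\le_i\{1,4,5\}$ for all $i$, so $\{1,4,5\}$ is a basis and $e_1\in\pi(\mathcal M(\widetilde{\mathcal S}))$; in fact $\pi(\mathcal M(\widetilde{\mathcal S}))=[0,1]^3$. But $Z_3$ is the union of $\mathcal{R}^{in}_{\mathcal S}$ and the parallelogram with vertices $0,\xi_1,\xi_1+\xi_2,\xi_2$. Hence $\xi_1\notin\mathcal{R}^{in}_{\mathcal S}$ and $\{1\}\notin\mathcal{D}^{in}_{\mathcal S}$.

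Moreover, $\emptyset,\{1,2\}\in\mathcal{D}^{in}_{\mathcal S}$, and $x_1-x_2-2x_3$ is maximized over $\{0,1\}^3\setminus\{e_1\}$ exactly at $0$ and $e_1+e_2$. So the segment $[0,e_1+e_2]$ is an edge of $\mathcal{GM}(\mathcal S)$ in direction $e_1+e_2\notin\mathbb{A}_3$, and $\mathcal{GM}(\mathcal S)$ is not even a g-polymatroid. If weak separation from $\mathcal S$ is built into $\mathcal{D}^{in}_{\mathcal S}$, as in \cite{DKK2}, then $\mathcal{GM}(\mathcal S)$ is $\{x\in[0,1]^3 : x_1\le x_2\}=\mathcal{P}(213)$.

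The paper's proof breaks at the same point. A planar criterion for lying in the positroid can only be expected for sets weakly separated from the necklace. Here $\{1,4,5\}$ is not weakly separated from $\widetilde S_5=\{2,5,6\}$, since the differences $\{1,4\}$ and $\{2,6\}$ interleave. So the inclusion $\pi(\mathcal M(\widetilde{\mathcal S}))\subseteq\mathcal{GM}(\mathcal S)$ has no valid justification.

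What your first step genuinely proves is that $\pi(\mathcal M(\widetilde{\mathcal S}))$ is a g-positroid. A correct version of the proposition must be stated for that polytope, or for a restricted class of patterns, rather than for $\mathrm{Conv}\{e_Y \mid Y\in\mathcal{D}^{in}_{\mathcal S}\}$.
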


\begin{proof}
First, let us define a map $\gamma$ between two collections of points, 
\[
\gamma: \xi_X \in Z_{2n} \rightarrow \xi_{X \cap [n]} \in Z_n.
\]
Let $\phi:[0,1] \rightarrow \mathbb{R}^2$ be a parameterization of necklace $\widetilde{\mathcal{S}}$ such that $\phi(k/2n)=\xi_{\widetilde{S}_k}$, and for $t \in [k/2n,(k+1)/2n]$, define $\phi(t)=(2nt-k)\xi_{\widetilde{S}_k}+(k+1-2nt)\xi_{\widetilde{S}_{k+1}}$. So we can extend the map $\gamma$ between points of patterns $\widetilde{\mathcal{S}}$ and $\mathcal{S}$ to the curves by defining $\gamma(\phi(t))=(2nt-k)\gamma(\xi_{\widetilde{S}_k})+(k+1-2nt)\gamma(\xi_{\widetilde{S}_{k+1}})$. Furthermore, $\gamma$ can be extended to a continuous surjective map between the regions $\mathcal{R}_{\widetilde{\mathcal{S}}}^{in}$ and $\mathcal{R}_{\mathcal{S}}^{in}$ (as left of Figure \ref{fig29}). For example, we can choose some combined tiling $K$ on $\mathcal{R}_{\widetilde{\mathcal{S}}}^{in}$, and refine $K$ into a triangulation. Thus every point $X \in \mathcal{R}_{\widetilde{\mathcal{S}}}^{in}$ lies in some triangle $\{\xi_{X_1},\xi_{X_2},\xi_{X_3}\}$, and has a unique expression $X=\lambda_1\xi_{X_1}+\lambda_2\xi_{X_2}+\lambda_3\xi_{X_3}$ where $\lambda_1+\lambda_2+\lambda_3=1$. Therefore, we can define the image $\gamma(X):=\lambda_1\gamma(\xi_{X_1})+\lambda_2\gamma(\xi_{X_2})+\lambda_3\gamma(\xi_{X_3})$. 

Since $\widetilde{\mathcal{S}}$ is a Grassmann necklace, by Proposition 9.10 of \cite{OPS}, $X\in \mathcal{D}_{\widetilde{\mathcal{S}}}^{in}$  if and only if $e_X$ belongs to the positroid $\mathcal{M}(\widetilde{\mathcal{S}})$. Let $\pi:\mathbb{R}^{2n} \rightarrow \mathbb{R}^n$ be the projection to the first n coordinates. So $\pi(\mathcal{M}(\widetilde{S}))$ is a generalized positroid and for every $e_Y \in \pi(\mathcal{M}(\widetilde{S}))$, there exists some $X \in \mathcal{D}_{\widetilde{\mathcal{S}}}^{in}$ such that $\gamma(\xi_X)=\xi_Y$, so $\pi(\mathcal{M}(\widetilde{S})) \subseteq \mathcal{GM}(\mathcal{S})$. Conversely, for any $e_Y \in \mathcal{GM}(\mathcal{S})$, since $\gamma$ is surjective, there exists $X=\lambda_1\xi_{X_1}+\lambda_2\xi_{X_2}+\lambda_3\xi_{X_3} \in \mathcal{R}_{\widetilde{\mathcal{S}}}^{in}$ (a triangle in the triangulation of $K$ as above) such that $\gamma(X)=\xi_Y$. Since $e_{X_1}$, $e_{X_2}$ and $e_{X_3}$ are in positroid $\mathcal{M}(\widetilde{S})$, we get $e_Y=\pi(\lambda_1e_{X_1}+\lambda_2e_{X_2}+\lambda_3e_{X_3})$ is in $\pi(\mathcal{M}(\widetilde{\mathcal{S}}))$. By Proposition (3) of \ref{prop13}, this proves that $\mathcal{GM}(\mathcal{S})=\pi(\mathcal{M}(\widetilde{\mathcal{S}}))$ is a generalized positroid.
\end{proof}

\vspace{0.3cm}
In particular, we consider the generalization (see \cite{DKK2}) of the $\omega$-chamber introduced in the work of Leclerc and Zelevinsky \cite{LZ}. Let $\omega$ be a permutation on $[n]$ and let Inv($\omega$) denotes the set of inversions of $\omega$ (the pair $(i,j)$ such that $i<j$ and $\omega(i) > \omega(j)$). The collection of sets
\[
\mathcal{D}(\omega):=\{X \in 2^{[n]}\;|\;\text{for}\;i<j,\;\omega(i)<\omega(j)\;\text{and}\; j \in X,\;\text{then}\;i \in X\}
\]
is called the $\omega$-\textbf{chamber}. Additionally, if there are two permutations $\omega'$  and $\omega$ with Inv($\omega'$) $\subseteq$ Inv($\omega$), $\mathcal{D}(\omega',\omega)$ is defined to be the sets  $X \in \mathcal{D}(\omega)$ with additional condition: for $i<j$, $\omega'(i) >\omega'(j)$ and $i \in X$, then $j \in X$.

\begin{ex}
    Let $\omega_0$ be the longest permutation (where $\omega_0:i \rightarrow n-i+1$). For any permutation $\omega$ on $[n]$, the polytope
    \[
    \mathcal{P}(\omega):=Conv\{e_Y\:|\;Y \in \mathcal{D}(\omega,\omega_0)\}
    \]
    is a generalized positroid.
\end{ex}

\begin{proof}
    As shown in the right of Figure \ref{fig29}, $\mathcal{D}(\omega, \omega_0)=\mathcal{D}_{\mathcal{S}}^{in}$ where the generalized cyclic pattern $\mathcal{S}$ is given by $S_1=[n],\cdots ,S_k=[k,n],\cdots,S_n=\{n\},S_{n+1}=\emptyset,S_{n+2}=\omega^{-1}([1])\cdots,S_{n+k}=\omega^{-1}([k-1]),\cdots,S_{2n}=\omega^{-1}([n-1])$ (see \cite{DKK2}). Since $\widetilde{\mathcal{S}}=(\mathbf{pad}(S_1),\cdots,\mathbf{pad}(S_{2n}))$ is a Grassmann necklace in the domain $\binom{[2n]}{n}$, by Proposition \ref{prop10}, $\mathcal{P}(\omega)$ is a generalized positroid.
\end{proof}

 \begin{figure}[H]
	\centering
	\includegraphics[width=16cm]{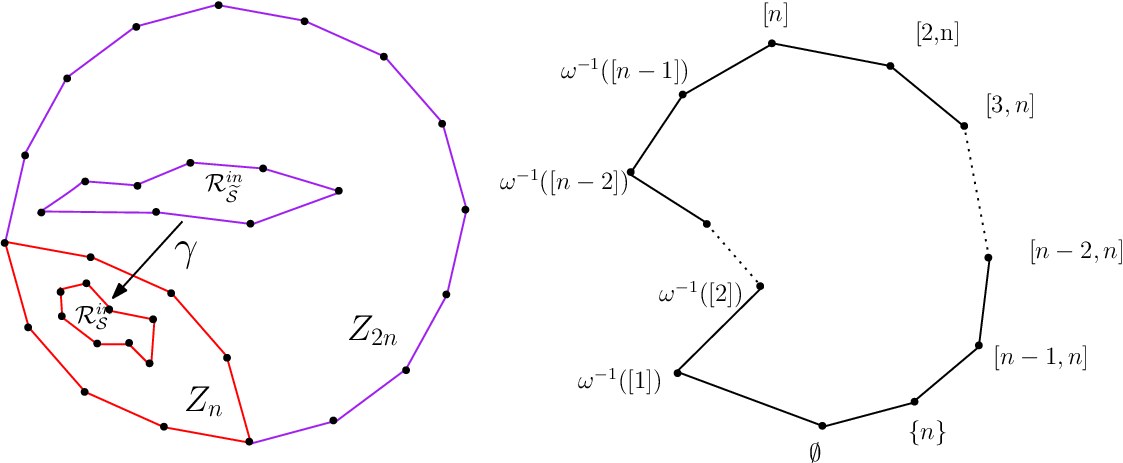}
	\caption{}
	\label{fig29}
\end{figure}
\vspace{0.5cm}
\subsection{Fan structure of stable MV polytopes}\quad\\
\indent As shown by Kamnitzer (see Section 6 in \cite{K2} ), the equivalence classes of MV polytopes under translation are called \textbf{stable MV polytopes}. Each MV polytope is determined by a unique DCTP function, and all DCTP functions form a fan structure under addition, thereby causing the stable MV polytopes to constitute a fan $\mathcal{MV}$ under Minkowski sums. Kamnitzer referred to the generators of all cones in the fan $\mathcal{MV}$ as \textbf{prime MV polytopes} and designated certain generators within the same cone as "\textbf{clusters}". 

Anderson computed all the prime MV polytopes for some lower-rank cases in \cite{A}. Later in \cite{AK}, Anderson and Kogan argued that these generators are closely related to the cluster algebra of Berenstein-Fomin-Zelevinsky \cite{BFZ}.

And here we formally state Kamnizer's question in \cite{K2} as follows,

\begin{prob}
    What are all the generators of the fan $\mathcal{MV}$ or prime MV polytopes? How to characterize the maximal cones of $\mathcal{MV}$?
\end{prob}

However, by relating the secondary fan of generalized positroid subdivision of $[0,1]^n$ to the fan of DCTP functions, we obtained that all the Schubert matroid polytopes are prime MV polytopes.

\begin{prop}
    \label{10}
    For $I \in 2^{[n]} \backslash \{[1,j]\;|\;j \in [n]\}$, Schubert matroid polytopes $Q(\theta_I)$ are prime MV polytopes. Moreover when $I \neq J$, $Q(\theta_I)$ and $Q(\theta_J)$ represent different stable MV polytopes.
\end{prop}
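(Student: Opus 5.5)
We want: $\theta_I$ spans a ray of the fan $\mathcal{MV}$ (i.e. it is not a positive combination of non-proportional DCTP functions modulo the lineality space), and distinct $I$ give distinct rays. By Theorem \ref{affine_DCTP}, $\mathcal{MV}$ is the secondary fan of g-positroid subdivisions of $[0,1]^n$ modulo its lineality space, which consists of the affine functions $x\mapsto c+\sum z_ix_i$. So a DCTP function spans a ray exactly when the regular subdivision it induces is a coarsest nontrivial one: every DCTP function inducing a coarsening of it is affine or a positive multiple of it plus an affine function.

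First, $\Sigma_I$ is nontrivial precisely when $I\neq[1,j]$ (and $I\neq\emptyset$). Indeed, if $I=[1,j]$, then $\Omega_I$ consists of the single basis $[1,j]$ and $\theta_I(X)=|X\cap[1,j]|$ is linear, so it lies in the lineality space. Otherwise Proposition \ref{prop8} shows that $\Sigma_I$ is an $l$-split or an $(l+1)$-split of $[0,1]^n$ with at least two maximal cells; this needs a short check that $\sigma(\mathbf{pad}(I))\ge 2$ unless $I$ is an initial interval. By Theorem \ref{thm6}, every multi-split is a coarsest regular subdivision. Hence any DCTP function $g$ with $\theta_I=g+h$, $g,h$ DCTP, induces a subdivision coarsened by $\Sigma_I$: the cells of $\theta_I$ refine the common refinement of the cells of $g$ and $h$. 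The subdivision of $g$ is therefore either trivial or equal to $\Sigma_I$.

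Second, we show that the functions inducing exactly $\Sigma_I$ form a single ray modulo affine functions. For a multi-split, the height function is determined by its affine pieces on the $l$ (or $l+1$) maximal cells $\pi(\widetilde\Pi_i)\cap[0,1]^n$. These cells share the common face $\mathcal F_{\Sigma_I}$ of codimension $l-1$, and their walls are the blade hyperplanes $x(S_i\cup\cdots\cup S_{j})=\mathrm{const}$. Subtract the affine piece on one cell. The remaining differences are then functions of the $l-1$ linear forms $x(S_i)$ (cf.\ Proposition \ref{prop4} and Theorem \ref{thm7}). Concavity together with the requirement that exactly these cells appear forces the bend across each wall to be positive. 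I expect this to be the main obstacle: showing that the bends across all walls must be equal, so that the space is one-dimensional rather than $(l-1)$-dimensional. I would try to prove this using the tropical Plücker relation of Theorem \ref{affine_DCTP}. Any DCTP $g$ inducing $\Sigma_I$ has its cells in $\mathcal{GPP}(n)$, and I would evaluate the relation on triples $i<j<k$ with $i,j,k$ taken from three consecutive blocks $S$. That should equalize consecutive bends, mirroring the computation in Proposition \ref{prop5}. Alternatively, pass to $\mathbf{pad}(I)$ via Lemma \ref{lem5} and use Early's result that the translated blade $\beta_{\mathbf{pad}(I)}$ spans a ray in the positive tropical Grassmannian.

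Finally, for distinctness: if $Q(\theta_I)$ and $Q(\theta_J)$ were translates, then $\theta_I-\theta_J$ would be affine, and $\Sigma_I=\Sigma_J$. But $\Sigma_I$ determines $I$: by Proposition \ref{prop8}, the walls of $\Sigma_I$ recover the blocks $S_i$ and the right-hand sides recover the values $|I_i|$. Since $I_i$ is the terminal segment of $S_i$ of length $|I_i|$, this recovers $I$. Alternatively, $\theta_I$ and $\theta_J$ take the value $|I|$ resp.\ $|J|$ on $[n]$, and the basis sets $\Omega_I\neq\Omega_J$ differ even after adding any linear function, since $Q(\theta_I)$ is a 0/1 polytope with maximal vertex $e_I$ in Gale order.
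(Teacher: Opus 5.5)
Your primality argument is the paper's: Proposition~\ref{prop8} (equivalently Theorem~\ref{thm7}) shows that $\Sigma_I$ is a multi-split, and Theorem~\ref{thm6} makes it a coarsest regular subdivision, hence a generator of $\mathcal{MV}$. Your check that $\sigma(\mathbf{pad}(I))\ge 2$ exactly when $I$ is not an initial interval is a detail the paper leaves implicit.

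The step you flag as the main obstacle, however, is not one. You need neither a tropical Pl\"ucker computation nor Early's results for it. In Herrmann's theorem, ``coarsest'' means that the subdivision gives a facet of the secondary polytope, i.e.\ its closed secondary cone is a single ray modulo affine functions. Even under your weaker reading (no strict nontrivial coarsening) this is automatic. The faces of a closed secondary cone are the secondary cones of coarsenings. So if the cone of $\Sigma_I$ were at least two-dimensional modulo affine functions, any extreme ray of it would be a proper face other than the lineality space, and would give a strict nontrivial coarsening.

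You can also see it directly. Let $g$ induce $\Sigma_I$ with affine pieces $g_1,\dots,g_l$. Continuity across the wall between cells $i<j$ gives $g_j-g_i=c_{ij}\ell_{ij}$, where $\ell_{ij}$ is the affine form cutting out that wall; in the notation of Proposition~\ref{prop4}, $\ell_{ij}=x(S_i\cup\cdots\cup S_{j-1})-|I_i|-\cdots-|I_{j-1}|$. For $i<j<m$ we have $\ell_{ij}+\ell_{jm}=\ell_{im}$, and $\ell_{ij},\ell_{jm}$ are linearly independent. Hence $c_{ij}=c_{jm}=c_{im}$, so all bends coincide and the space is one-dimensional.

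For distinctness your route differs from the paper's. The paper only cites that two matroid polytopes are translates iff the matroids differ by exchanging loops and coloops, and it does not say why $\Omega_I$ and $\Omega_J$ cannot. Your Gale-order idea is the cleanest way to fill this in, once spelled out:
\begin{itemize}
\item Suppose $Q(\theta_J)=Q(\theta_I)+v$. Then $v\in\{-1,0,1\}^n$, with $V_-=\{i: v_i=-1\}$ consisting of coloops and $V_+=\{i: v_i=1\}$ of loops of $\Omega_I$.
\item The map $L\mapsto (L\setminus V_-)\cup V_+$ is therefore a Gale-order isomorphism from the bases of $\Omega_I$ onto those of $\Omega_J$. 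It sends the maximum $I$ to $J$ and the minimum $[1,|I|]$ to $[1,|J|]$.
\item The coloops of $\Omega_I$ are the initial run $[1,m]\subseteq I$ and the loops are $[\max I+1,n]$. So the condition $[1,|I|]\mapsto[1,|J|]$ forces $V_-=V_+=\emptyset$ unless $I$ is an initial interval, and then $J=I$.
\end{itemize}

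Your other route, reading $I$ off the walls of $\Sigma_I$, needs the same bookkeeping. The walls only see the blocks lying between $[1,m]$ and $[\max I+1,n]$. You must record that the uncovered initial part lies in $I$ and the uncovered final part does not, which is exactly what the four cases of Proposition~\ref{prop8} encode. In return it gives more: it shows $\Sigma_I\neq\Sigma_J$. So $\theta_I$ and $\theta_J$ span distinct rays of $\mathcal{MV}$ even allowing positive rescaling, not merely distinct translation classes.
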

\begin{proof}
    From Theorem \ref{thm7}, we know that $\theta_I$ induce a muti-split of $[0,1]^n$ which is a coarsest subdivision by Theorem \ref{thm6}. Thus $\theta_I$ corresponds to a generator of $\mathcal{MV}$, i.e. $Q(\theta_I)$ is a prime MV polytope. Since two matroid polytopes are move equivalent if and only if they can be obtained from each other by replacing loops and coloops. So for $I \neq J$, $Q(\theta_I)$ and $Q(\theta_J)$ are not move equivalent.
\end{proof}

\vspace{0.2cm}
In Theorem \ref{minkow_schubert}, we have shown that, for any weakly separated set, the
Minkowski sums of Schubert matroid polytopes, labeled by sets of  a weakly separated set, form a subset of MV polytopes stable under summation. 

Moreover,  for a maximal weakly separates set $\mathcal{C} \subseteq 2^{[n]}$, the  MV polytope $Q_{\mathcal{C}}:=\sum\limits_{I \in \mathcal{C}\backslash \{[1,j]\;|\;j \in [n]\}}Q(\theta_I)$ corresponds to a maximal cone of $\mathcal{MV}$.
This follows from 

\begin{thm}
    \label{thm10}
    Suppose that $\mathcal{C} \subseteq 2^{[n]}$ is a maximal weakly separated set, let $\theta=\sum\limits_{I \in \mathcal{C}\backslash \{[1,j]\;|\;j \in [n]\}} \theta_I$. Then $\theta(Ta)+\theta(Tb) > \theta(T)+\theta(Tab)$ and $\theta(Tab)+\theta(Tc) \neq\theta(Tbc)+\theta(Ta)$ for every $a<b<c$ and $T \subseteq [n]\backslash \{a,b,c\}$.
\end{thm}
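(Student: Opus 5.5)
Both claims are local statements about a single square face $\{T,Ta,Tb,Tab\}$ and a single octahedron $\{Ta,Tb,Tc,Tab,Tac,Tbc\}$ of the cube. Since $\theta$ is a sum of submodular Schubert rank functions, we have
\[
\theta(Ta)+\theta(Tb)-\theta(T)-\theta(Tab)=\sum_{I}\Phi_I(T,a,b),
\qquad \Phi_I(T,a,b)\in\{0,1\}
\]
by Lemma \ref{lem3}. The plan is to exhibit, for each pair of data, a suitable member of $\mathcal{C}$.

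For strict submodularity, fix $T$ and $a<b$. I would look for an $I\in\mathcal{C}$ that is not an initial interval and has $\Phi_I(T,a,b)=1$. The natural candidate comes from the maximality of $\mathcal{C}$: a maximal weakly separated collection in $2^{[n]}$ has size $\binom{n+1}{2}+1$ (Leclerc--Zelevinsky / Danilov--Karzanov--Koshevoy), and it contains a "local" set of the form $T'a$ or $T'b$ adapted to the pair $(a,b)$. More concretely, I would use the purity results quoted above (\cite{DKK2}) to argue as follows. Either some set of the form $Xa$ with $X\cap\{a,b\}=\emptyset$ lies in $\mathcal{C}$, or by maximality one can be added. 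By the bracket-word formula, $\Phi_{Xa}(T,a,b)=1$ for a suitable such $X$, namely when $X$ agrees with $T$ in the relevant prefix, so that the ')' at $a$ is the unique contested bracket. So I would find the needed $I$ by a flip argument: if no element of $\mathcal{C}$ "sees" the square $(T,a,b)$, then the set $\{T\}\cup\{\text{neighbors}\}$ would be weakly separated from all of $\mathcal{C}$, contradicting maximality.

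For the strict Plücker inequality, Theorem \ref{minkow_schubert} already gives that $\theta$ is DCTP. Hence the maximum $\max\{\theta(Tab)+\theta(Tc),\theta(Tbc)+\theta(Ta)\}$ is attained, and it remains to show the two terms differ. For a single $\theta_I$, the three Plücker sums differ only through the $\Phi$-values, as in the proof of Proposition \ref{prop5}. The task is to find $I\in\mathcal{C}$ giving a strict inequality between $\theta_I(Tab)+\theta_I(Tc)$ and $\theta_I(Tbc)+\theta_I(Ta)$. Since all summands satisfy the same tropical Plücker relation with a consistent direction, there is no cancellation: a strict inequality for one $\theta_I$ persists in the sum. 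Again I would argue by maximality. If every $\theta_I$ has equality, then the set $Tb$ (or $Tac$) can be shown weakly separated from every $I\in\mathcal{C}$. I would check this via the reduction $I\mapsto I^*$ of Lemma \ref{lem2} and the case analysis of Lemma \ref{lem4}, showing that equality for $\theta_I$ on this octahedron forces $I$ not to interleave with $T\cup\{b\}$ in the forbidden pattern $a<b<c$. Adding $Tb$ (or $Tac$) to $\mathcal{C}$ would then contradict maximality, unless it is already in $\mathcal{C}$, in which case a direct bracket count shows $\theta_{Tb}$ itself is strict.

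The main obstacle is this last translation. One must show that "$\theta_I$ is flat on the square/octahedron" implies "$I$ is weakly separated from a specific test set built from $T,a,b,c$". It also requires checking that the test set is not an initial interval $[1,j]$, or handling that case using a neighboring set. I would first try the test sets $Tb$ and $Ta$, adjusting to $Tac$ when $b$ forces surrounding.
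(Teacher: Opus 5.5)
\textbf{There is a genuine gap in the second claim, and the first claim also needs a concrete repair.}

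\textbf{Second claim.} Your reduction is fine. Because $\mathcal{C}$ is weakly separated, the summands are compatible. So the differences $\theta_I(Tab)+\theta_I(Tc)-\theta_I(Tbc)-\theta_I(Ta)$ all share one sign or vanish, and it suffices to find one strict member $I\in\mathcal{C}\setminus\{[1,j]\}$. You are also right that $\theta_{Tb}$ and $\theta_{Tac}$ are strict, in opposite directions.

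The gap is the step you flag as the main obstacle, and it is false, not merely hard: equality for $\theta_I$ on the octahedron does not force $I$ to be weakly separated from $Tb$ or $Tac$. Take $n=7$, $(a,b,c)=(1,2,3)$, $T=\{4,6\}$, $I=\{4,5,7\}$.
\begin{itemize}
\item Since $\{1,4,6\},\{2,4,6\},\{3,4,6\}\le_G\{4,5,7\}$, all six sets $Ta,Tb,Tc,Tab,Tac,Tbc$ have $\theta_I$-value $3$, so $\theta_I$ is flat there.
\item Yet $I$ is weakly separated from none of them. For $Tb$, the pattern $2<5<6<7$ interleaves. For $Tac$, the element $5\in I\setminus Tac$ lies strictly between elements of $Tac\setminus I=\{1,3,6\}$. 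The other four are similar.
\end{itemize}
So in any maximal $\mathcal{C}\ni I$, none of your test sets lies in $\mathcal{C}$. The implication ``all summands flat $\Rightarrow$ $Tb$ or $Tac$ is weakly separated from all of $\mathcal{C}$'' therefore cannot be proved member by member. Which member of $\mathcal{C}$ detects a given octahedron depends on the whole collection. The $I\mapsto I^*$ reduction of Lemma \ref{lem2} does not help either: it moves the indexing sets by crystal operators and does not preserve a fixed octahedron.

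What is missing is a global input about maximal weakly separated collections. The paper obtains it by padding:
\begin{itemize}
\item $\mathbf{pad}(\mathcal{C})\cup\widetilde{\mathbf{pad}}(\mathcal{D}_0)$, with $\mathcal{D}_0$ the intervals of $[n+1,2n]$, is maximal weakly separated in $\binom{[2n]}{n}$ (Lemma \ref{lem7}).
\item Theorem \ref{thm11} gives strict four-term inequalities for the sum of the Schubert rank functions of its non-cyclic-interval members.
\item The $\widetilde{\mathbf{pad}}(\mathcal{D}_0)$ summands are flat on quadruples $a<b<c\le n<d$.
\item Lemma \ref{lem5} transfers the strict inequality back to $\theta$ on $(T\cap[n];a,b,c)$.
\end{itemize}

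\textbf{First claim.} Your flip argument never actually produces a member of $\mathcal{C}$. Also, ``or by maximality one can be added'' inverts what maximality says. The fix is immediate and is what the paper does. Let $l=|T|+1\le n-1$ and take the final interval $L=[n-l+1,n]$. It is weakly separated from every subset of $[n]$, so it lies in $\mathcal{C}$, and it is not of the form $[1,j]$. Since $\theta_L(X)=\min\{|X|,l\}$, we get $\theta_L(Ta)+\theta_L(Tb)-\theta_L(T)-\theta_L(Tab)=1$. Every other summand contributes $\ge 0$ by submodularity.
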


To prove this theorem, we need to use a result from our earlier work.

\begin{thm}[\cite{KLZ}]
    \label{thm11}
    Suppose that $\mathcal{C} \subseteq \binom{[n]}{k}$ is a maximal weakly separated set and $\mathcal{I} \subseteq \binom{[n]}{k}$ is the set of cyclic k-intervals. Let $\theta:=\sum\limits_{I \in \mathcal{C}\backslash\mathcal{I}}\theta_I$, then $\theta(Tab)+\theta(Tcd) \neq \theta(Tad)+\theta(Tbc)$ for every $a <b<c<d$ and $T \in \binom{[n]\backslash\{a,b,c,d\}}{k-2}$,i.e., the positroid subdivision of $\Delta_{k,n}$ induced by $\theta$ is the finest.
\end{thm}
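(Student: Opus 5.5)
The plan is to reduce the inequality to a local statement about a single octahedron of $\Delta_{k,n}$ and then use maximality of $\mathcal{C}$. Fix $a<b<c<d$ and $T\in\binom{[n]\backslash\{a,b,c,d\}}{k-2}$. For each $I\in\binom{[n]}{k}$ put
\[
\delta_I:=\theta_I(Tab)+\theta_I(Tcd)-\theta_I(Tad)-\theta_I(Tbc),
\]
so that the quantity to be shown nonzero is $\delta:=\sum_{I\in\mathcal{C}\backslash\mathcal{I}}\delta_I$.

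\textbf{Step 1: the $\delta_I$ have a common sign.} The family $\mathcal{C}$ is weakly separated, so by Theorem \ref{thm8} (equivalently, by the sum being an $M$-concave positive tropical Pl\"ucker vector) $\theta$ satisfies the positive three-term relation
\[
\theta(Tac)+\theta(Tbd)=\max\{\theta(Tab)+\theta(Tcd),\ \theta(Tad)+\theta(Tbc)\},
\]
and each $\theta_I$ satisfies it individually. Summing the individual relations gives
\[
\textstyle\sum_I\max\{\cdot,\cdot\}=\max\{\textstyle\sum_I\cdot,\ \textstyle\sum_I\cdot\}.
\]
This equality forces all the $\delta_I$ to be simultaneously $\geq 0$ or simultaneously $\leq 0$. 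Hence $\delta\neq 0$ as soon as a single $\delta_I\neq0$ with $I\in\mathcal{C}\backslash\mathcal{I}$. Note that cyclic intervals contribute nothing anyway: for $I\in\mathcal{I}$ we have $l=1$, so $\widehat{\theta}_I$ is affine on $\Delta_{k,n}$ by Proposition \ref{prop4}.

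\textbf{Step 2: the key local lemma.} I would prove:
\begin{quote}
$\delta_I\neq 0$ if and only if $I$ fails to be weakly (chord) separated from $Tac$ or from $Tbd$.
\end{quote}
Geometrically, the translated blade $\beta_I$ cuts the octahedron with vertices $e_{Tab},e_{Tac},e_{Tad},e_{Tbc},e_{Tbd},e_{Tcd}$ through the square $e_{Tab},e_{Tbc},e_{Tcd},e_{Tad}$ exactly when $e_{Tac}$ and $e_{Tbd}$ lie in different cones $\Pi_i$.

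To prove this I would use the explicit piecewise-linear formula of Lemma \ref{lem1} and Proposition \ref{prop4}. On $\Delta_{k,n}$,
\[
\widehat{\theta}_I=\min_i f_{\Pi_i},
\]
where each $f_{\Pi_i}$ is a prefix sum $\sum_{m\le E(S_{i-1})}x_m$ plus a constant. Then $\delta_I\neq0$ iff the six vertices do not all lie in one $\Pi_i$ and the minimizing pieces at $e_{Tac}$ and $e_{Tbd}$ differ in a way detected by the square. I would translate membership $e_J\in\Pi_i$ into the inequalities $|J\cap(S_i\cup\cdots\cup S_j)|\ge|I_i|+\cdots+|I_j|$. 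Comparing $J=Tac$ with $J=Tbd$ and $I$, this becomes the condition that $I\backslash J$ and $J\backslash I$ interlace in the pattern $i<j<k<l$ forbidden by chord separation.

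\textbf{Step 3: maximality.} Suppose $\delta=0$. By Step 1 every $\delta_I$ with $I\in\mathcal{C}\backslash\mathcal{I}$ vanishes, and by Step 2 every $I\in\mathcal{C}$ is then weakly separated from both $Tac$ and $Tbd$; intervals are weakly separated from everything. Maximal weakly separated collections in $\binom{[n]}{k}$ are maximal by inclusion (purity, Oh--Postnikov--Speyer). So $\mathcal{C}\cup\{Tac\}$ being weakly separated gives $Tac\in\mathcal{C}$, and likewise $Tbd\in\mathcal{C}$. But $Tac\backslash Tbd=\{a,c\}$ and $Tbd\backslash Tac=\{b,d\}$ interlace, so these two sets are not chord separated. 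This contradicts the weak separation of $\mathcal{C}$, and the finest-subdivision statement follows because every octahedron is then split.

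\textbf{Main obstacle.} I expect the difficulty to be Step 2, specifically the direction "not weakly separated $\Rightarrow\delta_I\neq0$", which needs a careful case analysis of where $a,b,c,d$ fall relative to the cyclic blocks $S_1,\dots,S_l$. I would first reduce by cyclic rotation to the case $a\in S_1$. Then I would check that an interlacing $I\backslash J$, $J\backslash I$ forces $e_{Tac}$ and $e_{Tbd}$ to satisfy opposite inequalities for some $x(S_i\cup\cdots\cup S_j)$. If that becomes messy, I would fall back on Early's description of when a blade induces a nontrivial split of an octahedron.
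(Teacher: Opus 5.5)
The paper gives no proof of this statement (it is quoted from the authors' earlier work \cite{KLZ}), so there is no in-paper route to compare against. Judged on its own, your proposal has a genuine gap: Step 2 is false in exactly the direction that Step 3 uses.

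Step 1 is sound. Each $\theta_I$ is positive on octahedra because the cells of $\beta_I$ are positroids, and the sign-coherence argument from $\sum_I\max\{A_I,B_I\}=\max\{\sum_I A_I,\sum_I B_I\}$ is correct.

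Step 2, however, fails. Take $n=6$, $k=3$, $T=\{5\}$, $(a,b,c,d)=(1,2,3,4)$ and $I=\{3,4,6\}$, which is not a cyclic interval.
\begin{itemize}
\item All six vertices $\{1,2,5\},\{1,3,5\},\{1,4,5\},\{2,3,5\},\{2,4,5\},\{3,4,5\}$ satisfy $L\le_G\{3,4,6\}$. So they are bases of $\Omega_I$, and the bracket rule confirms $\theta_I\equiv 3$ on the octahedron. Hence $\delta_I=0$.
\item Yet $Tac\setminus I=\{1,5\}$ and $I\setminus Tac=\{4,6\}$ interlace ($1<4<5<6$). So $I$ and $Tac$ are not weakly separated.
\end{itemize}
The octahedron simply lies inside one cell of $\beta_I$ (the matroid polytope of $\Omega_I$), while the crossing is witnessed by elements outside $\{a,b,c,d\}$. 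Weak separation from $Tac$ is a global condition, whereas $\delta_I$ depends only on the minor $(\Omega_I/T)|_{\{a,b,c,d\}}$.

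Your geometric picture is also off. The cells of $\beta_I$ are positroids, so by Proposition \ref{prop2} the blade never cuts the octahedron along the square $\{e_{Tab},e_{Tbc},e_{Tcd},e_{Tad}\}$, which is the crossing square $\{a,c\}\times\{b,d\}$. When $\delta_I\neq 0$, the cut is along $\{a,d\}\times\{b,c\}$ or $\{a,b\}\times\{c,d\}$, and both $e_{Tac}$ and $e_{Tbd}$ lie on the cutting square. So ``$e_{Tac}$ and $e_{Tbd}$ lie in different cones'' is never the mechanism.

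Consequently Step 3 collapses. From $\delta=0$ you only learn that no $\beta_I$ with $I\in\mathcal{C}$ cuts this particular octahedron. That does not make $\mathcal{C}\cup\{Tac\}$ or $\mathcal{C}\cup\{Tbd\}$ weakly separated, so maximality gives no contradiction.

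To repair the argument you need two things:
\begin{itemize}
\item a correct local criterion for when $\beta_I$ splits the octahedron $e_T+\Delta_{2,\{a,b,c,d\}}$, for example read off from Lemma \ref{lem1} or the bracket rule on the six vertices;
\item a genuinely structural use of maximality (purity, plabic tilings and mutations, or an induction) that produces some $I\in\mathcal{C}\setminus\mathcal{I}$ satisfying that criterion.
\end{itemize}
Testing weak separation only against the two diagonal vertices $Tac$ and $Tbd$ cannot supply this.
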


\vspace{0.2cm}
For any collection of sets $\mathcal{C} \subseteq 2^{[n]}$, we can define $\mathbf{pad}(\mathcal{C}):=\{\mathbf{pad}(I)\;|\;I \in \mathcal{C}\}$. Also, we can define a shift of this pad construction. Given any $I \subseteq [n+1,2n]$, we define $\widetilde{\mathbf{pad}}(I):=[|I|+1,n] \sqcup I$ (when $I=[n+1,2n]$, $\widetilde{\mathbf{pad}}(I)=I$). So similarly if $\mathcal{D} \subseteq 2^{[n+1,2n]}$ is a collection of subsets of $[n+1,2n]$, then we let
\[
\widetilde{\mathbf{pad}}(\mathcal{D}):=\{\widetilde{\mathbf{pad}}(I)\;|\;I \in \mathcal{D},\;\mathcal{D} \subseteq 2^{[n+1,2n]}\}.
\]

Oh et al. \cite{OPS} have a lemma that also holds for this shifted pad construction with a similar proof.

\begin{lem}[\cite{OPS}]
    \label{lem6}
    For $I,J \in 2^{[n]}$ (resp. $I,J \in 2^{[n+1,2n]}$), they are weakly separated if and only if $\mathbf{pad}(I)$ and $\mathbf{pad}(J)$ (resp. $\widetilde{\mathbf{pad}}(I)$ and $\widetilde{\mathbf{pad}}(J)$) are weakly separated.
\end{lem}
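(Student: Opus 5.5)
The lemma asserts two equivalences, one for $\mathbf{pad}$ and one for $\widetilde{\mathbf{pad}}$. I plan to prove both by a direct set-theoretic computation. The key observation is that padding adds the same block to both sets, so the symmetric-difference data that enter the definition of weak separation are essentially unchanged. Only the size comparison $|A|\ge|B|$ needs attention.

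\textbf{The $\mathbf{pad}$ case.} Take $I,J\in 2^{[n]}$ with $|I|=k\ge |J|=m$. Then $\mathbf{pad}(I)=I\sqcup[n+k+1,2n]$ and $\mathbf{pad}(J)=J\sqcup[n+m+1,2n]$, and since $k\ge m$ we have $[n+k+1,2n]\subseteq[n+m+1,2n]$. Hence
\[
\mathbf{pad}(I)\setminus\mathbf{pad}(J)=I\setminus J,\qquad \mathbf{pad}(J)\setminus\mathbf{pad}(I)=(J\setminus I)\sqcup[n+m+1,n+k].
\]
Both padded sets have size $n$, so the relevant clause of the definition is the one with the tail block. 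It asks whether $\mathbf{pad}(J)\setminus\mathbf{pad}(I)$ splits as $B'\prec I\setminus J\prec B''$.

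\emph{($\Leftarrow$)} Suppose $J\setminus I=B'\sqcup B''$ with $B'\prec I\setminus J\prec B''$. Set the new splitting to be $B'$ and $B''\sqcup[n+m+1,n+k]$. This works because every element of $[n+m+1,n+k]$ exceeds every element of $[n]$.

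\emph{($\Rightarrow$)} Conversely, intersect a splitting of $\mathbf{pad}(J)\setminus\mathbf{pad}(I)$ with $[n]$. Since $[n+m+1,n+k]$ lies above $I\setminus J$, it must sit in the upper part. Intersecting with $[n]$ therefore gives a splitting of $J\setminus I$ with the same property.

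\emph{The other clause.} For equal size $n$, the second clause (roles swapped) also needs checking. If $I\setminus J$ can be split around $\mathbf{pad}(J)\setminus\mathbf{pad}(I)$, then the tail block forces the upper part of that splitting to be empty when $k>m$. In that situation $I\setminus J\prec J\setminus I$, which is strong separation and already weak separation of $I,J$. The case $k=m$ is symmetric and needs no tail block. This bookkeeping — the equal-size padded sets can satisfy the ``wrong'' clause — is the only real obstacle. I would dispatch it by this remark that the tail forces one part to be empty, which is precisely the strongly separated case.

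\textbf{The $\widetilde{\mathbf{pad}}$ case.} Take $I,J\subseteq[n+1,2n]$. Here $\widetilde{\mathbf{pad}}$ prepends the block $[|I|+1,n]$, which lies below $[n+1,2n]$. The argument is the mirror image of the one above: the extra block $[|I|+1,|J|]$, present when $|I|<|J|$, now appears in the difference belonging to the smaller set, and it lies entirely \emph{below} everything in $[n+1,2n]$. It is therefore absorbed into the lower part $A'$ instead of the upper part. The same two-direction splitting argument goes through verbatim, with $\prec$ reversed for the appended block.

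This reproduces the proof of \cite{OPS} cited for the lemma, adapted to the shifted construction.
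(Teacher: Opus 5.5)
Your proof is correct: comparing the symmetric differences of the padded sets, absorbing the extra block into the upper (resp.\ lower) part of the splitting, and noting that the other clause forces strong separation when $|I|\neq|J|$ covers both implications for $\mathbf{pad}$ and for $\widetilde{\mathbf{pad}}$. Your labels ($\Leftarrow$)/($\Rightarrow$) are swapped relative to the lemma as stated, but both directions are proved. The paper gives no argument of its own beyond citing \cite{OPS} and asserting that the shifted case has a similar proof, so your direct verification supplies that argument, including the $\widetilde{\mathbf{pad}}$ case the paper leaves unwritten.
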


\vspace{0.2cm}
\begin{lem}
    \label{lem7}
    If $\mathcal{C} \subseteq 2^{[n]}$ and $\mathcal{D} \subseteq 2^{[n+1,2n]}$ are both maximal weakly separated, then $\mathbf{pad}(\mathcal{C}) \cup \widetilde{\mathbf{pad}}(\mathcal{D})$ is maximal weakly separated over $\binom{[2n]}{n}$.
\end{lem}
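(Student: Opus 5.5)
We must show $\mathbf{pad}(\mathcal{C}) \cup \widetilde{\mathbf{pad}}(\mathcal{D})$ is weakly separated, and then that it is maximal. For maximality we count: maximal weakly separated sets in $2^{[n]}$ have size $\binom{n+1}{2}+1$, and those in $\binom{[2n]}{n}$ have size $n(2n-n)+1=n^2+1$ (purity results of Oh--Postnikov--Speyer and Danilov--Karzanov--Koshevoy). So once the union is weakly separated it suffices to count it.

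\textbf{Weak separation.} Pairs inside $\mathbf{pad}(\mathcal{C})$ are handled by Lemma \ref{lem6}, and likewise pairs inside $\widetilde{\mathbf{pad}}(\mathcal{D})$. For a mixed pair, take $A=\mathbf{pad}(I)=I\sqcup[n+k+1,2n]$ with $|I|=k$, and $B=\widetilde{\mathbf{pad}}(J)=[m+1,n]\sqcup J$ with $J\subseteq[n+1,2n]$, $|J|=m$. Both sets have size $n$, so we use chord separation in the sense of Galashin.
\begin{itemize}
\item $A\setminus B = (I\cap[1,m]) \sqcup \bigl([n+k+1,2n]\setminus J\bigr)$.
\item $B\setminus A = \bigl([m+1,n]\setminus I\bigr) \sqcup \bigl(J\cap[n+1,n+k]\bigr)$.
\end{itemize}
Listed in increasing order, these four blocks are: an $A$-block in $[1,m]$, a $B$-block in $[m+1,n]$, a $B$-block in $[n+1,n+k]$, and an $A$-block in $[n+k+1,2n]$. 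The pattern is $A\ldots A\,B\ldots B\,A\ldots A$, so there is no alternation $i<j<k<l$ of the form $A,B,A,B$ or $B,A,B,A$. Hence $A$ and $B$ are chord separated, and therefore weakly separated.

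\textbf{Counting.} Next check the overlap. Suppose $\mathbf{pad}(I)=\widetilde{\mathbf{pad}}(J)$. The first set meets $[n+1,2n]$ in a terminal interval, while the second contains $[m+1,n]$ and no element of $[1,m]$. Equality therefore forces $I=[m+1,n]$ and $J=[n+k+1,2n]$ with $m=n-k$.

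These coincidences occur exactly for the frozen sets. The sets $[j,n]$ are intervals, and every maximal weakly separated set in $2^{[n]}$ contains all intervals $[1,j]$ and $[j,n]$, since these are weakly separated from everything. Similarly every maximal $\mathcal{D}$ contains all terminal intervals of $[n+1,2n]$. So the overlap consists of exactly $n+1$ sets, indexed by $k=0,\dots,n$.

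Since $\mathbf{pad}$ and $\widetilde{\mathbf{pad}}$ are injective, this gives
\[
|\mathbf{pad}(\mathcal{C})\cup\widetilde{\mathbf{pad}}(\mathcal{D})| = 2\Bigl(\tbinom{n+1}{2}+1\Bigr)-(n+1) = n^2+n+2-n-1 = n^2+1,
\]
which equals the maximal size in $\binom{[2n]}{n}$. By purity of $\binom{[2n]}{n}$, the union is therefore maximal.

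\textbf{Expected obstacle.} The main point to get right is the overlap count. It relies on intervals of the form $[j,n]$ lying in every maximal weakly separated collection in $2^{[n]}$. This holds because for any $X$, the difference $[j,n]\setminus X$ lies to the right of $X\setminus[j,n]$, so $[j,n]$ is weakly separated from $X$.
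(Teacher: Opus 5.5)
Your proposal is correct and follows essentially the same route as the paper: Lemma \ref{lem6} handles pairs within each family, the explicit four-block decomposition of the set differences handles mixed pairs, and maximality comes from the purity count $2\bigl(\binom{n+1}{2}+1\bigr)-(n+1)=n^2+1$. Your argument that the overlap is exactly the $n+1$ sets $[n-k+1,n]\sqcup[n+k+1,2n]$ fills in a step the paper only asserts: terminal intervals are weakly separated from every set, so they lie in every maximal collection, and these are the only possible coincidences.
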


\begin{proof}
    By Lemma \ref{lem6}, we know both $\mathbf{pad}(\mathcal{C})$ and $\widetilde{\mathbf{pad}}(\mathcal{D})$ are weakly separated set. Below we show that any two elements in $\mathbf{pad}(\mathcal{C})$ and $\widetilde{\mathbf{pad}}(\mathcal{D})$ are weakly separated to each other. Suppose that $\mathbf{pad}(I)=I \sqcup [n+|I|+1,2n] \in \mathbf{pad}(\mathcal{C})$ and $\widetilde{\mathbf{pad}}(J)=[|J|+1,n] \sqcup J \in \widetilde{\mathbf{pad}}(\mathcal{D})$, then we have 
    \[
    I\backslash [|J|+1,n] <[|J|+1,n]\backslash I <J \backslash [n+|I|+1,2n] < [n+|I|+1,2n] \backslash J.
    \]
    Together with the relations $\mathbf{pad}(I)\backslash \widetilde{\mathbf{pad}}(J)= (I\backslash [|J|+1,n]) \sqcup ([n+|I|+1,2n] \backslash J)$ and $\widetilde{\mathbf{pad}}(J) \backslash \mathbf{pad}(I)=([|J|+1,n]\backslash I) \sqcup (J \backslash [n+|I|+1,2n])$, we obtain that $\mathbf{pad}(I)$ and $\widetilde{\mathbf{pad}}(J)$ are weakly separated.

    The maximality follows from the fact (see \cite{DKK}) that the maximal weakly separeted sets in the domain  $2^{[n]}$ (resp. in the domain $\binom{[n]}{k}$) are of the same size $\binom{n+1}{2}+1$ (resp. size $k(n-k)+1$). Since $\mathbf{pad}(\mathcal{C}) \cap \widetilde{\mathbf{pad}}(\mathcal{D})=[1,n] \cup [n+1,2n] \cup \{[n-l+1,n] \sqcup [n+l+1,2n]\;|\;l=1,2,\cdots,n-1\}$, we obtain that $|\mathbf{pad}(\mathcal{C}) \cap \widetilde{\mathbf{pad}}(\mathcal{D})|=n+1$. Therefore,
    \[
    \begin{split}
        |\mathbf{pad}(\mathcal{C}) \cup \widetilde{\mathbf{pad}}(\mathcal{D})|&=|\mathbf{pad}(\mathcal{C})|+| \widetilde{\mathbf{pad}}(\mathcal{D})|-|\mathbf{pad}(\mathcal{C}) \cap \widetilde{\mathbf{pad}}(\mathcal{D})|\\
        &=2\left(\binom{n+1}{2}+1\right)-(n+1)\\
        &=n(2n-n)+1.
    \end{split}
    \]
    This proves that $\mathbf{pad}(\mathcal{C}) \cup \widetilde{\mathbf{pad}}(\mathcal{D})$ is maximal weakly separated over $\binom{[2n]}{n}$
\end{proof}

We are now ready to prove Theorem \ref{thm10}.
\begin{proof}[Proof of Theorem \ref{thm10}]
    We first prove that $\theta(Ta)+\theta(Tb)>\theta(T)+\theta(Tab)$. Suppose that $|Ta|=|Tb|=l \geq 1$ and let $L=[n-l+1,n]$. Thus $\Omega_L$ is a uniform matroid and the rank function $\theta_L(X)=|X|$ when $|X| \leq l$. Otherwise, $\theta_L(X)=l$. Thus,
    \[\theta_L(Ta)+\theta_L(Tb)=2l>\theta_L(T)+\theta_L(Tab)=l-1+l=2l-1.
     \]
     By the submodularity of all the other summands in $\theta$, we get the first inequality.

     To prove that $\theta(Tab)+\theta(Tc) \neq \theta(Tbc)+\theta(Ta)$, we begin by introducing some notations. Let $\mathcal{D}_0 \subseteq 2^{[n+1,2n]}$ be the collection of intervals in $[n+1,2n]$. Particularly, $\mathcal{D}_0$ is a maximal weakly separated set over $2^{[n+1,2n]}$. We use $\widetilde{\mathcal{I}}$ to denote the set of all the cyclic n-intervals in $[2n]$ and let $\mathcal{I}_1=\{[1,j]\;|\;j \in [n]\}$ and $\mathcal{I}_2=\{[n+1,j]\;|\;j \in [n+1,2n]\}$. Clearly, we have the following facts:
     \begin{itemize}
         \item By Lemma \ref{lem7}, $\widetilde{\mathcal{C}}:=\mathbf{pad}(\mathcal{C}) \cup \widetilde{\mathbf{pad}}(\mathcal{D}_0)$ is a maximal weakly separated set over $\binom{[2n]}{n}$.

         \item $\mathcal{I}_1 \subseteq \mathcal{C}$, $\mathcal{I}_2 \subseteq \mathcal{D}_0$ and $\widetilde{\mathcal{I}}=\mathbf{pad}(\mathcal{I}_1) \cup \widetilde{\mathbf{pad}}(\mathcal{I}_2)$, $\widetilde{\mathcal{C}}\backslash \widetilde{\mathcal{I}}=\mathbf{pad}(\mathcal{C}\backslash \mathcal{I}_1) \cup \widetilde{\mathbf{pad}}(\mathcal{D}_0 \backslash \mathcal{I}_2)$.
     \end{itemize}
     
     Then we define $\widetilde{\theta}:=\sum\limits_{J \in \widetilde{\mathcal{C}}\backslash\ \widetilde{\mathcal{I}}}\theta_J=\widetilde{\theta}_1+\widetilde{\theta}_2$ where
     $\widetilde{\theta}_1=\sum\limits_{I \in \mathcal{C}\backslash \mathcal{I}_1}\theta_{\mathbf{pad}(I)}$ and $\widetilde{\theta}_2=\sum\limits_{I \in \mathcal{D}_0\backslash \mathcal{I}_2}\theta_{\mathbf{pad}(I)}$.
     Then by Theorem \ref{thm11}, $\widetilde{\theta}(Tab)+\widetilde{\theta}(Tcd) \neq \widetilde{\theta}(Tbc)+\widetilde{\theta}(Tad)$ for every $a<b<c<d$ and $T \in \binom{[2n]\backslash\{a,b,c,d\}}{n-2}$.
     
     Furthermore, we  claim that $\widetilde{\theta}_2(Tab)+\widetilde{\theta}_2(Tcd)=\widetilde{\theta}_2(Tbc)+\widetilde{\theta}_2(Tad)$ for $a,b,c \in [n]$ and $d \in [n+1,2n]$. Through Lemma \ref{lem4}, we know that for any $I \in \mathcal{D}_0\backslash \mathcal{I}_2$ and $X \in \binom{[2n]}{n}$, 
     \begin{equation}
     \label{E9}
     \theta_{\widetilde{\mathbf{pad}}(I)}(X)=
     \begin{cases}
         |X \cap [n]|+|I| &\text{for}\;\;|X \cap [n+1,E(I)]| \geq |I|,\\
         |X \cap [1,E(I)]| &\text{for}\;\;|X \cap [n+1,E(I)]| \leq |I|.
     \end{cases}
     \end{equation}
     
     Since $a,b,c \in [n]$ and $d \in [n+1,2n]$, we have $|Tab \cap [n+1,E(I)]|=|Tbc \cap [n+1,E(I)]|$ and $|Tad \cap [n+1,E(I)]|=|Tbc \cap [n+1,E(I)]|$. Combining with equation \ref{E9}, we get that $\theta_{\widetilde{\mathbf{pad}}(I)}(Tab)+\theta_{\widetilde{\mathbf{pad}}(I)}(Tcd)=\theta_{\widetilde{\mathbf{pad}}(I)}(Tbc)+\theta_{\widetilde{\mathbf{pad}}(I)}(Tad)$ for $I \in \mathcal{D}_0 \backslash \mathcal{I}_1$. Then summing over all $I \in \mathcal{D}_0 \backslash \mathcal{I}_1$ proves the claim.

     Since $\widetilde{\theta}=\widetilde{\theta}_1+\widetilde{\theta}_2$, we immediately get that $\widetilde{\theta}_1(Tab)+\widetilde{\theta}_1(Tcd)  \neq \widetilde{\theta}_1(Tbc)+\widetilde{\theta}_1(Tad)$ for $a,b,c \in [n]$ and $d \in [n+1,2n]$. And Lemma \ref{lem5} tells us that $\widetilde{\theta}_1(X)=\theta(X \cap [n])+\sum\limits_{I \in \mathcal{C}\backslash\mathcal{I}_1}(n-|I|)$ for every $X \in \binom{[2n]}{n}$. So we obtain that 
     $\theta((T \cap [n])ab)+\theta((T \cap [n])c) \neq \theta((T \cap [n])bc)+\theta((T \cap [n])a)$. Since $T \cap [n]$ and $a,b,c \in [n]$ can be chosed arbitrarily, we complete the proof.
\end{proof}

Use the same notations as Theorem \ref{thm10}, we obtain
\begin{cor}
	\label{finest_g_positroid}
    The g-positroid subdivision of $[0,1]^n$ induced by $\theta$ is the finest. Moreover, the maximal weakly separated set $\mathcal{C}$ (in the domain $2^{[n]}$) corresponds to a maximal cone of $\mathcal{MV}$.
\end{cor}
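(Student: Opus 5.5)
The plan is to deduce the corollary from Theorem \ref{thm10}, Theorem \ref{affine_DCTP} and Theorem \ref{thm2}(3). By Theorem \ref{minkow_schubert}, $\theta$ is a DCTP function, so by Theorem \ref{affine_DCTP} its concave extension $\widehat{\theta}$ induces a g-positroid subdivision $\Sigma_\theta$ of $[0,1]^n$. I would prove that $\Sigma_\theta$ is the finest such subdivision in the following sense: every maximal cell is a unimodular simplex, and every $2$-dimensional cell that a g-positroid cell can have is a triangle.

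First, consider the $2$-dimensional faces of $[0,1]^n$, the squares $\{e_T,e_{Ta},e_{Tb},e_{Tab}\}$. By Theorem \ref{thm10}, $\theta(Ta)+\theta(Tb)>\theta(T)+\theta(Tab)$, so $\widehat{\theta}$ is not affine on any such square. Each square is therefore cut by the diagonal $\{e_{Ta},e_{Tb}\}$, which means no cell of $\Sigma_\theta$ contains an edge parallel to $e_a+e_b$ with $a \ne b$.

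Second, consider the octahedra $\{e_{Tab},e_{Tac},e_{Tbc},e_{Ta},e_{Tb},e_{Tc}\}$ with $a<b<c$, which are the $3$-dimensional pieces governed by the tropical Plücker relation. The tropical Plücker relation says that the maximum of the three sums $\theta(Tab)+\theta(Tc)$, $\theta(Tac)+\theta(Tb)$, $\theta(Tbc)+\theta(Ta)$ is attained twice, and always by $\theta(Tac)+\theta(Tb)$. Theorem \ref{thm10} says $\theta(Tab)+\theta(Tc)\neq\theta(Tbc)+\theta(Ta)$, so exactly two of the three sums are equal and the third is strictly smaller. The octahedron is then split along a single diagonal plane into two square pyramids. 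Of the two possible splits, this is the one compatible with positivity, since Proposition \ref{prop2} forbids the square $\{e_{Tab},e_{Tbc},e_{Tc},e_{Ta}\}$.

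Third, I would argue that these local conditions force finest. Regular subdivisions of $[0,1]^n$ induced by $M^\natural$-concave functions are determined by their restriction to the $3$-skeleton (squares and octahedra), as stated in the proof of Theorem \ref{affine_DCTP}. So any g-positroid subdivision refining $\Sigma_\theta$ agrees with $\Sigma_\theta$ on every square and every octahedron, because each of these local configurations is already maximally refined among g-positroid possibilities. Hence the refinement equals $\Sigma_\theta$.

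Translating to the fan: the cone of $\mathcal{MV}$ containing $\theta$ in its relative interior consists of all DCTP functions inducing a subdivision coarsened by $\Sigma_\theta$. Since $\Sigma_\theta$ is finest, this cone is maximal. The strict inequalities of Theorem \ref{thm10} are open conditions, so $\theta$ lies in the interior of a full-dimensional cone of the secondary fan. That cone is maximal and is spanned by the rays $\theta_I$ for $I\in\mathcal{C}\setminus\{[1,j]\}$.

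The main obstacle is the third step: the claim that non-degeneracy on all squares and octahedra implies the subdivision cannot be refined within the secondary fan. I would argue it by lifting to $\Delta_{n,2n}$ via $\mathbf{pad}$ and Lemma \ref{lem5}, so that finestness of $\Sigma_\theta$ reduces to finestness of the positroid subdivision given by Theorem \ref{thm11} for $\widetilde{\mathcal{C}}$. That reduction works because of Lemma \ref{lem7} together with the projection $\pi$ used in the proof of Theorem \ref{thm10}.
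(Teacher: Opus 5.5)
Your core deduction is the one the paper makes: strict submodularity on every square and exactly one equality on every octahedron come from Theorem~\ref{thm10}, and the proof of Theorem~\ref{affine_DCTP} asserts that these square and octahedron data determine the subdivision and its cone. However, the sense of ``finest'' you announce at the outset is false, and your own second step contradicts it: maximal cells need not be unimodular simplices, and $2$-cells need not be triangles.

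Take $n=3$ and $\mathcal{C}=2^{[3]}\setminus\{\{1,3\}\}$. Then $\theta=\theta_{\{2\}}+\theta_{\{3\}}+\theta_{\{2,3\}}$, with $\theta(\emptyset)=0$, $\theta(1)=\theta(2)=3$, $\theta(3)=2$ and $\theta(X)=4$ for $|X|\ge 2$. All square inequalities are strict, and $\theta(13)+\theta(2)=\theta(23)+\theta(1)=7>6=\theta(12)+\theta(3)$. So $\Sigma_\theta$ consists of the two corner tetrahedra and two square pyramids, with apices $e_3$ and $e_{12}$, over the square $\{e_1,e_2,e_{13},e_{23}\}$. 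That square is a $2$-cell which no g-matroid subdivision can cut, because its diagonals have directions $e_1+e_3-e_2,\ e_2+e_3-e_1\notin\mathbb{A}_3$.

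``Finest'' must therefore mean that no g-positroid subdivision properly refines $\Sigma_\theta$, i.e.\ that $\theta$ lies in the relative interior of a maximal cone. That is what your third step actually uses, so discard the simplex/triangle description. Also, an octahedron has three splits into pyramids, not two: Proposition~\ref{prop2} excludes one, and the sign of the inequality in Theorem~\ref{thm10} selects one of the other two.

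For the fan statement, the phrase ``open conditions'' needs to be made precise. If $f$ is DCTP and close to $\theta$, then in each octahedron the sum that is strictly smaller for $\theta$ stays strictly smaller. The tropical Pl\"ucker relation then forces $f$ to satisfy the same equality as $\theta$, and the square inequalities stay strict. Hence near $\theta$ the DCTP functions form an open subset of the linear space cut out by $\theta$'s octahedron equalities. This puts $\theta$ in the relative interior of a cone of maximal dimension in $\mathcal{MV}$; that cone is not full-dimensional in $\mathbb{R}^{2^{[n]}}$.

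Your further claim that this cone is spanned exactly by the $\theta_I$ with $I\in\mathcal{C}\setminus\{[1,j]\}$ is not part of the statement and is not proved. It would need linear independence of these functions modulo the lineality space and the absence of further extreme rays; neither follows from Theorem~\ref{thm10}.

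Finally, the fallback via Theorem~\ref{thm11} is not an immediate reduction. The finest positroid subdivision of $\Delta_{n,2n}$ is induced by $\widetilde\theta=\widetilde\theta_1+\widetilde\theta_2$, not by the lift $\widetilde\theta_1$ of $\theta$ alone. You would still have to show that a proper g-positroid refinement of $\Sigma_\theta$ yields a proper refinement of the subdivision induced by $\widetilde\theta$. The paper does not take this route; it relies only on the local determination asserted in Theorem~\ref{affine_DCTP}.
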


\vspace{0.2cm}
\begin{rmk}
	Since the class of $g$-polypositroids is stable under central symmetry ($P\to -P$), each finest $g$-positroid subdivision $\theta$ of $[0,1]^{[n]}$ gives rise
	to a {\bf dual} $g$-positroid subdivision of $[0,1]^{[n]}$ by translating centrally symmetric subdivision of $[-1,0]^{n}$ induced by $-\theta$ by the unit vector $(1,\cdots, 1)$.
\end{rmk}

\vspace{0.2cm}
{\bf Acknowledgements.}\;
{\em  For this project, FL and LZ  are supported by the National Natural Science Foundation of China (No.12131015).}


\end{document}